\newtheorem{theorem}{Theorem}[section]
\newtheorem{axiom}[theorem]{Axiom}
\newtheorem{conclusion}[theorem]{Conclusion}
\newtheorem{corollary}[theorem]{Corollary}
\newtheorem{definition}[theorem]{Definition}
\newtheorem{example}[theorem]{Example}
\newtheorem{lemma}[theorem]{Lemma}
\newtheorem{proposition}[theorem]{Proposition}
\newtheorem{remark}[theorem]{Remark}
\newenvironment{proof}[1][Proof]{\noindent\textbf{#1.}
\setlength{\parindent}{1ex} }{\hfill \rule{0.4em}{0.4em}}
\begin{document}

\title{Generalized Von Neumann Universe and Non-Well-Founded Sets}
\author{Eugene Zhang}
\maketitle

\begin{abstract}
\medskip In this paper, a generalized version of the von Neumann universe known as the total universe is proposed to formally introduce non-well-founded sets that include infinitons, semi-infinitons and quasi-infinitons in Russell's paradox. All three infinitons are part of infinitely generated sets that are generators of non-well-founded sets. Combining the well-founded sets with the non-well-founded sets, the total universe is a model of ZF minus the axiom of regularity and free of Russell's paradox. The axiom of regularity can not define the well-founded sets and is invalid in any system consistent with ZF set theory.\footnote{A summary of the paper is contained in Appendix A.} \bigskip

\textit{Key Word:} Non-well-founded sets; Infiniton; Semi-infiniton; Quasi-infiniton; Infinitely generated set; Limit of formulas; Rank; Axiom of regularity; Nullity; Russell's paradox; Total universe.

\end{abstract}
\tableofcontents
\listoffigures

\section{Preliminaries}

\subsection{Introduction}

The investigation of non-well-founded sets began with the work of Mirimanoff in 1917 \cite{Mirimanoff}. A number of axiomatic systems of non-well-founded sets such as AFA (by Aczel, Forti and Honsell \cite{Aczel}), SAFA (by Scott), FAFA (by Finsler), and BAFA (by Boffa), have been proposed thereafter. These systems introduce non-well-founded sets by replacing the axiom of regularity with separate anti-foundation axioms. The main problem with these systems is that they lack precise mathematical descriptions for non-well-founded sets. As a result, non-well-founded sets are not rigorously defined and the exact process to generate them is unclear.\smallskip

In this paper, we will present a model for precisely defining the non-well-founded sets based on the notion of limit of formulas and the enlarged von Neumann universe ($V$). First, we show that $V$ is incomplete because it does not have limit ordinal ranks, a fact that is of fundamental importance because it implies that non-well-founded sets necessarily exist and should be added to $V$ as the limit ordinal ranks. Furthermore, limits of finite structures and formulas along with an algebra to handle the limit operations are given to provide enough mathematical rigor for describing non-well-founded sets. Consequently, the expanded universe of sets known as the total universe (\ref{TotalHierarchy}) is shown to be a model of ZF minus the axiom of regularity and free of Russell's paradox. The axiom of regularity is invalid in any system consistent with ZF set theory and even fails to define the well-founded sets (section \ref{SectionConclusion}). 

\subsection{Problems in Von Neumann Universe}

The von Neumann universe (also known as the cumulative hierarchy) is well known as the class of hereditary well-founded sets and is defined as follows,
\begin{align}
	V_{0}\, & =\,\varnothing\mathfrak{;}\nonumber  
	\\
	V_{\alpha}\, & =\,\mathcal{P}(V_{\alpha-1})\text{,\qquad\qquad\ }\alpha\text{ is any successor ordinal;}\nonumber
	\\
	V_{\alpha}\, & =\underset{\beta<\alpha}{\bigcup}V_{\beta},\text{\qquad\qquad\quad \ }\alpha\text{ is any limit ordinal;} \nonumber  
	\\
	V \, & =\,\,\smashoperator{\bigcup_{\alpha\in \mathrm{Ord}}} V_{\alpha}. \label{CumulativeHierarchy}
\end{align}

The structure of any set $S$ can be represented by a \textbf{tree}, in which $S$ can be regarded as the \textbf{root} and all the objects in the transitive closure of $S$ form the \textbf{nodes} of the tree \cite{Kechris}. A \textbf{branch} (or path) of the tree is a sequence of nodes connected by \textquotedblleft$\in$\textquotedblright\ from the root to an end node known as a \textbf{terminal}. Clearly, the only terminal in $V\,$is $\varnothing$. A finite branch consists of a finite number of nodes, while an infinite branch contains an infinite number of nodes. \medskip

A \textbf{transfinite sequence} $\gamma_{\alpha}=\left\langle \gamma_{\xi}\colon\xi\leqslant\alpha\right\rangle $\ is a function with an ordinal domain where $\alpha$ is its \textbf{length }\cite{Jech}.\ A $\in$-\textbf{sequence} is a transfinite sequence $\gamma_{\alpha}$ that $\gamma_{0}\in\dots\in\gamma_{\xi}\in\gamma_{\xi+1}\in \dots\in\gamma_{\alpha}$. Obviously, any branch of $S$ in $V$ can be represented by a $\in$-sequence like $\varnothing=\gamma_{0}\in\dots\in\gamma_{\alpha} =S$. As a result, well-founded and non-well-founded sets can be defined upon $\in$-sequences as follows.

\begin{definition}
\label{DefWFAndNWFSet} \ Suppose $S$ is a set (with $\varnothing$ as the only terminal) and $\gamma_{\alpha}=\left\langle \gamma_{\xi}\colon\xi\leqslant\alpha\right\rangle $ is a $\in$-sequence in $S$. Then $S\,$\textit{is \textbf{well-founded} (WF) if any }$\gamma_{\alpha}$\textit{\ of }$S$ \textit{has }$\alpha<\omega.$ $S\,$\textit{is \textbf{non-well-founded} (NWF) if one }$\gamma_{\alpha}$\textit{ of }$S$ \textit{has }$\alpha \geqslant\omega.$ \textit{If all }$\gamma_{\alpha}$\textit{\ of }$S$ \textit{have }$\alpha\geqslant\omega$\textit{, }$S\ $\textit{is \textbf{totally non-well-founded} (TNWF).}
\end{definition}

From definition \ref{DefWFAndNWFSet}, it follows easily that $V\,$consists of only well-founded sets.

\begin{lemma}
\label{1} \ $V$ \textit{is well-founded and no set in }$V$ is \textit{non-well-founded.\footnote{In this paper, an existing theorem in set and model theory is listed as a proposition, while a (mainly) new result is proved as a theorem. Lemmas, corollaries, conclusions and axioms can have both new and existing results.}}
\end{lemma}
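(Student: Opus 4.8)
The plan is to reduce the statement to the classical fact that the membership relation $\in$ is well-founded on $V$, and to read Definition \ref{DefWFAndNWFSet} through the rank function. Recall that since $V=\bigcup_{\alpha\in Ord}V_{\alpha}$, every $S\in V$ lies in some $V_{\alpha}$ and therefore possesses a well-defined ordinal rank $\operatorname{rank}(S)$, namely the least ordinal $\beta$ with $S\subseteq V_{\beta}$. I would also record at the outset that each $V_{\alpha}$, and hence $V$ itself, is transitive, so that every node occurring in a branch of a set $S\in V$ again belongs to $V$ and thus carries a rank.

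The first key step is the monotonicity property $x\in y\Rightarrow\operatorname{rank}(x)<\operatorname{rank}(y)$: if $y\subseteq V_{\operatorname{rank}(y)}$ and $x\in y$, then $x\in V_{\operatorname{rank}(y)}$, which forces $\operatorname{rank}(x)<\operatorname{rank}(y)$. The second step applies this along a branch. Let $\varnothing=\gamma_{0}\in\gamma_{1}\in\cdots\in\gamma_{\alpha}=S$ be any $\in$-sequence of $S$. Reading it from the root downwards produces a descending $\in$-chain $S=\gamma_{\alpha}\ni\gamma_{\alpha-1}\ni\cdots$, along which the ranks strictly decrease and so form a strictly decreasing sequence of ordinals. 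Since $Ord$ is well-ordered, no infinite strictly decreasing sequence of ordinals can exist; hence the chain, and therefore the branch, terminates after finitely many steps. This forces $\alpha<\omega$ for every $\in$-sequence of $S$, which is exactly the condition in Definition \ref{DefWFAndNWFSet} for $S$ to be well-founded.

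Since the argument is uniform in $S$, every $S\in V$ is WF, and in particular no $S\in V$ can be NWF, because by Definition \ref{DefWFAndNWFSet} the NWF case requires the existence of at least one branch with $\alpha\geqslant\omega$. The main obstacle I anticipate is the bookkeeping at limit indices: Definition \ref{DefWFAndNWFSet} formally permits an $\in$-sequence to carry a transfinite length $\alpha$, so I must argue carefully that the mere existence of a branch with $\alpha\geqslant\omega$ would manufacture an infinite descending $\in$-chain (passing through the nodes $\gamma_{\xi}$ with $\xi<\omega$), which is precisely what the rank inequality forbids. Getting the correspondence between the bottom-up indexing of the $\in$-sequence and the top-down descending chain exactly right---rather than merely bounding $\alpha$ by $\operatorname{rank}(S)$, an estimate which does not by itself yield finiteness---is the delicate point of the proof.
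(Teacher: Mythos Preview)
Your approach via the rank function is sound and genuinely different from the paper's. The paper proceeds by transfinite induction on the level $\alpha$ of $V_\alpha$, showing that every $\in$-sequence ending inside $V_\alpha$ has finite length: the successor step appends one node to a sequence already known to be finite in $V_{\alpha-1}$, and the limit step reduces to some smaller $V_\gamma$. You instead invoke rank monotonicity and the well-ordering of $Ord$ to rule out infinite descending $\in$-chains in one stroke, which is more conceptual and avoids the level-by-level bookkeeping; the paper's route is more elementary in that it never names the rank function explicitly.

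One step needs correcting. In your final paragraph you propose to manufacture an infinite descending $\in$-chain ``passing through the nodes $\gamma_\xi$ with $\xi<\omega$'', but those nodes satisfy $\gamma_0\in\gamma_1\in\gamma_2\in\cdots$, which is \emph{ascending}; infinite ascending $\in$-chains certainly live in $V$ (take $\gamma_n=n$), so this alone yields no contradiction, and your rank inequality only bounds the ranks from below. The clean fix is to read the branch from the root rather than from the terminal: a branch of the tree of $S$ is by definition a path $S=b_0\ni b_1\ni b_2\ni\cdots$, already a descending $\in$-chain, and rank monotonicity then forces $\operatorname{rank}(b_0)>\operatorname{rank}(b_1)>\cdots$ to terminate after finitely many steps. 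Translating back to the paper's bottom-up indexing gives $\alpha<\omega$. This is exactly the reindexing you flagged as delicate; you just need to run it top-down rather than bottom-up.
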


\begin{proof}
\ By definition \ref{DefWFAndNWFSet}, we only need to prove (by transfinite induction) that any $\in$-sequence\textit{\ }in\textit{\ }$V$ is of finite length. First, any $\in$-sequence\textit{\ }in\textit{\ }$V_{1}$ is of finite length for $V_{1}= \{\varnothing\}$. Suppose any $\in$-sequence\textit{\ }$Z_{\xi}\in V_{\beta}$ has length $\xi<\omega$ for $\beta<\alpha$. If $\alpha$ is a successor ordinal and $Z_{\xi}\in V_{\alpha-1}$ has $\xi<\omega$, then $Z_{\xi+1}$ is a $\in$-sequence in $V_{\alpha}$ and $\xi+1<\omega$. If $\alpha$ is a limit ordinal, for any $X\in V_{\alpha}=\underset{\beta<\alpha}{\displaystyle\bigcup}V_{\beta}$, there is a $\gamma<\alpha$ that $X\in V_{\gamma}$. Since any $\in$-sequence $Z_{\xi}\in V_{\gamma}$ has $\xi<\omega$, so is $X.$ Thus $V$ is WF.\medskip

If $X\in V$ is NWF, $X$ has a $\in$-sequence $Z_{\xi}$ with length $\xi\geqslant\omega$, contradicting $V$ being WF.\bigskip
\end{proof}

Rank in $V$ is defined as follows.

\begin{definition}
\label{DefCumulativeHierarchyRank}\ The rank of $X$ in $V$ is the least $\alpha$ that $X\in V_{\alpha+1}$ (or equivalently $X\subset V_{\alpha}$).\footnote{A survey on rank in set theory is given in \cite{Knoche}.}
\end{definition}

This definition of rank appears to be originated by Mirimanoff \cite{Mirimanoff}, developed by Bernays \cite{Bernays}, and given its current form by Tarski \cite{Tarski}. Nonetheless, it is erroneous for the following reasons. Rank in a universe of the sets is a function $R$ mapping each set to a unique ordinal number and satisfies the property of monotonicity, i.e. for any $Y\in X$, $R(Y)<R(X)$, and $R(\{X\})=R(X)+1$.\smallskip

First, it is reasonable to believe that a set with one infinite branch (rather than infinite splittings) has the rank of a limit ordinal. For example, suppose $I_{n}=\underset{n}{\underbrace{\{\dots\{\varnothing\}\dots\}}}.$ Then $I_{n}$ contains a $\in$-sequence of length $n$, $\varnothing\in\{\varnothing\}\in\dots\in I_n$. By the monotonicity of $R$, we have
\[
R(I_{n})\,=\,R(\{I_{n-1}\})\,=\,R(I_{n-1})+1\,=\,R(\varnothing)+n
\]  
As $n$ approaches infinity, $R(I_{\omega}) = \omega$.\footnote{$I_{\omega}$ is called an infiniton as in definition \ref{DefInfiniton}.} Therefore, the rank $\omega$ should belong to non-well-founded sets like $I_{\omega}$ that has an infinite branch rather than $\omega$ under the current definition of rank. On the other hand, a WF set with infinite splittings like $\omega$ should not have a rank of a limit ordinal because all of its branches are of finite length. More specifically, even if for any $n\in\omega$, $R(n)<R(\omega)$, we can not conclude that $R(\omega)=\omega$ because $\omega+1,\omega+2,\dots$ also qualify. \medskip

Furthermore, the evaluation of rank should be different for $V_{\alpha}$ with a successor and limit ordinal. But in definition \ref{DefCumulativeHierarchyRank}, the evaluation is the same because it enforces all sets to be contained only in layers of successor ordinals and never involves layers of limit ordinals ($\alpha+1$ is always a successor ordinal). As a result, definition \ref{DefCumulativeHierarchyRank} is fallacious. 

\subsection{Correct Rank in Von Neumann Universe}

From the previous discussion, we adopt a more natural and correct version of rank in $V$ as follows.

\begin{definition}
\label{DefModifiedCumulativeHierarchyRank}\ The \textbf{rank} of $X$ in $V$ is defined as the least $\alpha$ that $X\in V_{\alpha}$ and denoted as $R_{V}(X)$, i.e. $R_{V}(X)=\,\smashoperator{\inf\limits_{\alpha\in\mathrm{Ord}}}\,\,\,\{\alpha\colon X\in V_{\alpha}\}$.
\end{definition}

\begin{lemma}
\label{2} \ No set in $V$ has a rank of a limit ordinal.
\end{lemma}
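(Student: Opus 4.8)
The plan is to argue by contradiction, exploiting directly the defining clause of the cumulative hierarchy at limit stages. Suppose, contrary to the claim, that some $X \in V$ has $R_{V}(X) = \lambda$ for a limit ordinal $\lambda$. First I would unpack what this rank equation asserts: by definition \ref{DefModifiedCumulativeHierarchyRank}, $\lambda$ is the \emph{least} ordinal with $X \in V_{\lambda}$, so in particular $X \in V_{\lambda}$ while $X \notin V_{\beta}$ for every $\beta < \lambda$.

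Next I would invoke the limit clause of (\ref{CumulativeHierarchy}): since $\lambda$ is a limit ordinal, $V_{\lambda} = \bigcup_{\beta < \lambda} V_{\beta}$. Membership of $X$ in this union means there is some $\beta < \lambda$ with $X \in V_{\beta}$, which flatly contradicts the minimality of $\lambda$ recorded above. This contradiction forces $R_{V}(X)$ never to be a limit ordinal.

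The only point worth stating explicitly is that the rank is well defined in the first place: every $X \in V = \bigcup_{\alpha \in Ord} V_{\alpha}$ lies in some $V_{\alpha}$, and the class of such $\alpha$, being a nonempty class of ordinals, has a least element, namely $R_{V}(X)$. With this in hand the argument is essentially immediate, and I do not expect any real obstacle: the entire content is that at a limit stage nothing genuinely new is created, since everything in $V_{\lambda}$ is already inherited from an earlier stage $V_{\beta}$. For completeness I would also remark that the smallest possible rank is $1$, because $V_{0} = \varnothing$ contains no set while $\varnothing \in V_{1} = \{\varnothing\}$; consequently the ranks that actually occur are exactly the successor ordinals. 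This confirms the concern raised after definition \ref{DefCumulativeHierarchyRank}, that $V$ never uses its limit layers to first introduce a set, and it is precisely this gap that motivates enlarging $V$ in the sequel.
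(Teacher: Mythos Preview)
Your argument is correct and is essentially identical to the paper's own proof: assume $R_V(X)=\alpha$ for a limit $\alpha$, use $V_\alpha=\bigcup_{\beta<\alpha}V_\beta$ to obtain $X\in V_\gamma$ for some $\gamma<\alpha$, and derive a contradiction with minimality. The extra remarks you add about well-definedness of rank and the minimal rank being $1$ are fine but not needed for the lemma itself.
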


\begin{proof}
\ Suppose $\alpha$ is a limit ordinal and $R_{V}(X)=\alpha$. Then there is a $\gamma<\alpha$ that $X\in V_{\gamma}$ or $R_{V}(X)<\alpha$, contradiction.\smallskip
\end{proof}

\begin{corollary}
\label{3} \ \textit{For any von Neumann ordinal }$\alpha\in V,\,R_{V}(\alpha)\,=\,\alpha+1$.
\end{corollary}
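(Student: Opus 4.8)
The plan is to unfold definition \ref{DefModifiedCumulativeHierarchyRank}: proving $R_{V}(\alpha)=\alpha+1$ amounts to establishing the two containments $\alpha\in V_{\alpha+1}$ (so the rank is at most $\alpha+1$) and $\alpha\notin V_{\alpha}$ (so the rank is no smaller). I would carry this out by transfinite induction on $\alpha$, taking as induction hypothesis that $R_{V}(\gamma)=\gamma+1$ for every ordinal $\gamma<\alpha$; equivalently, $\gamma\in V_{\beta}$ iff $\gamma<\beta$ for all $\beta$. Throughout I would freely use the standard monotonicity of the layers, $V_{\beta}\subseteq V_{\delta}$ whenever $\beta\leqslant\delta$, together with the defining identity $V_{\beta+1}=\mathcal{P}(V_{\beta})$, so that membership in a successor layer reduces to inclusion: $X\in V_{\beta+1}$ iff $X\subseteq V_{\beta}$.

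For the upper bound I would show $\alpha\subseteq V_{\alpha}$ uniformly in $\alpha$, which immediately gives $\alpha\in\mathcal{P}(V_{\alpha})=V_{\alpha+1}$. Indeed, any element $\gamma\in\alpha$ satisfies $\gamma<\alpha$, hence $\gamma+1\leqslant\alpha$; by the induction hypothesis $\gamma\in V_{\gamma+1}$, and by monotonicity $V_{\gamma+1}\subseteq V_{\alpha}$, so $\gamma\in V_{\alpha}$. This single argument covers the successor and limit cases together and never needs to inspect the shape of $\alpha$.

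The delicate half is the lower bound $\alpha\notin V_{\alpha}$, and here lemma \ref{2} does the decisive work. Suppose toward a contradiction that $\alpha\in V_{\alpha}$, so that $R_{V}(\alpha)\leqslant\alpha$. By lemma \ref{2} the rank $R_{V}(\alpha)$ cannot be a limit ordinal, so it is a successor, say $R_{V}(\alpha)=\delta+1\leqslant\alpha$, whence $\delta<\alpha$. From $\alpha\in V_{\delta+1}=\mathcal{P}(V_{\delta})$ we get $\alpha\subseteq V_{\delta}$, and since $\delta<\alpha$ we have $\delta\in\alpha\subseteq V_{\delta}$, i.e.\ $\delta\in V_{\delta}$. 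But the induction hypothesis applied to $\delta$ yields $\delta\notin V_{\delta}$, a contradiction. Hence $\alpha\notin V_{\alpha}$, and combining the two bounds gives $R_{V}(\alpha)=\alpha+1$.

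I expect the main obstacle to be exactly this exclusion of $\alpha$ from $V_{\alpha}$ at limit stages, where $V_{\alpha}=\bigcup_{\beta<\alpha}V_{\beta}$ offers no top successor layer to pull the set apart; the clean way past it is to let lemma \ref{2} force the least containing layer to be a successor, after which the elementary inclusion argument above closes the gap. If one preferred to avoid invoking lemma \ref{2}, the same step goes through using transitivity of each $V_{\beta}$: from $\alpha\in V_{\beta}$ with $\beta<\alpha$ one gets $\alpha\subseteq V_{\beta}$, whence $\beta\in V_{\beta}$, again contradicting the induction hypothesis.
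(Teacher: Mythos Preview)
Your proof is correct and follows the same overall strategy as the paper --- transfinite induction establishing $\alpha\in V_{\alpha+1}$ and $\alpha\notin V_{\alpha}$ --- but your organization differs in two instructive ways. The paper splits explicitly into successor and limit cases: for successors it passes from $R_{V}(\alpha)=\alpha+1$ to $R_{V}(\alpha+1)=\alpha+2$ via $\alpha+1=\alpha\cup\{\alpha\}$, and for limits it argues $\alpha\subset V_{\alpha}$ from the hypothesis on each $\gamma<\alpha$ and then derives a (tersely stated) contradiction from $\alpha\in V_{\gamma}$ for some $\gamma<\alpha$. You instead handle the upper bound uniformly without a case split, and for the lower bound you invoke lemma~\ref{2} explicitly to force the least containing layer to be a successor, then close with $\delta\in V_{\delta}$ contradicting the induction hypothesis. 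Your version is cleaner and makes the role of lemma~\ref{2} transparent; the paper's version is more traditional but leaves the limit-case contradiction to the reader (where it implicitly needs either transitivity of $V_{\gamma}$ or the same reduction to a successor layer that you spell out).
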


\begin{proof}
\ We prove by transfinite induction. First, since $\varnothing\in V_{1},\,R_{V}(\mathbf{0})=1$. If $\alpha$ is a successor ordinal, suppose $R_{V}(\alpha)=\alpha+1$, i.e. $\alpha\in V_{\alpha+1}$ and $\alpha\notin V_{\alpha}$. Then $\alpha+1= \alpha\cup\{\alpha\}\in\mathcal{P(}V_{\alpha+1})=V_{\alpha+2}$. Also, $\alpha+1\notin V_{\alpha+1}$ for otherwise $\alpha+1 \subset V_{\alpha}$, which means $\alpha\in V_{\alpha}$, contradiction. So $R_{V}(\alpha+1)\,=\,\alpha+2.$\medskip

If $\alpha$ is a limit ordinal, for any $\gamma<\alpha,\,\gamma\in V_{\gamma+1}\subset V_{\alpha}$, i.e. $\alpha\subset V_{\alpha}$ and $\alpha\in V_{\alpha+1}.$ If $\alpha\in V_{\alpha}=\displaystyle\bigcup\limits_{\beta<\alpha}V_{\beta}$, there is a $\gamma<\alpha,\,\alpha\in V_{\gamma}$, contradiction. Thus $R_{V}(\alpha)\,=\,\alpha+1$.\smallskip
\end{proof}

\begin{remark}
\ Corollary \ref{3} shows that the rank of any ordinal in $V$ is a successor ordinal, which is consistent with the fact that $V$ contains only well-founded sets and no set in $V$ has a rank of a limit ordinal.
\end{remark}

Next, we will introduce the notion of the unpacking operator that is important for the rest discussion in this paper.

\subsection{Unpacking Operator and Nullity}\label{SectionUnpackingOperator}

\begin{definition}
\label{DefUnpackOperator}\ Suppose $G=\{a_{1},a_{2},\dots\}.$ The \textbf{unpacking operator} $\ast G$ of $G$ is defined as $\{\ast G\}=G,$ i.e. $\ast G=a_{1},a_{2},\dots$.
\end{definition}

\begin{remark}
\ Intuitively, the unpacking operator can be considered as removing the curly brackets of a set, and $\ast G$ as the collection of $a_{i}$ without the curly brackets.
\end{remark}

\begin{example}
\ Let $S=\{a_{1},a_{2},\dots,b_{1},b_{2},\dotsc\}$, $G_{1}=\{a_{1},a_{2},\dotsc\}$ and $G_{2}=\{b_{1},b_{2},\dotsc\}.$ Then $S=\{\ast G_{1},\ast G_{2}\}.$
\end{example}

The unpacking of the empty set $\ast\varnothing$ is of particular importance because it represents \textquotedblleft nothing\textquotedblright\ or \textquotedblleft nullity\textquotedblright, a philosophical term that denotes the general state of void or nonexistence. The empty set $\varnothing$ is not \textquotedblleft nothing\textquotedblright\ because it is a set with nothing inside it. This can be understood by viewing a set as a bag --- an empty bag is still a bag. Unpacking the empty set, nonetheless, removes the empty bag, and thus there is nothing left, or only nullity exists. Since $\varnothing=\{\ast\varnothing\}$ and $\varnothing\subset S,\,\ast\varnothing$ is a member of every set. So we have the following axiom.

\begin{axiom}
\label{DefNothing} \ $\ast\varnothing$ is known as \textbf{nullity}. Suppose $S$ is any set. Then
\end{axiom}

\begin{enumerate}
\item $\varnothing\:\Longleftrightarrow\,\left(\forall X\in\varnothing\right) \left(X=\ast\varnothing\right)$

\item $\forall S\left(\ast\varnothing\in S\right)$

\item $\forall X(X\in\ast\varnothing\,\,\Longrightarrow\, X=\ast\varnothing)$

\item $\forall S\left(\left(S,\ast\varnothing\right) \,=\,\left(\ast\varnothing,S\right) \,=\,\ast\varnothing\right)$
\end{enumerate}

\begin{remark}
\ The definition of $\varnothing$ is changed from containing nothing to containing nullity as its only member. In general, if nothing satisfies a sentence in the axiom of comprehension, then the solution is nullity.
\end{remark}

\begin{remark}
\ $\ast\varnothing$ is a special object that permeates in every set but is not involved in the general set operations. Since every set contains $\ast\varnothing$, most conclusions in set theory remain unchanged.
\end{remark}

\begin{lemma}
\label{41}\qquad$\{S,\ast\varnothing\}\,=\,\{S\}$
\end{lemma}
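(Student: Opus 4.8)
The plan is to establish the equality by extensionality, leveraging the fact that nullity already resides inside every set. By Axiom~\ref{DefNothing} (specifically the clause $\forall S(\ast\varnothing\in S)$), nullity is a member of every set, so in particular $\ast\varnothing\in\{S\}$. Thus, although the listing $\{S\}$ explicitly names only $S$, its actual members are $S$ together with the silent member $\ast\varnothing$.

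Next I would compare the two sides elementwise. The members of $\{S,\ast\varnothing\}$ are precisely $S$ and $\ast\varnothing$, and by the same clause of Axiom~\ref{DefNothing} no further elements are forced in. Hence both $\{S,\ast\varnothing\}$ and $\{S\}$ have exactly the members $S$ and $\ast\varnothing$. An appeal to extensionality---two sets with identical members are equal---then yields $\{S,\ast\varnothing\}=\{S\}$.

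The argument carries no serious obstacle; the only point demanding care is the bookkeeping forced by the paper's redefinition $\varnothing=\{\ast\varnothing\}$ and the accompanying convention that $\ast\varnothing$ permeates every set. One must consistently treat $\ast\varnothing$ as a genuine, if unwritten, member of $\{S\}$ for the extensionality comparison to be legitimate; with that understood, the lemma simply records that explicitly listing this ubiquitous silent member is redundant. I would also double-check that the presence of $\ast\varnothing$ does not collapse the set via clause (iv) of the axiom, which governs ordered pairing $(S,\ast\varnothing)$ rather than unordered collection, so that clause does not interfere with the set $\{S,\ast\varnothing\}$ under consideration here.
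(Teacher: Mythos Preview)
Your proposal is correct and follows essentially the same approach as the paper: both arguments use Axiom~\ref{DefNothing}(ii) to conclude $\ast\varnothing\in\{S\}$, then invoke extensionality by checking that an arbitrary $X$ belongs to $\{S\}$ iff $X=S$ or $X=\ast\varnothing$ iff $X\in\{S,\ast\varnothing\}$. The paper compresses this into a single biconditional chain, but the content is the same as yours.
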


\begin{proof}
\ \ By axiom \ref{DefNothing},  for any $X$

\setstretch{1.4}
\hfill%
\begin{tabular}
[t]{l}%
$X\in\{S\}\,\,\Longleftrightarrow\,X=S\,\vee X=\ast\varnothing\,\,\Longleftrightarrow\, X\in\{S,\ast\varnothing\}$
\end{tabular}
\hfill%
\begin{tabular}
[t]{r}
\end{tabular}
\setstretch{1}
\end{proof}\bigskip

Lemma \ref{41} means that $\ast\varnothing$ can be omitted in any set.

\begin{corollary}
\qquad$S\times\varnothing\,=\,\varnothing\times S\,=\,\varnothing$
\end{corollary}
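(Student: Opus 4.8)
The plan is to compute the Cartesian product directly from the revised notion of the empty set. The crucial observation is that under Axiom \ref{DefNothing} the symbol $\varnothing$ is no longer literally empty: we have $\varnothing = \{\ast\varnothing\}$, so its sole member is the nullity $\ast\varnothing$. Consequently the usual \textquotedblleft vacuous\textquotedblright\ justification of $S \times \varnothing = \varnothing$ (namely, that there is no second coordinate available to form a pair) is not directly applicable in this framework, and one must instead track what actually happens to the ordered pairs that do arise.

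First I would unwind the definition of the Cartesian product. Since the only element of $\varnothing$ is $\ast\varnothing$, every ordered pair in $S \times \varnothing$ must be of the form $(s, \ast\varnothing)$ for some $s \in S$ --- including possibly $s = \ast\varnothing$, which by Axiom \ref{DefNothing}(2) lies in every set. Thus $S \times \varnothing = \{(s, \ast\varnothing) : s \in S\}$, and the whole question reduces to identifying this collection of pairs.

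Next I would invoke Axiom \ref{DefNothing}(4), which asserts $(S, \ast\varnothing) = (\ast\varnothing, S) = \ast\varnothing$ for any set. Applying this absorption law to each pair, every $(s, \ast\varnothing)$ collapses to $\ast\varnothing$ itself, so the set of all such pairs is simply $\{\ast\varnothing\}$. By the revised definition of the empty set, $\{\ast\varnothing\} = \varnothing$, which yields $S \times \varnothing = \varnothing$. The computation for $\varnothing \times S$ is entirely symmetric, this time using the identity $(\ast\varnothing, s) = \ast\varnothing$ from the same clause of the axiom.

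The step I expect to require the most care is the second one: ensuring that the collection of collapsed pairs genuinely reduces to the singleton $\{\ast\varnothing\}$ rather than being misread as a set of distinct pairs indexed by $s \in S$. The subtlety is conceptual rather than computational --- it is the recognition that here the product is not empty by default but becomes $\varnothing$ precisely because every admissible pair is swallowed by the nullity absorption law of Axiom \ref{DefNothing}(4). Once that absorption is applied, the identification $\{\ast\varnothing\} = \varnothing$ closes the argument with no further calculation.
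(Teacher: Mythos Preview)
Your proposal is correct and follows essentially the same route as the paper: take an arbitrary pair $(x,y)\in S\times\varnothing$, note that $y=\ast\varnothing$, apply Axiom~\ref{DefNothing}(iv) to collapse the pair to $\ast\varnothing$, and conclude that $S\times\varnothing=\{\ast\varnothing\}=\varnothing$. The paper's proof is just a terser version of yours, omitting the explicit unwinding of $\varnothing=\{\ast\varnothing\}$ that you spell out.
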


\begin{proof}
\ \ For any $\left(x,y\right)\in S\times\varnothing$, by axiom \ref{DefNothing}(iv), $\left(x,y\right)\,=\,\left(x, \ast\varnothing\right) \,=\,\ast\varnothing$. So $S\times\varnothing\,=\,\varnothing$. The second part is proved similarly.\bigskip
\end{proof}

Here is another example that uses the unpacking operator. 

\begin{example}
\ Suppose $\mathfrak{M} = (M,\dots)$ is a $\mathscr{L}$-structure and $A\subset M$. $X\subset M^n$ is $A$-definable if there is a $\mathscr{L}$-formula $\phi(x_1,\dots,x_n, y_1,\dots,y_m)$ and parameters $a_1,\dots,a_m\in A$ such that $X = \{(x_1,\dots,x_n)\colon \phi(x_1,\dots,x_n, a_1,\dots,a_m)\}$. If no parameters are needed, $X$ is called $\varnothing$-definable and $X = \{(x_1,\dots,x_n)\colon \phi(x_1,\dots,x_n)\}$. Through the unpacking operator, the two cases can be merged into one, i.e. $X = \{(x_1,\dots,x_n)\colon \phi(x_1,\dots,x_n, *D)\}$ where $D=\{a_1,\dots,a_m\}\subset A$.
\end{example}

\subsection{Membership Dimension}

\begin{definition}
\ The \textbf{membership dimension }of $S$ in $V_{}$ is the measure defined by a recursive function from $S$ to a cardinal number, i.e. $D\colon S\rightarrow\mathcal{K}$ and
\begin{equation}
D(S)\,=\,\sup\{D(X)\colon X\in S\}\,+\,1\label{DefMembershipDimension}
\end{equation}
Where $D(\varnothing)=0$.
\end{definition}

\begin{remark}
\ Membership dimension is based on cardinal numbers rather than ordinal ones and  measures the maximum number of curly brackets in a set (over $\varnothing$).
\end{remark}

\begin{example}
\ Suppose $\mathbf{n}$ is a finite \textit{von Neumann ordinal.}
\end{example}

\begin{enumerate}
\item $D(\mathbf{2})\,=\,D(\{\varnothing,\{\varnothing\}\})=2$ \ ($D(\{\varnothing\})=1\,$ and $D(\varnothing)=0$).

\item $D(\mathbf{n})\,=\,n.$

\item $D(\omega)\,=\,\aleph_{0}$ \ ($D({\omega})>n$ for any $n\in\omega$).
\end{enumerate}

The notion of membership dimension gives a necessary condition on when a set can be a member of itself.

\begin{theorem}
\label{47} \ Suppose $S_{n}$ are sets of finite membership dimension.
\end{theorem}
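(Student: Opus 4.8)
The plan is to exploit the strict monotonicity of the membership dimension $D$ along the $\in$-relation, forced by the recursion (\ref{DefMembershipDimension}). The first step is to isolate the key inequality: \emph{if $S$ has finite membership dimension and $X\in S$, then $D(X)<D(S)$}. This is immediate from the definition, since $\sup\{D(Y)\colon Y\in S\}$ is an upper bound for each $D(X)$ with $X\in S$, whence $D(X)\leqslant\sup\{D(Y)\colon Y\in S\}=D(S)-1<D(S)$; the passage to $D(S)-1$ and the final strict inequality are legitimate precisely because $D(S)$ is finite, hence a successor cardinal. I would also record the base values $D(\ast\varnothing)=0$ and $D(\varnothing)=1$, the former forced by $\varnothing=\{\ast\varnothing\}$ together with $D(\varnothing)=1$ and Axiom \ref{DefNothing}, so that nullity never raises a supremum and the recursion stays well-grounded.

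With the strict inequality in hand, the necessary condition on self-membership is immediate. If some $S_{n}$ satisfied $S_{n}\in S_{n}$, then taking $X=S=S_{n}$ would give $D(S_{n})<D(S_{n})$, absurd for a finite cardinal; hence no set of finite membership dimension can be a member of itself. The same device handles any putative membership cycle: from $S_{1}\in S_{2}\in\cdots\in S_{n}\in S_{1}$ I would apply the inequality once across each edge to obtain $D(S_{1})<D(S_{2})<\cdots<D(S_{n})<D(S_{1})$, and transitivity then yields $D(S_{1})<D(S_{1})$, a contradiction. Thus finite membership dimension forbids every finite $\in$-cycle among the $S_{n}$, with self-membership recovered as the degenerate case $n=1$.

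The only point demanding care --- and the place I expect the sole real friction --- is the strictness $D(X)<D(S)$ rather than the weak $D(X)\leqslant D(S)$. Strictness is exactly where finiteness is spent: for infinite $D(S)$ the $\sup+1$ step may fail to produce a strictly larger cardinal (for instance $\aleph_{0}+1=\aleph_{0}$), so the cycle argument would collapse, which is consistent with the later sets of infinite membership dimension being precisely the ones allowed to be self-membered. I would therefore state the inequality as a small separate lemma, proving it by unwinding (\ref{DefMembershipDimension}) and using only that a supremum is an upper bound together with the finiteness of $D(S)$, and only then feed it into the cycle argument. The remaining ingredients --- the nullity base case via Axiom \ref{DefNothing} and the transitivity of the resulting chain of strict inequalities --- are routine and need no machinery beyond the recursion for $D$.
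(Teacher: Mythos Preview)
Your proposal is correct and follows essentially the same route as the paper: both arguments extract the strict inequality $D(S_1)<D(S_2)$ from the recursion (\ref{DefMembershipDimension}) using finiteness (the paper writes it as $D(S_2)\geqslant D(S_1)+1>D(S_1)$, you as $D(S_1)\leqslant D(S_2)-1<D(S_2)$), then chain the inequalities by induction and close the cycle by contradiction. Your added remarks on $D(\ast\varnothing)$ and on why finiteness is indispensable are sound but not needed for the proof itself.
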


\begin{enumerate}
\item $S_{1}\in S_{2}\,\,\Longrightarrow\,D(S_{1})<D(S_{2})$

\item $S_{1}\in S_{2}\,\wedge\,\cdots\,\wedge\,S_{n-1}\in S_{n}\,\,\Longrightarrow\,D(S_{1})<D(S_{n})$

\item $\urcorner\left(S_{1}\in S_{2}\,\wedge\,\cdots\,\wedge\,S_{n-1}\in S_{n}\,\wedge\,S_{n}\in S_{1}\right)$
\end{enumerate}

\begin{proof}
\ \ (i) \ By (\ref{DefMembershipDimension}), $D(S_{2})\geqslant D(S_{1})+1>D(S_{1})$.\medskip

(ii) By (i) and induction.\medskip

(iii) \ If it is true, then by (i) and (ii), $D(S_{n})<D(S_{1})$ and $D(S_{1})<D(S_{n}),$ which is contradiction. $n=1$ reduces to the case that there is no $S_{1}$ that $S_{1}\in S_{1}$.\smallskip
\end{proof}

\begin{corollary}
\ That a set is a member of itself or contains a vicious cycle happens only if it has the infinite membership dimension.
\end{corollary}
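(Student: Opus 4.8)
The plan is to recognize this corollary as precisely the contrapositive of Theorem \ref{47}, so that the entire argument reduces to negating its hypothesis. Theorem \ref{47} assumes the sets $S_{n}$ have \emph{finite} membership dimension and concludes in (iii) that no vicious cycle $S_{1}\in S_{2}\in\cdots\in S_{n}\in S_{1}$ can occur, the case $n=1$ being the impossibility of $S_{1}\in S_{1}$. Reading this in reverse: if such a cycle \emph{does} occur, the finiteness hypothesis must fail, i.e. at least one set in the cycle carries infinite membership dimension. So the first step is simply to state the contrapositive and invoke Theorem \ref{47}(iii).

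The second step is to pin down self-membership directly, which also clarifies why infinity is the decisive property. Suppose $S\in S$ and, for contradiction, that $D(S)$ is a finite cardinal $m$. Since $S$ is itself one of the members $X\in S$, the defining recursion (\ref{DefMembershipDimension}) gives
\[
D(S)\,=\,\sup\{D(X)\colon X\in S\}+1\,\geqslant\,D(S)+1\,=\,m+1\,>\,m,
\]
contradicting $D(S)=m$. Hence $D(S)$ cannot be finite. The point to emphasize is that the recursion (\ref{DefMembershipDimension}) admits no finite fixed point, whereas for an infinite cardinal $\kappa$ one has $\kappa+1=\kappa$, so self-membership is consistent only when $D(S)$ is infinite.

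For the general \textquotedblleft contains a vicious cycle\textquotedblright\ phrasing, where the cycle $S_{1}\in\cdots\in S_{n}\in S_{1}$ lies in the transitive closure of an outer set $S$, the third step propagates infinity outward. By Theorem \ref{47}(i) (equivalently, by (\ref{DefMembershipDimension})), membership dimension is strictly increasing along every $\in$-step, so along any finite chain $Y\in\cdots\in S$ one obtains $D(S)>D(Y)$; taking $Y$ to be a cycle member of infinite dimension forces $D(S)$ to be infinite as well.

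I do not anticipate a serious obstacle, since everything follows by contraposition from the already-proved Theorem \ref{47}. The only point requiring care is the cardinal arithmetic underlying the recursion: one must distinguish the finite case, where $m\neq m+1$ rules the cycle out, from the infinite case, where $\kappa=\kappa+1$ permits the fixed point. A secondary subtlety, which the corollary tacitly presupposes, is that $D$ is well defined on non-well-founded sets at all; here it suffices that $D$ takes the value of an infinite cardinal governed by the longest $\in$-sequences, consistent with $D(\omega)=\aleph_{0}$ in the preceding example.
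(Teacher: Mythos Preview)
Your proposal is correct and takes essentially the same approach as the paper: both arguments amount to reading Theorem~\ref{47}(iii) contrapositively, with the $n=1$ case covering self-membership. Your steps~2 and~3 (the direct recursion argument and the transitive-closure extension) are sound elaborations that the paper's one-line proof omits; the only minor slip is citing Theorem~\ref{47}(i) in step~3, since that part carries a finiteness hypothesis---but your parenthetical appeal to the recursion~(\ref{DefMembershipDimension}) itself is the correct justification and makes the step go through.
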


\begin{proof}
\ By theorem \ref{47}(iii). $n=1$ reduces to the case that $S_{1}\in S_{1}$ means $D(S_{1})=\aleph_{0}.$\medskip

Note that the converse is not true. For example, $D(\omega)=\aleph_{0}$, but $\omega\notin\omega$ for $\omega$ is WF.\bigskip
\end{proof}

Membership dimension allows non-well-founded sets like infinitons to be defined intuitively as follows.\footnote{In later sections, we will give a rigorous treatment of these sets based on the notion of limit for finite structures.} An infiniton is a set that contains itself as the only member, i.e.
\begin{equation}
I=\underset{\aleph_{0}}{\,\underbrace{\{\dots\{\varnothing\}\dots\}}\,}=\underset{\aleph_{0}+1}{\,\underbrace{\{\{\dots\{\varnothing\}\dots\}\}}\,}=\{\underset{\aleph_{0}}{\,\underbrace{\{\dots\{\varnothing\}\dots\}}\,}\}  =\,\{I\}\label{Infiniteton}
\end{equation}

Generally, a set that is a member of itself is known as a semi-infiniton that takes on the following form. Suppose $G\,=\,\{a_{1},a_{2},\dots\}$ and $a_{k}\in V.$ Then
\[
Z\,=\,\{a_{1},a_{2},\dots,Z\}\,=\,\{\ast G,Z\}\footnote{$\ast G$ is the unpacking operator as in definition \ref{DefUnpackOperator}.}
\]

By replacing $Z$ with itself infinite times, we have
\begin{equation}
Z\,=\,\underset{\aleph_{0}}{\underbrace{\{\ast G,\{\ast G,\dots\{\ast G,\varnothing\}\dots\}}}\label{SemiInfiniton0}
\end{equation}

Then $Z$ is the solution to $S=\{\ast G,S\}$ for
\[
Z\,=\,\underset{\aleph_{0}+1}{\underbrace{\{\ast G,\{\ast G,\dots\{\ast G,\varnothing\}\dots\}}}\,=\,\{\ast G,\underset{\aleph_{0}}{\underbrace{\{\ast G,\dots\{\ast G,\varnothing\}\dots\}}}\}\,=\,\{\ast G,Z\}
\]

An infiniton is a special case of a semi-infiniton because $Z=\{\ast G,Z\}$ reduces to $Z=\{Z\}$ if $G=\varnothing$ (lemma \ref{41}).\smallskip

A set that contains a vicious cycle is called a quasi-infiniton and is illustrated as follows. Suppose
\[
S_{1}=\{\ast G_{1},S\},\,S_{2}=\{\ast G_{2},S_{1}\}, \,\dots, \,S_{n-1}=\{\ast G_{n-1},S_{n-2}\},\,S=\{\ast G_{n},S_{n-1}\}
\]

Then $S\in S_{1},\,\dots,\,S_{n-2}\in S_{n-1},\,S_{n-1}\in S$ form a vicious cycle. Let
\begin{equation}
Q\,=\,\underset{\aleph_{0}}{\underbrace{\{\overset{n}{\overbrace{\ast G_{n},\{\ast G_{n-1},\dots,\{\ast G_{1}}},\dots\{\overset{n}{\overbrace{\ast G_{n},\{\ast G_{n-1},\dots,\{\ast G_{1}}},\varnothing\}\dots\}}} \label{QuasiInfiniton0}
\end{equation}

Then $Q$ is the solution to $\,S\,=\,\{\ast G_{n},\{\ast G_{n-1},\dots,\{\ast G_{1},S\}\dots\}$ for
\begin{align*}
Q \,& =\,\underset{\aleph_{0}+n}{\underbrace{\{\overset{n}{\overbrace{\ast G_{n},\{\ast G_{n-1},\dots,\{\ast G_{1}}},\dots\{\overset{n}{\overbrace{\ast G_{n},\{\ast G_{n-1},\dots,\{\ast G_{1}}},\varnothing\}\dots\}}}
\\
& =\,\{\overset{n}{\overbrace{\ast G_{n},\{\ast G_{n-1},\dots,\{\ast G_{1}}},\underset{\aleph_{0}} {\underbrace {\{\overset{n}{\overbrace{\ast G_{n},\{\ast G_{n-1},\dots,\{\ast G_{1}}},\dots\{\overset{n}{\overbrace{\ast G_{n}, \{\ast G_{n-1},\dots,\{\ast G_{1}}},\varnothing\}\dots\}}}\dots\}
\\
& =\,\{{{\ast G_{n},\{\ast G_{n-1},\dots,\{\ast G_{1}}},Q\}\dots\}
\end{align*}

Obviously, generators of a quasi-infiniton form a finite cycle, and a quasi-infiniton (\ref{QuasiInfiniton0}) is reduced to a semi-infiniton (\ref{SemiInfiniton0}) if all\ $G_{k}$ $\left(1\leqslant k\leqslant n\right)$ are identical.\smallskip

\section{\label{SectionOfLimitFormula}Limit of Structures and Formulas}

In this section, we will investigate limits of (finitely generated) structures and formulas which provide a rigorous mathematical foundation for non-well-founded sets. The limit of finite structures is an infinite structure that can be described by an infinitely long formula of $\mathscr{L}_{\omega_{1},\omega_{1}}$ involving countably many conjunctions, disjunctions and quantifiers. An infinitely long formula involving countably many quantifiers may be undecidable \cite{Karp} but is always decidable in $\mathscr{L}_{\omega_{1},\omega}$ with only a finite number of quantifiers \cite{Scott}.

\subsection{Limit in General Language}

We begin with a brief review of some background knowledge in model theory. A (finitary) \textbf{formula} is a finite well-formed sequence of symbols from a given alphabet that is part of a formal language. A \textbf{sentence} is a formula that contains no free variables. A \textbf{theory} is a set of sentences in a first-order language $\mathscr{L}$ that is closed under logical implication. A \textbf{model} of a theory $T$ is a \textbf{structure} (a set along with relations, functions and constants) that satisfies the sentences of $T$. A \textbf{consistent} \textbf{theory} $T$ is a theory in which there is no sentence $\varphi$ that $T\vdash\varphi$ and $T\vdash\urcorner\varphi$. A \textbf{complete} \textbf{theory} $T$ is a theory in which for any sentence $\varphi,$ either $\varphi\in T$ or $\urcorner\varphi\in T$.\medskip

Furthermore, let $\mathfrak{M}=(M,\dots)$ be a $\mathscr{L}$-structure in a consistent theory $T$, $\bar{x}=(x_{1}, \dots,x_{n})$ be a $n$-tuple of variables, $\bar{a}=(a_{1},\dots,a_{n})$ ($a_{1},\dots,a_{n}\in M$), and $X\subset M$. The \textbf{complete type of $\bar{a}$ over $X$} with respect to $\mathfrak{M}$ is a maximal consistent set of formulas $\phi(\bar{x}, \bar{a})$ of $\mathscr{L}$ satisfied in $X^n$ (for all $n$), denoted as $\operatorname{tp}_M(\bar{a}/X)$ where $\bar{a}$ and the elements of $X$ are the parameters of the complete type. The \textbf{complete type over $X$} with respect to $\mathfrak{M}$ is a maximal consistent set of formulas $\phi(\bar{x})$ of $\mathscr{L}$ satisfied in $X^n$ (for all $n$). A $\mathbf{n}$\textbf{-type} is a consistent set of formulas with $n$ free variables and a subset of a complete type. A \textbf{type} (or \textbf{partial type}) is a subset of a complete type and can be either a complete type or a $n$-type. A type $\Gamma$ is called an \textbf{isolated type} in $T$ if, for any $\gamma\in\Gamma$, there is a complete formula $\varphi$ that $T\vDash\varphi\rightarrow\gamma$ (\cite{Chang-Keisler} and \cite{Hodges}). \medskip

The theory of $\mathfrak{M}$ (denoted as Th$(\mathfrak{M})$) is the set of sentences satisfied by $\mathfrak{M}$. In a complete theory $T$, a formula $\varphi$ is called \textbf{complete} in $T$ if for every formula $\phi,\,T\vDash\varphi\rightarrow \phi$ or $T\vDash\varphi\rightarrow\urcorner\phi$. An \textbf{atomic theory} $T$ is a theory in which every formula $\gamma$ that is consistent with $T$ can be derived from a complete formula $\varphi$ in $T$, i.e. $T\vDash\varphi\rightarrow\gamma$. A $\mathscr{L}$-structure $\mathfrak{A}$ is an \textbf{atomic structure} if every $n$-tuple $\bar{a}$ in $\mathfrak{A}$ satisfies a complete formula in Th$(\mathfrak{A})$. Obviously, every type in an atomic theory is isolated. A $\mathscr{L}$-structure $\mathfrak{M}=(M,\dots)$ is called $\mathbf{\kappa}$-\textbf{saturated} if for all subsets $A\subset M$ of cardinality less than $\kappa,\, \mathfrak{M}$ realizes all complete types over $A$. $\mathfrak{M}$ is called \textbf{countably saturated} if it is $\aleph_{0}$-\textbf{saturated}. \medskip

Suppose $\mathfrak{N}=(N,\dots)$ is another $\mathscr{L}$-structure in $T$. $\mathfrak{M}$ and $\mathfrak{N}$ are \textbf{isomorphic} $(\mathfrak{M}\cong\mathfrak{N})$ iff there is a $1$-$1$ function $f$ mapping $M$ onto $N$ and satisfying the following properties: (i) For each relation symbol $R$ of $\mathscr{L}$, $R^{\mathfrak{M}}(\bar{a})$ iff $R^{\mathfrak{N}}(f(\bar{a}))$; (ii) For each function $G$ of $\mathscr{L}$, $f(G^{\mathfrak{M}}(\bar{a}))=G^{\mathfrak{N}}(f(\bar{a}))$; (iii) For each constant $c$ of $\mathscr{L}$, $f(c^{\mathfrak{M}})=c^{\mathfrak{N}}$. $\mathfrak{M}$ and $\mathfrak{N}$ are \textbf{elementarily equivalent} $(\mathfrak{M}\equiv\mathfrak{N})$ iff for each $\mathscr{L}$-sentence $\phi$, $\mathfrak{M}\vDash \phi\Leftrightarrow\mathfrak{N}\vDash \phi$. $\mathfrak{M}$ is \textbf{$\omega$-homogeneous} if for any two tuples $\bar{a}=(a_{1},\dots,a_{n})$ and $\bar{b}=(b_{1},\dots,b_{n})$ in $\mathfrak{M}$ that $(\mathfrak{M},\bar{a})\equiv (\mathfrak{M},\bar{b})$, and any $c\in M$ there exists $d\in M$ such that $(\mathfrak{M},\bar{a},c)\equiv (\mathfrak{M},\bar{b},d)$. An atomic or saturated structure is always $\omega$-homogeneous. There is another homogeneity known as \textbf{ultrahomogeneous} in which any isomorphism between its two finite substructures in a $\mathscr{L}$-structure $\mathfrak{M}$ can be extended to an automorphism of $\mathfrak{M}$. An ultrahomogeneous structure is always $\omega$-homogeneous but not vice versa. Hence any conclusion holds for an ultrahomogeneous structure also holds for a $\omega$-homogeneous structure  (\cite{Chang-Keisler} and \cite{Hodges}). \medskip

A $\aleph_{0}$-\textbf{categorical theory }in $\mathscr{L}$ has exactly one countable structure up to isomorphism. A $\aleph_{0}$-\textbf{categorical structure} is a countable structure whose theory is $\aleph_{0}$-categorical. We list the following theorems without proof. Proposition \ref{25} states that a $\aleph_{0}$-{categorical structure} is made up of only finitely many countable atomic structures. It is central to $\aleph_{0}$-categorical theories and is essentially due to Engeler, Ryll-Nardzewski, Svenonius and Vaught.

%\begin{proposition}
%	\ An ultrahomogeneous structure is $\omega$-homogeneous.
%\end{proposition}

\begin{proposition}
	\label{58} \ Any atomic structure is $\omega$-homogeneous.
\end{proposition}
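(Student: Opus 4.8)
The plan is to prove this by a back-and-forth construction, the standard technique for establishing homogeneity in countable structures. Let $\mathfrak{A}$ be a countable atomic structure (homogeneity being defined only for countable structures), and put $T=\text{Th}(\mathfrak{A})$. Suppose $\vec{a}=(a_{1},\cdots,a_{n})$ and $\vec{b}=(b_{1},\cdots,b_{n})$ are tuples in $\mathfrak{A}$ realizing the same type; the goal is to produce an automorphism $f$ of $\mathfrak{A}$ with $f(a_{i})=b_{i}$ for each $i$. First I would record the key consequence of atomicity: since $\mathfrak{A}$ is atomic, every finite tuple satisfies a complete formula in $T$, and a complete formula isolates a single complete type. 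Hence in $\mathfrak{A}$ two tuples realize the same type if and only if they satisfy the same complete formula.

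Next I would set up the inductive construction. Enumerate the universe of $\mathfrak{A}$ as $c_{0},c_{1},c_{2},\cdots$ and build an increasing chain of finite partial maps $f_{0}\subset f_{1}\subset\cdots$, each a partial isomorphism, so that the domain tuple and the range tuple of every $f_{k}$ realize the same complete type in $T$. Start with $f_{0}$ sending $\vec{a}$ to $\vec{b}$, which is legitimate by hypothesis. At even stages (the ``forth'' step) I would force $c_{k}$ into the domain; at odd stages (the ``back'' step) I would force $c_{k}$ into the range, the latter being entirely symmetric to the former.

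The heart of the argument is the forth step, which is where atomicity does its work. Suppose $f_{k}$ maps a tuple $\vec{a}'$ onto $\vec{b}'$, with both realizing the same complete type, and let $c$ be the next element to be absorbed into the domain. Because $\mathfrak{A}$ is atomic, the extended tuple $(\vec{a}',c)$ satisfies some complete formula $\psi(\vec{x},y)$, so $\vec{a}'$ satisfies $\exists y\,\psi(\vec{x},y)$. Since $\vec{a}'$ and $\vec{b}'$ realize the same type, $\vec{b}'$ also satisfies $\exists y\,\psi(\vec{x},y)$, whence there is $d\in\mathfrak{A}$ with $(\vec{b}',d)\models\psi$. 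As $\psi$ is complete, the tuples $(\vec{a}',c)$ and $(\vec{b}',d)$ satisfy the same complete formula and therefore realize the same type, so extending $f_{k}$ by $c\mapsto d$ yields the required $f_{k+1}$.

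Finally I would take $f=\bigcup_{k}f_{k}$. The forth steps guarantee that $f$ is defined on all of $\mathfrak{A}$ and the back steps guarantee that it is surjective, so $f$ is a bijection; and since each stage preserves the full type of its tuple, in particular every atomic formula, $f$ preserves all relations, functions and constants in both directions, so it is an automorphism extending $\vec{a}\mapsto\vec{b}$. The main obstacle is precisely the forth step: without atomicity one could not guarantee that the single formula $\psi$ witnessed on one side transfers to the other, so the indispensable ingredient is that every finite tuple in $\mathfrak{A}$ is pinned down by a complete formula.
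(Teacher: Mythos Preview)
Your proof is correct: this is exactly the standard back-and-forth argument, and each step is sound. Note, however, that the paper does not actually prove Proposition~\ref{58}; it is listed among the results stated ``without proof'' as background from model theory. So there is no paper proof to compare against---you have simply supplied the classical argument (see e.g.\ Chang--Keisler) that the author chose to cite rather than reproduce.
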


\begin{proposition}
	\label{70} \ Two countable homogeneous structures that realize the same types are isomorphic.
\end{proposition}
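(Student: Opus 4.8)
The plan is to prove this by a back-and-forth construction, producing the isomorphism as the union of an increasing chain of finite partial isomorphisms between the two countable homogeneous structures $\mathfrak{M}$ and $\mathfrak{N}$. Since both are countable, I would fix enumerations $\mathfrak{M}=\{a_{0},a_{1},\dots\}$ and $\mathfrak{N}=\{b_{0},b_{1},\dots\}$ and build finite partial maps $f_{0}\subseteq f_{1}\subseteq\cdots$, where each $f_{k}$ carries a tuple $\vec{u}$ from $\mathfrak{M}$ to a tuple $\vec{v}$ from $\mathfrak{N}$. The central invariant to maintain at every stage is that $\vec{u}$ and $\vec{v}$ realize the \emph{same} complete type (in $\mathfrak{M}$ and $\mathfrak{N}$ respectively). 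This invariant alone forces each $f_{k}$ to be a partial isomorphism, since agreement of types subsumes agreement on all atomic formulas, hence on all relations, functions and constants.

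At even (\textbf{forth}) stages, to make the domain eventually exhaust $\mathfrak{M}$, I would select the least-indexed $a_{i}$ not yet in the domain of the current map $\vec{u}\mapsto\vec{v}$. The extended tuple $\vec{u}\,a_{i}$ realizes some complete type $p$ in $\mathfrak{M}$; because $\mathfrak{M}$ and $\mathfrak{N}$ realize the same types, $p$ is also realized in $\mathfrak{N}$, say by a tuple $\vec{w}\,c$. Restricting to the initial coordinates, $\vec{w}$ and $\vec{v}$ realize the same type, and this is exactly where homogeneity enters: by homogeneity of $\mathfrak{N}$ there is an automorphism $\sigma$ of $\mathfrak{N}$ taking $\vec{w}$ to $\vec{v}$. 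Then $\sigma(c)$ is the desired image of $a_{i}$, since $\vec{v}\,\sigma(c)$ realizes the same type as $\vec{w}\,c$, namely $p$, which is the type of $\vec{u}\,a_{i}$; I set $f_{k+1}=f_{k}\cup\{(a_{i},\sigma(c))\}$, preserving the invariant.

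At odd (\textbf{back}) stages I would run the symmetric argument, choosing the least-indexed $b_{j}$ not yet in the range and invoking homogeneity of $\mathfrak{M}$ to locate a preimage, thereby guaranteeing surjectivity. Taking $f=\bigcup_{k}f_{k}$ then yields a map whose domain is all of $\mathfrak{M}$ and whose range is all of $\mathfrak{N}$, and which preserves types in both directions, so $f$ is the required isomorphism. The hard part is precisely the correction step: a naive attempt merely locates \emph{some} realization $\vec{w}\,c$ of the correct type $p$ in $\mathfrak{N}$, but $\vec{w}$ need not coincide with $\vec{v}$, so $c$ cannot be declared the image of $a_{i}$ without wrecking the map already built. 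Homogeneity repositions the realization via $\sigma$ so that it extends the existing finite map, and checking that type-agreement survives this repositioning is the crux of the argument; the enumeration bookkeeping that ensures both sides are exhausted is routine.
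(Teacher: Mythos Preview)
Your back-and-forth argument is the standard and correct proof of this classical model-theoretic fact; the invariant you maintain (same complete type on corresponding tuples) and the use of homogeneity to reposition a realization so it extends the current partial map are exactly the right ideas. Note, however, that the paper does not supply its own proof here: Proposition~\ref{70} is explicitly listed among the results quoted without proof (alongside Propositions~\ref{58} and~\ref{25}), so there is no in-paper argument to compare against. Your write-up would serve as a faithful proof of the cited result.
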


\begin{proposition}
\label{25}\cite[Theorem 2.3.13]{Chang-Keisler} \ Let $T$ be a complete theory. Then the following are equivalent:
\end{proposition}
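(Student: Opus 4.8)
The plan is to establish the theorem as a cycle of implications among its equivalent clauses, which I will abbreviate as: (a) $T$ is $\aleph_{0}$-categorical; (b) for each $n<\omega$ the Stone space $S_{n}(T)$ of complete $n$-types is finite; (c) for each $n$ every $n$-type of $T$ is isolated, equivalently every countable model of $T$ is atomic; and (d) for each $n$ there are only finitely many formulas $\varphi(x_{1},\dots,x_{n})$ up to $T$-provable equivalence. The organizing principle is that the complete $n$-types are exactly the ultrafilters (the points of the Stone space) of the Lindenbaum--Tarski Boolean algebra $B_{n}$ of $\mathcal{L}$-formulas in $x_{1},\dots,x_{n}$ taken modulo $T$. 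I would record this Stone duality first, so that the combinatorial clauses (b), (c), (d) can all be read off the topology of a single compact Hausdorff space.

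With that dictionary in hand, the equivalences (b)$\Leftrightarrow$(c)$\Leftrightarrow$(d) are the soft part. For (b)$\Leftrightarrow$(c): an isolated point of $S_{n}(T)$ is precisely an isolated (principal) type, and since $S_{n}(T)$ is compact and Hausdorff it is finite iff it is discrete iff every point is isolated; the cover by singleton clopen sets together with compactness forces finiteness in one direction, and discreteness is immediate in the other. For (b)$\Leftrightarrow$(d): a Boolean algebra is finite iff its Stone space is finite, so the finiteness of $B_{n}$ and of $S_{n}(T)$ coincide. The bridge ``every $n$-type is isolated $\Leftrightarrow$ every countable model is atomic'' is then routine: each tuple in a model realizes some complete type, which must be isolated, so the model is atomic; conversely every type is realized in some countable model (pass to an elementary extension realizing it, then apply downward L\"owenheim--Skolem), and in an atomic model every realized type is isolated.

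The substantive half is (a)$\Leftrightarrow$(c). For (c)$\Rightarrow$(a) I would invoke the two propositions already in hand: any two countable models of $T$ are atomic, hence homogeneous by Proposition \ref{58}, and two countable atomic models of a complete theory realize exactly the same (isolated) types, so Proposition \ref{70} delivers an isomorphism between them, giving $\aleph_{0}$-categoricity. The hard direction, and the main obstacle, is (a)$\Rightarrow$(c), which I would argue contrapositively. If some $S_{n}(T)$ is infinite then, being an infinite compact Hausdorff space, it has a non-isolated point, i.e. a non-principal type $p$. By the Omitting Types Theorem there is a countable model of $T$ that omits $p$, while some other countable model realizes $p$; since isomorphic structures realize the same types, these two models are non-isomorphic, contradicting (a). The weight of the whole theorem therefore rests on the Omitting Types Theorem, whose proof is a Henkin-style construction meeting countably many requirements (one for each $\mathcal{L}$-formula that could force $p$); establishing that lemma, rather than any of the Stone-space bookkeeping, is where the real work lies.
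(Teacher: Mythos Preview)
The paper does not prove this proposition at all: it is listed immediately after the sentence ``We list the following theorems without proof'' and is simply cited from Chang--Keisler. So there is no in-paper argument to compare against; your outline is being measured only against the standard literature proof.

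Your sketch is the standard Ryll--Nardzewski argument and is correct in its essentials. One small omission: the paper's statement has six clauses, and your abbreviated list (a)--(d) folds (iii) and (vi) together but drops clause (ii), that $T$ has a countable model which is both atomic and $\omega$-saturated. This is easy to patch once your cycle is in place: from (b)/(iv), over any finite parameter set there are still only finitely many complete types (augment the language by constants and note the Lindenbaum algebra stays finite), so the unique countable model realizes them all and is $\omega$-saturated, while by (vi) it is atomic, giving (ii); conversely, a countable atomic model is prime and a countable saturated model is universal among countable models, so a model with both properties is unique up to isomorphism, yielding (i). With that addendum your proof is complete and matches the textbook route the paper is citing.
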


\begin{enumerate}
\item $T$ \textit{is }$\aleph_{0}$\textit{-categorical.}

%\item $T$ \textit{has a structure that is both atomic and countably saturated.}

\item \textit{For each $n<\omega$, each type $\Gamma(x_{1},\dots,x_{n})$ of $T$ has a complete formula.}

\item \textit{For each $n<\omega,\:T$ has only finitely many types in $x_{1},\dots,x_{n}$.}

\item \textit{For each $n<\omega$, there are only finitely many formulas $\varphi(x_{1},\dots,x_{n})$ up to equivalence with respect to $T$.}

\item \textit{All structures of $T$ are countably atomic and saturated.}
\end{enumerate}

In the following discussion of this paper, without further specification, $\mathscr{L}$ means a first-order language, $\bar{\mathscr{L}}$ the language of set theory, and $\mathscr{L}_{\omega_{1},\omega}$ an infinitary language of $\mathscr{L}$. $\mathscr{L}^{\prime}$ indicates an expansion of $\mathscr{L}$ with new relations, functions or constants. $T$ is a $\aleph_{0}$-categorical theory. A (small) Greek letter such as $\phi$ or $\varphi$ represents a formula in a language, and a Greek letter with index like $\phi_n$ or $\varphi_n$ represents a sequence of formulas. A (capital) letter in fraktur font like $\mathfrak{M}$ means a structure (model) and a fraktur letter with index like $\mathfrak{M}_n$ indicates a sequence of structures. A theorem involving \textit{homogeneous} means that it holds for both \textit{ultrahomogeneous} and \textit{$\omega$-homogeneous}.\medskip 

Now we discuss the limit of structures and formulas for a $\aleph_{0}$-categorical theory. First, we define the neighborhood of $\omega$ based on the cofinite topology.

\begin{definition}
	\label{DefNeighborhood} \ The\textbf{\ cofinite topology} on $\omega$ is defined as $\,\mathfrak{T}\,=\,\{y\subset \omega\colon y=\varnothing\,\vee\,\omega-y$ is finite$\,\}$. A \textbf{neighborhood of} $\mathbf{\omega}\, ( \omega$-neighborhood$)\,\mathfrak{H}$ is a member of $\mathfrak{T}$, i.e. $\mathfrak{H}\in\mathfrak{T}.$
\end{definition}

\begin{lemma}
	\label{7} \ $\mathfrak{H}$ is a neighborhood of $\omega$ if and only if $\exists N\in\omega$ such that $\forall n>N,n\in\mathfrak{H}.$
\end{lemma}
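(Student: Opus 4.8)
The plan is to unwind Definition~\ref{DefNeighborhood} and reduce the biconditional to the elementary equivalence between a subset of $\omega$ having finite complement and that subset containing a final segment $\{n\in\omega\colon n>N\}$. The single nontrivial ingredient is that every finite subset of $\omega$ is bounded above, which is immediate from the well-ordering of $\omega$: a finite nonempty set of ordinals has a maximum.

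For the forward direction I would take a nonempty $\mathfrak{H}\in\mathfrak{T}$, so that $\omega-\mathfrak{H}$ is finite by Definition~\ref{DefNeighborhood}. Letting $N$ be an upper bound for the finite set $\omega-\mathfrak{H}$ (for instance its maximum, or $N=0$ when the complement is empty), every $n>N$ must lie outside $\omega-\mathfrak{H}$ and hence in $\mathfrak{H}$; this $N$ witnesses the right-hand side. For the converse I would assume an $N$ with $n\in\mathfrak{H}$ for all $n>N$. Then $\omega-\mathfrak{H}\subset\{0,1,\dots,N\}$, a finite set, so $\omega-\mathfrak{H}$ is finite and therefore $\mathfrak{H}\in\mathfrak{T}$, i.e. $\mathfrak{H}$ is a neighborhood of $\omega$.

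I do not expect a genuine obstacle here; the argument is a one-line translation in each direction, resting only on boundedness of finite subsets of $\omega$. The one point deserving care is the degenerate value $\mathfrak{H}=\varnothing$: it belongs to $\mathfrak{T}$ (via the explicit $y=\varnothing$ clause that makes $\mathfrak{T}$ a topology) yet fails the tail condition, so the biconditional is really intended for nonempty neighborhoods --- which matches the intuition that a genuine neighborhood of the limit point $\omega$ must eventually contain all large $n$. In the write-up I would either restrict attention to nonempty $\mathfrak{H}$ or explicitly record this case as the lone exception, since it is the only place the literal statement is delicate.
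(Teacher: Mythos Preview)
Your argument is correct and matches the paper's: the paper proves the forward implication by contradiction (no such $N$ forces $\omega-\mathfrak{H}$ to be infinite) rather than directly via a maximum of the finite complement, but the content is identical. Your attention to the degenerate case $\mathfrak{H}=\varnothing$ is in fact more careful than the paper, which tacitly ignores that $\varnothing\in\mathfrak{T}$ via the explicit clause in Definition~\ref{DefNeighborhood}.
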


\begin{proof}
	\ Suppose $\mathfrak{H}$ is a neighborhood of $\omega$ and for any $N\in\omega,$ there is $n>N,\,n\notin\mathfrak{H}.$ Then $\omega-\mathfrak{H}$ is not finite, contradicting definition \ref{DefNeighborhood}. On the other hand, if there is a $N\in\omega$ such that for any $n>N,n\in\mathfrak{H},$ then $\omega-\mathfrak{H}$ is finite and $\mathfrak{H}\in\mathfrak{T}$.\medskip
\end{proof}

\begin{definition}
	\label{DefHomoSeq} \ Suppose $\mathscr{L}$ is a first-order language and $T$ is a $\aleph_{0}$-categorical theory of $\mathscr{L}$. Let $\phi_{n}$ be types in $T$ and $\,\mathfrak{M}_{n}$ be  $\mathscr{L}$-structures that $\,\mathfrak{M}_{n}\vDash\phi_{n}$. If there exists a $\omega$-neighborhood $\mathfrak{H}$ that for any $k,n\in\mathfrak{H}\:(k>n)$, $\mathfrak{M}_{k}\vDash\phi_{n}$, then $\{(\mathfrak{M}_{n}, \phi_n)\colon\mathfrak{M}_{n}\vDash\phi_{n}\land n<\omega\}$ is known as a \textbf{homogeneous sequence of structures defined by $\phi_n$} in $T$.
\end{definition}

\begin{theorem}
	\label{71}\ Suppose $T$ is a $\aleph_{0}$-categorical theory of $\mathscr{L}$ and $\{(\mathfrak{M}_{n},\phi_n)\colon \mathfrak{M}_{n} \vDash\phi_{n}\land n<\omega\}$ a homogeneous sequence of structures in $T$. Then there is a unique formula $\phi$ in $\mathscr{L}_{\omega_{1},\omega}$ (up to equivalence) and a unique countable atomic structure $\mathfrak{M}$ (up to isomorphism) for $\{(\mathfrak{M}_{n},\phi_n)\}$ such that $\mathfrak{M}\vDash\phi$. Also, there is a $N<\omega$ such that $\,\phi\Leftrightarrow\:\smashoperator{\bigwedge_{N<n<\omega}}\phi_{n}$ which is a complete formula of $\mathfrak{M}$.
\end{theorem}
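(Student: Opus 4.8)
The plan is to squeeze everything out of the $\aleph_{0}$-categoricity of $T$ through Proposition \ref{25}, which controls the type space, and then to obtain uniqueness from Propositions \ref{58} and \ref{70}. The first move is to pin down the index threshold: by Lemma \ref{7} the neighborhood $\mathfrak{H}$ contains a cofinite tail $\{n : n > N\}$ for some $N \in \omega$, so on this tail the homogeneity hypothesis reads simply as $\mathfrak{M}_{k} \models \phi_{n}$ for all $k > n > N$. This monotone coherence is precisely what makes the candidate $\phi = \bigwedge_{N < n < \omega}\phi_{n}$ behave, and it is the $N$ named in the statement.

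The crux is to show that $\phi$ is consistent with $T$ and is genuinely realized, and here I cannot appeal to finite satisfiability alone, since $\mathcal{L}_{\omega_{1},\omega}$ has no compactness theorem. The coherence just established gives that every finite subconjunction $\phi_{n_{1}} \wedge \cdots \wedge \phi_{n_{j}}$ is satisfied by any $\mathfrak{M}_{k}$ whose index exceeds all the $n_{i}$, so $\{\phi_{n} : n > N\}$ is a finitely satisfiable, hence consistent, partial type. To actually realize it I would invoke Proposition \ref{25}: $\aleph_{0}$-categoricity yields, through clause (iv), only finitely many types of each arity, and through clause (ii) a countable model $\mathfrak{M}$ that is simultaneously atomic and countably saturated. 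The restriction of the limit type to any finite block of variables is therefore one of finitely many complete types and is realized in $\mathfrak{M}$; homogeneity of $\mathfrak{M}$ (Proposition \ref{58}) then lets these finite realizations be amalgamated coherently by a back-and-forth argument, so $\mathfrak{M}$ realizes the whole coherent type and $\mathfrak{M} \models \phi$.

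With $\mathfrak{M}$ in hand I would identify $\phi$ as its complete formula. By Proposition \ref{25}(iii) every type of $T$ has a complete formula, and by clause (vi) every structure of $T$ is atomic, so the realizing tuple in $\mathfrak{M}$ satisfies a complete formula; the coherence of the $\phi_{n}$ forces that complete formula to be $T$-equivalent to $\bigwedge_{N < n < \omega}\phi_{n}$, which is $\phi$. For uniqueness of the structure, $\mathfrak{M}$ is atomic and hence homogeneous, so any other countable atomic $\mathfrak{M}'$ with $\mathfrak{M}' \models \phi$ is pinned down by the same complete formula, realizes the same types, and is therefore isomorphic to $\mathfrak{M}$ by Proposition \ref{70}. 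Uniqueness of $\phi$ up to equivalence follows because two complete formulas isolating the same complete type are $T$-equivalent, and in any case Proposition \ref{25}(v) leaves only finitely many formulas up to $T$-equivalence.

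The main obstacle is the second step, realizing the infinite conjunction without compactness; the whole argument hinges on pairing the finiteness of the type space (Proposition \ref{25}(iv)) with countable saturation and homogeneity (Propositions \ref{25}(ii) and \ref{58}) to force a coherent branch through the finitely many types at each level and then realize it inside the unique countable atomic model. A subsidiary point to watch is the reading of ``$n$-type'': if the arity of $\phi_{n}$ genuinely grows, the free variables of $\bigwedge_{N < n < \omega}\phi_{n}$ must be kept finite to stay inside $\mathcal{L}_{\omega_{1},\omega}$, so I would stratify by a fixed block of variables and take a diagonal limit, using the finiteness of $m$-types for each fixed $m$, rather than branching on arity directly.
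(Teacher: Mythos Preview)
Your argument is sound and in fact more careful than the paper's in the crucial realization step. The paper's proof differs from yours at exactly the point you flag as the main obstacle: rather than avoiding compactness, the paper simply sets $\Sigma=\{\phi_n:n>N\}$, observes that every finite subset is satisfied by some $\mathfrak{M}_k$, and then invokes ``the compactness theorem'' directly to obtain a model of $\Sigma$; from there it takes a countable such model $\mathfrak{M}$, notes that $\Sigma$ lies in a type of $T$ so that Proposition~\ref{25}(vi) makes $\mathfrak{M}$ atomic, and declares $\phi=\bigwedge_{N<n<\omega}\phi_n$ complete because each $\phi_n$ follows from it. The uniqueness argument is then identical to yours: Propositions~\ref{58} and~\ref{70}.

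So the divergence is methodological. Your route---finiteness of the type space plus countable saturation and a back-and-forth amalgamation---is genuinely more defensible in the $\mathcal{L}_{\omega_1,\omega}$ setting announced earlier, where compactness is not available in general; you are also the only one to worry about the growing arity of the $\phi_n$ and the need to stay within finitely many free variables. The paper's approach is shorter but silently treats $\Sigma$ as a first-order type to which ordinary compactness applies, which buys brevity at the cost of the very tension you identified.
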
 

\begin{proof}
	\ By lemma \ref{7}, $\exists N<\omega$ such that $\forall k>n>N,\,\mathfrak{M}_{k}\vDash\phi_{n}$. Let $\Sigma=\{\phi_n\colon n>N\}$. Since any finite subset of $\Sigma$ has a model, by the compactness theorem, $\Sigma$ has a model. Since arbitrary large number of $\phi_{n}$ can be realized by $\mathfrak{M}_{k}$, $\Sigma$ has an infinite model. So $\Sigma$ is consistent and a subset of a complete type of $T$. Let $\mathfrak{M}$ be a countable structure that satisfies $\Sigma$. Then $\mathfrak{M}\vDash\:\smashoperator{\bigwedge_{N<n<\omega}}\phi_{n}\Leftrightarrow\phi$. By proposition \ref{25}, $\mathfrak{M}$ is a countable atomic structure. Since any $\phi_{n}$ can be derived from $\phi$, $\phi$ is a complete formula of $\mathfrak{M}$.  \smallskip
	
	Suppose $\mathfrak{N}$ is another countable atomic structure satisfying $\Sigma$. By proposition \ref{58}, both $\mathfrak{M}$ and $\mathfrak{N}$ are $\omega$-homogeneous. And by proposition \ref{70}, $\mathfrak{M}\cong\mathfrak{N}$. So $\mathfrak{M}$ is unique. 
\end{proof}

\begin{corollary}
	\label{16}\ Suppose in a sequence of structures $\{(\mathfrak{M}_{n},\phi_n)\colon\ \mathfrak{M}_{n} \vDash\phi_{n}\land n<\omega\}$ of a $\aleph_{0}$-categorical theory $T$, there are finitely many homogeneous subsequences of structures $\{(\mathfrak{M}_{n_i}, \phi_{n_i}) \colon \allowbreak\mathfrak{M}_{n_i} \vDash\phi_{n_i}\land n_i<\omega\}$. Then there is a unique formula $\widetilde{\phi_i}$ in $\mathscr{L}_{\omega_{1},\omega}$ (up to equivalence) and a unique countable atomic structure $\widetilde{\mathfrak{M}_i}$ (up to isomorphism) for each $\{(\mathfrak{M}_{n_i},\phi_{n_i})\}$ such that $\widetilde{\mathfrak{M}_i}\vDash\widetilde{\phi_i}$. In addition, there is a $N_i<\omega$ such that $\,\widetilde{\phi_i}\Leftrightarrow\:\smashoperator{\bigwedge_{N_i<n_i<\omega}} \phi_{n_i}$ which is a complete formula of $\widetilde{\mathfrak{M}_i}$.
\end{corollary}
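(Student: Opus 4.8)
The plan is to reduce Corollary \ref{16} to finitely many applications of Theorem \ref{71}, one for each homogeneous subsequence. The key observation is that the hypothesis already supplies the decomposition: we are handed that the sequence $\{(\mathfrak{M}_n,\phi_n)\}$ contains finitely many homogeneous subsequences $\{(\mathfrak{M}_{n_i},\phi_{n_i})\}$, indexed by $i$ ranging over a finite set. Each such subsequence is, by assumption, a homogeneous sequence of structures in the sense of Definition \ref{DefHomoSeq}, so nothing about the ambient sequence beyond the existence of these subsequences is needed, and in particular the ambient sequence itself need not be homogeneous.

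First I would fix one index $i$ and isolate the subsequence $\{(\mathfrak{M}_{n_i},\phi_{n_i})\}$. Because this subsequence is homogeneous, there is (by Definition \ref{DefHomoSeq} together with Lemma \ref{7}) an $\omega$-neighborhood witnessing that $\mathfrak{M}_k\models\phi_{n_i}$ holds for all sufficiently large terms of the subsequence beyond $n_i$. The only bookkeeping step is to reindex the subsequence by $\omega$ via the order isomorphism carrying the increasing enumeration of the indices $n_i$ onto $\omega$; since this isomorphism preserves the pattern ``$k>n$ implies $\mathfrak{M}_k\models\phi_n$'', the reindexed subsequence is literally a homogeneous sequence of structures of the same $\aleph_0$-categorical theory $T$, to which Theorem \ref{71} applies verbatim.

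Applying Theorem \ref{71} to this subsequence then yields the unique formula $\widetilde{\phi_i}$ (up to equivalence) and the unique countable atomic structure $\widetilde{\mathfrak{M}_i}$ (up to isomorphism) with $\widetilde{\mathfrak{M}_i}\models\widetilde{\phi_i}$, together with an $N_i\in\omega$ for which $\widetilde{\phi_i}=\smashoperator{\bigwedge_{N_i<n_i<\omega}}\phi_{n_i}$ is the complete formula of $\widetilde{\mathfrak{M}_i}$. This is exactly the asserted conclusion for the index $i$. I would then repeat the argument for each of the finitely many values of $i$ and collect the outputs $(\widetilde{\phi_i},\widetilde{\mathfrak{M}_i},N_i)$; the subsequences are treated independently, so no new obstruction appears in assembling them and possible overlaps among the index sets are irrelevant.

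The one point that needs genuine care — and the only place the argument could slip — is confirming that each homogeneous subsequence really meets the hypotheses of Theorem \ref{71} after reindexing, i.e. that passing to a subsequence preserves both the $\aleph_0$-categoricity of the ambient theory (it is the same $T$, hence unchanged) and the neighborhood condition of Definition \ref{DefHomoSeq}. Both are immediate, so the corollary follows essentially formally from the theorem; the substance lies entirely in Theorem \ref{71}, and Corollary \ref{16} is its finite, subsequence-wise packaging.
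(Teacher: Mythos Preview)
Your proposal is correct and follows essentially the same approach as the paper: both arguments treat each homogeneous subsequence independently and feed it into Theorem~\ref{71} (the paper phrases this as ``by the proof in theorem~\ref{71}'' and then invokes Proposition~\ref{25} explicitly, while you invoke Theorem~\ref{71} as a black box after a harmless reindexing). The only cosmetic difference is your explicit reindexing step, which the paper leaves implicit.
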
 

\begin{proof}
	Let $\Sigma_i=\{\phi_{n_i}\colon n_i>N_i\}$. By the proof in theorem \ref{71}, $\Sigma_i$ is consistent and belongs to a type of $T$. So by proposition \ref{25}, it is satisfied by a unique countable atomic structure $\widetilde{\mathfrak{M}_i}$ defined by a complete formula $\widetilde{\phi_i}$. \bigskip
\end{proof}

From definition \ref{DefHomoSeq}, theorem \ref{71} and corollary \ref{16}, we have the following definitions.

\begin{definition}
\label{DefLimit} \ Suppose $\{(\mathfrak{M}_{n},\phi_n)\colon \mathfrak{M}_{n} \vDash\phi_{n}\land n<\omega\}$ is a homogeneous sequence of structures in a $\aleph_{0}$-categorical theory. The unique countable atomic structure $\mathfrak{M}$ (up to isomorphism) in $\{(\mathfrak{M}_{n},\phi_n)\}$ is known as the \textbf{limit} of $\mathfrak{M}_n$ and is denoted as $\underset{n\rightarrow\omega}{\lim}\mathfrak{M}_{n}=\mathfrak{M}$. The unique formula $\phi$ (up to equivalence) in $\mathscr{L}_{\omega_{1},\omega}$ is known as the \textbf{limit} of $\phi_{n}$ and is denoted as $\underset{n\rightarrow\omega}{\lim}\phi_{n}=\phi$. In both cases, we also say that the limit of $\phi_{n}$ or the limit of $\mathfrak{M}_{n}$ is unique.
\end{definition}

%From corollary \ref{16}, we can see that subsequences of $\mathfrak{M}_{n}/\phi_{n}$ can reach different limits.

\begin{definition}
	\label{DefSubLimit} \ Suppose in a sequence of structures $\{(\mathfrak{M}_{n},\phi_n)\colon \mathfrak{M}_{n} \vDash\phi_{n}\land n<\omega\}$ in a $\aleph_{0}$-categorical theory, there are finitely many homogeneous subsequences of structures $\{(\mathfrak{M}_{n_i}, \phi_{n_i})\colon \allowbreak \mathfrak{M}_{n_i} \vDash\phi_{n_i}\land n_i<\omega\}$. Then each $\underset{i\rightarrow\omega}{\lim}\,\mathfrak{M}_{n_i}$ is known as a \textbf{sublimit} of $\mathfrak{M}_{n}$, and  each $\underset{i\rightarrow\omega}{\lim}\,\phi_{n_i}$ is known as a \textbf{sublimit} of $\phi_{n}$. If some sublimits of $\mathfrak{M}_{n}/\phi_{n}$ are different, we say $\underset{n\rightarrow\omega}{\lim}\mathfrak{M}_{n}/ \underset{n\rightarrow\omega}{\lim}\phi_{n}$ exist (but not unique).
\end{definition}

In the rest discussion, we will not distinguish \textquotedblleft$=$\textquotedblright \ and \textquotedblleft$\Leftrightarrow$\textquotedblright \ for formulas. So we have

\begin{lemma}
$\left(\underset{n\rightarrow\omega}{\lim}\phi_{n}\,=\,\underset{n\rightarrow\omega}{\lim}\,\varphi_{n}\right)  \,\Longleftrightarrow\,\left(\underset{n\rightarrow\omega}{\lim}\phi_{n}\,\Longleftrightarrow\,\underset{n\rightarrow\omega}{\lim}\,\varphi_{n}\right)$
\end{lemma}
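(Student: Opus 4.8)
The plan is to anchor both sides of the biconditional to the uniqueness clauses of Definition \ref{DefLimit} and Theorem \ref{71}. By those results each limit $\lim_{n\to\omega}\phi_n$ is the complete formula $\phi=\bigwedge_{N<n<\omega}\phi_n$ of a countable atomic limit structure $\mathfrak{M}=\lim_{n\to\omega}\mathfrak{M}_n$, and it is specified only up to logical equivalence; likewise $\lim_{n\to\omega}\varphi_n=\varphi$ is the complete formula of a countable atomic structure $\mathfrak{N}$. Hence the symbol $=$ between two limit formulas already denotes equality of their $\Leftrightarrow$-equivalence classes, which is precisely the content the lemma records. The forward direction is then immediate: if $\lim_{n\to\omega}\phi_n=\lim_{n\to\omega}\varphi_n$, the two limits are one and the same formula up to equivalence, so they share all models and $\lim_{n\to\omega}\phi_n\Leftrightarrow\lim_{n\to\omega}\varphi_n$.

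For the converse I would route the logical equivalence through the limit structures rather than merely invoke the convention. Assume $\phi\Leftrightarrow\varphi$. Since $\phi$ and $\varphi$ are the complete formulas of the countable atomic structures $\mathfrak{M}$ and $\mathfrak{N}$, equivalence forces each structure to satisfy the other's complete formula, so $\mathfrak{M}$ and $\mathfrak{N}$ realize the same types. By Proposition \ref{58} both structures are homogeneous, and by Proposition \ref{70} two countable homogeneous structures realizing the same types are isomorphic; hence $\mathfrak{M}\simeq\mathfrak{N}$. The uniqueness clause of Theorem \ref{71} then pins down the limit structure and, with it, its complete formula up to equivalence, yielding $\lim_{n\to\omega}\phi_n=\lim_{n\to\omega}\varphi_n$.

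The only delicate step is this converse, where one must verify that identifying $=$ with $\Leftrightarrow$ is consistent with the structural definition of the limit rather than simply posit it. The argument above supplies that verification by passing from equivalence of the two complete formulas to isomorphism of the underlying atomic structures via Propositions \ref{58} and \ref{70}, and back through the uniqueness in Theorem \ref{71}. Everything else is bookkeeping; and since each limit is a single complete formula, the undecidability concerns attached to countably many quantifiers in $\mathcal{L}_{\omega_1,\omega_1}$ never enter the argument.
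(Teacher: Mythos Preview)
Your proposal is correct, but it does considerably more work than the paper. Immediately before this lemma the paper declares the convention ``we will not distinguish `$=$' and `$\Leftrightarrow$' for formulas,'' and then states the lemma with no proof at all; the lemma is simply a restatement of that convention applied to limit formulas. Your argument, by contrast, treats the converse as a genuine claim to be verified: you pass from the logical equivalence $\phi\Leftrightarrow\varphi$ of the two complete formulas to isomorphism of the underlying countable atomic structures via Propositions~\ref{58} and~\ref{70}, and then invoke the uniqueness clause of Theorem~\ref{71} to conclude equality. This is a more careful justification that the convention is consistent with the structural definition of limits, rather than a bare stipulation; the paper's route is shorter precisely because it takes the identification as primitive.
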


\begin{corollary}
\label{18}
\end{corollary}

\begin{enumerate}
	\item $(\exists N<\omega)(\forall n>N)(\phi_n=\phi)\,\Longrightarrow\,\underset{n\rightarrow\omega}{\lim}\phi_n=\phi$

	\item $(\exists N<\omega)(\forall n>N)(\mathfrak{M}_n=\mathfrak{M})\,\Longrightarrow\,\underset{n\rightarrow\omega}{\lim}\mathfrak{M}_n=\mathfrak{M}$
\end{enumerate}

\begin{corollary}
	\label{23} \ Suppose $\underset {n\rightarrow\omega}{\lim}\phi_{n}$ and $\underset{n\rightarrow\omega}{\lim}\varphi_{n}$ are unique. Then
\end{corollary}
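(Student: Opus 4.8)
The plan is to reduce every clause to a statement about the tail infinitary conjunctions furnished by Theorem \ref{71}. By that theorem, whenever $\underset{n\rightarrow\omega}{\lim}\phi_{n}$ is unique it is (up to equivalence) the complete formula $\bigwedge_{N<n<\omega}\phi_{n}$ for some $N\in\omega$, and likewise $\underset{n\rightarrow\omega}{\lim}\varphi_{n}=\bigwedge_{M<n<\omega}\varphi_{n}$ for some $M\in\omega$. So the first step is to rewrite each side of each clause purely as an $\mathcal{L}_{\omega_{1},\omega}$ combination of the $\phi_{n}$ and $\varphi_{n}$; the lemma immediately preceding the corollary then lets me replace the required identities ``$=$'' by logical equivalences ``$\Leftrightarrow$'', turning every clause into a purely syntactic equivalence of two infinitary formulas.

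Next I would align the indices. Since a tail conjunction $\bigwedge_{N<n<\omega}\phi_{n}$ is unchanged up to equivalence when $N$ is enlarged --- one only discards finitely many conjuncts, each already forced along the homogeneous sequence by Lemma \ref{7} --- I may set $K=\max(N,M)$ and index all sequences by $n>K$. For the conjunction clause, commutativity and associativity of infinitary $\bigwedge$ give $\bigwedge_{K<n<\omega}(\phi_{n}\wedge\varphi_{n})\,\Leftrightarrow\,\left(\bigwedge_{K<n<\omega}\phi_{n}\right)\wedge\left(\bigwedge_{K<n<\omega}\varphi_{n}\right)$ at once; it then remains only to verify that $\{(\mathfrak{M}_{n},\phi_{n}\wedge\varphi_{n})\}$ is again a homogeneous sequence in the $\aleph_{0}$-categorical theory, so that Proposition \ref{25} and Theorem \ref{71} apply and the limit on the left is genuinely unique.

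The main obstacle is the disjunction clause, because the classical infinitary distributive law $\bigwedge_{n}(\phi_{n}\vee\varphi_{n})\,\Leftrightarrow\,\left(\bigwedge_{n}\phi_{n}\right)\vee\left(\bigwedge_{n}\varphi_{n}\right)$ is false for arbitrary sequences: the left-hand side is strictly weaker in general, and only the ``$\Leftarrow$'' direction holds for free. To recover the ``$\Rightarrow$'' direction I would exploit the monotonicity inherent in a homogeneous sequence --- for $k>n$ in the witnessing $\omega$-neighborhood the structure $\mathfrak{M}_{k}$ already realizes $\phi_{n}$, so along the tail the conjuncts form a descending chain of conditions rather than independent ones. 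This collapses the infinitary distribution to a finitary one at each stage and restores the equivalence, and it is precisely here that both the homogeneity hypothesis and the uniqueness of the two limits are indispensable. Any residual clause phrased with ``$\urcorner$'' or ``$\Rightarrow$'' I would then settle by the same device, noting that the negation of a tail conjunction is a tail disjunction, so that such a clause is merely a disjunction statement in disguise.
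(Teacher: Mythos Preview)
You have misidentified the content of the corollary. In the paper, Corollary~\ref{23} has three clauses that follow the hypothesis ``Suppose $\lim_{n\rightarrow\omega}\phi_{n}$ and $\lim_{n\rightarrow\omega}\varphi_{n}$ are unique'':
\begin{enumerate}
\item $\lim_{n\rightarrow\omega}\phi=\phi$ (limit of a constant sequence);
\item $\lim_{n\rightarrow\omega}\phi_{n-p}=\lim_{n\rightarrow\omega}\phi_{n}$ for $p<\omega$ (finite index shift);
\item $(\forall n\in\mathfrak{H})(\phi_{n}=\varphi_{n})\Longrightarrow\lim_{n\rightarrow\omega}\phi_{n}=\lim_{n\rightarrow\omega}\varphi_{n}$ (tail agreement forces equal limits).
\end{enumerate}
The paper's proof is one line: ``By theorem~\ref{71} and definition~\ref{DefLimit}.'' Each clause is immediate once the limit is identified with the tail conjunction $\bigwedge_{N<n<\omega}\phi_{n}$: a constant tail conjunction is the constant; dropping or adding finitely many conjuncts leaves the tail unchanged up to equivalence; and two sequences that coincide on a cofinite set have the same tail conjunction.

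Your proposal instead attacks the interchange of $\lim$ with $\wedge$, $\vee$, $\urcorner$, and $\Rightarrow$. In the paper those are \emph{not} the content of Corollary~\ref{23}: the $\wedge$, $\urcorner$, $\exists$ interchanges are \emph{postulated} as Axiom~\ref{15}, and the $\vee$, $\Rightarrow$, $\Leftrightarrow$, $\forall$ interchanges are then derived in Lemma~\ref{12} purely by De~Morgan manipulations from that axiom. Your worry about the failure of the infinitary distributive law $\bigwedge_{n}(\phi_{n}\vee\varphi_{n})\Leftrightarrow(\bigwedge_{n}\phi_{n})\vee(\bigwedge_{n}\varphi_{n})$ is well-placed as a mathematical concern, but the paper sidesteps it entirely by taking Axiom~\ref{15} as primitive rather than deriving it from the tail-conjunction description. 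So your argument is aimed at the wrong target, and the portion you regard as the ``main obstacle'' is not something the paper attempts to prove at all.
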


\begin{enumerate}	
	\item $\underset{n\rightarrow\omega}{\lim}\phi_{n-p}\,=\,\underset{n\rightarrow\omega}{\lim}\phi_{n}\,\, \left(p<\omega\right)$
	
	\item $\left(\forall n\in\mathfrak{H}\right)\left(\phi_{n}\,=\,\varphi_{n}\right)\,\Longrightarrow\,\left(\underset {n\rightarrow\omega}{\lim}\phi_{n}\,=\,\underset{n\rightarrow\omega}{\lim}\,\varphi_{n}\right)$
\end{enumerate}

\begin{proof}
	\ By theorem \ref{71} and definition \ref{DefLimit}.
\end{proof}

\begin{corollary}
	\label{42} \ Suppose $\underset {n\rightarrow\omega}{\lim}\mathfrak{M}_{n}$ and $\underset{n\rightarrow\omega}{\lim}\mathfrak{N}_{n}$ are unique. Then
\end{corollary}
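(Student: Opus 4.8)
The plan is to read Corollary \ref{42} as the exact structural counterpart of Corollary \ref{23}. The formula version asserted that $\lim_{n\to\omega}\phi = \phi$, that $\lim_{n\to\omega}\phi_{n-p} = \lim_{n\to\omega}\phi_n$ for $p<\omega$, and that agreement of $\phi_n$ with $\varphi_n$ on an $\omega$-neighborhood forces equal limits; so the corresponding claims here should read $\lim_{n\to\omega}\mathfrak{M} = \mathfrak{M}$, $\lim_{n\to\omega}\mathfrak{M}_{n-p} = \lim_{n\to\omega}\mathfrak{M}_n$, and $(\forall n\in\mathfrak{H})(\mathfrak{M}_n = \mathfrak{N}_n) \Rightarrow (\lim_{n\to\omega}\mathfrak{M}_n = \lim_{n\to\omega}\mathfrak{N}_n)$, where equality of structures is read as isomorphism. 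My proof would run entirely through the correspondence of Theorem \ref{71}, which pairs each homogeneous sequence $\{(\mathfrak{M}_n,\phi_n)\}$ with a unique complete limit formula $\phi = \bigwedge_{N<n<\omega}\phi_n$ and a unique (up to isomorphism) countable atomic limit structure $\mathfrak{M}\models\phi$.

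First I would dispatch the constant case: the sequence $\mathfrak{M}_n = \mathfrak{M}$ is trivially homogeneous, its complete formulas may be taken constantly equal to the complete formula $\phi$ of $\mathfrak{M}$, so by Corollary \ref{23}(i) the limit formula is $\phi$ again, and Definition \ref{DefLimit} identifies the unique atomic model of $\phi$ as $\mathfrak{M}$ up to isomorphism. For the shift identity I would observe that reindexing by a finite $p$ leaves the tail $\{\phi_n : n > N\}$ cofinally unchanged --- formally, by Lemma \ref{7} the shifted and original sequences share a common $\omega$-neighborhood --- so the limit formulas coincide by Corollary \ref{23}(ii), and uniqueness in Theorem \ref{71} forces the limit structures to be isomorphic. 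For the agreement clause, if $\mathfrak{M}_n \simeq \mathfrak{N}_n$ on some $\mathfrak{H}$, then they satisfy the same complete formula $\phi_n = \varphi_n$ there, so Corollary \ref{23}(iii) gives $\lim \phi_n = \lim \varphi_n$, and one more application of Theorem \ref{71} and Definition \ref{DefLimit} yields $\lim \mathfrak{M}_n \simeq \lim \mathfrak{N}_n$.

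The only real content --- and the step I expect to need the most care --- is the passage between ``structures agree'' and ``formulas agree.'' I must justify that two isomorphic countable atomic structures are described by the same complete formula up to equivalence, and conversely that the unique atomic model of a given complete formula is determined up to isomorphism; both directions are exactly what Propositions \ref{58} and \ref{70} supply, since atomic structures are homogeneous and homogeneous structures realizing the same types are isomorphic. Once this dictionary is in place, every clause of Corollary \ref{42} is a formal transcription of the corresponding clause of Corollary \ref{23}, and the proof reduces, as there, to citing Theorem \ref{71} and Definition \ref{DefLimit}.
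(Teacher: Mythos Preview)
Your proposal is correct and follows essentially the same approach as the paper, which provides no separate proof for Corollary~\ref{42} and treats it as the direct structural analogue of Corollary~\ref{23}, both justified by Theorem~\ref{71} and Definition~\ref{DefLimit}. Your reduction to Corollary~\ref{23} via the structure--formula correspondence (and your explicit invocation of Propositions~\ref{58} and~\ref{70} to secure that dictionary) simply spells out what the paper leaves implicit.
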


\begin{enumerate}
	\item $\underset{n\rightarrow\omega}{\lim}\,\mathfrak{M}_{n-p}\,=\,\underset{n\rightarrow\omega}{\lim}\,\mathfrak{M}_{n}\,\, \left(p<\omega\right)$
	
	\item $\left(\forall n\in\mathfrak{H}\right)\left(\mathfrak{M}_{n}\,=\,\mathfrak{N}_{n}\right) \,\Longrightarrow\,\left(  \underset{n\rightarrow\omega}{\lim}\,\mathfrak{M}_{n}\,=\,\underset{n\rightarrow\omega}{\lim}\mathfrak{N}_{n}\right)$
\end{enumerate}

\begin{corollary}
	\label{17} \ Suppose $\mathfrak{M}_{n}\vDash\phi_{n}$ and for any $n\in\omega,\,\vDash\phi_{n+1} \rightarrow\phi_{n}$. Then 
	\[
	\underset{n\rightarrow\omega}{\lim}\phi_{n}\, =\,\underset{n\rightarrow\omega}{\lim}\,\underset{m\leqslant n} {\displaystyle\bigwedge}\phi_{m}\,=\underset{n<\omega} {\displaystyle\bigwedge}\phi_{n}
	\]
\end{corollary}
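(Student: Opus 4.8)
The plan is to verify first that the hypotheses force $\{(\mathfrak{M}_n,\phi_n)\}$ to be a homogeneous sequence in the sense of Definition \ref{DefHomoSeq}, so that Theorem \ref{71} applies and all three expressions are well-defined, and then to establish the two equalities in turn.

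First I would exploit the implication chain. From $\models\phi_{n+1}\rightarrow\phi_n$ a routine induction gives $\models\phi_k\rightarrow\phi_n$ for all $k>n$. Since $\mathfrak{M}_k\models\phi_k$ by hypothesis, this yields $\mathfrak{M}_k\models\phi_n$ whenever $k>n$. Hence the condition of Definition \ref{DefHomoSeq} is met with the $\omega$-neighborhood $\mathfrak{H}=\omega$, and by Theorem \ref{71} the limit $\underset{n\rightarrow\omega}{\lim}\phi_n$ is the unique (up to equivalence) complete formula equal to $\smashoperator{\bigwedge_{N<n<\omega}}\phi_n$ for some $N\in\omega$.

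For the first equality I would note that the same induction also gives $\models\phi_n\rightarrow\phi_m$ for every $m\leqslant n$, so $\phi_n$ implies each conjunct of $\bigwedge_{m\leqslant n}\phi_m$; conversely $\bigwedge_{m\leqslant n}\phi_m\rightarrow\phi_n$ holds trivially since $\phi_n$ is itself a conjunct. Thus $\phi_n\Leftrightarrow\bigwedge_{m\leqslant n}\phi_m$ for every $n$, and under the convention identifying ``$=$'' with ``$\Leftrightarrow$'' for formulas this reads $\phi_n=\bigwedge_{m\leqslant n}\phi_m$. Corollary \ref{23}(iii) then delivers $\underset{n\rightarrow\omega}{\lim}\phi_n=\underset{n\rightarrow\omega}{\lim}\bigwedge_{m\leqslant n}\phi_m$. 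One should also check that $\{(\mathfrak{M}_n,\bigwedge_{m\leqslant n}\phi_m)\}$ is itself homogeneous so that its limit exists, but this is immediate from $\mathfrak{M}_n\models\phi_n\Leftrightarrow\bigwedge_{m\leqslant n}\phi_m$ together with the argument of the previous step.

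For the second equality I would split the full conjunction as $\bigwedge_{n<\omega}\phi_n=\bigl(\bigwedge_{n\leqslant N}\phi_n\bigr)\wedge\bigl(\smashoperator{\bigwedge_{N<n<\omega}}\phi_n\bigr)$, taking the same $N$ produced by Theorem \ref{71}. The tail factor satisfies $\smashoperator{\bigwedge_{N<n<\omega}}\phi_n\models\phi_{N+1}$, and since $\models\phi_{N+1}\rightarrow\phi_n$ for each $n\leqslant N$, the head factor $\bigwedge_{n\leqslant N}\phi_n$ is redundant. Hence $\bigwedge_{n<\omega}\phi_n\Leftrightarrow\smashoperator{\bigwedge_{N<n<\omega}}\phi_n=\underset{n\rightarrow\omega}{\lim}\phi_n$, which combined with the first equality finishes the proof. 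The step I expect to be the main obstacle is exactly this passage from the tail conjunction handed to us by Theorem \ref{71} to the full conjunction starting at $0$: reconciling the finite index discrepancy is where the monotonicity hypothesis $\phi_{n+1}\rightarrow\phi_n$ is indispensable, and dropping it would break the claimed identity.
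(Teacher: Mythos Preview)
Your proposal is correct and follows essentially the same route as the paper: verify homogeneity via the implication chain with $\mathfrak{H}=\omega$, invoke Theorem \ref{71}, and use the equivalence $\phi_n\Leftrightarrow\bigwedge_{m\leqslant n}\phi_m$ to handle the middle term. The only cosmetic difference is that the paper, having chosen $\mathfrak{H}=\omega$, tacitly takes the $N$ of Theorem \ref{71} to be $0$ and writes $\lim\phi_n=\bigwedge_{n<\omega}\phi_n$ in one stroke, whereas you spell out the tail-versus-full-conjunction reconciliation explicitly; and the paper obtains the middle equality by a second application of Theorem \ref{71} rather than via Corollary \ref{23}(iii).
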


\begin{proof}
	\ Let $\mathfrak{H} = \omega$. By lemma \ref{7}, for any $1\leqslant n<k<\omega,\,\vDash\phi_{k} \rightarrow\phi_{n}$. Since $\mathfrak{M}_{k}\vDash\phi_{n}$ for any $k>n\geqslant 1$, $\{(\mathfrak{M}_{n},\phi_n)\}$ is a homogeneous sequence. Thus by theorem \ref{71},  $\underset{n\rightarrow\omega}{\lim}\phi_{n}=\underset{n<\omega} {\displaystyle\bigwedge}\phi_{n}$. Since for any $1\leqslant m<n,\:\vDash\phi_{n} \rightarrow\phi_{m}$, $\underset{m\leqslant n}{\displaystyle\bigwedge}\phi_{m}\Leftrightarrow \phi_n$. So by theorem \ref{71} again, $\underset{n\rightarrow\omega}{\lim}\,\underset{m\leqslant n}{\displaystyle\bigwedge}\phi_{m}\, =\underset{n<\omega} {\displaystyle\bigwedge}\phi_{n}.$\bigskip
\end{proof}

In addition, the following axiom holds for the limit operations.

\begin{axiom}
\label{15} \ Suppose $\phi_{n}$ and $\varphi_{n}$ are consistent, $\underset{n\rightarrow\omega}{\lim}\phi_{n}$ and $\underset{n\rightarrow\omega}{\lim}\,\varphi_{n}$ are unique in a $\aleph_{0}$-categorical theory. Then
\end{axiom}

\begin{enumerate}
\item \textit{$\underset{n\rightarrow\omega}{\lim}\left(\phi_{n}\wedge\varphi_{n}\right)$ is unique and}
\[
\underset{n\rightarrow\omega}{\lim}\left(\phi_{n}\wedge\varphi_{n}\right)\,=\,\underset{n\rightarrow\omega}{\lim}\phi_{n} \wedge\underset{n\rightarrow\omega}{\lim}\,\varphi_{n}
\]

\item \textit{$\underset{n\rightarrow\omega}{\lim} \urcorner\phi_{n}$ is unique and}
\[
\underset{n\rightarrow\omega}{\lim}\urcorner\phi_{n}\,=\,\urcorner\,\underset{n\rightarrow\omega}{\lim}\phi_{n}
\]

\item \textit{$\underset{n\rightarrow\omega}{\lim}\exists x \,\phi_{n}$ is unique $(x$ is a variable in $\phi_{n})$, and}
\[
\underset{n\rightarrow\omega}{\lim}\exists x\,\phi_{n}\,=\,\exists x\,\underset{n\rightarrow\omega}{\lim}\phi_{n}
\]
\end{enumerate}

\begin{corollary}
\label{12} \ Suppose $\phi_{n}$ and $\varphi_{n}$ are consistent, $\underset{n\rightarrow\omega}{\lim}\phi_{n}$ and $\underset{n\rightarrow\omega}{\lim}\,\varphi_{n}$ are unique in a $\aleph_{0}$-categorical theory. Then
\end{corollary}

\begin{enumerate}
\item $\underset{n\rightarrow\omega}{\lim}\left(\phi_{n}\vee\varphi_{n}\right) \,=\,\underset{n\rightarrow\omega}{\lim} \phi_{n}\vee\underset{n\rightarrow\omega}{\lim}\,\varphi_{n}$

\item $\underset{n\rightarrow\omega}{\lim}\left(\phi_{n}\Longrightarrow\varphi_{n}\right)\,=\,\left(\underset {n\rightarrow\omega}{\lim}\phi_{n}\,\Longrightarrow\,\underset{n\rightarrow\omega}{\lim}\,\varphi_{n}\right)$

\item $\underset{n\rightarrow\omega}{\lim}\left(\phi_{n}\Longleftrightarrow\varphi_{n}\right)\,=\,\left(\underset {n\rightarrow\omega}{\lim}\phi_{n}\,\Longleftrightarrow\,\underset{n\rightarrow\omega}{\lim}\,\varphi_{n}\right)$

\item $\underset{n\rightarrow\omega}{\lim}\,\forall x\,\phi_{n}\,=\,\forall x\,\underset{n\rightarrow\omega}{\lim}\phi_{n}$
\end{enumerate}

\begin{proof}
\ (i) \ By axiom \ref{15}
\begin{align*}
\underset{n\rightarrow\omega}{\lim}\left(\phi_{n}\vee\varphi_{n}\right) \,\, & =\,\,\underset{n\rightarrow\omega}{\lim} \urcorner\left(\urcorner\phi_{n}\,\wedge\,\urcorner\varphi_{n}\right) 
\\
& =\,\,\urcorner\left(\urcorner\,\underset{n\rightarrow\omega}{\lim}\phi_{n}\,\wedge\,\urcorner\, \underset{n\rightarrow\omega}{\lim}\,\varphi_{n}\right) 
\\
& =\,\,\underset{n\rightarrow\omega}{\lim}\phi_{n}\,\vee\,\underset{n\rightarrow\omega}{\lim}\,\varphi_{n}
\end{align*}

(ii) and (iii) follow from (i).\medskip

(iv) \ By axiom \ref{15}
\begin{align*}
\underset{n\rightarrow\omega}{\lim}\,\forall x\,\phi_{n} \,& =\,\,\urcorner\,\underset{n\rightarrow\omega}{\lim}\,\exists x\urcorner\phi_{n}
\\
& =\,\,\urcorner\,\exists x\urcorner\,\underset{n\rightarrow\omega}{\lim}\phi_{n}
\\
& =\,\,\forall x\,\underset{n\rightarrow\omega}{\lim}\phi_{n}
\end{align*}\vspace{-12pt}
\end{proof}

\begin{corollary}
\label{46}\ Suppose $\mathfrak{M}_{n}\vDash\phi_{n}$ and for any $n\in\omega,\,\vDash\phi_{n-1} \rightarrow\phi_{n}$. Then 
\[
	\underset{n\rightarrow\omega}{\lim}\,\underset{m\leqslant n}{\displaystyle\bigvee}\phi_{m}\,=\underset{n<\omega} {\displaystyle\bigvee}\phi_{n}
\]
\end{corollary}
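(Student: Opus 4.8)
The plan is to obtain Corollary \ref{46} as the De Morgan dual of Corollary \ref{17}, exactly as Lemma \ref{12}(i) was extracted from the conjunction case of Axiom \ref{15}. Write $\psi_n := \bigvee_{m\leqslant n}\phi_m$ for the increasing disjunctions appearing in the statement. The first observation is that the hypothesis $\models\phi_{n-1}\rightarrow\phi_n$ contraposes to $\models\urcorner\phi_n\rightarrow\urcorner\phi_{n-1}$, so after reindexing the sequence $\chi_n:=\urcorner\phi_n$ satisfies $\models\chi_{n+1}\rightarrow\chi_n$, which is precisely the monotonicity hypothesis of Corollary \ref{17}. Hence Corollary \ref{17} applies to $\{\chi_n\}$ and produces $\lim_{n\rightarrow\omega}\bigwedge_{m\leqslant n}\chi_m=\bigwedge_{n<\omega}\chi_n$, with the limit unique.

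I would then translate this back to the $\phi_n$ by De Morgan inside $\mathcal{L}_{\omega_1,\omega}$, using the paper's convention that identifies $=$ with $\Leftrightarrow$. Since the inner meet is finite, $\bigwedge_{m\leqslant n}\urcorner\phi_m=\urcorner\bigvee_{m\leqslant n}\phi_m=\urcorner\psi_n$, and likewise $\bigwedge_{n<\omega}\urcorner\phi_n=\urcorner\bigvee_{n<\omega}\phi_n$. Substituting shows that $\lim_{n\rightarrow\omega}\urcorner\psi_n$ is unique and equals $\urcorner\bigvee_{n<\omega}\phi_n$. Because this limit is unique, Axiom \ref{15}(ii) applied with $\urcorner\psi_n$ in the role of the base sequence lets me commute the remaining negation with the limit: $\lim_{n\rightarrow\omega}\psi_n=\lim_{n\rightarrow\omega}\urcorner\urcorner\psi_n=\urcorner\lim_{n\rightarrow\omega}\urcorner\psi_n=\urcorner\urcorner\bigvee_{n<\omega}\phi_n=\bigvee_{n<\omega}\phi_n$, which is the claimed identity.

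It is worth recording why I would not attempt the direct analogue of the proof of Corollary \ref{17}. There one builds a homogeneous sequence from $\{(\mathfrak{M}_n,\phi_n)\}$ by using $\models\phi_k\rightarrow\phi_n$ for $k>n$ to force $\mathfrak{M}_k\models\phi_n$. Under the present hypothesis the implications point the opposite way, $\models\phi_n\rightarrow\phi_k$ for $n<k$, so from $\mathfrak{M}_k\models\phi_k$ one cannot conclude $\mathfrak{M}_k\models\phi_n$; equivalently, the disjunctions $\psi_n$ (which coincide with $\phi_n$ up to equivalence, since $\phi_m\rightarrow\phi_n$ for $m\leqslant n$) are monotone weakening, so $\{(\mathfrak{M}_n,\psi_n)\}$ need not be homogeneous and Theorem \ref{71} does not apply to it directly. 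This is exactly why routing the argument through negation is forced rather than cosmetic.

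The step I expect to be the main obstacle is verifying that Corollary \ref{17} is genuinely applicable to $\chi_n=\urcorner\phi_n$, specifically the standing requirement that each formula be satisfied by some structure of $T$, i.e. that each $\urcorner\phi_n$ be consistent with $T$. This holds precisely when no $\phi_n$ is valid in $T$. If some $\phi_N$ were valid, then by $\phi_N\rightarrow\phi_n$ every $\phi_n$ with $n\geqslant N$, hence every $\psi_n$ with $n\geqslant N$ and hence $\bigvee_{n<\omega}\phi_n$, would be valid, so both sides of the asserted identity collapse to the valid formula and the corollary holds trivially; the substantive case is the one in which all $\urcorner\phi_n$ are consistent, where the argument above goes through verbatim. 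I would also confirm that the uniqueness premise needed for Axiom \ref{15}(ii) is in hand, but that is supplied directly by the uniqueness clause in the conclusion of Corollary \ref{17}.
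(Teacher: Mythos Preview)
Your proposal is correct and follows essentially the same route as the paper's proof: contrapose to obtain $\models\urcorner\phi_n\rightarrow\urcorner\phi_{n-1}$, invoke Corollary~\ref{17} on the negated sequence to get $\lim_{n\rightarrow\omega}\bigwedge_{m\leqslant n}\urcorner\phi_m=\bigwedge_{n<\omega}\urcorner\phi_n$, then use Axiom~\ref{15}(ii) together with De~Morgan to unwind the negations. Your treatment is in fact slightly more careful than the paper's, which applies Corollary~\ref{17} to the $\urcorner\phi_n$ without pausing to check its model-existence hypothesis; your case split on whether some $\phi_N$ is valid addresses exactly that point.
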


\begin{proof}
Let $\mathfrak{H} = \omega$. Then for any $n\in\mathfrak{H},\,\vDash\,\urcorner\phi_{n} \rightarrow\,\urcorner\phi_{n-1}$. So by corollary \ref{17}
\[
\underset{n\rightarrow\omega}{\lim}\urcorner\phi_n\,=\,\underset{n\rightarrow\omega}{\lim}\,\underset{m\leqslant n}{\displaystyle\bigwedge}\urcorner\phi_{m}\, =\underset{n<\omega} {\displaystyle\bigwedge}\urcorner\phi_{n}
\]
Thus by axiom \ref{15}
\begin{align*}
\underset{n\rightarrow\omega}{\lim}\,\underset{m\leqslant n}{\displaystyle\bigvee}\phi_{m}\,& =\,\,\urcorner\,\underset{n\rightarrow\omega}{\lim}\,\underset{m\leqslant n} {\displaystyle\bigwedge}\urcorner\phi_{m}
\\
& =\,\,\urcorner\,\underset{n<\omega} {\displaystyle\bigwedge}\urcorner\phi_{n}
\\
& =\,\,\underset{n<\omega}{\displaystyle\bigvee}\phi_{n}
\end{align*}\vspace{-6pt}
\end{proof}

%\begin{corollary}
%\label{20}\ Suppose $\mathfrak{M}_{n}$ are ascending $\mathscr{L}$-structures that $\,\mathfrak{M}_{n}\vDash\phi_{n}$. Then $\underset{n\rightarrow\omega}{\lim}\,\mathfrak{M}_{n}\,=\underset{n<\omega}{\displaystyle\bigcup}\mathfrak{M}_{n}$.
%\end{corollary}
%
%\begin{proof}
%\ Since $\mathfrak{M}_{k}\supset\mathfrak{M}_{n}$, for any $k>n,\,\mathfrak{M}_{k}\vDash\phi_{n}$. Thus $\{(\mathfrak{M}_{n},\phi_n)\}$ is a homogeneous sequence and by theorem \ref{71}, $\underset{n\rightarrow\omega} {\lim}\phi_{n}$ is unique. Also, since for any $1\leqslant m<n,\:\vDash\phi_{m} \rightarrow\phi_{n}$, $\underset{m\leqslant n}{\displaystyle\bigvee}\phi_{m}\Leftrightarrow \phi_n$.  Thus by corollary \ref{46}
%\[
%\underset{n\rightarrow\omega}{\lim}\phi_{n}\,= \,\underset{n\rightarrow\omega}{\lim}\,\underset{m\leqslant n} {\displaystyle\bigvee}\phi_{m}\,=\underset{n<\omega} {\displaystyle\bigvee}\phi_{n}
%\] 
%Since $\underset{n<\omega}{\displaystyle\bigcup}\mathfrak{M}_{n}\vDash\underset{n<\omega} {\displaystyle\bigvee}\phi_{n}$, $\,\,\underset{n\rightarrow\omega} {\lim}\,\mathfrak{M}_{n}\,=\underset{n<\omega}{\displaystyle\bigcup}\mathfrak{M}_{n}$.
%\end{proof}

\begin{proposition}
\ Suppose $\phi_{n}\,=\,\exists x\underset{m\leqslant n}{\displaystyle\bigwedge}\left(x>m\right)$. Then $\underset{n\rightarrow\omega}{\lim}\phi_{n}\,=\,\exists x\underset{m<\omega}{\displaystyle\bigwedge}\left(x>m\right)$.
\end{proposition}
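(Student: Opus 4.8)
The plan is to peel the existential quantifier off the outside and reduce everything to a limit of conjunctions that is already controlled by Corollary \ref{17}. Writing $\psi_{n}\,=\,\underset{m\leqslant n}{\bigwedge}\left(x>m\right)$ so that $\phi_{n}=\exists x\,\psi_{n}$, the first step is to apply Axiom \ref{15}(iii), which---once we know $\underset{n\rightarrow\omega}{\lim}\psi_{n}$ is unique---gives
\[
\underset{n\rightarrow\omega}{\lim}\phi_{n}\,=\,\underset{n\rightarrow\omega}{\lim}\,\exists x\,\psi_{n}\,=\,\exists x\,\underset{n\rightarrow\omega}{\lim}\psi_{n}.
\]
Thus the whole problem collapses to identifying the inner limit $\underset{n\rightarrow\omega}{\lim}\psi_{n}$.

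To evaluate that inner limit I would invoke Corollary \ref{17} with the component formulas $\phi_{m}\,=\,\left(x>m\right)$. The key observation is that these form a descending chain: since $x>m+1$ entails $x>m$, we have $\models\left(x>m+1\right)\rightarrow\left(x>m\right)$ for every $m$, which is exactly the hypothesis $\models\phi_{m+1}\rightarrow\phi_{m}$ required by Corollary \ref{17}. The corollary then yields
\[
\underset{n\rightarrow\omega}{\lim}\,\underset{m\leqslant n}{\bigwedge}\left(x>m\right)\,=\,\underset{m<\omega}{\bigwedge}\left(x>m\right),
\]
and in particular asserts that this limit is unique, which retroactively licenses the use of Axiom \ref{15}(iii) in the first step. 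Substituting back gives $\underset{n\rightarrow\omega}{\lim}\phi_{n}=\exists x\,\underset{m<\omega}{\bigwedge}\left(x>m\right)$, as claimed.

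The symbol manipulation here is essentially trivial; the only genuine content is checking that the cited machinery actually applies. Concretely, one must fix an underlying $\aleph_{0}$-categorical theory $T$ (a discrete or dense linear order, say) in which each $\left(x>m\right)$ is a consistent formula and in which the witnessing structures $\mathfrak{M}_{n}\models\phi_{n}$ can be arranged so that $\{(\mathfrak{M}_{n},\phi_{n})\}$ is a homogeneous sequence in the sense of Definition \ref{DefHomoSeq}. I expect this verification---rather than the algebra---to be the main obstacle, because both Corollary \ref{17} and Axiom \ref{15} presuppose uniqueness of the relevant limits, so one has to confirm that the descending conjunctions $\psi_{n}$ really do meet the homogeneity hypothesis before either displayed equality is warranted.
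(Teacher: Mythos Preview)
Your proposal is correct and follows essentially the same route as the paper: set $\varphi_{m}=(x>m)$, observe $\models\varphi_{m+1}\rightarrow\varphi_{m}$, and then invoke Corollary~\ref{17} together with Axiom~\ref{15}(iii) to pull the quantifier through. The paper's proof is a one-line citation of exactly these two results, whereas you have spelled out the intermediate limit $\underset{n\rightarrow\omega}{\lim}\psi_{n}$ and flagged the $\aleph_{0}$-categoricity hypothesis more carefully than the paper does.
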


\begin{proof}
\ Let $\varphi_n=(x>n)$. Then $\forall n\in\omega,\:\vDash\varphi_{n+1}\rightarrow\varphi_{n}$. So it follows by corollary \ref{17} and axiom \ref{15}. This confirms that there is an arbitrary large number in nonstandard number theory.\bigskip
\end{proof}

Now we give the limit of formula for the $\epsilon-N$ formula.

\begin{corollary}
\ Suppose $a_{\omega}=\left\langle a_{m}\colon m<\omega\right\rangle $ is a sequence in a separable space and
\[
\varphi_{n}\,=\,\exists x\,\exists N_{n}\,\forall m\left(m>N_{n}\, \Longrightarrow|a_{m}-x|<1/n\right)
\]
Then the limit of formula for $\underset{m\rightarrow\omega}{\lim}a_{m}=x$ is:
\[
\underset{n\rightarrow\omega}{\lim}\,\underset{p\leqslant n}{\displaystyle\bigwedge}\varphi_{p}\,=\,\exists x\underset{n<\omega}{\displaystyle\bigwedge}\exists N_{n}\,\forall m\left(m>N_{n}\,\Longrightarrow|a_{m}-x|<1/n\right)
\]
\end{corollary}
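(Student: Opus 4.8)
The plan is to reduce the statement to Corollary \ref{17} and Axiom \ref{15}(iii) by peeling off the outer existential quantifier. First I would write $\varphi_{n}=\exists x\,\psi_{n}$, where $\psi_{n}=\exists N_{n}\,\forall m\left(m>N_{n}\,\Longrightarrow|a_{m}-x|<1/n\right)$ is a formula with $x$ free. The key monotonicity observation is that $\models\psi_{n+1}\rightarrow\psi_{n}$: any witness $N$ for which $|a_{m}-x|<1/(n+1)$ holds for all $m>N$ also witnesses $|a_{m}-x|<1/n$, since $1/(n+1)<1/n$. Quantifying over $x$ then gives $\models\varphi_{n+1}\rightarrow\varphi_{n}$ as well.

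With monotonicity in hand, the finite conjunctions collapse to their last (strongest) term: for each finite $n$ we have $\underset{p\leqslant n}{\bigwedge}\psi_{p}\Leftrightarrow\psi_{n}$ and $\underset{p\leqslant n}{\bigwedge}\varphi_{p}\Leftrightarrow\varphi_{n}$. Because $\models\psi_{k}\rightarrow\psi_{n}$ for all $k>n$, any $\mathfrak{M}_{k}\models\psi_{k}$ satisfies $\psi_{n}$, so $\{(\mathfrak{M}_{n},\psi_{n})\}$ is a homogeneous sequence in the sense of Definition \ref{DefHomoSeq}; here separability of the space is what keeps us in the countable, $\aleph_{0}$-categorical setting where such countable atomic models exist and the limits are unique. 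Corollary \ref{17} then applies to $\psi_{n}$ and yields $\underset{n\rightarrow\omega}{\lim}\,\underset{p\leqslant n}{\bigwedge}\psi_{p}=\underset{n<\omega}{\bigwedge}\psi_{n}$, and in particular $\underset{n\rightarrow\omega}{\lim}\,\psi_{n}=\underset{n<\omega}{\bigwedge}\psi_{n}$.

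It remains to reinstate the existential quantifier. Using the finite collapses above together with Axiom \ref{15}(iii), I would chain the equalities
\[
\underset{n\rightarrow\omega}{\lim}\,\underset{p\leqslant n}{\bigwedge}\varphi_{p}\,=\,\underset{n\rightarrow\omega}{\lim}\,\varphi_{n}\,=\,\underset{n\rightarrow\omega}{\lim}\,\exists x\,\psi_{n}\,=\,\exists x\,\underset{n\rightarrow\omega}{\lim}\,\psi_{n}\,=\,\exists x\,\underset{n<\omega}{\bigwedge}\psi_{n},
\]
and the right-hand side, after reinserting $\psi_{n}=\exists N_{n}\,\forall m\left(m>N_{n}\,\Longrightarrow|a_{m}-x|<1/n\right)$, is exactly the claimed $\epsilon$-$N$ formula for $\underset{m\rightarrow\omega}{\lim}a_{m}=x$.

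The main obstacle is the placement of the outer quantifier. The target formula has $\exists x$ \emph{outside} the infinite conjunction, whereas a direct application of Corollary \ref{17} to $\varphi_{n}$ would leave it inside each conjunct, and $\underset{n<\omega}{\bigwedge}\exists x\,\psi_{n}$ is \emph{not} logically equivalent to $\exists x\,\underset{n<\omega}{\bigwedge}\psi_{n}$ in general, since different conjuncts could demand different witnesses $x$. The point is that I never invoke such a general swap: the interchange is licensed only at the level of the limit, by Axiom \ref{15}(iii), and is reconciled with the finite approximations through the monotonicity collapse $\underset{p\leqslant n}{\bigwedge}\psi_{p}\Leftrightarrow\psi_{n}$. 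Verifying the homogeneity hypotheses of Corollary \ref{17} for $\psi_{n}$ rather than for $\varphi_{n}$, so that the limit may legitimately be pushed through $\exists x$, is the one place where genuine care is required.
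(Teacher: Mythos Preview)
Your proposal is correct and follows essentially the same route as the paper: establish the monotonicity $\models\varphi_{n+1}\rightarrow\varphi_{n}$ from $1/(n+1)<1/n$, then invoke Corollary~\ref{17} together with Axiom~\ref{15}. Your treatment is in fact more careful than the paper's terse proof, since you make explicit why Corollary~\ref{17} must be applied to the inner formula $\psi_{n}$ before pulling $\exists x$ through the limit via Axiom~\ref{15}(iii), rather than naively collapsing to $\bigwedge_{n<\omega}\exists x\,\psi_{n}$.
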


\begin{proof}
\ For any $n\in\omega$ and any $m>N_{n}$, since $|a_{m}-x|<1/\left(n+1\right)\Longrightarrow|a_{m}-x|<1/n$, $\vDash\varphi_{n+1}\rightarrow\varphi_{n}$ for any $n<\omega$. So it follows by corollary \ref{17} and axiom \ref{15}.\bigskip
\end{proof}

Note that a $\aleph_{0}$-categorical theory (or atomic theory) is absolutely necessary in the above definitions of limit and axiom \ref{15} because of the following example.\footnote{This example is suggested by Martin Goldstern.} Suppose $I_{0}=G_{0}$ and $I_{n+1}=\{I_{n}\}$ where $G_{0}\neq\{G_{0}\}$. Let $\chi=\forall x\left(x\neq\{x\}\right)$ and $\varphi_{n}=\phi_{n} \wedge\chi$ where $\phi_{n}$ is given in theorem \ref{38}(iii). Since $I_{n}\vDash\phi_{n}$ and $\underset{n\rightarrow\omega}{\lim}I_{n}$ is unique, by theorem \ref{30}(i), $I_{\omega}=\{I_{\omega}\}$. So $\underset {n\rightarrow\omega}{\lim}\,\varphi_{n}$ is unique but it must also satisfy $\chi$, which is a contradiction. This can be avoided by the fact that Th$(I_{n})$ is $\aleph_{0}$-categorical (theorem \ref{38}(ii)). So by theorem \ref{30}(iv), $I_{\omega}$ is atomic and there is a complete formula $\varphi_{\omega}$ for $\operatorname{Th}(I_{\omega})$ such that $\chi\notin\operatorname{Th}(I_{\omega})$ for $\varphi_{\omega}\Rightarrow\urcorner\chi$.\smallskip

\subsection{Limit in Language of Set Theory}

Next, we will study the limit of structures in the language of set theory $\bar{\mathscr{L}}=\{\in\}$.

\begin{axiom}
\label{19} \ Suppose $\mathfrak{M}_{n}$ are $\bar{\mathscr{L}}$-structures in a $\aleph_{0}$-categorical theory and $\underset{n\rightarrow\omega}{\lim}\,\mathfrak{M}_{n}$ is unique.\footnote{In the rest of discussion of this section, we assume  $\underset{n\rightarrow\omega}{\lim}\,\mathfrak{M}_{n}$ and $\underset{n\rightarrow\omega}{\lim}\,\mathfrak{N}_{n}$ are unique unless further specified.} Then
\[
\underset{n\rightarrow\omega}{\lim}\left(\mathfrak{A}\in\mathfrak{M}_{n}\right)\,\Longleftrightarrow\,\bigl(\mathfrak{A}\in\underset {n\rightarrow\omega}{\lim}\,\mathfrak{M}_{n}\bigr)
\]
\end{axiom}

%All formulas of $\bar{\mathscr{L}}$ involving the limit operations are formed recursively based upon the atomic formula above satisfying axiom \ref{15}.

\begin{lemma}
\label{13}\qquad$\underset{n\rightarrow\omega}{\lim}\,\exists\mathfrak{A}\left(\mathfrak{M}_{n}=\mathfrak{A}\right) \,\Longleftrightarrow\,\exists\mathfrak{A}\, \bigl(\underset {n\rightarrow\omega}{\lim}\,\mathfrak{M}_{n}=\mathfrak{A}\bigr)$
\end{lemma}

\begin{proof}
\ By axioms \ref{15}, \ref{19} and corollary \ref{12}
\begin{align*}
\underset{n\rightarrow\omega}{\lim}\,\exists\mathfrak{A}\left(\mathfrak{M}_{n}=\mathfrak{A}\right) \,\,&\Longleftrightarrow\,\,\underset{n\rightarrow\omega} {\lim}\,\exists\mathfrak{A}\,\forall\mathfrak{B}\left(\mathfrak{B}\in\mathfrak{M}_{n}\Longleftrightarrow \mathfrak{B}\in\mathfrak{A}\right)
\\
\,\,&\Longleftrightarrow\,\,\exists\mathfrak{A}\,\forall\mathfrak{B}\,\bigl(\mathfrak{B}\in\underset{n\rightarrow\omega}{\lim}\,\mathfrak{M}_{n}\Longleftrightarrow \mathfrak{B}\in\mathfrak{A}\bigr)  
\\
\,\,&\Longleftrightarrow\,\,\exists\mathfrak{A}\,\bigl(\underset{n\rightarrow\omega}{\lim}\,\mathfrak{M}_{n}=\mathfrak{A}\bigr)
\end{align*}\vspace{-9pt}
\end{proof}

\begin{corollary}
\label{29}\ Suppose $\mathfrak G$, $\mathfrak G_1,\dots,\mathfrak G_l$ are structures without relation and function symbols.\footnote{The symbol $\exists!$ means there exists only one.}
\end{corollary}

\begin{enumerate}
\item $\underset{n\rightarrow\omega}{\lim}\left(\mathfrak G\cup\{\mathfrak{M}_{n}\}\right)\,=\, \mathfrak G\cup\{\underset{n\rightarrow\omega}{\lim}\,\mathfrak{M}_{n}\}$

\item $\underset{n\rightarrow\omega}{\lim}\,\{\ast \mathfrak G_{1},\{\ast \mathfrak G_{2},\dots\{\ast \mathfrak G_{l},\mathfrak{M}_{n}\}\dots\}\,=\,\{\ast \mathfrak G_{1},\{\ast \mathfrak G_{2},\dots\{\ast \mathfrak G_{l},\underset{n\rightarrow\omega}{\lim}\, \mathfrak{M}_{n}\}\dots\}$
\end{enumerate}

\begin{proof}
\ (i) \ By lemma \ref{13}
\begin{align*}
\forall\mathfrak{B}\,\bigl(\mathfrak{B}\in\underset{n\rightarrow\omega}{\lim}(\mathfrak G\cup\{\mathfrak{M}_{n}\})\bigr) \,\,&\Longleftrightarrow\,\,\underset{n\rightarrow\omega}{\lim}\,\forall\mathfrak{B}\left(\mathfrak{B}\in\mathfrak G\cup\{\mathfrak{M}_{n}\}\right)
\\
\,\,&\Longleftrightarrow\,\,\underset{n\rightarrow\omega}{\lim}\,\forall\mathfrak{B}\left(\mathfrak{B}\in\mathfrak G\,\vee\, \mathfrak{B}\in\{\mathfrak{M}_{n}\}\right)
\\
\,\,&\Longleftrightarrow\,\,\underset{n\rightarrow\omega}{\lim}\,\exists!\,\mathfrak{B}\left(\mathfrak{B}\,=\,\mathfrak{M}_{n}\right)
\\
\,\,&\Longleftrightarrow\,\,\exists!\,\mathfrak{B}\,\bigl(\mathfrak{B}\,=\underset{n\rightarrow\omega}{\lim}\,\mathfrak{M}_{n}\bigr)
\\
\,\,&\Longleftrightarrow\,\,\forall\mathfrak{B}\,\bigl(\mathfrak{B}\in\{\underset{n\rightarrow\omega}{\lim}\,\mathfrak{M}_{n}\}\bigr)
\\
\,\,&\Longleftrightarrow\,\,\forall\mathfrak{B}\,\bigl(\mathfrak{B}\in\mathfrak G\cup\{\underset{n\rightarrow\omega}{\lim}\,\mathfrak{M}_{n}\}\bigr)
\end{align*}

\ (ii) \ By (i) and induction.
\end{proof}

\begin{corollary}
\label{24}
\end{corollary}

\begin{enumerate}
\item $\underset{n\rightarrow\omega}{\lim}\,\left(\forall\mathfrak{B}\in\mathfrak{M}_{n}\right)\phi_{n}\,\Longleftrightarrow\,\bigl(\forall \mathfrak{B}\in\underset{n\rightarrow\omega}{\lim}\,\mathfrak{M}_{n}\bigr)\underset{n\rightarrow\omega}{\lim}\phi_{n}$

\item $\underset{n\rightarrow\omega}{\lim}\,\left(\exists\mathfrak{B}\in\mathfrak{M}_{n}\right)\phi_{n}\,\Longleftrightarrow\,\bigl(\exists \mathfrak{B}\in\underset{n\rightarrow\omega}{\lim}\,\mathfrak{M}_{n}\bigr)\underset{n\rightarrow\omega}{\lim}\phi_{n}$
\end{enumerate}

\begin{proof}
\ (i) \ By corollary \ref{12}
\begin{align*}
\underset{n\rightarrow\omega}{\lim}\left(\forall\mathfrak{B}\in\mathfrak{M}_{n}\right)\phi_{n} \,\,&\Longleftrightarrow\,\,\underset{n\rightarrow\omega}{\lim}\,\forall\mathfrak{B}\left(\mathfrak{B}\in\mathfrak{M}_{n}\,\Longrightarrow\, \phi_{n}\right)
\\
\,\,&\Longleftrightarrow\,\,\forall\mathfrak{B}\,\bigl(\mathfrak{B}\in\underset{n\rightarrow\omega}{\lim}\,\mathfrak{M}_{n}\,\Longrightarrow \,\underset{n\rightarrow\omega}{\lim}\phi_{n}\bigr)
\\
\,\,&\Longleftrightarrow\,\,\bigl(\forall\mathfrak{B}\in\underset{n\rightarrow\omega}{\lim}\,\mathfrak{M}_{n}\bigr)  \underset{n\rightarrow\omega}{\lim}\phi_{n}
\end{align*}
\ (ii) is proved similar to (i).
\end{proof}

\begin{corollary}
	\label{14}\qquad
\end{corollary}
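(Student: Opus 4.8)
Since the statement of the corollary is cut off in the source, I take it to be the natural two-sided companion of Theorem \ref{13}, namely that $\lim_{n\to\omega}(\mathfrak{M}_n=\mathfrak{N}_n)\Leftrightarrow(\lim_{n\to\omega}\mathfrak{M}_n=\lim_{n\to\omega}\mathfrak{N}_n)$ --- the one-sided equality of Theorem \ref{13}, now with both arguments varying along the sequence. The plan is to re-run the extensionality-plus-commutation argument of Theorem \ref{13}, but to invoke Axiom \ref{19} on both sides rather than on one. First I would rewrite $\mathfrak{M}_n=\mathfrak{N}_n$ by extensionality as $\forall\mathfrak{B}(\mathfrak{B}\in\mathfrak{M}_n\Leftrightarrow\mathfrak{B}\in\mathfrak{N}_n)$, so that the formula under the limit is built entirely from atomic membership statements, a single biconditional, and a single universal quantifier --- precisely the connectives that Lemma \ref{12} and Axiom \ref{19} allow the limit to pass through.

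Next I would push the limit inward. By Lemma \ref{12}(iv) the limit commutes with $\forall\mathfrak{B}$, and by Lemma \ref{12}(iii) it distributes across the biconditional, leaving the two atomic limits $\lim_{n\to\omega}(\mathfrak{B}\in\mathfrak{M}_n)$ and $\lim_{n\to\omega}(\mathfrak{B}\in\mathfrak{N}_n)$. The key observation is that $\mathfrak{B}$ is bound by the outer quantifier and hence a fixed structure relative to the sequence, so Axiom \ref{19} applies verbatim to each side and converts them into $\mathfrak{B}\in\lim_{n\to\omega}\mathfrak{M}_n$ and $\mathfrak{B}\in\lim_{n\to\omega}\mathfrak{N}_n$. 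Reassembling the quantifier and biconditional yields $\forall\mathfrak{B}(\mathfrak{B}\in\lim_{n\to\omega}\mathfrak{M}_n\Leftrightarrow\mathfrak{B}\in\lim_{n\to\omega}\mathfrak{N}_n)$, which by extensionality is exactly $\lim_{n\to\omega}\mathfrak{M}_n=\lim_{n\to\omega}\mathfrak{N}_n$, closing the chain of equivalences.

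The step that must be justified rather than waved through is the applicability of the commutation rules: Lemma \ref{12} and Axiom \ref{19} demand that the formulas be consistent and that the relevant limits be unique inside a $\aleph_0$-categorical theory. So before distributing I would check that $\lim_{n\to\omega}(\mathfrak{B}\in\mathfrak{M}_n)$ and $\lim_{n\to\omega}(\mathfrak{B}\in\mathfrak{N}_n)$ are themselves unique --- which follows from the standing uniqueness of $\lim_{n\to\omega}\mathfrak{M}_n$ and $\lim_{n\to\omega}\mathfrak{N}_n$ through Axiom \ref{19} --- and that uniqueness survives the $\wedge$ and $\urcorner$ hidden inside the biconditional, which is what Axiom \ref{15}(i) and (ii) guarantee by way of Lemma \ref{12}. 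This bookkeeping of uniqueness is the only genuine obstacle; once each intermediate line is an honest equality of limit-formulas rather than a one-way implication, the argument is purely formal. I note finally that the same template, with $\in$ in place of $=$ and with Theorem \ref{13} supplying an equality witness $\mathfrak{B}=\mathfrak{M}_n$ under an existential quantifier, would prove the membership analogue $\lim_{n\to\omega}(\mathfrak{M}_n\in\mathfrak{N}_n)\Leftrightarrow(\lim_{n\to\omega}\mathfrak{M}_n\in\lim_{n\to\omega}\mathfrak{N}_n)$, should that be the intended reading of the corollary.
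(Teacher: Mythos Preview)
Your proposal is correct and matches the paper's approach: part (i) is indeed proved by rerunning the extensionality-plus-commutation argument of Theorem \ref{13} with Axiom \ref{19} applied on both sides, and part (ii) is proved exactly as you sketch in your final paragraph, by rewriting $\mathfrak{M}_n\in\mathfrak{N}_n$ as $\exists\mathfrak{Z}\,(\mathfrak{M}_n=\mathfrak{Z}\wedge\mathfrak{Z}\in\mathfrak{N}_n)$ and invoking Theorem \ref{13} for the equality and Axiom \ref{19} for the membership. Your attention to the uniqueness bookkeeping is more explicit than the paper's, which simply takes those hypotheses as standing.
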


\begin{enumerate}
	\item $\underset{n\rightarrow\omega}{\lim}\left(\mathfrak{M}_{n}\in\mathfrak{N}_{n}\right)\,\Longleftrightarrow\,\bigl(\underset {n\rightarrow\omega}{\lim}\,\mathfrak{M}_{n}\in\underset{n\rightarrow\omega}{\lim}\mathfrak{N}_{n}\bigr)$
	
	\item $\underset{n\rightarrow\omega}{\lim}\left(\mathfrak{M}_{n}=\mathfrak{N}_{n}\right)\,\Longleftrightarrow\,\bigl(\underset {n\rightarrow\omega}{\lim}\,\mathfrak{M}_{n}=\underset{n\rightarrow\omega}{\lim}\mathfrak{N}_{n}\bigr)$
\end{enumerate}

\begin{proof}
\ (i) \ By axioms \ref{15}, \ref{19} and corollary \ref{12}
\begin{align*}
	\underset{n\rightarrow\omega}{\lim}\left(\mathfrak{M}_{n}\in\mathfrak{N}_{n}\right) \,\,&\Longleftrightarrow\,\,\underset{n\rightarrow\omega}{\lim}\,\exists\mathfrak{A}\left(\mathfrak{M}_{n}=\mathfrak{A}\,\wedge\,\mathfrak{A} \in\mathfrak{N}_{n}\right)
	\\
	\,\,&\Longleftrightarrow\,\,\underset{n\rightarrow\omega}{\lim}\,\exists\mathfrak{A}\left(\forall\mathfrak{B}\left(\mathfrak{B}\in\mathfrak{M}_{n}\Longleftrightarrow\mathfrak{B}\in\mathfrak{A}\right)\wedge\,\mathfrak{A} \in\mathfrak{N}_{n}\right)
	\\
	\,\,&\Longleftrightarrow\,\,\exists\mathfrak{A}\,\bigl(\forall\mathfrak{B}\,\bigl(\mathfrak{B}\in\underset{n\rightarrow\omega}{\lim}\,\mathfrak{M}_{n}\Longleftrightarrow\mathfrak{B}\in\mathfrak{A}\bigr)\wedge\, \mathfrak{A}\in\underset{n\rightarrow\omega}{\lim}\mathfrak{N}_{n}\bigr)
	\\
	\,\,&\Longleftrightarrow\,\,\bigl(\underset{n\rightarrow\omega}{\lim}\,\mathfrak{M}_{n}\in\underset{n\rightarrow\omega}{\lim}\mathfrak{N}_{n} \bigr)
\end{align*}

(ii) is proved similar to (i).
\end{proof}

\begin{corollary}
\label{67}\qquad $\underset{n\rightarrow\omega}{\lim}\left(\exists\,\mathfrak{M}_{n}\in\mathfrak{N}_{n}\right)  \phi_{n}\,\Longleftrightarrow\,\bigl(\exists\,\underset{n\rightarrow\omega}{\lim}\,\mathfrak{M}_{n}\in\underset{n\rightarrow\omega}{\lim} \mathfrak{N}_{n}\bigr)\underset{n\rightarrow\omega}{\lim}\phi_{n}$
\end{corollary}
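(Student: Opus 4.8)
The plan is to reduce the bounded existential to the building blocks already established in this subsection, following the same pattern as Corollary~\ref{24}(ii), with the one new feature that the quantified witness $\mathfrak{M}_{n}$ is itself indexed by $n$ and hence must pass to its own limit. First I would unfold the bounded quantifier into an unbounded existential guarded by a membership conjunct, rewriting $(\exists\,\mathfrak{M}_{n}\in\mathfrak{N}_{n})\phi_{n}$ as $\exists\,\mathfrak{M}_{n}\left(\mathfrak{M}_{n}\in\mathfrak{N}_{n}\wedge\phi_{n}\right)$, so that only the primitive connectives $\exists$ and $\wedge$ and the atomic relation $\in$ remain.

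Next I would push the limit inward in three routine steps. Setting $\psi_{n}=\mathfrak{M}_{n}\in\mathfrak{N}_{n}\wedge\phi_{n}$, Axiom~\ref{15}(iii) pulls the limit through the existential, giving $\underset{n\rightarrow\omega}{\lim}\,\exists\,\mathfrak{M}_{n}\,\psi_{n}=\exists\,\underset{n\rightarrow\omega}{\lim}\,\mathfrak{M}_{n}\,\underset{n\rightarrow\omega}{\lim}\psi_{n}$, where the bound variable becomes the limit $\underset{n\rightarrow\omega}{\lim}\mathfrak{M}_{n}$ of the indexed witness. Then Axiom~\ref{15}(i) splits the conjunction, $\underset{n\rightarrow\omega}{\lim}\psi_{n}=\underset{n\rightarrow\omega}{\lim}\left(\mathfrak{M}_{n}\in\mathfrak{N}_{n}\right)\wedge\underset{n\rightarrow\omega}{\lim}\phi_{n}$, and Corollary~\ref{14}(ii) rewrites the membership limit as $\underset{n\rightarrow\omega}{\lim}\mathfrak{M}_{n}\in\underset{n\rightarrow\omega}{\lim}\mathfrak{N}_{n}$. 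Re-folding the guarded existential back into bounded form then produces the right-hand side $\left(\exists\,\underset{n\rightarrow\omega}{\lim}\,\mathfrak{M}_{n}\in\underset{n\rightarrow\omega}{\lim}\mathfrak{N}_{n}\right)\underset{n\rightarrow\omega}{\lim}\phi_{n}$, completing the equivalence.

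The main obstacle will be justifying the single step where the limit is carried through the existential while simultaneously converting the indexed bound variable $\mathfrak{M}_{n}$ into $\underset{n\rightarrow\omega}{\lim}\,\mathfrak{M}_{n}$: Axiom~\ref{15}(iii) as literally stated fixes one variable $x$ across all $\phi_{n}$, whereas here the witness varies with $n$. I would close this gap by observing that, under the standing hypothesis that $\underset{n\rightarrow\omega}{\lim}\mathfrak{M}_{n}$ and $\underset{n\rightarrow\omega}{\lim}\mathfrak{N}_{n}$ are unique, the witnessing family $\mathfrak{M}_{n}$ forms a homogeneous sequence whose limit is, by Theorem~\ref{71}, the unique structure realizing the conjoined formula; the existential over the $n$-indexed witnesses therefore collapses to an existential over the single limiting witness lying in $\underset{n\rightarrow\omega}{\lim}\mathfrak{N}_{n}$. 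Once this identification is in place, the remaining manipulations are exactly the same mechanical applications of Axiom~\ref{15} and Corollary~\ref{14} used throughout this section, and the result follows just as Corollary~\ref{24}(ii) does, the only difference being that the quantified object is now carried to its limit rather than held fixed.
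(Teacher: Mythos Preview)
Your outline correctly identifies the obstacle, but the patch you propose for it is where the argument breaks down, and it is precisely the point the paper handles differently. Axiom~\ref{15}(iii) is stated for a single fixed variable $x$; your formula $\exists\,\mathfrak{M}_{n}\bigl(\mathfrak{M}_{n}\in\mathfrak{N}_{n}\wedge\phi_{n}\bigr)$ has an $n$-indexed bound variable, and your appeal to Theorem~\ref{71} to collapse the family of witnesses into a single limiting one is not a proof but a restatement of the desired conclusion. Nothing in the paper's framework licenses the step $\underset{n\rightarrow\omega}{\lim}\,\exists\,\mathfrak{M}_{n}\,\psi_{n}=\exists\bigl(\underset{n\rightarrow\omega}{\lim}\,\mathfrak{M}_{n}\bigr)\,\underset{n\rightarrow\omega}{\lim}\psi_{n}$ directly.

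The paper sidesteps this entirely by a different unfolding: rather than quantifying over the indexed $\mathfrak{M}_{n}$, it introduces a fresh \emph{fixed} dummy variable $\mathfrak{Z}$ and rewrites the bounded quantifier as
\[
\exists\mathfrak{Z}\bigl(\mathfrak{M}_{n}=\mathfrak{Z}\ \wedge\ \mathfrak{Z}\in\mathfrak{N}_{n}\ \wedge\ \phi_{n}\bigr),
\]
exactly as in the proof of Corollary~\ref{14}(ii). Now the bound variable is constant in $n$, so Axiom~\ref{15}(iii) applies literally; the indexed object $\mathfrak{M}_{n}$ appears only as a term inside the equality conjunct, and Theorem~\ref{13} converts $\underset{n\rightarrow\omega}{\lim}\bigl(\mathfrak{M}_{n}=\mathfrak{Z}\bigr)$ into $\bigl(\underset{n\rightarrow\omega}{\lim}\,\mathfrak{M}_{n}=\mathfrak{Z}\bigr)$. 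Axiom~\ref{19} handles $\mathfrak{Z}\in\mathfrak{N}_{n}$, Axiom~\ref{15}(i) the conjunction, and re-folding yields the claim. The insertion of the equality with a fixed dummy is the missing idea in your attempt.
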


\begin{proof}
\ By axioms \ref{15}, \ref{19} and corollary \ref{12}
\begin{align*}
\underset{n\rightarrow\omega}{\lim}\left(\exists\,\mathfrak{M}_{n}\in\mathfrak{N}_{n}\right)\phi_{n}
\,\,&\Longleftrightarrow\,\,\underset{n\rightarrow\omega}{\lim}\,\exists\mathfrak{A}\left(\mathfrak{M}_{n}=\mathfrak{A}\,\wedge\,\mathfrak{A} \in\mathfrak{N}_{n}\,\wedge\,\phi_{n}\right) 
\\ \,\,&\Longleftrightarrow\,\,\underset{n\rightarrow\omega}{\lim}\,\exists\mathfrak{A}\left(\forall\mathfrak{B}\left(\mathfrak{B}\in\mathfrak{M}_{n}\Longleftrightarrow\mathfrak{B}\in\mathfrak{A}\right)\wedge \mathfrak{A}\in\mathfrak{N}_{n}\wedge\phi_{n}\right)
\\
\,\,&\Longleftrightarrow\,\,\exists\mathfrak{A}\,\bigl(\forall\mathfrak{B}\,\bigl(\mathfrak{B}\in\underset{n\rightarrow\omega}{\lim}\mathfrak{M}_{n}\Longleftrightarrow\mathfrak{B}\in\mathfrak{A}\bigr)\wedge \,\mathfrak{A}\in\underset{n\rightarrow\omega}{\lim}\mathfrak{N}_{n}\,\wedge\underset{n\rightarrow\omega}{\lim}\phi_{n}\bigr)
\\
\,\,&\Longleftrightarrow\,\,\exists\mathfrak{A}\,\bigl(\underset{n\rightarrow\omega}{\lim}\,\mathfrak{M}_{n}=\mathfrak{A}\,\wedge\, \mathfrak{A} \in\underset{n\rightarrow\omega}{\lim}\mathfrak{N}_{n}\,\wedge\,\underset{n\rightarrow\omega}{\lim}\phi_{n}\bigr)
\\
\,\,&\Longleftrightarrow\,\,\bigl(\exists\,\underset{n\rightarrow\omega}{\lim}\,\mathfrak{M}_{n}\in\underset{n\rightarrow\omega}{\lim} \mathfrak{N}_{n}\bigr)\underset{n\rightarrow\omega}{\lim}\phi_{n}
\end{align*}\vspace{-12pt}
\end{proof}

\begin{corollary}
\label{20}
\[
(\exists N<\omega)\,(\forall n>N)\left(\mathfrak{M}_n\in\mathfrak{N}\right)\,\Longrightarrow\,\bigl(\underset{n\rightarrow\omega}{\lim}\,\mathfrak{M}_{n}\in\mathfrak{N}\bigr)
\]
\end{corollary}

\begin{proof}
\ Clearly
\[
(\forall n>N)\left(\mathfrak{M}_n\in\mathfrak{N}\right)\,\Longrightarrow\,(\forall n>N)\,\exists\mathfrak{A}\left(\mathfrak{M}_n=\mathfrak{A}\,\wedge\,\mathfrak{A}\in\mathfrak{N}\right)
\]
By corollary \ref{67}
\begin{align*}
\underset{n\rightarrow\omega}{\lim}\,\exists\mathfrak{A}\left(\mathfrak{M}_n=\mathfrak{A}\,\wedge\,\mathfrak{A}\in\mathfrak{N}\right)
\,&\Longleftrightarrow\,\exists\mathfrak{A}\,\bigl(\underset{n\rightarrow\omega}{\lim}\,\mathfrak{M}_n=\mathfrak{A}\,\wedge\,\mathfrak{A}\in\mathfrak{N}\bigr)
\\
\,\,&\Longrightarrow\,\bigl(\underset{n\rightarrow\omega}{\lim}\,\mathfrak{M}_{n}\in\mathfrak{N}\bigr)
\end{align*}
So it follows by corollary \ref{18}.\bigskip
\end{proof}

Lastly, we show that the theory of dense linear order without endpoints can be obtained through the limit of formulas.

\begin{corollary}
\ Suppose $T$ is the theory of DLO without endpoints and $\,\mathfrak{M}_{n}\,=\bigl\langle\,\,\smashoperator{\bigcup_{1\leqslant j<n}} \,\bigl(\mathbb{Z}+j/n\bigr),\leqslant,+,\cdot,0,1\bigr\rangle$. Then $\underset{n\rightarrow\omega}{\lim}\, \mathfrak{M}_{n}=\mathbb{Q}$ and $T=\,$Th$(\mathbb{Q})$.
\end{corollary}

\begin{proof}
\ Suppose $\varphi_{n},\phi_{n},\delta_{n}$ are sentences specifying the properties of linear ordering, a dense subset and set without endpoints for $\mathfrak{M}_{n}$. Then
\begin{align*}
\varphi_{n} & \,\Longleftrightarrow\, \bigl(\forall x,y,z\in\mathfrak{M}_{n}\bigr) \bigl(x\leqslant x\wedge\left(x\leqslant y\wedge y\leqslant x\,\Longrightarrow x=y\right) \wedge \left(x\leqslant y\wedge y\leqslant z\,\Longrightarrow x\leqslant z\right)\bigr) 
\\
\phi_{n} & \,\Longleftrightarrow\, \bigl(\forall x,y\in\mathfrak{M}_{n}\bigr) \bigl(x<y\,\Longrightarrow\bigl(\exists z\in\mathfrak{M}_{l} \bigr)(l>n\,\wedge\,x<z<y)\bigr) 
\\
\delta_{n} & \,\Longleftrightarrow\, \bigl(\forall x\in\mathfrak{M}_{n}\bigr)\bigl(\left(\exists y\in\mathfrak{M}_{n}\right)(y<x) \wedge\left(\exists y\in\mathfrak{M}_{n}\right)(x<y)\bigr)
\end{align*}

For any $x,y\in\mathfrak{M}_{n}\left(x<y\right)$, set $N_{n}=2n$. Then $\forall k>N_{n},\,\exists z\in\mathfrak{M}_{k}$ that $x<z<y$, i.e. $\mathfrak{M}_{k}\vDash\phi_{n}$. Since $\mathbb{Z}\vDash\varphi_{n}\wedge\delta_{n}$, $\mathfrak{M}_{k}\vDash\varphi_{n}\,\wedge\,\phi_{n}\,\wedge\,\delta_{n}$. So $\underset{n\rightarrow\omega}{\lim}\, \mathfrak{M}_{n}$ is unique. By corollary \ref{24}
\begin{align*}
\underset{n\rightarrow\omega}{\lim}\varphi_{n} & \,\Longleftrightarrow\,\bigl(\forall x,y,z\in\underset{n\rightarrow\omega}{\lim}\, \mathfrak{M}_{n}\bigr)\bigl(x\leqslant x\wedge(x\leqslant y\wedge y\leqslant x\,\Longrightarrow x=y)  \wedge(x\leqslant y\wedge y\leqslant z\,\Longrightarrow x\leqslant z)\bigr) 
\\
\underset{n\rightarrow\omega}{\lim}\phi_{n} & \,\Longleftrightarrow\,\bigl(\forall x,y\in\underset{n\rightarrow\omega}{\lim}\, \mathfrak{M}_{n}\bigr)\bigl(x<y\,\Longrightarrow\bigl(\exists z\in\underset{n\rightarrow\omega}{\lim}\, \mathfrak{M}_{n}\bigr)(x<z<y)\bigr) 
\\
\underset{n\rightarrow\omega}{\lim}\delta_{n} & \,\Longleftrightarrow\,\bigl(\forall x\in\underset{n\rightarrow\omega}{\lim}\, \mathfrak{M}_{n}\bigr)\bigl(\bigl(\exists y\in\underset{n\rightarrow\omega}{\lim}\,\mathfrak{M}_{n}\bigr)(y<x)  \wedge\bigl(\exists y\in\underset{n\rightarrow\omega}{\lim}\,\mathfrak{M}_{n}\bigr)(x<y)\bigr)
\end{align*}

Since $T$ is $\aleph_{0}$-categorical and $\underset{n\rightarrow\omega}{\lim}\,\varphi_{n}, \underset {n\rightarrow\omega}{\lim}\phi_{n}$, $\underset{n\rightarrow\omega}{\lim}\delta_{n}$ are axioms of Th$(\mathbb{Q})$, $\underset{n\rightarrow\omega}{\lim}\,\mathfrak{M}_{n}=\mathbb{Q}$ and $T=\,$Th$(\mathbb{Q})$.
\end{proof}\smallskip

\section{Non-Well-Founded Sets}\label{SectionNonWellFoundedSets}

\subsection{Introduction}

In this section, we will study infinitely generated sets and three of their types known as infinitons, semi-infinitons and quasi-infinitons. First, let's review existing theories of non-well-founded sets.\smallskip

The investigation of non-well-founded sets was initiated by Mirimanoff in 1917 \cite{Mirimanoff}, in which he formulated the distinction between well-founded and non-well-founded sets. A number of axiomatic systems of non-well-founded sets have been proposed thereafter. Since the Zermelo--Fraenkel set theory bans $\in$-sequences of infinite length by the axiom of regularity\footnote{This is actually fallacious because the axiom of regularity can also hold for many non-well-founded sets. See section \ref{SectionConclusion}.} (also known as the axiom of foundation), most of these systems incorporate non-well-founded sets by replacing the axiom of regularity with distinct anti-foundation axioms and are essentially models of ZF minus the axiom of regularity. A notable exception is New Foundations by Quine \cite{Quine} that allows non-well-founded sets without a specific axiom and avoids Russell's paradox by permitting only stratified formulas.\smallskip

There are mainly four anti-foundation axioms by far --- AFA (by Aczel, Forti and Honsell \cite{Aczel}), SAFA (by Scott), FAFA (by Finsler), and BAFA (by Boffa). Each of them defines a different notion of equality for non-well-founded sets. For example, AFA bases hypersets (including non-well-founded sets) on accessible pointed graphs (APG) that two hypersets are equal if and only if they can be pictured by the same APG. In the universe of AFA, a Quine atom (called infiniton in this paper) is shown to be existent and unique. The anti-foundation axioms of AFA, SAFA, FAFA and BAFA specify an increasing sequence of universes over the von Neumann universe, i.e. $V\subset A\subset S\subset F\subset B$. In the universe of BAFA which is the largest of the four, the Quine atoms form a proper class.\smallskip

The main problem of the above axiomatic systems, however, is the lack of precise mathematical descriptions for non-well-founded sets. For instance, a non-well-founded set such as a Quine atom in AFA is an APG that can be unfolded into an infinite tree. As we learn later, a tree with an infinite branch is a countable structure that must be handled by the limit of finite structures and formulas. Consequently, AFA only describes countable structures intuitively and does not provide enough mathematical rigor for depicting their structures and operations.\smallskip

Furthermore, there have been efforts to introduce non-well-founded sets by enlarging the von Neumann universe (through removing the axiom of regularity). For examples, in \cite{Sharlow}, $V$ is modified through the iterative conception of a set that includes some non-well-founded sets; in \cite{Viale}, $V$ is expanded through the process of bisimulation. However, these attempts do not identify exactly why $V$ needs be enlarged, as well as precisely how the non-well-founded sets are generated.\smallskip

In this paper, we will present a new way to generate non-well-founded sets by enlarging the von Neumann universe along with the precise reason why $V$ needs be enlarged as well as the exact process to generate these sets. First, we can see that $V$ is incomplete because it does not have the limit ordinal ranks (lemma \ref{2}). This fact is of fundamental importance because it implies that non-well-founded sets are necessarily existent and should take on the limit ordinal ranks in a complete universe of sets.\footnote{More precisely, only generators of non-well-founded sets known as infinitely generated sets take on the rank of limit ordinals in the total universe. However, (many) non-well-founded sets can have the rank of successor ordinals.} Then non-well-founded sets are added to $V$ as infinitely generated sets with limit ordinal ranks to form the total universe (\ref{TotalHierarchy}). Furthermore, limits of finite structures and formulas discussed in the previous section can provide rigorous analysis for three types of infinitely generated sets as infinitons, semi-infinitons and quasi-infinitons that appear in Russell's paradox. Consequently, the total universe is a model of ZF minus the axiom of regularity and free of Russell's paradox.

\subsection{Infinitely Generated Set}

An infinitely generated set is a generator of non-well-founded sets and contains only one infinite branch. It is the limit of well-found sets known as finitely generated sets. As discussed in a previous section, an infinitely generated set\ is an infinite structure that is (generally) described by an infinitely long formula of $\mathscr{L}_{\omega_{1},\omega}$ \cite{Scott}. First, we introduce the notion of a finitely generated set.

\begin{definition}
\ Suppose $\bar{\mathscr{L}}=\{\in\}$ is the language of set theory and $G_{k}\in V_{\omega} \,\left(0\leqslant k\leqslant n\right)$. A \textbf{finitely generated set} is a finite $\bar{\mathscr{L}}$-structure that is defined as:
\begin{equation}
H_{n}(G_{n},\dots,G_{0})\,=\,\{\ast G_{n},\{\ast G_{n-1},\dots\{\ast G_{1},G_{0}\}\dots\}\footnote{$\ast G$ is the unpacking operator as in definition \ref{DefUnpackOperator}.}\label{FiniteGeneratedSet}
\end{equation}
Where $G_{k} \left(1\leqslant k\leqslant n\right)$ are \textbf{principal} \textbf{generators} and $G_{0}$ is a \textbf{base generator} of $H_{n}$. $H_n$ can also be defined recursively as:
\[
H_{n}(G_{n},\dots,G_{0})\,=\,\{\ast G_{n},H_{n-1}(G_{n-1},\dots,G_{0})\}
\]

\end{definition}

An infinitely generated set is defined as the limit (definition \ref{DefLimit}) of finitely generated sets in $V_{\omega}$. In a later section, we will extend it to well-founded sets of higher ranks in the
total universe.

\begin{definition}
\label{DefInfGeneratedSet} \ Suppose $H_{n}$ is defined in (\ref{FiniteGeneratedSet}) and $\mathcal{G}=\{G_{n}\colon G_{n}\in V_{\omega},\,n<\omega\}$. An \textbf{infinitely generated set (IGS)} (at $\omega$) is defined as:
\begin{equation}
H_{\omega}(\mathcal{G})\,=\,\underset{n\rightarrow\omega}{\lim}\,H_{n}(G_{n},\dots,G_{0})\label{InfGeneratedSet}
\end{equation}
Where $G_{n}\,(n\geqslant1)$\ are \textbf{principal} \textbf{generators} and $G_{0}$ is a \textbf{base generator} of $H_{\omega}$. The language of set theory is expanded to $\bar{\mathscr{L}}^{\prime}=\{\in, H_{\omega}\}$.
\end{definition}

\begin{definition}
\label{DefHeightFunction}\ Let $h\colon V_{\omega}\to \mathrm{Ord}$ and $h(X)=\sup\{R_V(Y)\colon Y\in X\wedge X\in V_{\omega}\}$ where $h(\varnothing)=0$. Then $h$ is known as the \textbf{height function} in $V_{\omega}$. Clearly, $h(H_n)=R_V(G_0)+n$ $(n<\omega)$.\footnote{$h(X)$ measures the maximum number of curly brackets of $X$ in $V_{\omega}$ and is equal to the membership dimension of $X$ (definition \ref{DefMembershipDimension}).}
\end{definition}

From above definitions, we can see that each IGS has only one infinite branch. A non-well-founded set with multiple infinite branches can be formed from IGS through power set operations. Hence IGS are generators of the non-well-founded sets.\medskip

From definition \ref{DefLimit}, it is clear that the limit of finitely generated sets exists if $H_n$ is a homogeneous sequence in a $\aleph_{0}$-categorical theory. As a result, it is essential to find out conditions for Th$(H_{n})$ to be $\aleph_{0}$-categorical and homogeneous. First, let's review more background knowledge in model theory.\medskip

Let $\mathfrak{A}=(A,\dots)$ and $\mathfrak{B}=(B,\dots)$ be models of $\mathscr{L}$, $\bar{a}=(a_{1},\dots,a_{n})$ ($a_{1},\dots,a_{n}\in A$) and $\bar{b}=(b_{1},\dots,b_{n})$ ($b_{1},\dots,b_{n}\in B$). A \textbf{partial isomorphism} $I\colon \mathfrak{A}\cong_p\mathfrak{B}$ between $\mathfrak{A}$ and $\mathfrak{B}$ is a relation $I$ on $\bar{a}$ and $\bar{b}$ satisfying the following three properties: (i) $\varnothing\,I\,\varnothing$; (ii) If $\,\bar{a}\,I\,\bar{b}$, then $(\mathfrak{A},\bar{a})$ and $(\mathfrak{B},\bar{b})$ satisfy the same atomic sentences of $\mathscr{L}^{\prime}$; (iii) If $\,\bar{a}\,I\,\bar{b}$, then for any $c\in A$, there exists $d\in B$ such that $(\bar{a},c)\,I\,(\bar{b},d)$, and vice versa. Condition (iii) is known as the \textbf{back and forth condition}. $\mathfrak{A}$ is $\omega$-homogeneous if and only if there is a partial isomorphism from $\mathfrak{A}$ to $\mathfrak{A}$. Any two countable partial isomorphic models are isomorphic. Thus any two models satisfying the back and forth condition are isomorphic \cite{Chang-Keisler}.\medskip

A first-order theory $T$ has \textbf{quantifier elimination} if, for every formula $\varphi(x_{1},\dots,x_{n})$ there is a quantifier-free formula $\phi(x_{1},\dots,x_{n})$ such that $T\vdash\forall x_{1}\dots\forall x_{n}(\varphi(\bar{x})\leftrightarrow \phi(\bar{x}))$. The \textbf{skeleton} (or \textbf{age}) of a countable $\mathscr{L}$-structure $\mathfrak{M}$ is the class of all finite $\mathscr{L}$-structures, each of which is isomorphic to a substructure of $\mathfrak{M}$. An \textbf{amalgamation} class (or \textbf{Fraisse class}) is a class of finite $\mathscr{L}$-structures with the hereditary, the joint embedding and the amalgamation properties \cite{Hodges}. We summarize the above in the following propositions without proof.

\begin{proposition}
\label{21}\cite[Corollary 3.1.3]{Macpherson} \ Let $\mathfrak{M}$ be a countable $\mathscr{L}$-structure which is homogeneous in a finite relational language. Then Th$(\mathfrak{M})$ is $\aleph_{0}$-categorical.
\end{proposition}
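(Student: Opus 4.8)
The plan is to derive the $\aleph_{0}$-categoricity of $\operatorname{Th}(\mathfrak{M})$ from the Ryll--Nardzewski criterion recorded in proposition \ref{25}, by showing that the finiteness of the relational language, together with homogeneity, forces only finitely many complete $n$-types for each $n<\omega$. First I would exploit the finiteness of the language: since $\mathcal{L}$ has only finitely many relation symbols, each of fixed finite arity, there are only finitely many atomic formulas (including equalities $x_{i}=x_{j}$) in the free variables $x_{1},\cdots,x_{n}$, and hence only finitely many quantifier-free formulas $\varphi(x_{1},\cdots,x_{n})$ up to logical equivalence. Consequently there are only finitely many quantifier-free $n$-types, that is, only finitely many isomorphism types of the finite substructure generated by an $n$-tuple.

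Next I would use homogeneity to show that the complete type of a tuple over $\varnothing$ is already determined by its quantifier-free type. If $\vec{a}=(a_{1},\cdots,a_{n})$ and $\vec{b}=(b_{1},\cdots,b_{n})$ realize the same quantifier-free type, then, the language being relational, the map $a_{i}\mapsto b_{i}$ is an isomorphism between the two finite substructures they generate; by homogeneity it extends to an automorphism of $\mathfrak{M}$, and since complete types are invariant under automorphisms, $\vec{a}$ and $\vec{b}$ realize the same complete type. Combining this with the previous step, $\mathfrak{M}$ realizes only finitely many complete $n$-types; equivalently, $\operatorname{Aut}(\mathfrak{M})$ has only finitely many orbits on $n$-tuples.

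The remaining step is to upgrade ``finitely many types realized in $\mathfrak{M}$'' to ``finitely many types consistent with $T=\operatorname{Th}(\mathfrak{M})$,'' which is exactly what proposition \ref{25}(iv) demands. The clean route is to observe that homogeneity yields quantifier elimination for $T$: every formula $\psi(x_{1},\cdots,x_{n})$ is $T$-equivalent to a quantifier-free one, because any two tuples sharing the same quantifier-free type lie in a common automorphism orbit and hence agree on $\psi$. Granting this, every formula in $x_{1},\cdots,x_{n}$ is $T$-equivalent to one of the finitely many quantifier-free formulas from the first step, so there are only finitely many formulas up to equivalence with respect to $T$, and proposition \ref{25}(v) delivers the conclusion. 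Alternatively, one shows directly that $\mathfrak{M}$ is both atomic and countably saturated --- each quantifier-free type being isolated by the conjunction that defines it, while the finitely many quantifier-free types exhaust all types over any finite parameter set --- and then invokes proposition \ref{25}(ii), with propositions \ref{58} and \ref{70} supplying the underlying homogeneity and uniqueness.

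The main obstacle I anticipate is precisely this last passage. Homogeneity as stated controls only the tuples actually occurring in $\mathfrak{M}$, whereas the Ryll--Nardzewski conditions quantify over all types of the theory; establishing quantifier elimination (equivalently, the $\omega$-saturation of the countable homogeneous $\mathfrak{M}$) from the finite-relational hypothesis is where the genuine work lies. The counting arguments in the earlier steps are routine once the language is known to be finite and relational.
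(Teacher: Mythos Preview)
Your argument is correct and is the standard proof, essentially the one in the cited reference. Note, however, that the paper itself gives no proof of this proposition: it is explicitly listed among results stated ``without proof'' and attributed to \cite[Corollary~3.1.3]{Macpherson}. So there is nothing in the paper to compare against; your sketch simply fills in what the paper omits.

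On the obstacle you flag at the end: your own quantifier-elimination argument already resolves it, and you should not regard it as a genuine gap. Once you have shown that in $\mathfrak{M}$ any two tuples with the same quantifier-free type lie in the same automorphism orbit and hence agree on every formula $\psi(\bar{x})$, it follows that $\psi$ is equivalent \emph{in $\mathfrak{M}$}, and therefore in $T=\operatorname{Th}(\mathfrak{M})$, to the disjunction of those finitely many complete quantifier-free $n$-types compatible with $\psi$, each of which is itself definable by a single quantifier-free formula. That is full quantifier elimination for $T$, and proposition~\ref{25}(v) then applies immediately. The passage from ``types realized in $\mathfrak{M}$'' to ``types of $T$'' requires nothing beyond this; no separate saturation argument is needed.
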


\begin{proposition}
\label{22}\cite[Proposition 3.1.6]{Macpherson} \ Let $\mathfrak{M}$ be a $\aleph_{0}$-categorical structure in a relational language. Then $\mathfrak{M}$ is ultrahomogeneous if and only if Th$(\mathfrak{M})$ has quantifier elimination.
\end{proposition}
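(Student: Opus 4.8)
The plan is to set $T=\mathrm{Th}(\mathfrak{M})$, which is complete and, by hypothesis, $\aleph_{0}$-categorical, and to exploit two facts throughout. First, by proposition \ref{25}, for each $n<\omega$ there are only finitely many formulas $\varphi(x_{1},\cdots,x_{n})$ up to equivalence modulo $T$, every $n$-type of $T$ is isolated by a complete formula, and $\mathfrak{M}$ is (up to isomorphism) the unique countable atomic model. Second, since $\mathcal{L}$ is relational, the substructure generated by a finite tuple $\vec{a}=(a_{1},\cdots,a_{n})$ is just $\{a_{1},\cdots,a_{n}\}$ with the induced relations, so the map $a_{i}\mapsto b_{i}$ is an isomorphism of finite substructures precisely when $\vec{a}$ and $\vec{b}$ satisfy the same quantifier-free formulas, i.e. have the same quantifier-free type. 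I would prove the two implications separately.

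For the direction ``$T$ has quantifier elimination $\Rightarrow\mathfrak{M}$ homogeneous'', I would start from an isomorphism $a_{i}\mapsto b_{i}$ of finite substructures, which by the relational remark means $\vec{a}$ and $\vec{b}$ share the same quantifier-free type. Quantifier elimination then upgrades this to equality of the full types $\mathrm{tp}(\vec{a})=\mathrm{tp}(\vec{b})$, since every formula is $T$-equivalent to a quantifier-free one. To extend the partial map to an automorphism I would run a back-and-forth over the countable $\mathfrak{M}$: given $\mathrm{tp}(\vec{a})=\mathrm{tp}(\vec{b})$ and any $c\in\mathfrak{M}$, the $(n{+}1)$-type of $\vec{a}c$ is isolated by a complete formula $\psi(\vec{x},y)$ (proposition \ref{25}), so $\exists y\,\psi(\vec{x},y)\in\mathrm{tp}(\vec{a})=\mathrm{tp}(\vec{b})$ yields a witness $d$ with $\mathrm{tp}(\vec{a}c)=\mathrm{tp}(\vec{b}d)$; alternating ``forth'' and ``back'' steps through enumerations of $\mathfrak{M}$ then produces the desired automorphism extending $a_{i}\mapsto b_{i}$.

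For the converse, ``$\mathfrak{M}$ homogeneous $\Rightarrow T$ has quantifier elimination'', I would first show that the quantifier-free type of a tuple determines its complete type: if $\vec{a}$ and $\vec{b}$ share a quantifier-free type, then by the relational remark $a_{i}\mapsto b_{i}$ is an isomorphism of finite substructures, which homogeneity extends to an automorphism $\sigma$ of $\mathfrak{M}$ with $\sigma(\vec{a})=\vec{b}$; since automorphisms preserve all formulas, $\mathrm{tp}(\vec{a})=\mathrm{tp}(\vec{b})$. Now fix a formula $\theta(\vec{x})$. Because there are only finitely many quantifier-free formulas in $x_{1},\cdots,x_{n}$ up to $T$-equivalence (a consequence of proposition \ref{25}), there are finitely many complete quantifier-free types $q_{1},\cdots,q_{m}$, each isolated by a single quantifier-free formula $\psi_{j}$. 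By the previous step each $q_{j}$ forces a unique complete type, hence forces either $\theta$ or $\urcorner\theta$; setting $J=\{j:q_{j}\vdash_{T}\theta\}$, I would conclude $T\vdash\forall\vec{x}\,(\theta(\vec{x})\leftrightarrow\bigvee_{j\in J}\psi_{j}(\vec{x}))$, a quantifier-free equivalent, establishing quantifier elimination.

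The main obstacle is the extension step in the back-and-forth of the first direction: it is precisely here that $\aleph_{0}$-categoricity is indispensable, since it guarantees (via proposition \ref{25}) that each successive type $\mathrm{tp}(\vec{a}c)$ is isolated, so $\exists y\,\psi$ lies in the common type of $\vec{a}$ and $\vec{b}$ and a witness can always be found; without isolation the local one-point extension need not succeed. A secondary point requiring care is the relational-language identification of ``isomorphism of finite substructures'' with ``equal quantifier-free type'', which is what allows quantifier elimination and homogeneity to speak about the same objects and would fail in the presence of function symbols.
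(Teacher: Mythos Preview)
Your argument is correct and is essentially the standard proof of this fact. Note, however, that the paper does not supply its own proof of this proposition: it is quoted as an existing result from \cite[Proposition~3.1.6]{Macpherson} (consistent with the paper's convention that ``an existing theorem in set and model theory is listed as a proposition''), so there is no in-paper proof to compare against. Your write-up would serve perfectly well as a self-contained justification; the use of proposition~\ref{25} both to isolate types in the back-and-forth step and to reduce to finitely many quantifier-free types in the converse is exactly how the cited reference proceeds.
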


\begin{proposition}
\label{9}\cite[Theorem 7.1.2]{Hodges} \ A countable class of $\mathscr{L}$-structures is an amalgamation class if and only if it is the skeleton of a
countable ultrahomogeneous $\mathscr{L}$-structure $\mathfrak{M}$. The amalgamation class is unique and is the Fraisse limit\textbf{\ }of $\mathfrak{M}$.
\end{proposition}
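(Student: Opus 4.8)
The plan is to prove both directions of the biconditional and then the uniqueness clause, treating the statement as Fra\"iss\'e's theorem. Write $K$ for the countable class of finite $\mathcal{L}$-structures in question, and recall that calling $K$ an amalgamation class means it is closed under isomorphism and enjoys the hereditary property (HP), the joint embedding property (JEP) and the amalgamation property (AP). For the easy direction I would assume $K$ is the skeleton (age) of a countable homogeneous structure $\mathfrak{M}$ and verify the three closure properties. HP is immediate, since a substructure of a substructure of $\mathfrak{M}$ is again a substructure of $\mathfrak{M}$. For JEP, given $A,B\in K$ fix embeddings into $\mathfrak{M}$; the substructure of $\mathfrak{M}$ generated by the two images is finite, lies in $K$, and embeds both $A$ and $B$. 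For AP, given embeddings $e\colon A\to B$ and $f\colon A\to C$ with $A,B,C\in K$, I would realize $B$ and $C$ inside $\mathfrak{M}$ and then use the homogeneity of $\mathfrak{M}$ to extend the isomorphism between the two copies of $A$ to an automorphism, thereby gluing $B$ and $C$ along $A$; the finite substructure they generate is the required amalgam $D\in K$.

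The substantive direction is the converse. Assuming $K$ is an amalgamation class, I would construct a countable homogeneous $\mathfrak{M}$ with age $K$ as the union of an increasing chain $A_0\subseteq A_1\subseteq\cdots$ of members of $K$, built by a bookkeeping argument. Two families of requirements must be met cofinally along the chain. The first is a \emph{richness} requirement: every member of $K$ (countably many up to isomorphism, enumerated in advance) embeds into some $A_i$, which is arranged using JEP. The second is an \emph{extension} requirement: whenever $a\subseteq A_i$ is a substructure and $a\hookrightarrow b$ with $b\in K$, some later $A_j$ contains an isomorphic copy of $b$ over $a$, which is arranged by applying AP to the triple $(a,A_i,b)$. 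Setting $\mathfrak{M}=\bigcup_i A_i$ yields a countable structure whose age equals $K$: each $A_i\in K$ together with HP gives the inclusion of the age into $K$, while the richness requirements give the reverse inclusion.

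Homogeneity of $\mathfrak{M}$ then follows by a back-and-forth argument. Given an isomorphism $p\colon\vec a\to\vec b$ between two finite substructures, I would extend it one element at a time, at each stage using the extension requirement built into the chain to find an image (going forth) or a preimage (going back) of the next element so that the partial map remains a partial isomorphism; the union of this increasing chain of partial maps is the desired automorphism. Uniqueness of $\mathfrak{M}$ up to isomorphism is again a back-and-forth argument between two countable homogeneous structures of age $K$ (cf. proposition \ref{70}), and since the age is exactly $K$ this identifies $\mathfrak{M}$, up to isomorphism, as the Fra\"iss\'e limit associated with the amalgamation class $K$.

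I expect the main obstacle to be the bookkeeping in the construction of the chain. One must interleave the countably many richness and extension demands so that each is discharged cofinally, while checking at every stage that AP genuinely produces the needed extension inside $K$ and that the already-constructed embeddings are preserved. The homogeneity of the limit is only as strong as the care taken to enumerate and satisfy all finite partial-isomorphism-extension requirements, so the core difficulty is organizing this simultaneous enumeration rather than any single application of HP, JEP, or AP.
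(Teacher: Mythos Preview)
Your proof is correct and is the standard proof of Fra\"iss\'e's theorem. However, the paper does not actually prove this proposition: it is listed among the results the author explicitly introduces with the phrase ``We summarize the above in the following propositions without proof,'' alongside the cited results from \cite{Macpherson}. So there is no proof in the paper to compare against; you have supplied the classical argument that the paper simply takes for granted.
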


Now we apply the above results to study the limit of $H_{n}$ in the expanded language of set theory $\bar{\mathscr{L}}^{\prime}=\{\in, H_{\omega}\}$. First, we need the notion of an amalgamation class in $V_{\omega}$.

\begin{definition}
\label{DefAmalgamation} \ An \textbf{amalgamation class} $\mathcal{K}$ of $V_{\omega}$ is a collection of finitely generated sets in $V_{\omega}$ that satisfies the following properties.
\end{definition}

\begin{enumerate}
\item \textit{\textbf{(Heredity)} \ If $H\in\mathcal{K}$, then any $J$ that is isomorphic to a finitely generated subset of $H$ is in $\mathcal{K}$.}

\item \textit{\textbf{(Joint embedding)} \ If $H_1,H_2\in\mathcal{K}$, then there is a $J\in\mathcal{K}$ and embeddings $f\colon H_1\to J$ and $g\colon H_2\to J$.}

\item \textit{\textbf{(Amalgamation)} \ If $H_1,H_2,J_1\in\mathcal{K}$ and embeddings $f_0\colon H_1\to H_2$ and $f_1\colon H_1\to J_1$, then there is a $J_2\in\mathcal{K}$ and embeddings $g_0\colon H_2\to J_2$ and $g_1\colon J_1\to J_2$ with $g_0\circ f_0=g_1\circ f_1$.}
\end{enumerate}

\begin{lemma}
\label{11} \ Suppose $H_{n}$ is defined in (\ref{FiniteGeneratedSet}) and $\mathcal{G}=\{G_{n}\colon G_{n}\in V_{\omega},\,n<\omega\}$. If $\{H_n\colon n<\omega\}$ is an amalgamation class, then Th$(H_{n})$ is $\aleph_{0}$-categorical.
\end{lemma}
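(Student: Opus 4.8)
The plan is to read the hypothesis through Fra\"iss\'e theory and then invoke the characterization of $\aleph_0$-categoricity for homogeneous structures, so that the whole argument reduces to chaining Propositions \ref{9} and \ref{21}. The observation that makes everything go through is that $\mathcal{L}^{\prime}=\{\in\}$ is a \emph{finite relational} language: it has a single binary relation symbol and no function or constant symbols, so the standing hypotheses of both propositions on the ambient language are met automatically. Thus no extra work is needed to bring $H_n$ into the setting where the classical theorems apply.

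First I would apply Proposition \ref{9} to the amalgamation class $\{H_n\colon n<\omega\}$ supplied by the hypothesis (in the sense of Definition \ref{DefAmalgamation}). This yields a unique countable homogeneous $\mathcal{L}^{\prime}$-structure $\mathfrak{M}$, the Fra\"iss\'e limit, whose skeleton (age) is exactly $\{H_n\colon n<\omega\}$; in particular every $H_n$ is isomorphic to a finite substructure of $\mathfrak{M}$. Next, since $\mathfrak{M}$ is a countable structure that is homogeneous in the finite relational language $\mathcal{L}^{\prime}$, Proposition \ref{21} applies verbatim and gives that $\mathrm{Th}(\mathfrak{M})$ is $\aleph_0$-categorical. (Proposition \ref{22} could be invoked to extract quantifier elimination along the way, but it is not needed for the categoricity conclusion itself.)

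The final step is to identify $\mathrm{Th}(H_n)$ with $\mathrm{Th}(\mathfrak{M})$, and this is where the only real subtlety lies. The notation $\mathrm{Th}(H_n)$ cannot be read literally as the theory of one finite structure $H_n$, since the complete theory of a finite structure has no infinite models and hence is not $\aleph_0$-categorical in the model-theoretic sense; rather it must denote the common theory of the whole sequence, whose intended unique countable model is the limit $H_\omega=\lim_{n\to\omega}H_n$ of Definition \ref{DefInfGeneratedSet}. I would therefore argue that this limit is precisely the Fra\"iss\'e limit $\mathfrak{M}$: each $H_n$ embeds in $\mathfrak{M}$, the $H_n$ form a homogeneous sequence in the sense of Definition \ref{DefHomoSeq}, and by Theorem \ref{71} their limit is the unique countable atomic structure they describe, which by uniqueness of the Fra\"iss\'e limit must coincide with $\mathfrak{M}$. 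Consequently $\mathrm{Th}(H_n)=\mathrm{Th}(\mathfrak{M})$ is $\aleph_0$-categorical.

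The hard part is thus not any deep model theory but the bookkeeping of matching the paper's own notions (the homogeneous sequence and its limit of Definitions \ref{DefHomoSeq} and \ref{DefInfGeneratedSet}, and the structure implicit in the symbol $\mathrm{Th}(H_n)$) with the classical age and Fra\"iss\'e-limit vocabulary of Propositions \ref{9} and \ref{21}, together with the verification that the finite relational hypothesis on $\mathcal{L}^{\prime}$ is genuinely satisfied so that Proposition \ref{21} may be applied.
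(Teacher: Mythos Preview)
Your proposal is correct and follows essentially the same route as the paper: apply Proposition~\ref{9} (Fra\"iss\'e) to the amalgamation class $\{H_n\}$ to obtain a countable homogeneous $\mathcal{L}'$-structure, then invoke Proposition~\ref{21} to conclude $\aleph_0$-categoricity. You supply more care than the paper does on two points the paper leaves implicit---that $\mathcal{L}'=\{\in\}$ is finite relational (needed for Proposition~\ref{21}) and that $\mathrm{Th}(H_n)$ is to be read as the theory of the Fra\"iss\'e limit rather than of a single finite structure---but the skeleton of the argument is identical.
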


\begin{proof}
Since $H_{n}$ is the only (isomorphic) copy of finite $\bar{\mathscr{L}}$-structures (finitely generated sets) of rank $n$, its skeleton is $\{H_n\colon n<\omega\}$. By proposition \ref{9}, $\{H_n\}$ is ultrahomogeneous. So by proposition \ref{21}, Th$(H_{n})$ is $\aleph_{0}$-categorical.
\end{proof}

\begin{corollary}
\label{10} \ Suppose $H_{n}$ is an amalgamation class.
\end{corollary}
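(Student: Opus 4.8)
The plan is to combine Lemma \ref{11} with the limit machinery of Theorem \ref{71}. Since the hypothesis is that $\{H_n\colon n<\omega\}$ is an amalgamation class, Lemma \ref{11} immediately yields that $\operatorname{Th}(H_n)$ is $\aleph_0$-categorical. The only remaining ingredient needed to invoke Theorem \ref{71} is to verify that $\{H_n\}$ forms a homogeneous sequence of structures in the sense of Definition \ref{DefHomoSeq}, i.e. that there is a formula $\phi_n$ describing $H_n$ together with an $\omega$-neighborhood $\mathfrak{H}$ on which $H_k\models\phi_n$ whenever $k>n$.

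First I would produce the formulas $\phi_n$. Because $H_n(G_n,\cdots,G_0)=\{\ast G_n,H_{n-1}(G_{n-1},\cdots,G_0)\}$ is built recursively, each $H_n$ carries an $\in$-chain of length $n$ whose successive layers are tagged by the generators $G_n,\cdots,G_0$; let $\phi_n$ be the $\mathcal{L}'$-formula asserting the existence of precisely this finite chain with the prescribed membership pattern. By heredity every $H_m$ with $m<n$ is (isomorphic to) a finitely generated substructure of $H_n$, so for any $k>n$ the chain witnessing $\phi_n$ sits inside $H_k$, whence $H_k\models\phi_n$. Taking $\mathfrak{H}=\omega$ (an $\omega$-neighborhood by Definition \ref{DefNeighborhood}) this is exactly the homogeneity condition of Definition \ref{DefHomoSeq}.

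With $\aleph_0$-categoricity in hand and $\{(H_n,\phi_n)\}$ a homogeneous sequence, Theorem \ref{71} applies verbatim: there is a unique complete formula $\phi=\smashoperator{\bigwedge_{N<n<\omega}}\phi_n$ and a unique countable atomic structure (up to isomorphism) satisfying it. By Definition \ref{DefLimit} this structure is $\underset{n\rightarrow\omega}{\lim}\,H_n$, which by Definition \ref{DefInfGeneratedSet} is precisely the infinitely generated set $H_\omega(\mathcal{G})$. One can moreover identify this limit with the Fraisse limit supplied by Proposition \ref{9}, since both are the unique countable homogeneous structure whose skeleton is $\{H_n\}$; uniqueness up to isomorphism then forces them to coincide.

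The main obstacle I anticipate is the bookkeeping around the two distinct senses of \emph{homogeneous} in play --- the Fraisse-homogeneity of the structure underlying the amalgamation class (Propositions \ref{9} and \ref{21}) versus the homogeneous sequence of Definition \ref{DefHomoSeq} --- and making the passage between them airtight. Concretely, the delicate point is writing $\phi_n$ so that it genuinely isolates $H_n$ (up to isomorphism) as a substructure and is preserved upward along the nesting $H_n\hookrightarrow H_k$; once the embeddings furnished by heredity are matched with the membership chain realizing $\phi_n$, the remainder is a direct appeal to the already-established theorems.
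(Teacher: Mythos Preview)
Your proposal does considerably more work than the paper's proof, and part of that extra work is misdirected because you were not shown the full statement. In the paper, Corollary~\ref{10} continues with two enumerated clauses outside the corollary environment:
\begin{enumerate}
\item \emph{If $H_n$ is a homogeneous sequence}, then $\underset{n\rightarrow\omega}{\lim}H_n$ is unique.
\item \emph{If $H_n$ consists only of finitely many homogeneous subsequences}, then there are finitely many sublimits for $\underset{n\rightarrow\omega}{\lim}H_n$.
\end{enumerate}
So ``homogeneous sequence'' is a \emph{hypothesis} in (i), not something to be derived. The paper's proof is accordingly a one-liner: Lemma~\ref{11} gives $\aleph_0$-categoricity from the amalgamation-class assumption, and then Definitions~\ref{DefLimit} and~\ref{DefSubLimit} (which package Theorem~\ref{71} and Corollary~\ref{16}) immediately yield (i) and (ii).

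Your construction of explicit $\phi_n$ and verification that $H_k\models\phi_n$ for $k>n$ is therefore unnecessary for (i); it is essentially re-proving the hypothesis. Moreover, the step where you invoke \emph{heredity} to get $H_m$ as a substructure of $H_n$ for $m<n$ is the wrong tool: heredity (Definition~\ref{DefAmalgamation}(i)) says substructures of members of $\mathcal{K}$ lie in $\mathcal{K}$, not that earlier terms of the sequence embed in later ones. The nesting $H_{n-1}\in H_n$ that makes $H_k\models\phi_n$ comes directly from the recursive definition $H_n=\{\ast G_n,H_{n-1}\}$, not from the amalgamation-class axioms. Finally, your proposal does not address clause (ii) at all, whereas the paper handles it by the same citation to Definition~\ref{DefSubLimit}.
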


\begin{enumerate}
\item \textit{If $H_{n}$ is a homogeneous sequence, then $\underset{n\rightarrow\omega}{\lim}H_{n}$ is unique.}

\item \textit{If $H_{n}$ consists only of finitely many homogeneous subsequences, then there are finitely many sublimits for $\underset{n\rightarrow\omega}{\lim}H_{n}$.}  
\end{enumerate}

\begin{proof}
By lemma \ref{11}, definition \ref{DefLimit} and \ref{DefSubLimit}.\bigskip
\end{proof}

An infinitely generated set is a set as well as a countable structure. Therefore, if two IGS are equal, they must satisfy the axiom of extensionality in set theory. In addition, they are isomorphic structures and satisfy the back and forth condition in model theory. So two IGS are equal if and only if their generators at each level are identical. As a result, the axiom of extensionality for infinitely generated sets in $\bar{\mathscr{L}}^{\prime}$ is as follows.

\begin{axiom}
\label{50}\ Suppose $H_{\omega}(\mathcal{G}_{1})$ and $H_{\omega}(\mathcal{G}_{2})$ are two IGS (with a unique limit respectively) where $\,\mathcal{G}_{1}\,=\, \{G_{n}^{1}\colon G_{n}^{1}\in V_{\omega},\,n<\omega\}$ and $\,\mathcal{G}_{2}\,=\,\{G_{n}^{2}\colon G_{n}^{2}\in V_{\omega},\,n<\omega\}$. Then
\[
\left(\forall n<\omega\right)\left(G_{n}^{1}=G_{n}^{2}\right)\,\Longrightarrow\,H_{\omega}(\mathcal{G}_{1}) = H_{\omega}(\mathcal{G}_{2})
\]
\end{axiom}

The following concept is significant for the rest discussion.

\begin{definition}
\label{DefOmegaInvariant} \ Suppose $H_{\omega+\gamma}=\{\ast G_{\omega+\gamma},\,H_{\omega+\gamma-1}\}$ where $G_{\omega+\gamma}\in V_{\omega}\left(\gamma\geqslant1\right)$. If for any $\alpha>\omega$, there is a (successor ordinal) $\beta>\alpha$ that $H_{\beta}=H_{\omega}$, then $H_{\omega}$ is called $\mathbf{\omega}$\textbf{-invariant}.
\end{definition}

\begin{remark}
\label{52}\ Generally, an IGS does not have an immediate member, i.e. there is no $z\in H_{\omega}$. The significance of $\omega$-invariance is that a $\omega$-invariant set always has an immediate member. For example, if $H_{\omega}$ is $\omega$-invariant, i.e. there is a $\beta>\omega$ that $H_{\beta}=H_{\omega}$, then $H_{\beta-1}\in H_{\beta}=H_{\omega}$.
\end{remark}

Next, we will study three types of infinitely generated sets that are fundamental in Russell's paradox.

\subsection{Infiniton}\label{SectionInfiniton}

An infiniton is a set that contains itself as the only member, i.e. $I=\{I\}$, a fact that will be proved rigorously next.

\begin{theorem}
\label{38} \ Suppose for each $n<\omega,$ $I_{n}$ is a finitely generated set and $I_{n}=\{I_{n-1}\}$ with $I_{0}\,=\,G_{0}\in V_{\omega}$, $\mathcal{I}_n=\langle \{I_n\},\in,h,G_0\rangle$, $\mathfrak{I}_n=\langle\{I_j\colon j\leqslant n\}, \in,h,G_0\rangle$ and $\mathfrak{I}=\bigcup_{n<\omega}\mathfrak{I}_n$.\footnote{$h$ is the height function in definition \ref{DefHeightFunction}.}  Then
\end{theorem}

\begin{enumerate}
\item $I_{n}\,=\underset{n}{\,\,\underbrace{\{\dots\{G_{0}\}\dots\}}}$.

\item \textit{Th$(\mathfrak{I})$ is $\aleph_{0}$-categorical and has quantifier elimination.}

\item \textit{$\mathfrak{I}$ has only one complete type.}

\item $\underset{n\rightarrow\omega}{\lim}\mathcal{I}_{n}$ \textit{and} $\underset{n\rightarrow\omega}{\lim}\mathfrak{I}_{n}$ \textit{is unique.}
\end{enumerate}

\begin{proof}
\ (i) follows easily by replacing $I_{n}$ recursively $n$ times.\medskip

(ii) \ For any $i<j<\omega$, suppose $f\colon I_{i}\to I_{j}$ is an isomorphism. Clearly, $f\subset g$ where $g\colon I_{n}\to I_{n+j-i}$ (for any $n<\omega$) is an automorphism on $\mathfrak{I}$. So $\mathfrak{I}$ is ultrahomogeneous. By proposition \ref{21} and \ref{22}, Th$(\mathfrak{I})$ is $\aleph_{0}$-categorical and has quantifier elimination.\medskip

(iii)\ Suppose $B=\underset{l}{\,\,\underbrace{\{\dots\{G_{0}\}\dots\}}}$ $(l<\omega)$ and
\begin{equation}
\varphi_n(x)\,\Longleftrightarrow\,\exists!\,y_{n-1}\,\cdots\,\exists!\,y_1\, \exists!\,y_0 \,\bigl(y_0\in y_1\,\wedge\dots\wedge\,y_{n-1}\in x\,\wedge\, y_0=B\bigr)\label{InfinitonOneType}
\end{equation}

Then the validity of $\varphi_n(x)$ means that there is a unique $\in$-sequence of length $n$ in $x$. If $l=0$, $B=G_0$ and $\varphi_n(x)\Leftrightarrow h(x)=R_V(G_0)+n$, indicating that the terminal of the unique $\in$-sequence is $G_0$. Thus $(\varphi_n(x))$ is a $1$-type of $\mathfrak{I}$ for $\mathcal{I}_n\vDash\varphi_n[I_n]$. Next suppose
\begin{equation}
\phi_n(x_1,\dots,x_n)\,\Longleftrightarrow\quad\smashoperator{\bigwedge_{1\leqslant i\leqslant n-1}}\,\,\bigl(x_{i}\in x_{i+1}\wedge h(x_{i+1})=h(x_i)+1\bigr)\label{InfinitonNType}
\end{equation}

Then $(\phi_n(x_1,\dots,x_n))$ is a $n$-type of $\mathfrak{I}$ for $\mathfrak{I}_n\vDash\phi_n[I_1,\dots,I_n]$. Clearly 
\[
\bigl\{\, \varphi_1(x_1),\dots, \varphi_n(x_1),\dots, \phi_2(x_1,x_2),\dots,\phi_n(x_1,x_2, \dots,x_n)\dots\bigr\}
\]
generates a maximum consistent set of formulas involving $G_0$ that is the only complete type of $\mathfrak{I}$.\medskip

(iv)\ By (iii), for any $n<\omega$, $\mathcal{I}_n \vDash \varphi_n[I_n]$, and for any $k>n,\,\mathcal{I}_k\vDash \varphi_n[I_k]$. So $(\mathcal{I}_{n},\varphi_{n})$ is a homogeneous sequence. By (ii) and definition \ref{DefLimit}, $\underset{n\rightarrow\omega}{\lim}\mathcal{I}_{n}$ is unique. Likewise, since for any $k>n$, $\mathfrak{I}_k\vDash \phi_n[I_{k-n+1},\dots,I_k]$, $(\mathfrak{I}_{n},\phi_n)$ is a homogeneous sequence. Thus $\underset{n\rightarrow\omega}{\lim}\mathfrak{I}_{n}$ is unique.
\end{proof}

\begin{definition}
\label{DefInfiniton} \ In theorem \ref{38}, $\underset{n\rightarrow\omega}{\lim}\mathcal{I}_{n}$ is known as the \textbf{infiniton} generated by $G_{0}$ and is denoted as:
\[
\underset{n\rightarrow\omega}{\lim}\mathcal{I}_{n}=\underset{n\rightarrow\omega}{\lim}I_{n}\,=\,I_{\omega}\,=\,\underset{\aleph_{0}}{\underbrace{\{\dots\{G_{0}\}\dots\}}} \,=\,\{G_{0}\}_{\mathcal{I}}
\]
Where $G_{0}$ is a \textbf{base generator} of $I_{\omega}$. $\bar{\mathscr{L}}^{\prime}=\{\in, H_{\omega}, I_{\omega}\}$ after $I_{\omega}$ is added as a constant.\footnote{In the rest discussion, we will no longer distinguish $\underset{n\rightarrow\omega}{\lim}\mathcal{I}_{n}$ and $\underset{n\rightarrow\omega}{\lim}I_{n}$.}
\end{definition}

\begin{definition}
\label{DefSetInfinitons}\ $S|_{\mathcal{I}}\,=\,\{\{G_{0}\}_{\mathcal{I}}\colon G_{0}\in S\}$ is known as the \textbf{set of the infinitons} from $S$.
\end{definition}

\begin{theorem}
\label{30}\ Suppose $I_n$ is the same as in theorem \ref{38} and $\mathfrak{I}^+=\langle\{I_n\colon n\leqslant\omega\},\in,h, G_0\rangle$. Then
\end{theorem}

\begin{enumerate}
\item $I_{\omega}\,=\,\{I_{\omega}\}$.

\item \textit{$I_{\omega}$ is $\omega$-invariant.}

\item \textit{A type of $I_{\omega}$ is that $I_{\omega}$ has a unique $\in$-sequence of length $\omega$ and is the member of itself.}

\item $\underset{n\rightarrow\omega}{\lim}\mathfrak{I}_{n}=\mathfrak{I}^+$ and $\mathfrak{I}^+$ \textit{is atomic}.
\end{enumerate}

\begin{proof}
\ (i) \ By theorem \ref{38}, $I_{\omega}$ is unique. So by corollary \ref{42} and \ref{29}
\[
I_{\omega}\,=\underset{n\rightarrow\omega}{\lim}I_{n}\,=\underset{n\rightarrow\omega}{\lim}\{I_{n-1}\}\,=\, \{\underset{n\rightarrow\omega}{\lim}I_{n-1}\}\,=\,\{I_{\omega}\}
\]

(ii) \ Obviously, for any $\alpha$ that $\omega<\alpha<\omega2$
\[
I_{\alpha}\,=\,\{I_{\alpha-1}\}\,=\dots=\,\underset{\alpha-\omega}{\underbrace{\{\dots\{I_{\omega}\}\dots\}}}\,=\, I_{\omega}
\]

Then by transfinite induction, for any $\alpha,\,I_{\alpha}\,=\,I_{\omega}$. So (ii) follows by definition \ref{DefOmegaInvariant}.\medskip

(iii) \ Let $\left\langle I_{j}\colon j\leqslant n\right\rangle$ be the unique $\in$-sequence of length $n$ in $I_{n}$. Then by (\ref{InfinitonOneType})
\[
\varphi_{n}(I_n)\,\Longleftrightarrow\,\exists!\,I_{n-1}\,\cdots\,\exists!\,I_{0}\,\bigl(\,\,\,\smashoperator{\bigwedge_{1\leqslant j\leqslant n-1}}\,(I_{j-1} \in I_{j})\wedge I_0=G_0\bigr)\wedge\,\exists!\,I_{n-1}(I_{n-1}\in I_{n})
\]

Since $\underset{n\rightarrow\omega}{\lim}I_{n}\,=\,I_{\omega}$, by axiom \ref{15}, corollary \ref{17} and \ref{67}
\[
\varphi_{\omega}\,\Longleftrightarrow\underset{n\rightarrow\omega}{\lim}\,\varphi_{n}\,\Longleftrightarrow\underset{n<\omega}{\displaystyle\bigwedge}\exists! \,I_{n}\,\exists!\,I_{n-1}\left(I_{n-1}\in I_{n}\right)\,\wedge\,\exists!\,I_0\,(I_0=G_0)\,\wedge\,\exists!\,I_{\omega}\left(I_{\omega}\in I_{\omega}\right)
\]

By theorem \ref{71}, $I_{\omega}\vDash\varphi_{\omega}$ where $\varphi_{\omega}$ defines $\left\langle I_{n}\colon n\leqslant\omega\right\rangle$, the unique $\in$-sequence of length $\omega$ in
$I_{\omega}$. \medskip

(iv) \ Fix $k<\omega$, for any $n>k$, $I_k\in\mathfrak{I}_n$. By theorem \ref{38}(iv) and axiom \ref{19}, $\underset{n\rightarrow\omega}{\lim}(I_k\in\mathfrak{I}_{n})\Leftrightarrow I_k\in\underset{n\rightarrow\omega}{\lim}\mathfrak{I}_{n}$. So by corollary \ref{18}, for any $k<\omega$, $I_k\in\underset{n\rightarrow\omega}{\lim}\mathfrak{I}_{n}$. By corollary \ref{20}, $I_{\omega}=\underset{k\rightarrow\omega}{\lim}I_k\in\underset{n\rightarrow\omega}{\lim}\mathfrak{I}_{n}$ and so $\mathfrak{I}^+\subset\underset{n\rightarrow \omega}{\lim}\mathfrak{I}_{n}$. On the other hand, by theorem \ref{38}(iv), $\underset{n\rightarrow\omega}{\lim}\mathfrak{I}_{n}$ is unique. Thus $\underset{n\rightarrow \omega}{\lim}\mathfrak{I}_{n}\subset\mathfrak{I}^+$ and $\underset{n\rightarrow \omega}{\lim}\mathfrak{I}_{n}=\mathfrak{I}^+$. So by (\ref{InfinitonNType}), corollary \ref{17} and \ref{14}
\begin{align*}
\phi_{\omega}\,\Longleftrightarrow\,\underset{n\rightarrow\omega}{\lim}\,\phi_{n}\,&\,\Longleftrightarrow\,\underset{n\rightarrow\omega}{\lim}\quad\smashoperator{\bigwedge_{1\leqslant i\leqslant n-2}}\,\,\bigl(x_{i}\in x_{i+1}\,\wedge\, h(x_{i+1})=h(x_i)+1\bigr)
\\
&\qquad\,\,\,\wedge\,\underset{n\rightarrow\omega}{\lim}\bigl(x_{n-1}\in x_{n}\,\wedge\, h(x_{n})=h(x_{n-1})+1\bigr)
\\
\,&\Longleftrightarrow\underset{n<\omega}{\displaystyle\bigwedge}\bigl(x_{n-1}\in x_{n}\,\wedge\, h(x_{n})=h(x_{n-1})+1\bigr)\,\wedge\,\bigl(x_{\omega}\in x_{\omega}\bigr)
\end{align*}
where $h$ is the height function (definition \ref{DefHeightFunction}) and $\underset{n\rightarrow\omega}{\lim}x_n=x_{\omega}$. Since $\mathfrak{I}^+\vDash\phi_{\omega}$, any tuple from $\{I_n\colon n\leqslant\omega\}$ satisfies $\phi_{\omega}$. By theorem \ref{38}(iii), Th$(\mathfrak{I}^+)$ has only one type. So by proposition \ref{25},  $\mathfrak{I}^+$ is atomic with $\phi_{\omega}$ being its complete formula. Since any type of Th$(\mathfrak{I}^+)$ can be derived from it,  $\phi_{\omega}$ is a $\omega$-type with countable free variables.\smallskip
\end{proof}

\begin{corollary}
\label{31} \ Suppose $I_{0}^{\prime}\,=\underset{k}{\,\underbrace{\{\dots\{G_{0}\}\dots\}}}$ and $I_{n}^{\prime}= \{I_{n-1}^{\prime}\}$ for each $n<\omega$. Then $\underset{n\rightarrow\omega}{\lim}I_{n}^{\prime}\,= \underset{n\rightarrow\omega}{\lim}I_{n}$.
\end{corollary}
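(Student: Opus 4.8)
The plan is to reduce the claim to the shift-invariance of the limit already recorded in Corollary \ref{42}(ii). First I would identify the two sequences explicitly. By Theorem \ref{38}(i) the original sequence is $I_{n} = \underset{n}{\underbrace{\{\cdots\{G_{0}\}\cdots\}}}$, so the hypothesis $I_{0}' = \underset{k}{\underbrace{\{\cdots\{G_{0}\}\cdots\}}}$ says precisely $I_{0}' = I_{k}$. Since $I_{n}' = \{I_{n-1}'\}$, a one-line induction then gives $I_{n}' = I_{n+k}$ for every $n<\omega$; in other words the primed sequence is the original one with its first $k$ terms deleted and reindexed. I would also observe that $I_{0}' = I_{k} \in V_{\omega}$, because finitely many applications of $\{\,\cdot\,\}$ to $G_{0}\in V_{\omega}$ remain in $V_{\omega}$, so the sequence $I_{n}'$ satisfies the hypotheses of Theorem \ref{38} and hence $\underset{n\rightarrow\omega}{\lim}I_{n}'$ is unique.

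With both $\underset{n\rightarrow\omega}{\lim}I_{n}$ and $\underset{n\rightarrow\omega}{\lim}I_{n}'$ unique (Theorem \ref{38}(iii)), I would apply Corollary \ref{42}(ii) to the sequence $\mathfrak{M}_{n} = I_{n}'$ with $p = k$. Because $\mathfrak{M}_{n-k} = I_{n-k}' = I_{(n-k)+k} = I_{n}$, the corollary yields
\[
\underset{n\rightarrow\omega}{\lim}I_{n} \,=\, \underset{n\rightarrow\omega}{\lim}\,\mathfrak{M}_{n-k} \,=\, \underset{n\rightarrow\omega}{\lim}\,\mathfrak{M}_{n} \,=\, \underset{n\rightarrow\omega}{\lim}I_{n}',
\]
which is exactly the assertion. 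Conceptually this holds because, under the cofinite topology defining a neighborhood of $\omega$ (Definition \ref{DefNeighborhood}), a limit is determined only by the tail of its sequence, and excising the first $k$ terms leaves that tail unchanged; the complete formula $\bigwedge_{N<n<\omega}\phi_{n}$ supplied by Theorem \ref{71} is insensitive to a finite index shift.

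The computation is routine, so the only real hazard is the index bookkeeping: I must verify that the shift is $I_{n}' = I_{n+k}$ rather than $I_{n-k}$, and that Corollary \ref{42}(ii) is applied to the \emph{primed} sequence, so that the subtraction $n-p$ in the corollary lands back on $I_{n}$. A secondary but necessary point is the uniqueness bookkeeping: since Corollary \ref{42} presupposes that the relevant limits exist uniquely before its shift identity may be invoked, I must confirm $I_{0}' \in V_{\omega}$ so that Theorem \ref{38} legitimately furnishes uniqueness of $\underset{n\rightarrow\omega}{\lim}I_{n}'$.
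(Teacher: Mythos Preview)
Your proposal is correct and follows essentially the same route as the paper: identify $I_{n}' = I_{n+k}$ and then invoke Corollary~\ref{42}. The paper's proof is just the two-line version of this, omitting the explicit uniqueness check and index bookkeeping that you (rightly) spell out.
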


\begin{proof}
\ Since $I_{n}\,=\underset{n}{\,\underbrace{\{\dots\{G_{0}\}\dots\}}}$, $I_{n}^{\prime}=I_{n+k}$. So it follows by corollary \ref{42}.\smallskip
\end{proof}

\begin{remark}
\ Corollary \ref{31} shows that different base generators could generate the same infiniton. By choosing the one with the least rank, an infiniton is unique to its base generator.
\end{remark}

\begin{definition}
\label{DefBaseGeneratorInfiniton} \ The \textbf{base generator} $G_{0}$ of an infiniton $I_{\omega}$ is the one with the least rank (in $V_{\omega}$).
\end{definition}

\begin{corollary}
\label{32} \ $2$ \textit{infinitons are identical if and only if their base generators are the same.}
\end{corollary}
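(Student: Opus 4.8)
The plan is to prove the biconditional by treating the two implications separately, since the ``if'' direction is essentially definitional while the ``only if'' direction carries the real content.

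For sufficiency, suppose the two infinitons share the same base generator $G_0$. By definition \ref{DefInfiniton} each is the limit $\underset{n\rightarrow\omega}{\lim}I_n$ of the sequence determined by $I_0=G_0$ and $I_n=\{I_{n-1}\}$, so identical base generators produce identical approximating sequences and hence, by the uniqueness of the limit (theorem \ref{38}(iii)), identical infinitons. Equivalently, I would read this off axiom \ref{50}: the generator sequences of two infinitons agree in the base term $G_0$ and are otherwise trivial, so termwise equality of the generators forces equality of the infinitons.

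For necessity I would argue that an infiniton canonically determines its least-rank base generator, whence equal infinitons must have equal base generators. The tool is theorem \ref{30}(iii): $I_\omega$ carries a \emph{unique} $\in$-sequence of length $\omega$, namely $\gamma_\omega=\langle I_n\colon n\leqslant\omega\rangle$ whose $\in$-minimal element is $I_0=G_0$. If $I_\omega=I_\omega'$, then the two objects are the same set and therefore carry the same unique $\omega$-length $\in$-sequence, so their bottom elements agree and $G_0=G_0'$.

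The step that needs care --- and that I expect to be the main obstacle --- is verifying that the bottom of this unique $\in$-sequence really is the \emph{least-rank} base generator, rather than some intermediate node that could be unwound further. Here I would invoke corollary \ref{31} together with definition \ref{DefBaseGeneratorInfiniton}: were the base generator of the form $G_0=\{X\}$, then by corollary \ref{31} the set $X$ would be a base generator of the very same infiniton but of strictly smaller rank, contradicting minimality. Thus a least-rank base generator is never a singleton, so it is precisely the terminal node of the singleton chain read off from $\gamma_\omega$; combined with the uniqueness of that chain this pins down $G_0=G_0'$ and closes the necessity direction.
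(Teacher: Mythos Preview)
Your proof is correct and follows essentially the same approach as the paper: sufficiency via equality of the approximating sequences (the paper cites corollary \ref{42} where you cite theorem \ref{38}(iii)/axiom \ref{50}, but the content is the same), and necessity via theorem \ref{30}'s unique $\in$-sequence of length $\omega$ together with definition \ref{DefBaseGeneratorInfiniton}. Your additional paragraph verifying that the bottom of the unique chain really is the least-rank generator is more careful than the paper, which simply invokes definition \ref{DefBaseGeneratorInfiniton} at that step without spelling out the singleton-unwinding argument.
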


\begin{proof}
\ Suppose $I_{n}\,=\underset{n}{\,\underbrace{\{\dots\{G_{0}\}\dots\}}}$ and $I_{n}^{\prime}\,=\underset{n} {\,\underbrace{\{\dots\{G_{0}^{\prime}\}\dots\}}}$. If $G_{0}=G_{0}^{\prime}$, then $I_{n}=I_{n}^{\prime}$ for any $n<\omega$. So by corollary \ref{42}, $I_{\omega}=I_{\omega}^{\prime}$. Conversely, by theorem \ref{30}, if $I_{\omega}=I_{\omega}^{\prime}$, then $I_{\omega}$ and $I_{\omega}^{\prime}$ have the same $\in$-sequence of length $\omega$. Thus by definition \ref{DefBaseGeneratorInfiniton}, $G_{0}=G_{0}^{\prime}$.
\end{proof}

\begin{corollary}
\ Suppose $I$ is an infiniton and $S|_{\mathcal{I}}$ a set of infinitons.
\end{corollary}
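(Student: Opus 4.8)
The plan is to derive the stated properties from two facts already in hand: the self-identity $I_{\omega}=\{I_{\omega}\}$ of theorem \ref{30}(i), and the criterion that two infinitons coincide exactly when their least-rank base generators agree (corollary \ref{32} together with definition \ref{DefBaseGeneratorInfiniton}). First I would record the local properties of a single infiniton $I$. From $I=\{I\}$ we get $I\in I$ at once, so $\cdots\in I\in I\in I$ is an $\in$-sequence of length $\geqslant\omega$ and $I$ is non-well-founded by definition \ref{DefWFAndNWFSet}; and because $I=\{I\}$ admits no finite branch --- its unique branch is the length-$\omega$ sequence $\langle I_{n}\colon n\leqslant\omega\rangle$ exhibited in theorem \ref{30}(iii) --- $I$ is in fact totally non-well-founded. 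The same self-membership forces $D(I)=\aleph_{0}$: a finite dimension would violate theorem \ref{47}(iii), which bans $S\in S$ at finite membership dimension, while the corollary following theorem \ref{47} yields $D(I)=\aleph_{0}$ outright. Finally the $\omega$-invariance of theorem \ref{30}(ii), read through remark \ref{52}, supplies $I$ as its own immediate member.

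For the statements about $S|_{\mathcal{I}}=\{\{G_{0}\}_{\mathcal{I}}\colon G_{0}\in S\}$ I would study the generating map $\iota\colon G_{0}\mapsto\{G_{0}\}_{\mathcal{I}}$. By construction $\iota$ is surjective onto $S|_{\mathcal{I}}$, and corollary \ref{32} makes it injective once it is restricted to the least-rank base generators of definition \ref{DefBaseGeneratorInfiniton}. Hence $\lvert S|_{\mathcal{I}}\rvert$ equals the number of least-rank representatives occurring in $S$; in particular $\lvert S|_{\mathcal{I}}\rvert\leqslant\lvert S\rvert$, with equality precisely when $S$ contains no two elements lying on a single infinite branch (no pair $G_{0}$ and $\{\cdots\{G_{0}\}\cdots\}$). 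That each element of $S|_{\mathcal{I}}$ is itself non-well-founded is just the single-infiniton analysis above applied pointwise.

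The main obstacle is the bookkeeping around this non-injectivity, which is exactly the content of corollary \ref{31}: since $G_{0}$ and any $\{\cdots\{G_{0}\}\cdots\}$ generate one and the same $I_{\omega}$, the clean cardinality statement holds only after passing to least-rank base generators, and I would first have to check that this passage is well-defined --- that every infiniton has a unique least-rank base generator in $V_{\omega}$, which definition \ref{DefBaseGeneratorInfiniton} and corollary \ref{32} jointly guarantee. Beyond this reduction, the remaining clauses are direct substitutions into $I=\{I\}$ and a single appeal to the membership-dimension bound of theorem \ref{47}.
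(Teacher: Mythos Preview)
The corollary you were asked to prove has four enumerated parts: (i) $I\neq\varnothing$; (ii) $D(I)=D(S|_{\mathcal{I}})=\aleph_{0}$; (iii) any infiniton and any set of infinitons is TNWF; (iv) $I\notin V$. Your first paragraph essentially recovers (i), the $D(I)=\aleph_{0}$ half of (ii), and (iii) for a single $I$, by the same route the paper uses: $I=\{I\}$ gives $I\in I$, hence $I\neq\varnothing$ and an infinite $\in$-sequence, and finite membership dimension is ruled out by theorem~\ref{47}. That part is fine and matches the paper.

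The difficulty is that your second and third paragraphs prove something the corollary does not ask for. The cardinality analysis of the map $\iota\colon G_{0}\mapsto\{G_{0}\}_{\mathcal{I}}$, the bound $\lvert S|_{\mathcal{I}}\rvert\leqslant\lvert S\rvert$, and the discussion of least-rank representatives are all extraneous here; none of (i)--(iv) concerns the size of $S|_{\mathcal{I}}$. Meanwhile you omit three required clauses: the $D(S|_{\mathcal{I}})=\aleph_{0}$ half of (ii), which the paper gets in one line from $D(S|_{\mathcal{I}})\geqslant D(I)$; the TNWF claim for $S|_{\mathcal{I}}$ itself in (iii) --- you only note that each \emph{element} is non-well-founded, whereas what is needed is that every branch of $S|_{\mathcal{I}}$ is infinite, which follows because any branch passes through some $I\in S|_{\mathcal{I}}$ and then inherits an infinite tail; and (iv) $I\notin V$, which is immediate from (iii) and lemma~\ref{1}. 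Strip out the cardinality digression and add these three short observations, and your argument becomes the paper's.
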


\begin{enumerate}
\item $I\neq\varnothing$.

\item $D(I)\,=\,D(S|_{\mathcal{I}})\,=\,\aleph_{0}$.

\item \textit{Any infiniton and set of infinitons are TNWF.}

\item $I\notin V$.
\end{enumerate}

\begin{proof}
\ (i) \ Since $I\in I$ and $\varnothing\notin\varnothing,\,I\neq\varnothing$.\medskip

(ii) \ If $D(I)<\aleph_{0}$, then by (\ref{DefMembershipDimension}), $D(I)<D\left(\{I\}\right)=D(I)$, contradiction. And by definition \ref{DefSetInfinitons}, $D(S|_{\mathcal{I}})\geqslant D(I)\,=\,\aleph_{0}$.\medskip

(iii) \ By definition \ref{DefWFAndNWFSet}, \ref{DefInfiniton} and \ref{DefSetInfinitons}.\medskip

(iv) \ By lemma \ref{1} and (iii).\bigskip
\end{proof}

The tree structures for infinitons and a set of infinitons are shown in Figure \ref{Fig1}. Intuitively, any infiniton consists of one infinite (broken) branch, and all branches of a set of infinitons are infinite.

\begin{figure}[h]
	\centering
	\includegraphics{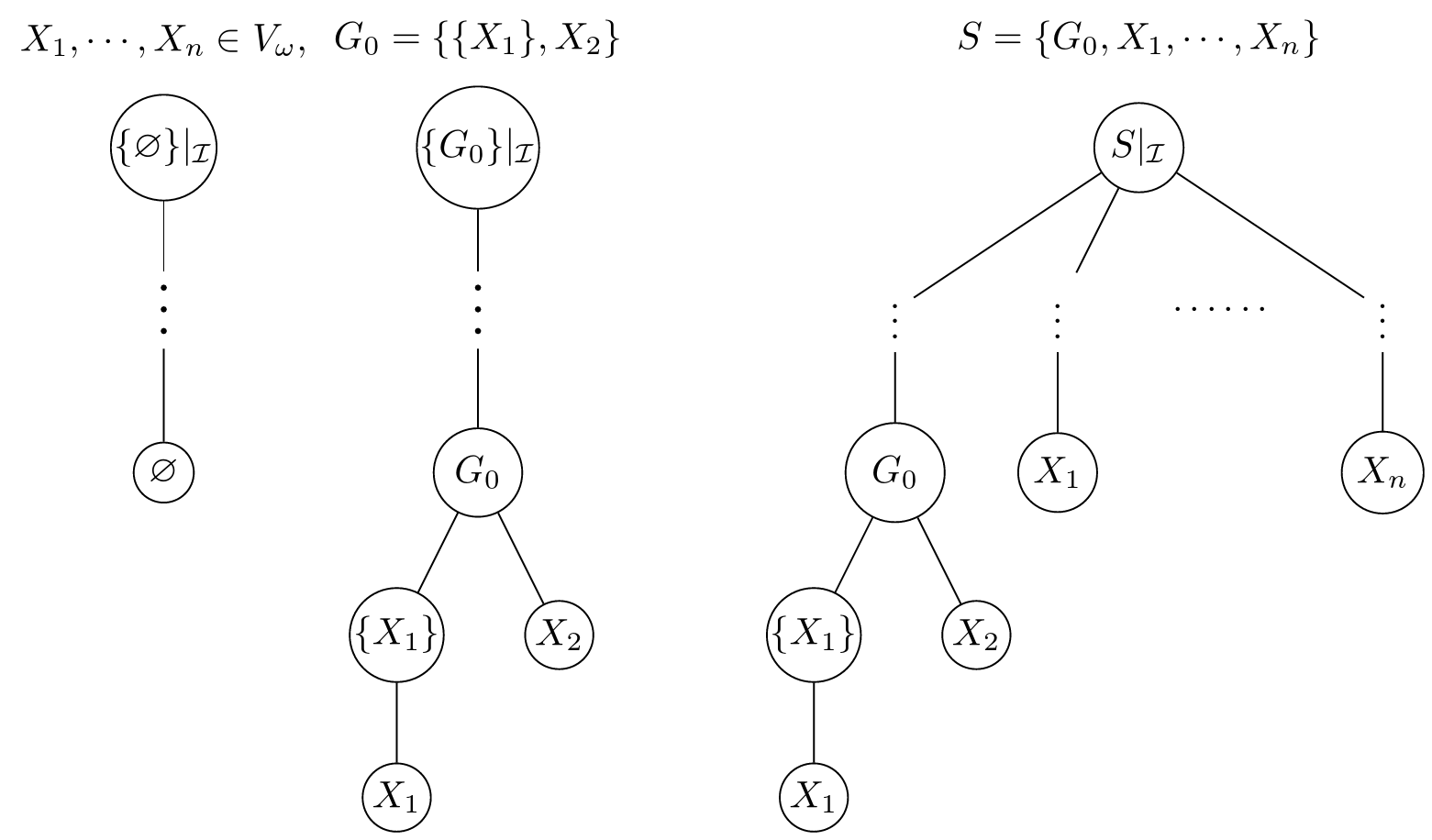}
	\caption{Diagrams of infinitons and a set of infinitons.}
	\label{Fig1}
	\centering
\end{figure}

\begin{theorem}
\qquad
\end{theorem}

\begin{enumerate}
\item $S_{1}\subset S_{2}\,\Longrightarrow\,S_{1}|_{\mathcal{I}}\subset S_{2}|_{\mathcal{I}}$

\item $\left(S_{1}\cup S_{2}\right)|_{\mathcal{I}}\,=\,S_{1}|_{\mathcal{I}}\cup S_{2}|_{\mathcal{I}}$

\item $\left(S_{1}\cap S_{2}\right)|_{\mathcal{I}}\,\subset S_{1}|_{\mathcal{I}}\cap S_{2}|_{\mathcal{I}}$

\item $S_{1}|_{\mathcal{I}}-S_{2}|_{\mathcal{I}}\,\subset\left(S_{1}-S_{2}\right)|_{\mathcal{I}}$

\item $\Bigl(\underset{\alpha\in D}{\displaystyle\bigcup}S_{\alpha}\Bigr)\Big\vert_{\mathcal{I}}\,= \underset{\alpha\in D}{\displaystyle\bigcup}S_{\alpha}\vert_{\mathcal{I}}$
\end{enumerate}

\begin{proof}
\ (i) is obvious.\medskip

(ii) \ $``\supset\textquotedblright$ follows from (i) for $\left(S_{1}\cup S_{2}\right) |_{\mathcal{I}}\supset S_{1}|_{\mathcal{I}}$ and $\left(S_{1}\cup S_{2}\right)|_{\mathcal{I}}\supset S_{2}|_{\mathcal{I}}$. For any $\{G_{0}\}|_{\mathcal{I}}\in\left(S_{1}\cup S_{2}\right)|_{\mathcal{I}}$, $G_{0}\in S_{1}\cup S_{2}$. So $\{G_{0}\}|_{\mathcal{I}}\in S_{1}|_{\mathcal{I}}$ or $\{G_{0}\}|_{\mathcal{I}}\in S_{2}|_{\mathcal{I}}$. This proves $``\subset\textquotedblright$.\medskip

(iii) \ By (i).\medskip

(iv) \ For any $\{G_{0}\}|_{\mathcal{I}}\in S_{1}|_{\mathcal{I}}-S_{2}|_{\mathcal{I}}$, $G_{0}\in S_{1}$ and $G_{0}\notin S_{2}$. So $\{G_{0}\}|_{\mathcal{I}}\in\left(S_{1}-S_{2}\right)|_{\mathcal{I}}$.\medskip

(v) \ By (i), for any $\alpha\in D$
\[
\left.S_{\alpha}\right\vert_{\mathcal{I}}\,\subset\Bigl(\underset{\alpha\in D}{\displaystyle\bigcup}S_{\alpha}\Bigr)  \Big\vert_{\mathcal{I}}\quad\text{and so}\quad\underset{\alpha\in D}{\displaystyle\bigcup}\left.S_{\alpha}\right\vert _{\mathcal{I}}\,\subset\Bigl(\underset{\alpha\in D}{\displaystyle\bigcup}S_{\alpha}\Bigr)\Big\vert_{\mathcal{I}}
\]

For any $\{G_{0}\}|_{\mathcal{I}}\,\in\Bigl(\underset{\alpha\in D}{\displaystyle\bigcup}S_{\alpha}\Bigr)\Big\vert _{\mathcal{I}}$, \ there is a $\beta\in D$ that $G_{0}\in S_{\beta}$. So
\[
\{G_{0}\}|_{\mathcal{I}}\in\left.S_{\beta}\right\vert_{\mathcal{I}}\,\subset\underset{\alpha\in D}{\displaystyle\bigcup} \left.S_{\alpha}\right\vert_{\mathcal{I}}
\]

Thus (v) follows.\smallskip
\end{proof}

\begin{corollary}
\label{33} \ The axiom of regularity fails for any infiniton and set of infinitons.
\end{corollary}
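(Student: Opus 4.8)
The plan is to recall the standard form of the axiom of regularity and then exhibit, in each of the two cases, the failure of its conclusion. Regularity asserts that every nonempty set $S$ has an $\in$-minimal element, i.e. $S\neq\varnothing\Rightarrow\exists x\in S\,(x\cap S=\varnothing)$. To refute it for a given set it suffices to show that the set is nonempty and yet every one of its elements meets it.

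For a single infiniton $I$, I would first invoke Theorem \ref{30}(i), which gives $I=\{I\}$, and observe that $I$ is therefore nonempty, since $I\in I$. The only element of $I$ is $I$ itself, so the sole candidate for an $\in$-minimal element is $x=I$; but $x\cap I=I\cap I=I=\{I\}\neq\varnothing$, because $I\in I$. Hence no element of $I$ is $\in$-minimal and regularity fails.

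For a set of infinitons $S|_{\mathcal{I}}$, I would argue uniformly. Let $x\in S|_{\mathcal{I}}$ be arbitrary. By Definition \ref{DefSetInfinitons}, $x$ is an infiniton, so by Theorem \ref{30}(i) we have $x=\{x\}$, whose unique member is $x$ itself, and $x\in S|_{\mathcal{I}}$ by assumption. Thus $x\in x\cap S|_{\mathcal{I}}$, so $x\cap S|_{\mathcal{I}}\neq\varnothing$. Since $S|_{\mathcal{I}}$ is nonempty whenever $S$ is (each $G_{0}\in S$ yields an infiniton $\{G_{0}\}_{\mathcal{I}}\in S|_{\mathcal{I}}$) and no element of $S|_{\mathcal{I}}$ is $\in$-minimal, regularity again fails.

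There is essentially no obstacle here: the whole content is packaged in the identity $I=\{I\}$ established in Theorem \ref{30}(i). The only point requiring a little care is the set-of-infinitons case, where one must notice that the unique member of each infiniton in $S|_{\mathcal{I}}$ is that very infiniton and therefore still lies inside $S|_{\mathcal{I}}$; this is exactly what forces every element to meet the set. For completeness I would add the remark that this confirms the incompatibility of regularity with the presence of such non-well-founded sets, in line with infinitons and sets of infinitons being \textbf{TNWF}.
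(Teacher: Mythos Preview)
Your proof is correct and follows essentially the same approach as the paper: both arguments use Theorem \ref{30}(i) to get $I=\{I\}$, then show that the unique element of each infiniton lies back in the ambient set, so no $\in$-minimal element exists. Your version is slightly more explicit (you spell out $I\cap I=I\neq\varnothing$ and note the nonemptiness of $S|_{\mathcal{I}}$), whereas the paper records the stronger equality $I_{k}\cap S|_{\mathcal{I}}=I_{k}$, but the logical content is identical.
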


\begin{proof}
\ Suppose $I=\{I\}$ and $S|_{\mathcal{I}}=\{I_{k}\colon I_{k}=\{I_{k}\}\,\wedge\,k\in\mathbb{N}\}$. AR fails for $I$ since $I\in I$. For each $I_{k}\in S|_{\mathcal{I}}$, $I_{k}\cap S|_{\mathcal{I}}\,=\,I_{k}$ for $I_{k}\in I_{k}$. Since no $y\in S|_{\mathcal{I}}$ satisfies $y\cap S|_{\mathcal{I}}=\varnothing$, AR fails for $S|_{\mathcal{I}}$ too.
\end{proof}

\begin{corollary}
\label{34} \ \textit{Any set of infinitons is not a member of itself.}
\end{corollary}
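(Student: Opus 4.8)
The plan is to argue by contradiction, exploiting the fact (theorem \ref{30}) that an infiniton is characterized by having a single, self-referential member. Suppose $S|_{\mathcal{I}}\in S|_{\mathcal{I}}$. By definition \ref{DefSetInfinitons} every member of $S|_{\mathcal{I}}$ is an infiniton of the form $\{H_0\}_{\mathcal{I}}$ with $H_0\in S$; hence the assumed self-membership forces $S|_{\mathcal{I}}$ to itself be such an infiniton, say $S|_{\mathcal{I}}=\{G_0\}_{\mathcal{I}}$ for some $G_0\in S$. The strategy is to show that this structural demand is incompatible with $S|_{\mathcal{I}}$ being a genuine collection of infinitons.

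First I would pin down the internal structure this forces. By theorem \ref{30}(i) we have $\{G_0\}_{\mathcal{I}}=\{\{G_0\}_{\mathcal{I}}\}$, so $S|_{\mathcal{I}}=\{S|_{\mathcal{I}}\}$ has exactly one member, and by theorem \ref{30}(iii) it admits a \emph{unique} $\in$-sequence of length $\omega$. On the other hand, the members of $S|_{\mathcal{I}}$ are precisely the infinitons $\{H_0\}_{\mathcal{I}}$ for $H_0\in S$, and by corollary \ref{32} distinct base generators yield distinct infinitons. Each such member is TNWF and therefore contributes its own infinite branch, i.e. its own $\in$-sequence of length $\omega$ inside $S|_{\mathcal{I}}$.

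The decisive step is then a counting argument: if $S$ determines two or more distinct base generators, $S|_{\mathcal{I}}$ has two or more distinct members and hence two or more distinct $\in$-sequences of length $\omega$, contradicting the uniqueness granted by theorem \ref{30}(iii). This rules out $S|_{\mathcal{I}}\in S|_{\mathcal{I}}$ whenever $S|_{\mathcal{I}}$ comprises more than one infiniton, and the uniqueness of the length-$\omega$ $\in$-sequence is the crucial obstruction doing the work.

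The main obstacle I anticipate is the degenerate case in which $S$ yields only a single infiniton $J=\{G_0\}_{\mathcal{I}}$. Then $S|_{\mathcal{I}}=\{J\}$, and since $J=\{J\}$ by theorem \ref{30}(i) the singleton collapses to $S|_{\mathcal{I}}=J$, so $S|_{\mathcal{I}}$ literally \emph{is} an infiniton and is its own member. I therefore expect the statement to be intended for sets $S|_{\mathcal{I}}$ containing at least two distinct infinitons (equivalently, $S$ with at least two distinct base generators up to the identification of corollary \ref{32}); the counting argument above then applies without exception, and I would either make this hypothesis explicit or, to cover the degenerate case within the stated generality, argue that a one-element $S|_{\mathcal{I}}$ is not a \emph{proper} set of infinitons but coincides with an infiniton, whose non-self-membership for sets of infinitons is vacuous.
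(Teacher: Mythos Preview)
Your argument is essentially the paper's: assume $S|_{\mathcal{I}}\in S|_{\mathcal{I}}$, conclude $S|_{\mathcal{I}}$ is some infiniton $I_j=\{I_j\}$, hence a singleton, which forces all the (supposedly distinct) infinitons in $S|_{\mathcal{I}}$ to coincide. The detour through theorem~\ref{30}(iii) and counting $\in$-sequences is unnecessary---you already have the contradiction from cardinality once you note $S|_{\mathcal{I}}=\{S|_{\mathcal{I}}\}$ has exactly one element while $S|_{\mathcal{I}}$ was assumed to contain at least two distinct infinitons; the paper's proof uses exactly this simpler count.

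Your observation about the degenerate case is correct and worth flagging: if $S$ yields a single infiniton $J$, then $S|_{\mathcal{I}}=\{J\}=J$ is literally an infiniton and \emph{is} a member of itself. The paper's proof has the same hidden assumption---it writes $S=\{I_1,I_2,\cdots\}$ with the $I_k$ \emph{distinct} and derives a contradiction only from that distinctness---so the stated corollary should be read as applying to sets of at least two distinct infinitons, exactly as you surmised.
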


\begin{proof}
\ Let $S=\{I_{1},I_{2},\dots\}$ where $I_{k}$ are distinct infinitons. If $S\in S$, then $S=I_{j}=\{I_{j}\}$. This leads to $I_{1}=I_{2}=\dots=I_{j}$, contradiction.\bigskip
\end{proof}

Corollary \ref{34} can be extended to any set like $\{\{\{I_{1},I_{2}\},I_{3}\},I_{4}\},$ i.e. all sets of infinitons, all subset of sets of infinitons and so on (corollary \ref{62}).

\subsection{Semi-Infiniton}\label{SectionSemiInfiniton}

A semi-infiniton is a set that is a member of itself, i.e. $X\in X$. From (\ref{SemiInfiniton0}) and (\ref{InfGeneratedSet}), we can see that an infinitely generated set with only one principal generator is a semi-infiniton.

\begin{theorem}
\label{37} \ Suppose for each $n<\omega,\,Z_{n}$ is a finitely generated set that $Z_{n}=\{\ast G,Z_{n-1}\}$ with $Z_{0}\,=\,G_{0}\in V_{\omega}$ and $G\in V_{\omega}$ where $R_V(G_0)\geqslant R_V(G)$\footnote{If $R_V(G_0)< R_V(G)$, let $G_0=\{*G,G_0\}$.}, $\mathcal{Z}_n=\langle \{Z_n\},\in,h,G_0,G\rangle$, $\mathfrak{Z}_n=\langle\{Z_j\colon j\leqslant n\}, \in,h,G_0,G\rangle$ and $\mathfrak{Z}=\bigcup_{n<\omega}\mathfrak{Z}_n$. Then
\end{theorem}

\begin{enumerate}
\item $Z_{n}\,=\,\underset{n}{\underbrace{\{\ast G,\{\ast G,\dots\{\ast G,G_{0}\}\dots\}}}$.

\item \textit{Th$(\mathfrak{Z})$ is $\aleph_{0}$-categorical and has quantifier elimination.}

\item \textit{$\mathfrak{Z}$ has only one complete type.}

\item $\underset{n\rightarrow\omega}{\lim}\mathcal{Z}_{n}$ \textit{and} $\underset{n\rightarrow\omega}{\lim}\mathfrak{Z}_{n}$ \textit{is unique.}
\end{enumerate}

\begin{proof}
\ (i) follows easily by replacing $Z_{n}$ recursively $n$ times.\medskip

(ii) \ For any $i<j<\omega$, suppose $f\colon Z_{i}\to Z_{j}$ is an isomorphism by mapping $Z_{i-1}$ to $Z_{j-1}$ and $G$ to $G$. Clearly, $f\subset g$ where $g\colon Z_{n}\to Z_{n+j-i}$ (for any $n<\omega$) is an automorphism on $\mathfrak{Z}$. So $\mathfrak{Z}$ is ultrahomogeneous. By proposition \ref{21} and \ref{22}, Th$(\mathfrak{Z})$ is $\aleph_{0}$-categorical and has quantifier elimination.\medskip

(iii)\ Suppose $\mu(y)\Leftrightarrow (\forall z\in G)(z\in y)$,  $C=\underset{l}{\underbrace{\{\ast G,\{\ast G,\dots\{\ast G,G_{0}\}\dots\}}}\:(l<\omega)$ and $C=*\{*G,G_0\}$ for $l=0$, and
\begin{equation}
\delta_{n}(x)\,\Longleftrightarrow\,\exists!\,y_{n-1}\,\cdots\, \exists!\,y_{0}\,\bigl(\,\,\,\smashoperator{\bigwedge_{1\leqslant i\leqslant n-1}}\,\, \bigl(y_{i-1}\in y_{i}\wedge\mu(y_i)\bigr)\wedge \bigl(y_0=C\wedge y_{n-1}\in x\wedge\mu(x)\bigr)\bigr)\label{SemiInfinitonOneType}
\end{equation} 
Then the validity of $\delta_n(x)$ means that there is a unique $\in$-sequence of length $n$ in $x$, each sublevel of which satisfies $\mu$. If $l=0$, $\delta_n(x)\Leftrightarrow h(x)=R_V(G_0)+n$ ($h(x)$ is the height function of $x$ as in definition \ref{DefHeightFunction}), indicating that the terminals of the unique $\in$-sequence are $G_0$ and any of $G$. Thus $(\delta_n(x))$ is a $1$-type of $\mathfrak{Z}$ for $\mathcal{Z}_n\vDash\delta_n[Z_n]$. Next suppose
\begin{equation}
\lambda_n(x_1,\dots,x_n)\,\Longleftrightarrow\,\,\,\smashoperator{\bigwedge_{1\leqslant i\leqslant n-1}}\,\,\bigl(x_{i}\in x_{i+1}\,\wedge\,\mu(x_{i+1})\,\wedge\,h(x_{i+1})=h(x_i)+1\bigr)\label{SemiInfinitonNType}
\end{equation}
Then $(\lambda_n(x_1,\dots,x_n))$ is a $n$-type of $\mathfrak{Z}$ for $\mathfrak{Z}_n\vDash\lambda_n[Z_1,\dots,Z_n]$. Clearly 
\[
\bigl\{\,\delta_1(x_1),\dots, \delta_n(x_1),\dots,\lambda_2(x_1,x_2),\dots, \lambda_n(x_1,x_2,\dots,x_n)\dots\bigr\}
\]
generates a maximum consistent set of formulas involving $G_0$ and $G$ that is the only complete type of $\mathfrak{Z}$.\medskip

(iv)\ By (iii), for any $n<\omega$, $\mathcal{Z}_n \vDash \delta_n[Z_n]$, and for any $k>n,\,\mathcal{Z}_k\vDash \delta_n[Z_k]$. So $(\mathcal{Z}_{n},\delta_{n})$ is a homogeneous sequence. By (ii) and definition \ref{DefLimit}, $\underset{n\rightarrow\omega}{\lim}\mathcal{Z}_{n}$ is unique. Likewise, since for any $k>n$, $\mathfrak{Z}_k\vDash \lambda_n[Z_{k-n+1},\dots,Z_k]$, $(\mathfrak{Z}_{n},\lambda_n)$ is a homogeneous sequence. Thus $\underset{n\rightarrow\omega}{\lim}\mathfrak{Z}_{n}$ is unique.
\end{proof}

\begin{definition}
\label{DefSemiInfiniton} \ In theorem \ref{37}, $\underset{n\rightarrow\omega}{\lim}Z_{n}$ is known as the \textbf{semi-infiniton} generated by $G\,(G\neq\varnothing)$ and $G_{0}$, and is
denoted as:
\[
\underset{n\rightarrow\omega}{\lim}\mathcal{Z}_{n}\,=\underset{n\rightarrow\omega}{\lim}Z_{n}\,=\,Z_{\omega}\,=\,\underset{\aleph_{0}}{\underbrace{\{\dots\{\ast G,\{\ast G,G_{0}\}\dots\}}}\,=\,\{G,G_{0}\}|_{\mathfrak{S}}
\]
Where $G$ is the \textbf{principal generator} and $G_{0}$ the \textbf{base generator} of $Z_{\omega}$. $\bar{\mathscr{L}}^{\prime}=\{\in, H_{\omega}, I_{\omega},Z_{\omega}\}$ after $Z_{\omega}$ is added as a constant.\footnote{In the rest discussion, we will no longer distinguish $\underset{n\rightarrow\omega}{\lim}\mathcal{Z}_{n}$ and $\underset{n\rightarrow\omega}{\lim}Z_{n}$.}
\end{definition}

\begin{theorem}
\label{36}\ Suppose $Z_n$ is the same as in theorem \ref{37} and $\mathfrak{Z}^+=\langle\{Z_n\colon n\leqslant\omega\}, \in,h,G_0,G\rangle$. Then
\end{theorem}

\begin{enumerate}
\item $Z_{\omega}\,=\,\{\ast G,Z_{\omega}\}$.

\item $Z_{\omega}$ \textit{is $\omega$-invariant.}

\item \textit{A type of $Z_{\omega}$ is that $Z_{\omega}$ has a unique $\in$-sequence of length $\omega$ and is the member of itself with each of its sublevel $n\,(n\leqslant\omega)$ containing the members of $G$.}  
%$\in$-sequence, $\epsilon$-sequence

\item $\underset{n\rightarrow\omega}{\lim}\mathfrak{Z}_{n}=\mathfrak{Z}^+$ and $\mathfrak{Z}^+$ \textit{is atomic}.
\end{enumerate}

\begin{proof}
\ (i) \ By theorem \ref{37}, $Z_{\omega}$ is unique. So by corollary \ref{42} and \ref{29}
\begin{align*}
Z_{\omega} \: & =\: \underset{n\rightarrow\omega}{\lim}\{\ast G,Z_{n-1}\}
\\
\: & =\: \underset{n\rightarrow\omega}{\lim}\left(G\,\cup\{Z_{n-1}\}\right)
\\
\: & =\: G\,\cup\{\underset{n\rightarrow\omega}{\lim}Z_{n-1}\}
\\
\: & =\: \{\ast G,Z_{\omega}\}
\end{align*}

(ii) \ Obviously, for any $\alpha$ that $\omega<\alpha<\omega2$
\[
Z_{\alpha}\,=\,\{\ast G,Z_{\alpha-1}\}\,=\dots=\,\underset{\alpha-\omega}{\underbrace{\{\ast G,\dots\{\ast G,\{\ast G,Z_{\omega}\}\}\dots\}}}\,=\,Z_{\omega}
\]

Then by transfinite induction, for any $\alpha,\,Z_{\alpha}\,=\,Z_{\omega}$. Also for any $z\in G,\,z\in Z_{\omega}$. So (ii) follows by definition \ref{DefOmegaInvariant}.\bigskip

(iii) \ Let $\mu(y)\Leftrightarrow (\forall z\in G)(z\in y)$ and $\left\langle Z_{j}\colon j\leqslant n\right\rangle$ be the unique $\in$-sequence of length $n$ in $Z_{n}$ with each sublevel $j$ $(j<n)$ of $Z_{n}$ containing the members of $G$. Then by (\ref{SemiInfinitonOneType})
\[
\delta_{n}(Z_n)\,\Longleftrightarrow\,\exists!\,Z_{n-1}\,\cdots\,\exists!\,Z_{0}\,\bigl(\,\,\,\smashoperator{\bigwedge_{1\leqslant j\leqslant n-1}}\bigl(Z_{j-1} \in Z_{j}\wedge\mu(Z_j)\bigr)\bigr)\wedge\,\exists!\,Z_{n-1} \bigl(Z_{n-1}\in Z_{n}\wedge\mu(Z_n)\bigr)
\]

Since $\underset{n\rightarrow\omega}{\lim}Z_{n}\,=\,Z_{\omega}$, by axiom \ref{15}, corollary \ref{17} and \ref{67}
\begin{align*}
\delta_{\omega}\,&\Longleftrightarrow\,\underset{n\rightarrow\omega}{\lim}\quad\smashoperator{\bigwedge_{1\leqslant j\leqslant n-1}}\,\, \exists!\,Z_{j}\,\exists!\,Z_{j-1}\bigl(Z_{j-1} \in Z_{j}\wedge\mu(Z_j)\bigr)\wedge\underset{n\rightarrow\omega}{\lim}\,\exists!\,Z_{n-1}\bigl(Z_{n-1}\in Z_{n}\wedge\mu(Z_n)\bigr)
\\
\,&\Longleftrightarrow\,\smashoperator{\bigwedge_{n<\omega}}\, \exists!\,Z_{n}\,\exists!\,Z_{n-1}\bigl(Z_{n-1} \in Z_{n}\wedge\mu(Z_n)\bigr) \wedge\,\exists!\,Z_{\omega}\bigl(Z_{\omega}\in Z_{\omega}\wedge \mu(Z_{\omega})\bigr)
\end{align*}
In the above proof, by corollary \ref{24}
\[
\underset{n\rightarrow\omega}{\lim}\mu(Z_n)\Longleftrightarrow\underset{n\rightarrow\omega}{\lim}(\forall z\in G)\bigl(z\in Z_n\bigr)\Longleftrightarrow(\forall z\in G)\Bigl(z\in \underset{n\rightarrow\omega}{\lim}\,Z_n\Bigr)\Longleftrightarrow\mu(Z_{\omega})
\]
By theorem \ref{71}, $Z_{\omega}\vDash\delta_{\omega}$ where $\delta_{\omega}$ defines $\left\langle Z_{n}\colon n\leqslant\omega\right\rangle$, the unique $\in$-sequence of length $\omega$ in
$Z_{\omega}$.\medskip

(iv) \ Fix $k<\omega$, for any $n>k$, $Z_k\in\mathfrak{Z}_n$. By theorem \ref{37}(iv) and axiom \ref{19}, $\underset{n\rightarrow\omega}{\lim}(Z_k\in\mathfrak{Z}_{n})\Leftrightarrow Z_k\in\underset{n\rightarrow\omega}{\lim}\mathfrak{Z}_{n}$. So by corollary \ref{18}, for any $k<\omega$, $Z_k\in\underset{n\rightarrow\omega}{\lim}\mathfrak{Z}_{n}$. By corollary \ref{20}, $Z_{\omega}=\underset{k\rightarrow\omega}{\lim}Z_k\in\underset{n\rightarrow\omega}{\lim}\mathfrak{Z}_{n}$ and so $\mathfrak{Z}^+\subset\underset{n\rightarrow \omega}{\lim}\mathfrak{Z}_{n}$. On the other hand, by theorem \ref{37}(iv), $\underset{n\rightarrow\omega}{\lim}\mathfrak{Z}_{n}$ is unique. Thus $\underset{n\rightarrow \omega}{\lim}\mathfrak{Z}_{n}\subset\mathfrak{Z}^+$ and $\underset{n\rightarrow \omega}{\lim}\mathfrak{Z}_{n}=\mathfrak{Z}^+$. Next by (\ref{SemiInfinitonNType}), corollary \ref{17} and \ref{14}
\begin{align*}
	\lambda_{\omega}\,\Longleftrightarrow\,\underset{n\rightarrow\omega}{\lim}\,\lambda_{n}\,&\,\Longleftrightarrow\,\underset{n\rightarrow\omega}{\lim}\quad\smashoperator{\bigwedge_{1\leqslant i\leqslant n-2}}\,\,\bigl(x_{i}\in x_{i+1}\wedge\mu(x_{i+1})\wedge h(x_{i+1})=h(x_i)+1\bigr)
	\\
	&\qquad\,\,\,\wedge\,\underset{n\rightarrow\omega}{\lim}\bigl(x_{n-1}\in x_{n}\wedge\mu(x_n)\wedge h(x_{n})=h(x_{n-1})+1\bigr)
	\\
	\,&\Longleftrightarrow\underset{n<\omega}{\displaystyle\bigwedge}\bigl(x_{n-1}\in x_{n}\wedge\mu(x_n)\wedge h(x_{n})=h(x_{n-1})+1\bigr)\,\wedge\,\bigl(x_{\omega}\in x_{\omega}\wedge\mu(x_{\omega})\bigr)
\end{align*}
where $\underset{n\rightarrow\omega}{\lim}x_n=x_{\omega}$. Since $\mathfrak{Z}^+\vDash\lambda_{\omega}$, any tuple from $\{Z_n\colon n\leqslant\omega\}$ satisfies $\lambda_{\omega}$. By theorem \ref{37}(iii), Th$(\mathfrak{Z}^+)$ has only one type. So by proposition \ref{25}, $\mathfrak{Z}^+$ is atomic with $\lambda_{\omega}$ being its complete formula. Since any type of Th$(\mathfrak{Z}^+)$ can be derived from it, $\lambda_{\omega}$ is a $\omega$-type with countable free variables.\bigskip
\end{proof}

The tree structure for a semi-infiniton is shown in Figure \ref{Fig2}. Intuitively, a semi-infiniton has one infinite (broken) branch.

\begin{figure}[h]
	\centering
	\includegraphics{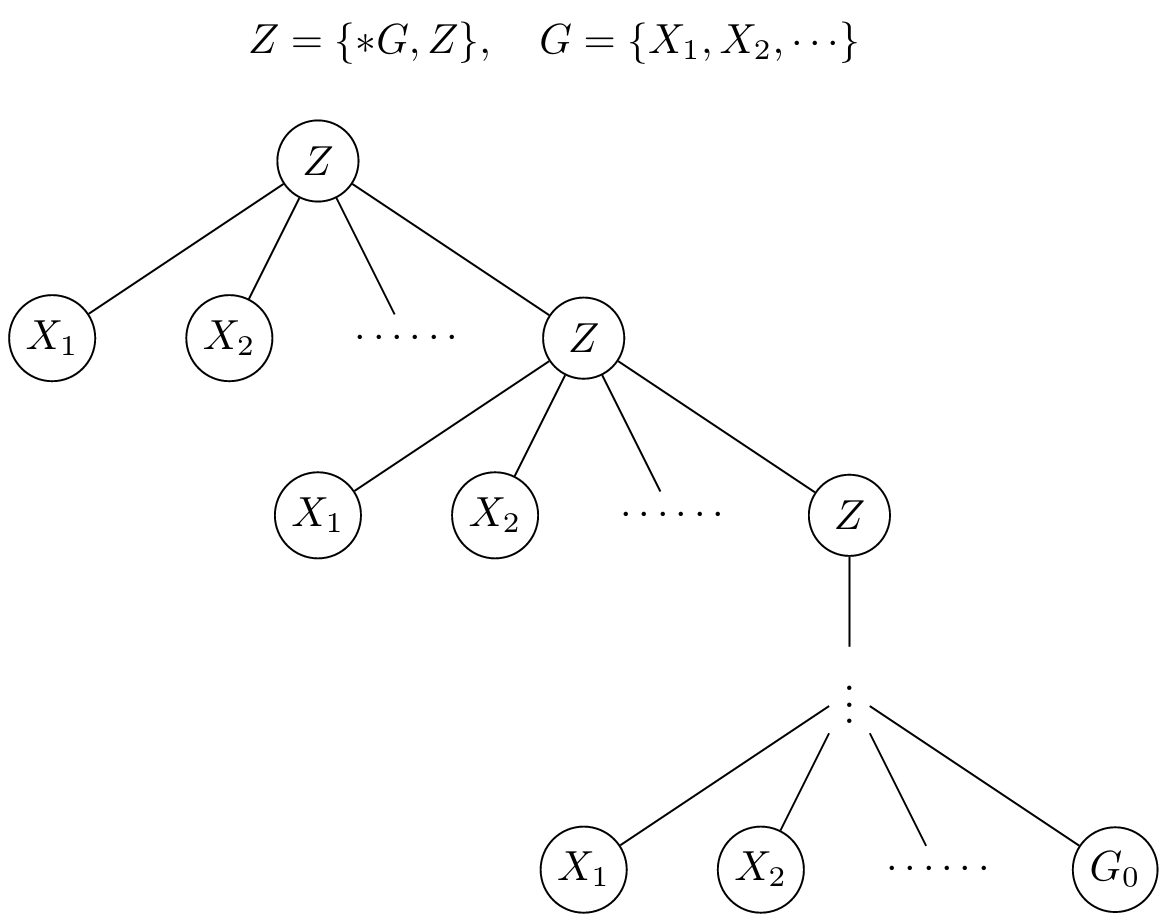}
	\caption{Diagram of a semi-infiniton.}
	\label{Fig2}
	\centering
\end{figure}

\begin{corollary}
\ Suppose $Z$ is a semi-infiniton.
\end{corollary}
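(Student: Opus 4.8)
The plan is to mirror the corollary for infinitons, deriving every clause from the single structural equation $Z_{\omega}=\{\ast G,Z_{\omega}\}$ established in theorem \ref{36}(i). First I would record the fact that drives everything: since $Z_{\omega}=\{\ast G,Z_{\omega}\}$, we have $Z_{\omega}\in Z_{\omega}$, so $Z$ is a member of itself, and moreover $Z$ carries the infinite $\in$-sequence $\gamma_{\omega}=\langle Z_{n}\colon n\leqslant\omega\rangle$ of length $\omega$ exhibited in theorem \ref{36}(iii). With these two observations in hand, each expected clause ($Z\neq\varnothing$, $D(Z)=\aleph_{0}$, $Z$ is NWF, $Z\notin V$) reduces to a short argument.

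For $Z\neq\varnothing$ I would argue directly from self-membership: $Z\in Z$, whereas by axiom \ref{DefNothing} the only member of $\varnothing$ is $\ast\varnothing$, so $\varnothing\notin\varnothing$; hence $Z$ cannot equal $\varnothing$. For the membership-dimension clause $D(Z)=\aleph_{0}$ I would invoke the recursion (\ref{DefMembershipDimension}), namely $D(Z)=\sup\{D(X)\colon X\in Z\}+1$: were $D(Z)$ finite, then since $Z\in Z$ we would obtain $D(Z)\geqslant D(Z)+1$, a contradiction, so $D(Z)=\aleph_{0}$. This is exactly the corollary to theorem \ref{47} applied to the vicious cycle $Z\in Z$, so I expect it to be essentially automatic.

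For non-well-foundedness I would appeal to definition \ref{DefWFAndNWFSet}: the $\in$-sequence $\gamma_{\omega}$ has length $\omega\geqslant\omega$, so $Z$ is NWF. The delicate point, and the step I expect to require the most care, is the classification beyond NWF: unlike an infiniton, a semi-infiniton is \emph{not} TNWF when $G\neq\varnothing$, because every branch that descends into a member of $G$ terminates in $V_{\omega}$ after finitely many steps (as $G\in V_{\omega}$ is well-founded by lemma \ref{1}). Thus $Z$ genuinely possesses finite branches alongside its one infinite branch, and this clause must be stated and checked carefully rather than copied verbatim from the infiniton case. Finally, $Z\notin V$ follows immediately: lemma \ref{1} states that every set in $V$ is well-founded, and we have just shown $Z$ is NWF, so $Z$ lies outside $V$.
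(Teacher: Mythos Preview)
Your arguments for $Z\neq\varnothing$, $D(Z)=\aleph_{0}$, and $Z\notin V$ are correct and match the paper's approach essentially line for line (self-membership plus $\varnothing\notin\varnothing$; the recursion \eqref{DefMembershipDimension} forcing $D(Z)\geqslant D(Z)+1$ if $D(Z)$ were finite; an infinite $\in$-branch plus lemma~\ref{1}).

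However, you have missed one clause that the paper does include: $Z\neq\{Z\}$. This is the item that genuinely separates the semi-infiniton corollary from the infiniton one, and the paper proves it by observing that $Z=\{Z\}$ would make $Z$ an infiniton, contrary to definition~\ref{DefSemiInfiniton}, which requires the principal generator $G\neq\varnothing$. Conversely, your careful discussion of why $Z$ is NWF but \emph{not} TNWF is extra: the paper's corollary does not state or prove this (the infiniton corollary's TNWF clause is simply dropped rather than replaced). So the shape of the corollary is $\{Z\neq\varnothing,\ Z\neq\{Z\},\ D(Z)=\aleph_{0},\ Z\notin V\}$, not $\{Z\neq\varnothing,\ D(Z)=\aleph_{0},\ \text{NWF but not TNWF},\ Z\notin V\}$.
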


\begin{enumerate}
\item $Z\neq\varnothing$

\item $Z\neq\{Z\}$

\item $D(Z)\,=\,\aleph_{0}$

\item $Z\notin V$
\end{enumerate}

\begin{proof}
\ (i) \ Since $\varnothing\notin\varnothing,\,Z\neq\varnothing$.\medskip

(ii) \ If $Z=\{Z\},\,Z$ is an infiniton, contradicting definition \ref{DefSemiInfiniton}.\medskip

(iii) \ If $D(Z)<\aleph_{0}$, then by (\ref{DefMembershipDimension}), $D(Z)<D(\{Z\})=D(Z)$, contradiction.\medskip

(iv) \ Since $Z$ has an infinite branch, it is NWF. By lemma \ref{1}, $Z\notin V$.\smallskip
\end{proof}

\begin{definition}
\label{DefSetSemiInfinitons}$\ S|_{\mathfrak{S}}=\{\{G,G_{0}\}|_{\mathfrak{S}}\colon G_{0},G\in S\}$ is known as the \textbf{set of the semi-infinitons} from $S$.
\end{definition}

\begin{corollary}
\label{39}\qquad
\end{corollary}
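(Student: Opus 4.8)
The plan is to mirror the proof strategy used for the corresponding set-of-infinitons identities, unwinding the definition of $S|_{\mathfrak{S}}$ (definition \ref{DefSetSemiInfinitons}) so that every membership assertion about a set of semi-infinitons is reduced to a condition on its two generators. Concretely, by definition \ref{DefSetSemiInfinitons} together with definition \ref{DefSemiInfiniton}, an object $\{G,G_{0}\}|_{\mathfrak{S}}$ lies in $S|_{\mathfrak{S}}$ exactly when both $G\in S$ and $G_{0}\in S$. So each part of the corollary translates into a purely Boolean statement about the pair $(G,G_{0})$, and the set identities should follow from the obvious logic of $\wedge$, $\vee$ and set difference applied to the two conditions $G\in S$ and $G_{0}\in S$.

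First I would establish monotonicity: if $S_{1}\subset S_{2}$ and $\{G,G_{0}\}|_{\mathfrak{S}}\in S_{1}|_{\mathfrak{S}}$, then $G,G_{0}\in S_{1}\subset S_{2}$, whence $\{G,G_{0}\}|_{\mathfrak{S}}\in S_{2}|_{\mathfrak{S}}$. From monotonicity the intersection inclusion $\left(S_{1}\cap S_{2}\right)|_{\mathfrak{S}}\subset S_{1}|_{\mathfrak{S}}\cap S_{2}|_{\mathfrak{S}}$ is immediate, and one inclusion of the (arbitrary) union identity follows as well. For the union I would then unwind $\{G,G_{0}\}|_{\mathfrak{S}}\in\left(S_{1}\cup S_{2}\right)|_{\mathfrak{S}}$ into $\left(G\in S_{1}\vee G\in S_{2}\right)\wedge\left(G_{0}\in S_{1}\vee G_{0}\in S_{2}\right)$ and compare it with the condition defining $S_{1}|_{\mathfrak{S}}\cup S_{2}|_{\mathfrak{S}}$.

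The hard part --- really the only genuinely new phenomenon relative to the infiniton case --- is that a semi-infiniton is built from a \emph{pair} of generators rather than a single base generator, so the two conditions $G\in S$ and $G_{0}\in S$ no longer collapse into one. This forces the union to hold only as $S_{1}|_{\mathfrak{S}}\cup S_{2}|_{\mathfrak{S}}\subset\left(S_{1}\cup S_{2}\right)|_{\mathfrak{S}}$ rather than with equality, since a cross term with $G\in S_{1}\setminus S_{2}$ and $G_{0}\in S_{2}\setminus S_{1}$ lies in the right-hand side but in neither $S_{1}|_{\mathfrak{S}}$ nor $S_{2}|_{\mathfrak{S}}$; the same splitting into principal and base generators is what controls whether the difference inclusion survives or must be weakened. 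I expect each individual verification to be a short truth-table style argument once this pair-of-generators bookkeeping is in place, so the real work is stating the correct (in)equalities and exhibiting the cross-generator counterexample that blocks equality in the union.
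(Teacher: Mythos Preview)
Your approach to parts (i)--(iv) is essentially the paper's: prove monotonicity directly from the definition, then obtain (ii), (iii), (iv) as immediate consequences. The paper's proof says exactly this in one line. Your additional cross-generator observation (why (ii) is only $\supset$) is correct and illuminating, though the paper does not bother to justify the failure of equality.

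However, you have misread the statement in two ways. First, there is no difference inclusion in Corollary \ref{39}; unlike the infiniton case, the semi-infiniton corollary drops the set-difference item entirely, so your remark about ``whether the difference inclusion survives or must be weakened'' is addressing a nonexistent clause. Second, and more seriously, you have completely omitted part (v), which asserts that equality \emph{does} hold in the arbitrary union when the family $S_{\alpha}$ is increasing. This is the only part the paper actually writes out in detail: given $\{G,G_{0}\}|_{\mathfrak{S}}$ with $G\in S_{\gamma_{1}}$ and $G_{0}\in S_{\gamma_{2}}$, monotonicity of the family lets you take $\gamma=\max\{\gamma_{1},\gamma_{2}\}$ and place both generators in a single $S_{\gamma}$, yielding the missing inclusion $\subset$. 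Your cross-term counterexample in fact shows precisely why this monotonicity hypothesis is needed, but you never turn that insight into a proof of (v). Add that argument and drop the reference to a difference clause, and the proof is complete.
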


\begin{enumerate}
\item $S_{1}\subset S_{2}\,\Longrightarrow\,S_{1}|_{\mathfrak{S}}\subset S_{2}|_{\mathfrak{S}}$

\item $\left(S_{1}\cup S_{2}\right)|_{\mathfrak{S}}\,\supset\, S_{1}|_{\mathfrak{S}}\cup S_{2}|_{\mathfrak{S}}$

\item $\left(S_{1}\cap S_{2}\right)|_{\mathfrak{S}}\,\subset\, S_{1}|_{\mathfrak{S}}\cap S_{2}|_{\mathfrak{S}}$

\item $\Bigl(\underset{\alpha\in D}{\displaystyle\bigcup}S_{\alpha}\Bigr)\Big\vert_{\mathfrak{S}} \supset\underset{\alpha\in D}{\displaystyle\bigcup}S_{\alpha}\vert_{\mathfrak{S}}$

\item $S_{\alpha}\uparrow\,\Longrightarrow\,\Bigl(\underset{\alpha\in D}{\displaystyle\bigcup}S_{\alpha}\Bigr) \Big\vert_{\mathfrak{S}}\,=\underset{\alpha\in D}{\displaystyle\bigcup}S_{\alpha}\vert_{\mathfrak{S}}$
\end{enumerate}

\begin{proof}
\ (i) is obvious. (ii), (iii) and (iv) follow from (i).\medskip

(v) \ For any $\{G,G_{0}\}|_{\mathfrak{S}}\in\Bigl(\underset{\alpha\in D}{\displaystyle\bigcup}S_{\alpha}\Bigr) \Big\vert_{\mathfrak{S}}$, there are $\gamma_{1},\gamma_{2}\in D$ that $G\in S_{\gamma_{1}}$ and $G_{0}\in S_{\gamma_{2}}$. Let $\gamma=\max\{\gamma_{1},\gamma_{2}\}$. Then $G,G_{0}\in S_{\gamma}$ and $\{G,G_{0}\}|_{\mathfrak{S}} \in S_{\gamma}\vert_{\mathfrak{S}}\subset\underset{\alpha\in D}{\displaystyle\bigcup}S_{\alpha} \vert_{\mathfrak{S}}$. This proves $``\subset\textquotedblright$ and it follows by (iv).\bigskip
\end{proof}

The following shows that an infiniton is a special case of a semi-infiniton.

\begin{corollary}
\label{40}\qquad
\end{corollary}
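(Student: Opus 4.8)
The plan is to prove the reduction by showing that the semi-infiniton construction of theorem \ref{37} degenerates, term by term, into the infiniton construction of theorem \ref{38} precisely when the principal generator $G$ is taken to be $\varnothing$, and then to transport this coincidence across the limit. The entire argument rests on the nullity collapse of lemma \ref{41}.

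First I would examine the recursion $Z_{n}=\{\ast G,Z_{n-1}\}$ of theorem \ref{37} under the substitution $G=\varnothing$. Here $\ast G=\ast\varnothing$ is nullity (axiom \ref{DefNothing}), so each stage becomes $Z_{n}=\{\ast\varnothing,Z_{n-1}\}$, which by lemma \ref{41} equals $\{Z_{n-1}\}$. With the common base generator $Z_{0}=G_{0}=I_{0}$, a one-line induction on $n<\omega$ then yields $Z_{n}=I_{n}$ for every finite $n$, since both sequences satisfy the same recurrence $X_{n}=\{X_{n-1}\}$ from the same starting set.

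Next I would pass to the limit. By theorems \ref{37} and \ref{38} both $\underset{n\rightarrow\omega}{\lim}Z_{n}$ and $\underset{n\rightarrow\omega}{\lim}I_{n}$ are unique. Because $Z_{n}=I_{n}$ holds for all $n$ (in particular on a neighborhood of $\omega$), corollary \ref{42}(iii) gives $\underset{n\rightarrow\omega}{\lim}Z_{n}=\underset{n\rightarrow\omega}{\lim}I_{n}$, that is $Z_{\omega}=I_{\omega}$. In the notation of definitions \ref{DefInfiniton} and \ref{DefSemiInfiniton} this reads $\{\varnothing,G_{0}\}|_{\mathfrak{S}}=\{G_{0}\}_{\mathcal{I}}$, so the fixed-point equation $Z_{\omega}=\{\ast G,Z_{\omega}\}$ of theorem \ref{36}(i) collapses to the identity $I_{\omega}=\{I_{\omega}\}$ of theorem \ref{30}(i), exhibiting the infiniton as the $G=\varnothing$ case of the semi-infiniton.

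The argument has no genuine obstacle; the only point requiring care is bookkeeping. Definition \ref{DefSemiInfiniton} deliberately excludes $G=\varnothing$, so $\{\varnothing,G_{0}\}|_{\mathfrak{S}}$ is not literally a semi-infiniton but the formal value of the semi-infiniton construction at the boundary case $G=\varnothing$. I would therefore phrase the statement as an equality of the two \emph{limits}, relying on lemma \ref{41} to justify treating $\ast\varnothing$ as invisible inside the braces rather than on any new structural fact.
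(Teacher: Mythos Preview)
Your argument for part (i) is correct and is essentially identical to the paper's: set $G=\varnothing$, use lemma \ref{41} to collapse $\{\ast\varnothing,Z_{n-1}\}$ to $\{Z_{n-1}\}$ so that $Z_n=I_n$ for all $n$, then invoke corollary \ref{42} to conclude $\{\varnothing,G_0\}|_{\mathfrak{S}}=\{G_0\}_{\mathcal{I}}$. You do not explicitly address part (ii), $S|_{\mathcal{I}}\subset S|_{\mathfrak{S}}$, but the paper dispatches it in three words (``follows from (i)''), and indeed it is immediate from (i) together with definitions \ref{DefSetInfinitons} and \ref{DefSetSemiInfinitons}.
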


\begin{enumerate}
\item $\{G_{0}\}_{\mathcal{I}}\,=\,\{\varnothing,G_{0}\}|_{\mathfrak{S}}$

\item $\left.S\right\vert_{\mathcal{I}}\,\subset\left.S\right\vert_{\mathfrak{S}}$
\end{enumerate}

\begin{proof}
\ (i) \ Suppose for each $n<\omega,\,I_{n}=\{I_{n-1}\}$, $Z_{n}=\{\ast\varnothing,Z_{n-1}\}$, and $I_{0}=Z_{0}=G_{0}\in V_{\omega}$. By lemma \ref{41}, $I_{n}=Z_{n}$ for all $n$. So by corollary \ref{42}, $\underset{n\rightarrow\omega} {\lim}I_{n}\,=\,\underset{n\rightarrow\omega}{\lim}Z_{n}$, or $\{G_{0}\}_{\mathcal{I}}\,=\, \{\varnothing,G_{0}\}|_{\mathfrak{S}}$. \medskip

(ii) follows from (i).\bigskip
\end{proof}

%A semi-infiniton is also related to an infiniton in the following result.

\begin{corollary}
$\qquad Z\in Z\,\wedge\,Z$ is transitive $\,\Longleftrightarrow\,Z=\{Z\}$
\end{corollary}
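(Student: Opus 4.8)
The plan is to argue both directions from the canonical semi-infiniton form. By theorem \ref{36}(i) we may write $Z=\{\ast G,Z\}$ with principal generator $G\in V_{\omega}$, so the members of $Z$ are exactly the members of $G$ together with $Z$ itself. The backward implication is routine: if $Z=\{Z\}$ then $Z$ is an infiniton (theorem \ref{30}(i)), $Z\in Z$ is immediate, and since $Z$ is the only member of $Z$ and $Z\subseteq Z$ holds trivially, every member of $Z$ is a subset of $Z$, i.e. $Z$ is transitive.

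For the forward implication I assume $Z\in Z$ (which holds automatically for a semi-infiniton) together with transitivity, and aim to show the principal generator contributes nothing, so that $Z$ collapses to $\{Z\}$. Transitivity says every member of $Z$ is a subset of $Z$; applied to any $a\in G$ this gives $a\subseteq Z$. Now $a\in G\subseteq V_{\omega}$ is well-founded (lemma \ref{1}) while $Z$ is totally non-well-founded (definition \ref{DefWFAndNWFSet}), so a well-founded set cannot contain $Z$ and hence $Z\notin a$. Consequently every member of $a$ is itself a member of $G$, and transitivity of $Z$ becomes equivalent to $G$ being a transitive set, along whose $\in$-chains I may descend by well-foundedness.

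To close I would then reduce to $G=\varnothing$, for in that case $Z=\{\ast\varnothing,Z\}=\{Z\}$ by lemma \ref{41} (equivalently corollary \ref{40}(i)). The natural route is to compare the complete formula of $Z_{\omega}$, which by theorem \ref{36}(iii)--(iv) describes the unique $\in$-sequence of length $\omega$ with each sublevel containing $\ast G$, against the infiniton formula of theorem \ref{30}(iii)--(iv), in which each sublevel has a single predecessor; since both structures are atomic with a unique complete formula, agreement of these formulas should force $G=\varnothing$.

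The main obstacle, and the step I expect to be the most delicate, is precisely this last reduction. Elementwise transitivity by itself is too weak to kill a nonempty generator, because a transitive $G$ can satisfy $a\subseteq Z$ for purely vacuous reasons on its minimal members (most starkly when $\varnothing\in G$). The argument must therefore lean on the uniqueness and atomicity of the limit structure from theorem \ref{36} rather than on membership bookkeeping alone, and genuine care is needed to exclude transitive principal generators that would otherwise leave $Z$ strictly larger than $\{Z\}$.
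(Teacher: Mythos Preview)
Your approach is vastly more elaborate than the paper's. The paper gives a two-line elementary argument using only the definitions: from $Z\in Z$ it obtains $\{Z\}\subset Z$; then it asserts that transitivity of $Z$ together with $Z\in\{Z\}$ yields $Z\subset\{Z\}$, whence $Z=\{Z\}$. There is no semi-infiniton structure theory, no complete formulas, no appeal to theorems \ref{30} or \ref{36} at all.

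That said, your instinct that something is wrong is correct, and the obstacle you flag is genuine rather than a gap in your technique. The paper's key step ``$Z$ is transitive and $Z\in\{Z\}$, hence $Z\subset\{Z\}$'' does not follow from the standard definition: transitivity of $Z$ says members of $Z$ are subsets of $Z$, not that $Z$ is a subset of sets it belongs to. Your own worry about $\varnothing\in G$ is already a counterexample. Take $Z=\{\varnothing,Z\}$, the semi-infiniton with principal generator $G=\{\varnothing\}$. Then $Z\in Z$, and $Z$ is transitive since its two members $\varnothing$ and $Z$ are each subsets of $Z$; yet $Z\neq\{Z\}$ because $\varnothing\in Z$ while $\varnothing\neq Z$. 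So the forward implication as stated is simply false, and no amount of atomicity or complete-formula comparison will close the argument you were attempting. Your difficulty was not a missing lemma but a correct detection that the claim itself fails.
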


\begin{proof}
\ If $Z=\{Z\}$, then $Z\in\{Z\}=Z$. Also, $Z\in Z$ means $\{Z\}\subset Z$. So $Z\subset Z$, i.e. $Z$ is transitive. Conversely, $Z\in Z$ means $\{Z\}\subset Z$. Since $Z$ is transitive, $Z\in\{Z\}$ means $Z\subset\{Z\}$. So $Z=\{Z\}$.
\end{proof}

\subsection{Quasi-Infiniton}\label{SectionQuasiInfiniton}

A quasi-infiniton is a set that contains a vicious cycle, i.e. $Q\in X_{1},\,X_{1}\in X_{2}$, $\dots,\,X_{n-1}\in Q$. From (\ref{QuasiInfiniton0}) and (\ref{InfGeneratedSet}), we can see that an infinitely generated set whose principal generators form a finite cycle is a quasi-infiniton.

\begin{definition}
\label{FiniteCycle}\ $\{G_n\colon G_{n}\in V_{\omega}\,\wedge\,n<\omega\}$ forms a \textbf{finite cycle} if there is a finite subset $\{G_{i}\colon 1\leqslant i\leqslant l\,\wedge\,G_i\neq G_j\,\wedge\,i\neq j\,\wedge\,l>1\}$ such that for each $1\leqslant i\leqslant l$ and $n=kl+i\,(k<\omega)$, $G_{n}=G_{i}$. $l$ is known as the \textbf{length} of the cycle.
\end{definition}

\begin{theorem}
\label{43}\ Suppose for each $n<\omega,\,Q_{n}$ is a finitely generated set and $Q_{n}=\{\ast G_{n},Q_{n-1}\}$ with $Q_{0}\,=\,G_{0}$ where $\{G_{n}\}$ form a finite cycle of length $l$ and $R_V(G_0)\geqslant\max\limits_{1\leqslant i\leqslant l}\{R_V(G_i)\}$\footnote{If $R_V(G_0)< R_V(G_j)$, let $G_0=\{*G_l,\dots\{G_1,G_0\}\dots\}$.}. Let
\begin{align*}
\mathcal{G}\,&=\,\{G_{i}\colon G_{i}\in V_{\omega}\,\wedge\,0\leqslant i\leqslant l\,\wedge\,l>1\,\wedge\,G_i\neq G_j\,\wedge\,i\neq j\,\}
\\
\mathcal{Q}_n\,&=\,\langle\{Q_{n}\},\in,h,G_0,G_1,\dots,G_l\rangle
\\
\mathfrak{Q}_{n_p}\,&=\,\left\langle\allowbreak\left\{ Q_{n_j}\colon n_j\leqslant n_{p}\right\},\in,h,G_0,\dots,G_l \right\rangle
\\
\mathfrak{Q}_q\,&=\,\bigcup_{n_p<\omega}\mathfrak{Q}_{n_{p}}= \left\langle\left\{Q_{{n_{p}(q)}}\colon p<\omega\right\},\in,h, G_0,\dots,G_l\right\rangle
\\
\mathfrak{Q}\,&=\,\bigcup_{0\leqslant q< l}\mathfrak{Q}_{q}=\langle\{\allowbreak Q_{n}\colon n<\omega\},\in,h,G_0,\dots,G_l\rangle
\end{align*}
\end{theorem}

\begin{enumerate}
\item $Q_{n}\,=\,\underset{n}{\underbrace{\{\ast G_n,\{\ast G_{n-1},\dots\{\ast G_1,G_{0}\}\dots\}}}$.

\item \textit{Each $\mathfrak{Q}_q$ has only one complete type and $\mathfrak{Q}$ has $l$ complete types.}

\item $\underset{p\rightarrow\omega}{\lim}\mathcal{Q}_{n_p}$ \textit{and} $\underset{p\rightarrow\omega}{\lim}\mathfrak{Q}_{n_p}$ \textit{is unique.}

\item $\underset{n\rightarrow\omega}{\lim}\mathcal{Q}_n$ \textit{has $l$ sublimits.}
\end{enumerate}

\begin{proof}
\ (i) follows easily by replacing $Q_{n}$ recursively $n$ times.\medskip

(ii) \ First, let $q=0$ and $n_p(0)=pl$. By definition \ref{FiniteCycle} and (i) \smallskip
\begin{align*}
	Q_{l} \,& =\,\{\ast G_{l},\{\ast G_{l-1},\dots\{\ast G_{1},G_{0}\}\dots\}
	\\
	Q_{2l} \,& =\,\{\ast G_{2l},\{\ast G_{2l-1},\dots\{\ast G_{l+1},Q_{l}\}\dots\}
	\\
	\,& =\,\underset{2l}{\underbrace{\{\ast G_{l},\{\ast G_{l-1},\dots\{\ast G_{1},\{\ast G_{l},\{\ast G_{l-1},\dots\{\ast G_{1},G_{0}\}\dots\}}}
	\\
	\,& \,\,\,\vdots
	\\
	Q_{pl} \,& =\,\{\ast G_{pl},\{\ast G_{pl-1},\dots\{\ast G_{(p-1)l+1},Q_{(p-1)l}\}\dots\}\,=\,\cdots
	\\
	\,& =\,\underset{pl}{\underbrace{\{\ast G_{l},\{\ast G_{l-1},\dots\{\ast G_{1},\{\ast G_{l},\{\ast G_{l-1},\dots\{\ast G_{1},G_{0}\}\dots\}\dots\}}}
\end{align*}

Suppose
\[
\theta_{n_{p}(0)}(x)\,\Longleftrightarrow\,\,\,\smashoperator{\bigwedge_{1\leqslant i\leqslant p-1}}\text{\,\,\,\,\,\quad}\smashoperator{\bigwedge_{0\leqslant j\leqslant l-1}}\Phi_{i,j}\,\,\wedge\,\,\,\smashoperator{\bigwedge_{0\leqslant j\leqslant l-2}}\Phi_{p,j}\,\wedge\,\exists!\,y_{pl-1}\bigl(y_{pl-1}\in x\,\wedge \mu_{l}(x)\bigr)
\]
where $\,\mu_j(y)\Longleftrightarrow(\forall z\in G_j)(z\in y)$, and for $\,1\leqslant i\leqslant p\,$ and $\, 0\leqslant j\leqslant l-1$
\[
\Phi_{i,j}\,\Longleftrightarrow\,\exists!\,y_{(i-1)l+j+1}\,\exists!\,y_{(i-1)l+j}\,\bigl(y_{(i-1)l+j}\in y_{(i-1)l+j+1}\wedge\mu_{j+1}(y_{(i-1)l+j+1})\bigr)
\]
The validity of $\theta_{n_{p}(0)}(x)$ means that there is a unique $\in$-sequence of length $pl$ in $x$ with $p$ levels of cycle of length $l$. Since $\mathcal{Q}_{pl}\vDash \theta_{n_{p}(0)}[{Q}_{pl}]$, $(\theta_{n_{p}(0)}(x))$ is a $1$-type of $\mathfrak{Q}_0$.\smallskip

Now let $n_{p}(q)=pl+q\,\,(0< q<l)$ and
\begin{equation}
\theta_{n_{p}(q)}(x)\,\Longleftrightarrow\,\,\,\smashoperator{\bigwedge_{1\leqslant i\leqslant p-1}}\text{\,\,\qquad}\smashoperator{\bigwedge_{q\leqslant j\leqslant q+l-1}}\Phi_{i,j}\,\,\wedge\quad\smashoperator{\bigwedge_{q\leqslant j\leqslant q+l-2}}\Phi_{p,j}\,\wedge\,\exists!\,y_{pl-1+q}\bigl(y_{pl-1+q}\in x\,\wedge \mu_{q}(x)\bigr)\label{QuasiInfinitonOneType}
\end{equation}

In the above conjunctions, $j=j-l+1$ if $j\geqslant l$. Clearly
\begin{align*}
	Q_{l+q} \,& =\,\underset{l+q}{\underbrace{\overbrace{\{\ast G_{q},\{\ast G_{q-1},\dots\{\ast G_{1},\{\ast G_{l},\{\ast G_{l-1},\dots\{\ast G_{q+1},}^{l}\{\ast G_{q},\dots\{\ast G_{1},G_{0}\}\dots\}}}
	\\
	\,& \,\,\,\vdots
	\\
	Q_{pl+q} \,&=\,\underset{pl+q}{\underbrace{\overbrace{\{\ast G_{q},\{\ast G_{q-1},\dots\{\ast G_{1},\{\ast G_{l},\dots\{\ast G_{q+1},}^{l}\overbrace{\{\ast G_{q},\dots\{\ast G_{1},\{\ast G_{l},\dots\{\ast G_{1},G_{0}\}\dots\}\dots\}}^{(p-1)l+q}}}
\end{align*}
So $\mathcal{Q}_{pl+q}\vDash \theta_{n_{p}(q)}[{Q}_{pl+q}]$. For any $r\neq q\,\,(0\leqslant r<l)$, $\mathcal{Q}_{pl+r}\nvDash \mu_q[{Q}_{pl+r}]$ and so $\mathcal{Q}_{pl+r}\nvDash \theta_{n_{p}(q)}[{Q}_{pl+r}]$. Since $\mathcal{Q}_{pl+r}\vDash \theta_{n_{p}(r)}[{Q}_{pl+r}]$, $\theta_{n_{p}(q)}$ and $\theta_{n_{p}(r)}$ are inconsistent. Thus $(\theta_{n_{p}(q)}(x))$ is a distinct $1$-type of $\mathfrak{Q}_q$ in $\mathfrak{Q}$.\smallskip

For $p$-type of $\mathfrak{Q}_q$, suppose
\begin{equation}
	\chi_{n_p(q)}(x_1,\dots,x_{p})\,\Longleftrightarrow\,\,\,\smashoperator{\bigwedge_{1\leqslant i\leqslant p-1}}\,\bigl(\xi(x_{i},x_{i+1})\wedge\mu_q(x_{i+1})\wedge \mu_q(x_{i})\wedge h(x_{i+1})=h(x_i)+l\bigr)\label{QuasiInfinitonNType}
\end{equation}

where $h(x)$ is the height of $x$ and
\[
\xi(x,y)\,\Longleftrightarrow\,\exists!z_{l-1}\,\cdots\, \exists!z_{1}\,\bigl(\,\,\,\smashoperator{\bigwedge_{2\leqslant i\leqslant l-1}}\, (z_{i-1}\in z_{i})\wedge(x\in z_1\wedge z_{l-1}\in y)\bigr)
\]
The validity of $\xi(x,y)$ means that there is a unique $\in$-sequence of length $l$ starting in $y$ and ending in $x$. Since  $\mathfrak{Q}_{n_{p}(q)}\vDash\chi_{n_{p}(q)}\left[Q_{n_{1}(q)},Q_{n_{2}(q)},\dots,Q_{n_{p}(q)}\right]$, $\bigl(\chi_{n_{p}(q)}(x_1,\dots,x_{p})\bigr)$ is a $p$-type of $\mathfrak{Q}_q$. Clearly
\[
\bigl\{\,\theta_{n_{p}(q)}(x_1),\theta_{n_{p+1}(q)}(x_1),\dots,\chi_{2l+q}( x_1,x_2),\dots, \chi_{pl+q}(x_1,x_2,\dots,x_{p})\dots\bigr\}
\] 
generates a maximum consistent set of formulas involving $\mathcal{G}$, which is the only complete type of $\mathfrak{Q}_q$.\smallskip

For any $r\neq q\,\,(0\leqslant r<l)$, $\mathcal{Q}_{n_{p}(r)}\nvDash \mu_q[{Q}_{n_{p}(r)}]$ and so $\mathfrak{Q}_{n_{p}(r)}\nvDash\chi_{n_{p}(q)}\left[Q_{n_{1}(r)},\dots,Q_{n_{p}(r)}\right]$. Thus $\chi_{n_{p}(q)}$ and $\chi_{n_{p}(r)}$ are inconsistent and belong to different types. Consequently, $\mathfrak{Q}$ has $l$ complete types. By proposition \ref{25}, Th$(\mathfrak{Q})$ is $\aleph_{0}$-categorical.\medskip

(iii) \ Let $n_{p}(q)=pl+q\,\,(0\leqslant q<l)$. By (ii), for any $p<\omega$, $\mathcal{Q}_{n_p} \vDash \theta_{n_{p}(q)}[Q_{n_p(q)}]$, and for any $k>p,\,\mathcal{Q}_{n_k}\vDash \theta_{n_{p}(q)}[Q_{n_{k}(q)}]$. So $\left(\mathcal{Q}_{n_p},\theta_{n_{p}(q)}\right)$ is a homogeneous subsequence. By (ii) and definition \ref{DefSubLimit}, $\underset{p\rightarrow\omega}{\lim}\mathcal{Q}_{n_{p}}$ is unique. Likewise, since for any $k>p$, $\mathfrak{Q}_{n_{k}}\vDash \chi_{n_{p}(q)}[Q_{n_{k-p+1}(q)},\dots,Q_{n_{k}(q)}]$, $\left(\mathfrak{Q}_{n_{p}},\chi_{n_{p}(q)}\right)$ is a homogeneous subsequence. Thus $\underset{p\rightarrow\omega}{\lim}\mathfrak{Q}_{n_{p}}$ is unique.\medskip

(iv) \ By (iii), for each $0\leqslant q<l$, there is a unique  $\underset{p\rightarrow\omega}{\lim}\mathcal{Q}_{n_{p}}$. So there are total $l$ sublimits in $\mathcal{Q}_n$.
\end{proof}

\begin{definition}
\label{DefQuasiInfiniton} \ In theorem \ref{43}, each $\underset{p\rightarrow\omega}{\lim}\mathcal{Q}_{n_{p}}$ is known as a \textbf{quasi-infiniton} generated by $\mathcal{G}$ and is denoted as $\left(0\leqslant q<l\right)$:
\[
\underset{p\rightarrow\omega}{\lim}\mathcal{Q}_{n_{p}(q)}\,=\,\underset{p\rightarrow\omega}{\lim}{Q}_{n_{p}(q)}\,=\,Q_{\omega,q}\,=\,\{G_{k},l\}|_{\mathfrak{Q}}
\]
The collection of the sublimits of $\mathcal{Q}_{n}$ is denoted as:
\[
\underset{n\rightarrow\omega}{\lim}\mathcal{Q}_{n}\,=\,\underset{n\rightarrow\omega}{\lim}Q_{n}\,=\,Q_{\omega}\,=\,\{Q_{\omega,q}\colon0\leqslant q<l\}
\]
Where $G_{k}\,\left(1\leqslant k\leqslant l\right)$ are \textbf{principal generators} and $G_{0}$ is a \textbf{base generator} of $Q_{\omega}$. $\bar{\mathscr{L}}^{\prime}=\{\allowbreak\in, H_{\omega}, I_{\omega},Z_{\omega},Q_{\omega},Q_{\omega,q}\}$ after new constants are added.\footnote{In the rest discussion, we will no longer distinguish $\underset{n\rightarrow\omega}{\lim}\mathcal{Q}_{n}$ and $\underset{n\rightarrow\omega}{\lim}Q_{n}$.}
\end{definition}

\begin{theorem}
\label{44} \ Suppose everything is the same as in theorem \ref{43}, $\mathfrak{Q}_{q}^+=\langle\{Q_{n_p(q)}\colon p<\omega\}\cup\{Q_{\omega,q}\}, \in,h,G_0,\dots, G_l\rangle$ and $\,\mathfrak{Q}^+=\,\bigcup_{0\leqslant q< l}\mathfrak{Q}_{q}^+$. For any $0\leqslant q<l$
\end{theorem}

\begin{enumerate}
\item $Q_{\omega,q}\,=\,\{\ast G_{q},\{\ast G_{q-1},\dots\{\ast G_{q+1},Q_{\omega,q}\}\dots\}$

\item \textit{For any} $q<r<l$, $\,Q_{\omega,r}=\{\ast G_{r},\dots\{\ast G_{q+1},Q_{\omega,q}\}$.

\item \textit{Each $Q_{\omega,q}$ of $Q_{\omega}$ is $\omega$-invariant.}

\item \textit{A type for each $Q_{\omega,q}$ of $Q_{\omega}$ is that there exists a unique $\in$-sequence of length $\omega$ with each sublevel $i\,(i\leqslant \omega)$ containing the members of $G_{i}$ and a vicious cycle of length $l$.}

\item \textit{For any} $0\leqslant q<l$, $\underset{p\rightarrow\omega}{\lim}\mathfrak{Q}_{n_p}=\mathfrak{Q}_q^+$ and $\mathfrak{Q}_q^+$ \textit{is atomic}.

\item $\mathfrak{Q}^+$\textit{\ is a }$\aleph_{0}$\textit{-categorical structure with $l$ atomic substructures.}
\end{enumerate}

\begin{proof}
(i) \ Let $n_p(q)=pl+q$. Clearly we have
\begin{align*}
	Q_{q} \,& =\,\{\ast G_{q},\{\ast G_{q-1},\dots\{\ast G_{1},G_{0}\}\dots\}
	\\
	Q_{l+q} \,& =\,\{\ast G_{q},\{\ast G_{q-1},\dots\{\ast G_{1},\{\ast G_{l},\{\ast G_{l-1},\dots\{\ast G_{q+1},Q_q\}\dots\}
	\\
	\,& \,\,\, \vdots
	\\
	Q_{pl+q} \,&=\, \{\ast G_{q},\{\ast G_{q-1},\dots\{\ast G_{1},\{\ast G_{l},\dots\{\ast G_{q+1},Q_{(p-1)l+q}\}\dots\}
\end{align*}

By theorem \ref{43}(iii) and definition \ref{DefQuasiInfiniton}, $\underset{p\rightarrow\omega}{\lim}Q_{n_{p}(q)}=Q_{\omega,q}$. So by corollary \ref{42} and \ref{29}
\begin{align*}
Q_{\omega,q}\,=\,\underset{p\rightarrow\omega}{\lim}Q_{n_{p}(q)}\,&=\,\{\ast G_{q},\{\ast G_{q-1},\dots\{\ast G_{1},\{\ast G_{l},\dots\{\ast G_{q+1},\underset{p\rightarrow\omega}{\lim}Q_{n_{p-1}(q)}\}\dots\}
\\
\,&=\,\{\ast G_{q},\{\ast G_{q-1},\dots\{\ast G_{1},\{\ast G_{l},\dots\{\ast G_{q+1},Q_{\omega,q}\}\dots\}
\end{align*}

(ii) \ By (i), we have
\[
	Q_{pl+r} \,=\, \{\ast G_{r},\{\ast G_{r-1},\dots\{\ast G_{q+1},Q_{pl+q}\}\dots\}
\]

So by corollary \ref{29} and theorem \ref{43}
\begin{align*}
	Q_{\omega,r}\,=\,\underset{p\rightarrow\omega}{\lim}Q_{n_p(r)}
	\,&=\,\{\ast G_{r},\{\ast G_{r-1},\dots\{\ast G_{q+1},\underset{p\rightarrow\omega}{\lim}Q_{n_p(q)}\}\dots\}
	\\
	\,&=\, \{\ast G_{r},\{\ast G_{r-1},\dots\{\ast G_{q+1},Q_{\omega,q}\}\dots\}
\end{align*}

(iii) \ For any $\alpha$ that $\omega<\alpha<\omega2$, if $(m-1)l+q\leqslant\alpha-\omega<ml+q$, let $\beta\,=\,\omega+ml+q$. Then by (i)
\begin{align*}
Q_{\beta} \:& =\: \{\ast G_{q},\{\ast G_{q-1},\dots\{\ast G_{q+1},Q_{(m-1)l+q}\}\dots\}
\\
\:& =\: \{\ast G_{q},\{\ast G_{q-1},\dots\{\ast G_{1},\{\ast G_{l},\{\ast G_{l-1},\dots\{\ast G_{q+1},Q_{(m-2)l+q}\}\dots\}
\\
\,&\:\: \vdots
\\
\:& =\: \underset{\beta-\omega}{\underbrace{\{\ast G_{q},\dots\{\ast G_{1},\{\ast G_{l},\dots\{\ast G_{q+1},Q_{\omega,q}\}\dots\}\dots\}}}
\\
\:& =\: Q_{\omega,q}
\end{align*}

Then it follows by transfinite induction and definition \ref{DefOmegaInvariant}.\medskip

(iv) \ Suppose $n_{p}(q)=pl+q\,\,(0\leqslant q<l)$ and $\left\langle Q_{i}\colon i\leqslant n_p\right\rangle$ is the unique $\in$-sequence of length $n_p$ in $Q_{{n}_p(q)}$ with each of its sublevel $i\,(i<n_p)$ containing the members of $G_i$. Then by (\ref{QuasiInfinitonOneType})
\begin{equation*}
	\theta_{n_{p}(q)}(Q_{pl+q})\,\Longleftrightarrow\,\,\,\smashoperator{\bigwedge_{1\leqslant i\leqslant p-1}}\text{\,\,\qquad}\smashoperator{\bigwedge_{q\leqslant j\leqslant q+l-1}}\Phi_{i,j}\,\,\wedge\quad\smashoperator{\bigwedge_{q\leqslant j\leqslant q+l-2}}\Phi_{p,j}\,\wedge\,\exists!\,Q_{pl-1+q}\,\bigl(Q_{pl-1+q}\in Q_{pl+q}\,\wedge \mu_{q}(Q_{pl+q})\bigr)
\end{equation*}
where for $\,1\leqslant i\leqslant p\,$ and $\, 0\leqslant j\leqslant l-1$
\[
\Phi_{i,j}\,\Longleftrightarrow\,\exists!\,Q_{(i-1)l+j+1}\,\exists!\,Q_{(i-1)l+j}\,\bigl(Q_{(i-1)l+j}\in Q_{(i-1)l+j+1}\,\wedge\,\mu_{j+1}(Q_{(i-1)l+j+1})\bigr)
\]
(In the above conjunctions, $j=j-l+1$ if $j\geqslant l$.) And for any $i=j+1$
\[
\mu_{i}(y)\Longleftrightarrow(\forall z\in G_{i})(z\in y)
\]
By definition \ref{DefQuasiInfiniton}, for any $\, 0\leqslant j\leqslant l-1$, $\underset{p\rightarrow\omega}{\lim}Q_{n_{p}(j)}=Q_{\omega,j}$. So by axiom \ref{15}, corollary \ref{17} and \ref{67}
\begin{align*}
\theta_{\omega,q} \,\,& \Longleftrightarrow\,\,\underset{p\rightarrow\omega}{\lim}\theta_{n_{p}(q)}
\\
\,\,&\Longleftrightarrow\,\underset{p\rightarrow\omega}{\lim}\,\,\,\smashoperator{\bigwedge_{1\leqslant i\leqslant p-1}}\text{\,\,\qquad}\smashoperator{\bigwedge_{q\leqslant j\leqslant q+l-1}}\Phi_{i,j}\,\,\wedge\,\underset{p\rightarrow\omega}{\lim}\,\,\,\,\smashoperator{\bigwedge_{q\leqslant j\leqslant q+l-2}}\Phi_{p,j}\, \wedge\,\underset{p\rightarrow\omega}{\lim}\,\exists!\,Q_{pl-1+q}\,\bigl(Q_{pl-1+q}\in Q_{pl+q}\wedge \mu_{q}(Q_{pl+q})\bigr)
\\
\,\,&\Longleftrightarrow\,\,\smashoperator{\bigwedge_{p<\omega}}\text{\,\,\,\quad}\smashoperator{\bigwedge_{q\leqslant j\leqslant q+l-1}}\,\,\,\exists!\,Q_{(p-1)l+j+1}\,\exists!\,Q_{(p-1)l+j}\,\bigl(Q_{(p-1)l+j}\in Q_{(p-1)l+j+1}\wedge\mu_{j+1}(Q_{(p-1)l+j+1})\bigr)
\\
\,\,& \qquad \wedge\,\,\,\smashoperator{\bigwedge_{q\leqslant j\leqslant q+l-1}}\,\,\,\exists!\,Q_{\omega,j+1}\,\exists!\, Q_{\omega,j}\,\bigl(Q_{\omega,j}\in Q_{\omega,j+1}\wedge\mu_{j+1}(Q_{\omega,j+1})\bigr)
\end{align*}

In the above proof, for any $i=j+1$, by corollary \ref{24}
\[
\underset{p\rightarrow\omega}{\lim}\mu_i(Q_{(p-1)l+i})\Leftrightarrow\underset{p\rightarrow\omega}{\lim}(\forall z\in G_{i})\bigl(z\in Q_{(p-1)l+i}\bigr)\Leftrightarrow(\forall z\in G_i)\Bigl(z\in \underset{p\rightarrow\omega}{\lim}Q_{n_{p-1}(i)}\Bigr)\Leftrightarrow\mu_i(Q_{\omega,i})
\]
By corollary \ref{16}, $Q_{\omega,q}\vDash\theta_{\omega,q}$ where $\theta_{\omega,q}$ defines the unique $\in$-sequence of length $\omega$ in $Q_{\omega,q}$ with each sublevel $i\,(i\leqslant \omega)$ containing the members of $G_{i}$ and a vicious cycle of length $l$.\medskip

(v) \ Let $n_{p}(q)=pl+q\,\,(0\leqslant q<l)$ and fix $n_{k}<\omega$. Then for any $p>k$, $Q_{n_{k}}\in\mathfrak{Q}_{n_{p}}$. By theorem \ref{43}(iii) and axiom \ref{19}, $\underset{{p}\rightarrow\omega}{\lim}(Q_{n_{k}}\in\mathfrak{Q}_{n_{p}})\Leftrightarrow Q_{n_{k}}\in\underset{{p}\rightarrow\omega}{\lim}\mathfrak{Q}_{n_{p}}$. So by corollary \ref{18}, for any $k<\omega$, $Q_{n_{k}}\in\underset{{p}\rightarrow\omega}{\lim}\mathfrak{Q}_{n_{p}}$. By corollary \ref{20}, $Q_{\omega,q}=\underset{k\rightarrow\omega}{\lim}Q_{n_{k}}\in\underset{{p}\rightarrow\omega}{\lim}\mathfrak{Q}_{n_{p}}$ and so $\mathfrak{Q}_q^+\subset\underset{{p}\rightarrow \omega}{\lim}\mathfrak{Q}_{n_{p}}$. On the other hand, by theorem \ref{43}(iii), $\underset{{p}\rightarrow\omega}{\lim}\mathfrak{Q}_{n_{p}}$ is unique. Thus $\underset{{p}\rightarrow \omega}{\lim}\mathfrak{Q}_{n_{p}}\subset\mathfrak{Q}_q^+$ and $\underset{{p}\rightarrow \omega}{\lim}\mathfrak{Q}_{n_{p}}=\mathfrak{Q}_q^+$.\smallskip

 Furthermore, by (\ref{QuasiInfinitonNType}), corollary \ref{17} and \ref{14} 
\begin{align*}
\chi_{\omega,q}\,&\,\Longleftrightarrow\,\underset{{p}\rightarrow\omega}{\lim}\,\chi_{n_p(q)}
\\
\,&\,\Longleftrightarrow\,\underset{{p}\rightarrow\omega}{\lim}\quad\smashoperator{\bigwedge_{1\leqslant i\leqslant p-2}}\,\bigl(\xi(x_{n_{i}},x_{n_{i+1}})\wedge\mu_q(x_{n_{i+1}})\wedge\mu_q(x_{n_{i}})\wedge h(x_{n_{i+1}})=h(x_{n_{i}})+l\bigr)
\\
&\qquad\,\,\,\wedge\,\underset{{p}\rightarrow\omega}{\lim}\bigl(\xi(x_{n_{p-1}},x_{n_{p}})\wedge\mu_q(x_{n_{p}})\wedge\mu_q(x_{n_{p-1}})\wedge h(x_{n_{p}})=h(x_{n_{p-1}})+l\bigr)
\\
\,&\,\Longleftrightarrow\,\smashoperator{\bigwedge_{p<\omega}}\, \bigl(\xi(x_{n_{p}},x_{n_{p+1}})\wedge\mu_q(x_{n_{p+1}})\wedge\mu_q(x_{n_{p}})\wedge h(x_{n_{p+1}})=h(x_{n_{p}})+l\bigr)
\\
\,\,& \qquad\, \wedge\,\,\,\,\smashoperator{\bigwedge_{q\leqslant j\leqslant q+l-1}}\,\,\bigl(x_{\omega,j}\in x_{\omega,j+1}\,\wedge\,\mu_{j+1}(x_{\omega,j+1})\bigr)
\end{align*}
where $h$ is the height function, $\underset{p\rightarrow\omega}{\lim}x_{n_{p}(j)}=x_{\omega,j}$ and
\[
\xi(x_{n_{i}},x_{n_{i+1}})\Longleftrightarrow\exists!z_{l-1}\cdots \exists!z_{1}\,\bigl(\,\,\,\smashoperator{\bigwedge_{2\leqslant j\leqslant l-1}} \bigl(z_{j-1}\in z_{j}\wedge\mu_{q+j}(z_{j})\bigr)\wedge\bigl(x_{n_{i}}\in z_1\wedge\mu_{q+1}(z_{1})\wedge z_{l-1}\in x_{n_{i+1}}\bigr)\bigr)
\]
Since $\mathfrak{Q}_q^+\vDash\chi_{\omega,q}$, any tuple from $\{Q_{n_p}\colon {p}\leqslant\omega\}$ satisfies $\chi_{\omega,q}$. By theorem \ref{43}(ii), Th$(\mathfrak{Q}_q^+)$ has only one type. So by proposition \ref{25}, $\mathfrak{Q}_q^+$ is atomic with $\chi_{\omega,q}$ being its complete formula. Since any type of Th$(\mathfrak{Q}_q^+)$ can be derived from it, $\chi_{\omega,q}$ is a $\omega$-type with countable free variables.\medskip

(vi) \ By theorem \ref{43}(ii) and (v), $\mathfrak{Q}^+$ has $l$ types and so Th$(\mathfrak{Q}^+)$ is $\aleph_{0}$-categorical. By proposition \ref{25} and (v), $\mathfrak{Q}^+$ has $l$ atomic substructures of $\mathfrak{Q}_q^+$.\bigskip 
\end{proof}

%\begin{remark}
%\ Theorem \ref{44}(i) shows that each quasi-infiniton has a vicious cycle of length $l$.
%\end{remark}

The tree structure for a quasi-infiniton is shown in Figure \ref{Fig3}. Intuitively, a quasi-infiniton has one infinite branch and the nodes of the infinite branch form a finite cycle.

\begin{figure}[h]
	\centering
	\includegraphics{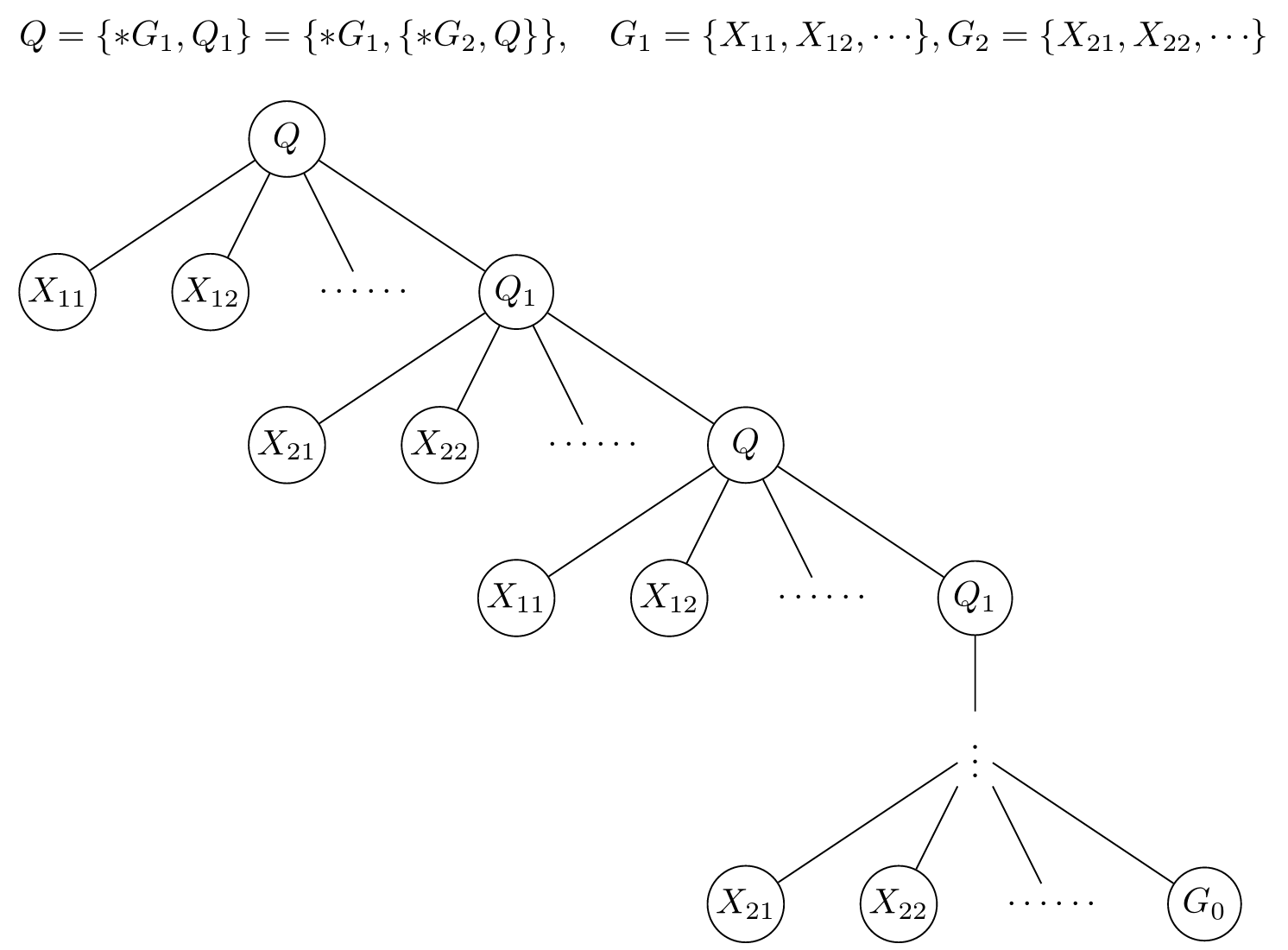}
	\caption{Diagram of a quasi-infiniton.}
	\label{Fig3}
	\centering
\end{figure}

\begin{corollary}
$\qquad$
\end{corollary}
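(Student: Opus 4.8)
The plan is to establish the basic structural properties of a quasi-infiniton $Q$ in direct parallel with the corollaries already proved for infinitons and for semi-infinitons; following that template, I read the (suppressed) statement as asserting roughly that $Q\neq\varnothing$, that $Q\neq\{Q\}$ (so a genuine quasi-infiniton is not an infiniton), that $D(Q)=\aleph_{0}$, and that $Q\notin V$. The whole proof should rest on three earlier results: theorem \ref{44} (for $\omega$-invariance and the explicit cycle structure $Q_{\omega,q+1}=\{\ast G_{q+1},Q_{\omega,q}\}$), theorem \ref{47} together with the membership-dimension recursion (\ref{DefMembershipDimension}), and lemma \ref{1}.

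First I would treat $Q\neq\varnothing$ by invoking $\omega$-invariance from theorem \ref{44}(i) and remark \ref{52}: an $\omega$-invariant set has an immediate member, so $Q$ cannot be empty; alternatively, theorem \ref{44}(ii) exhibits $Q_{\omega,q}\in Q_{\omega,q+1}$ directly. Next, for $Q\neq\{Q\}$, I would argue by contradiction: if $Q=\{Q\}$ then $Q$ would be an infiniton, i.e. its vicious cycle would have length $1$, contradicting the requirement $l>1$ and the strict inequalities $|G_{1}|<\cdots<|G_{l}|$ among the principal generators imposed in definition \ref{DefQuasiInfiniton}. For $D(Q)=\aleph_{0}$, I would use the contrapositive of the dimension argument: assuming $D(Q)<\aleph_{0}$, theorem \ref{44}(ii) lets me walk once around the cycle of length $l$, yielding $Q_{\omega,q}\in Q_{\omega,q+1}\in\cdots\in Q_{\omega,q+l}=Q_{\omega,q}$, and theorem \ref{47}(iii) forbids exactly such a finite $\in$-cycle among sets of finite membership dimension, a contradiction. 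Finally, $Q\notin V$ follows because $Q$ has an infinite branch and is therefore NWF by definition \ref{DefWFAndNWFSet}, so lemma \ref{1} gives $Q\notin V$.

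The main obstacle will be making the membership-dimension step rigorous, since theorem \ref{47} is stated only for sets of \emph{finite} membership dimension; the clean route is precisely the contrapositive just described, applying theorem \ref{47}(iii) to the finite cycle supplied by theorem \ref{44}(ii) rather than trying to compute $D(Q)$ directly. Once that is handled, the remaining items are routine transcriptions of the corresponding infiniton and semi-infiniton arguments, with the length-$l$ vicious cycle playing the role of the single self-membership $I=\{I\}$.
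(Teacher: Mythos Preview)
Your reading of the suppressed statement is almost right but not quite: the paper's item (i) asserts $Q\notin Q$ (together with $Q\neq\varnothing$), not $Q\neq\{Q\}$. Your argument adapts immediately---if $Q\in Q$ then the vicious cycle has length $1$, so $Q$ would be a semi-infiniton rather than a quasi-infiniton, contradicting $l>1$ in definition \ref{DefQuasiInfiniton}---and this is exactly what the paper does. Your proposed $Q\neq\{Q\}$ is true as well and provable by the same device, but it is not what the corollary records.

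For the remaining items your approach matches the paper's. The paper proves $Q\neq\varnothing$ by noting that the cycle forces some $Q_{1}\in\varnothing$, which is impossible; your $\omega$-invariance argument via remark \ref{52} is an acceptable alternative. For $D(Q)=\aleph_{0}$ the paper cites theorem \ref{44}(i) and the recursion (\ref{DefMembershipDimension}) in one line; your contrapositive via theorem \ref{47}(iii) applied to the length-$l$ cycle from theorem \ref{44}(ii) is the same idea spelled out, and is arguably cleaner since it makes explicit why finite dimension is impossible. The proof of $Q\notin V$ is identical in both.
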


\begin{enumerate}
\item $Q\notin Q$ \textit{and} $Q\neq\varnothing$.

\item $D(Q)\,=\,\aleph_{0}$ \textit{and} $Q\notin V$.
\end{enumerate}

\begin{proof}
\ (i) \ If $Q\in Q$, the length of $Q$ is $1$ and $Q$ is a semi-infiniton. Also, no $Q_{1}\in\varnothing$. So $\varnothing\in Q_{1}$ and $Q_{1}\in\varnothing$ are impossible.\medskip

(ii) \ By theorem \ref{44}(iv) and (\ref{DefMembershipDimension}), $D(Q)\,=\,\aleph_{0}$. Since $Q$ has an infinite branch, it is NWF. So $Q\notin V$.
\end{proof}

\begin{definition}
\label{DefSetQuasiInfiniton}\ $S|_{\mathfrak{Q}}=\{\{G_{k},l\}|_{\mathfrak{Q}}\colon G_{k}\in S,\,0\leqslant k\leqslant l,\,l>1\}$ is known as the \textbf{set of the quasi-infinitons} from $S$.
\end{definition}

\begin{corollary}
\label{45}\qquad
\end{corollary}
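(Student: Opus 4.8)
Since the statement of Corollary \ref{45} is the (empty) header followed by an enumerated list that the excerpt cuts off, I read it as the quasi-infiniton analogue of Corollary \ref{39}: namely $S_1\subset S_2\Rightarrow S_1|_{\mathfrak{Q}}\subset S_2|_{\mathfrak{Q}}$, the union inclusion $(S_1\cup S_2)|_{\mathfrak{Q}}\supset S_1|_{\mathfrak{Q}}\cup S_2|_{\mathfrak{Q}}$, the intersection inclusion $(S_1\cap S_2)|_{\mathfrak{Q}}\subset S_1|_{\mathfrak{Q}}\cap S_2|_{\mathfrak{Q}}$, the arbitrary-union inclusion $\left(\bigcup_{\alpha\in D}S_\alpha\right)|_{\mathfrak{Q}}\supset\bigcup_{\alpha\in D}S_\alpha|_{\mathfrak{Q}}$, and equality in the last clause when the $S_\alpha$ are ascending. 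The plan is to mirror the proof of Corollary \ref{39} almost verbatim, the only structural difference being that by Definition \ref{DefSetQuasiInfiniton} a member $\{G_k,l\}|_{\mathfrak{Q}}$ is built from finitely many generators $G_0,\cdots,G_l$ all drawn from $S$, so the ``several-generators-from-one-set'' phenomenon behaves exactly as for semi-infinitons.

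First I would prove monotonicity: if $S_1\subset S_2$, then every quasi-infiniton whose generators all lie in $S_1$ also has all its generators in $S_2$, so the inclusion follows directly from Definition \ref{DefSetQuasiInfiniton}. The union, intersection, and arbitrary-union clauses then drop out of monotonicity. Applying it to $S_1,S_2\subset S_1\cup S_2$ gives the $\supset$ in the union clause, and here equality genuinely fails in general because a quasi-infiniton on the right may draw some generators from $S_1$ and others from $S_2$; applying it to $S_1\cap S_2\subset S_1$ and to $S_1\cap S_2\subset S_2$ gives the $\subset$ in the intersection clause; and the same monotonicity argument gives the $\supset$ in the arbitrary-union clause.

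The only step needing real care, and the one I expect to be the main obstacle, is the ascending-chain clause. The $\supset$ direction is already the arbitrary-union clause, so the content is the reverse inclusion, and the crux is that each quasi-infiniton has finite length $l$ (Definition \ref{DefQuasiInfiniton}) and hence only finitely many generators. Given $\{G_k,l\}|_{\mathfrak{Q}}\in\left(\bigcup_{\alpha\in D}S_\alpha\right)|_{\mathfrak{Q}}$, each $G_k$ sits in some $S_{\gamma_k}$; setting $\gamma=\max\{\gamma_0,\cdots,\gamma_l\}$ and invoking the ascending hypothesis places all of $G_0,\cdots,G_l$ in the single set $S_\gamma$, whence $\{G_k,l\}|_{\mathfrak{Q}}\in S_\gamma|_{\mathfrak{Q}}\subset\bigcup_{\alpha\in D}S_\alpha|_{\mathfrak{Q}}$. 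The point to flag is that finiteness of the generator list is precisely what makes $\max\{\gamma_0,\cdots,\gamma_l\}$ exist; this is the same maneuver used in Corollary \ref{39}(v) for the two generators $G,G_0$, now carried out over the finite list forced by the finite cycle length $l$.
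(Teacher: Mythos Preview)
Your proposal is correct and mirrors the paper's proof essentially verbatim: the paper states that (i) is obvious, that (ii)--(iv) follow from (i), and for (v) it picks a single $\gamma\in D$ with $G_k\in S_\gamma$ for all $0\leqslant k\leqslant l$, which is exactly your $\gamma=\max\{\gamma_0,\cdots,\gamma_l\}$ step using the ascending hypothesis and finiteness of the generator list.
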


\begin{enumerate}
\item $S_{1}\subset S_{2}\:\Longrightarrow\:S_{1}|_{\mathfrak{Q}}\subset S_{2}|_{\mathfrak{Q}}$

\item $\left(S_{1}\cup S_{2}\right)|_{\mathfrak{Q}}\,\supset\,S_{1}|_{\mathfrak{Q}}\cup S_{2}|_{\mathfrak{Q}}$

\item $\left(S_{1}\cap S_{2}\right)|_{\mathfrak{Q}}\,\subset\,S_{1}|_{\mathfrak{Q}}\cap S_{2}|_{\mathfrak{Q}}$

\item $\Bigl(\underset{\alpha\in D}{\displaystyle\bigcup}S_{\alpha}\Bigr)\Big\vert_{\mathfrak{Q}}\, \supset\underset{\alpha\in D}{\displaystyle\bigcup}S_{\alpha}\vert_{\mathfrak{Q}}$

\item $S_{\alpha}\uparrow\hspace{1ex}\Longrightarrow\Bigl(\underset{\alpha\in D}{\displaystyle\bigcup}S_{\alpha} \Bigr)\Big\vert_{\mathfrak{Q}}\,=\underset{\alpha\in D}{\displaystyle\bigcup}S_{\alpha}\vert_{\mathfrak{Q}}$
\end{enumerate}

\begin{proof}
\ (i) is obvious. (ii), (iii) and (iv) follow from (i).\medskip

(v) \ For any $\{G_{k},l\}|_{\mathfrak{Q}}\in\Bigl(\underset{\alpha\in D}{\displaystyle\bigcup}S_{\alpha}\Bigr) \Big\vert_{\mathfrak{Q}}$, there is a $\gamma\in D$ that $G_{k}\in S_{\gamma}$ for $0\leqslant k\leqslant l$. Thus $\{G_{k},l\}|_{\mathfrak{Q}}\in S_{\gamma}\vert_{\mathfrak{Q}}\subset\underset{\alpha\in D}{\displaystyle\bigcup} S_{\alpha}\vert_{\mathfrak{Q}}$. This proves $``\subset
\textquotedblright$ and it follows by (iv).\smallskip
\end{proof}

\section{Total Universe}

In this section, we will present and investigate a hierarchy for combining the well-founded sets with the non-well-founded sets known as the total universe. We will also show that the total universe is free of Russell's paradox.

\subsection{Definitions}

First, we need to generalize the $\omega$-neighborhood and the limit of formulas. 

\begin{definition}
\label{DefNeighborhoodLimitOrdinals} \ Suppose $\alpha$ is a limit ordinal $\left(\alpha>\omega\right)$ and $\alpha_{0}$ the limit ordinal immediately below $\alpha$. The \textbf{cofinite topology} on $\alpha$ is defined as: $\,\mathfrak{T}\,=\, \{y\subset\alpha\colon y=\varnothing\,\vee\,(\alpha_{0}\subset y\,\wedge\,\alpha-y$ is finite$)\}$. A \textbf{neighborhood of} $\mathbf{\alpha}$ $(\alpha$-neighborhood$)$ $\mathfrak{H}$ is a member of $\mathfrak{T}$.
\end{definition}

\begin{lemma}
\ $\mathfrak{H}$ is a neighborhood of $\alpha$ if and only if $\exists\beta\in\alpha-\alpha_{0}$ that $\forall\gamma\left(  \beta<\gamma<\alpha\right)\Rightarrow\gamma\in\mathfrak{H}$.
\end{lemma}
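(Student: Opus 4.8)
The plan is to treat this as the $\alpha$-analog of Lemma \ref{7} and to copy that proof's architecture: establish the forward implication ($\Rightarrow$) by contraposition via a cofinality argument, and the reverse implication ($\Leftarrow$) by a direct bounding of the complement $\alpha-\mathfrak{H}$. The only genuinely new ingredient compared to the $\omega$-case is the ``base clause'' $\alpha_{0}\subset y$ that Definition \ref{DefNeighborhoodLimitOrdinals} adds to Definition \ref{DefNeighborhood}, and the whole proof amounts to showing that this clause together with finiteness of the complement is what a cofinal tail $(\beta,\alpha)\subseteq\mathfrak{H}$ encodes. As in Lemma \ref{7}, the substantive content concerns a nonempty $\mathfrak{H}$, so I would dispose of $\mathfrak{H}=\varnothing$ at the outset.

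For the forward direction I would argue contrapositively. Suppose $\mathfrak{H}$ is a nonempty $\alpha$-neighborhood but that for every $\beta\in\alpha-\alpha_{0}$ there is a $\gamma$ with $\beta<\gamma<\alpha$ and $\gamma\notin\mathfrak{H}$. Beginning with $\beta=\alpha_{0}$ and feeding each excluded $\gamma$ back in as the next $\beta$, one manufactures a strictly increasing $\omega$-sequence $\gamma_{0}<\gamma_{1}<\cdots$ of ordinals in $[\alpha_{0},\alpha)$, each lying outside $\mathfrak{H}$. Hence $\alpha-\mathfrak{H}$ is infinite, contradicting the requirement of Definition \ref{DefNeighborhoodLimitOrdinals} that $\alpha-\mathfrak{H}$ be finite; so the desired $\beta$ exists. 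This half is entirely parallel to the $\omega$-case and presents no difficulty.

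For the reverse direction, from $(\beta,\alpha)\subseteq\mathfrak{H}$ with $\alpha_{0}\leqslant\beta<\alpha$ one gets immediately $\alpha-\mathfrak{H}\subseteq\beta+1$, and the goal is to upgrade this to $\mathfrak{H}\in\mathfrak{T}$, i.e. to $\alpha_{0}\subseteq\mathfrak{H}$ and $\alpha-\mathfrak{H}$ finite. \emph{This is the step I expect to be the main obstacle.} In Lemma \ref{7} the witness $N$ is finite, so $[0,N]$ is automatically finite and cofiniteness is free; here $\beta$ may be transfinite, so $\beta+1$ is infinite and a bare cofinal tail does not by itself force either finiteness of the complement or the base clause. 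The clean route is to prove the equivalence \emph{relative to} Definition \ref{DefNeighborhoodLimitOrdinals}'s base clause: read the tail witness as lying only finitely many successor steps above $\alpha_{0}$, so that $[\alpha_{0},\beta]$ is finite and, combined with $\alpha_{0}\subseteq\mathfrak{H}$, yields $\alpha-\mathfrak{H}\subseteq[\alpha_{0},\beta]$ finite. I would therefore phrase the reverse direction so that the hypothesis explicitly carries this finiteness of the gap between $\alpha_{0}$ and $\beta$; making precise that the purely cofinal tail condition genuinely captures membership in $\mathfrak{T}$, base clause included, is exactly the point that the $\omega$-setting hides and that this proof must confront.
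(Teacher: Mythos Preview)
Your forward direction is exactly the paper's: assume the tail condition fails for every $\beta\in\alpha-\alpha_{0}$, produce unboundedly many $\gamma\notin\mathfrak{H}$ in $[\alpha_{0},\alpha)$, and contradict finiteness of $\alpha-\mathfrak{H}$.

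For the reverse direction, you have put your finger on a genuine defect---and it is a defect in the paper, not in your reasoning. The paper's proof of this half consists of the single sentence ``if there is a $\beta\in\alpha-\alpha_{0}$ such that for any $\gamma$ of $\beta<\gamma<\alpha$, $\gamma\in\mathfrak{H}$, then $\alpha-\mathfrak{H}$ is finite and $\mathfrak{H}\in\mathfrak{T}$.'' That is precisely the bare assertion you were worried about, and it is false as stated. Take $\mathfrak{H}=(\beta,\alpha)$ for any $\beta$ with $\alpha_{0}\leqslant\beta<\alpha$: the tail condition holds trivially, yet $\alpha-\mathfrak{H}=[0,\beta]$ is infinite (indeed $\alpha_{0}\not\subset\mathfrak{H}$), so $\mathfrak{H}\notin\mathfrak{T}$. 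Your observation that $[\alpha_{0},\beta]$ is automatically finite (since $\alpha=\alpha_{0}+\omega$) is right, but, as you say, the base clause $\alpha_{0}\subset\mathfrak{H}$ is not recoverable from the tail condition alone; it has to be assumed separately for the reverse implication to go through.

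So your proposal is more careful than the paper here: the paper simply glosses over the gap you identified, while you correctly diagnose that the lemma as written is not an equivalence and would need the additional hypothesis $\alpha_{0}\subset\mathfrak{H}$ on the reverse side to become one.
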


\begin{proof}
\ Suppose $\mathfrak{H}$ is a neighborhood of $\alpha$ and for any $\beta\in\alpha-\alpha_{0}$, there is a $\gamma$ that $\beta<\gamma<\alpha$ and $\gamma\notin\mathfrak{H}$. Then $\alpha-\mathfrak{H}$ is not finite, contradicting definition \ref{DefNeighborhoodLimitOrdinals}. On the other hand, if there is a $\beta\in\alpha-\alpha_{0}$ such that for any $\gamma$ of $\beta<\gamma<\alpha$, $\gamma\in\mathfrak{H}$, then $\alpha-\mathfrak{H}$ is finite and $\mathfrak{H}\in\mathfrak{T}$.
\end{proof}

\begin{definition}
	\label{DefGeneralHomoSeq} \ Suppose $\mathscr{L}$ is an infinitary language of $\mathscr{L}_{\omega_{1},\omega}$, $T$ is a $\aleph_{0}$-categorical theory of $\mathscr{L}$ and $\alpha$ is a limit ordinal $\left(\alpha>\omega\right)$. Let $\phi_{\gamma}$ be types in $T$ and $\mathfrak{M}_{\gamma}$ be $\mathscr{L}$-structures that $\mathfrak{M}_{\gamma}\vDash\phi_{\gamma}$. If there exists a $\alpha$-neighborhood $\mathfrak{H}$ that for any $\beta,\gamma\in\mathfrak{H}\:(\beta>\gamma)$, $\mathfrak{M}_{\beta}\vDash\phi_{\gamma}$, then $\{(\mathfrak{M}_{\gamma}, \phi_{\gamma})\colon\mathfrak{M}_{\gamma}\vDash\phi_{\gamma}\land \gamma<\alpha\}$ is known as a \textbf{homogeneous sequence of structures defined by $\phi_{\gamma}$} in $T$.
\end{definition}

\begin{definition}
	\label{DefGeneralLimit} \ Suppose $\{(\mathfrak{M}_{\gamma},\phi_{\gamma})\colon \mathfrak{M}_{\gamma} \vDash\phi_{\gamma}\land \gamma<\alpha\}$ is a homogeneous sequence of structures in a $\aleph_{0}$-categorical theory. The unique countable atomic structure $\mathfrak{M}$ (up to isomorphism) in $\{(\mathfrak{M}_{\gamma},\phi_{\gamma})\}$ is known as the \textbf{limit} of $\mathfrak{M}_{\gamma}$ and is denoted as $\underset{\gamma\rightarrow\alpha}{\lim} \mathfrak{M}_{\gamma}=\mathfrak{M}$. The unique formula $\phi$ (up to equivalence) is known as the \textbf{limit} of $\phi_{\gamma}$ and is denoted as $\underset{\gamma\rightarrow\alpha}{\lim}\phi_{\gamma}=\phi$. In both cases, we also say that the limit of $\phi_{\gamma}$ or the limit of $\mathfrak{M}_{\gamma}$ is unique.
\end{definition}

\begin{definition}
	\label{DefGeneralSubLimit} \ Suppose in a sequence of structures $\{(\mathfrak{M}_{\gamma},\phi_{\gamma})\colon \mathfrak{M}_{\gamma} \vDash\phi_{\gamma}\land \gamma<\alpha\}$ in a $\aleph_{0}$-categorical theory, there are finitely many homogeneous subsequences of structures $\{(\mathfrak{M}_{\gamma_i}, \phi_{\gamma_i})\colon \allowbreak \mathfrak{M}_{\gamma_i} \vDash\phi_{\gamma_i}\land \gamma_i<\alpha\}$. Then each $\underset{\gamma_i\rightarrow\alpha}{\lim}\,\mathfrak{M}_{\gamma_i}$ is known as a \textbf{sublimit} of $\mathfrak{M}_{\gamma}$, and  each $\underset{\gamma_i\rightarrow\alpha}{\lim}\,\phi_{\gamma_i}$ is known as a \textbf{sublimit} of $\phi_{\gamma}$. If some sublimits of $\mathfrak{M}_{\gamma}/\phi_{\gamma}$ are different, we say $\underset{\gamma\rightarrow\alpha}{\lim}\mathfrak{M}_{\gamma}/ \underset{\gamma\rightarrow\alpha}{\lim}\phi_{\gamma}$ exist (but not unique).
\end{definition}

Most conclusions in section \ref{SectionOfLimitFormula} hold for the limit ordinals as well. We can simply replace $n$ with $\gamma$ ($\gamma$ is any successor ordinal in a neighborhood of a limit ordinal $\alpha$ above $\omega$), and $\underset{n\rightarrow\omega} {\lim}$ with $\underset{\gamma\rightarrow\alpha}{\lim}$. Now we define the total universe based upon the von Neumann universe.

\begin{definition}
\ The \textbf{total universe} is:\footnote{$\mathrm{SOrd}$ is the set of all successor ordinals and $\mathrm{LOrd}$ is the set of all limit ordinals.}
\begin{align}
T_{0} \, & =\,\varnothing,  \nonumber  
\\
T_{\alpha} \, & =\,\mathcal{P}(T_{\alpha-1}),\qquad\qquad\qquad\qquad\qquad\quad \alpha\in\mathrm{SOrd} \nonumber  
\\
T_{\alpha} \,& =\underset{\beta<\alpha}{\bigcup} T_{\beta}\,\cup\Bigl(\underset{\beta<\alpha}{\bigcup} T_{\beta}\Bigr)\Big \vert_{\aleph_{0}},\qquad\qquad\qquad \alpha\in\mathrm{LOrd}  \nonumber
\\
T \, & =\,\,\smashoperator{\bigcup_{\alpha\in \mathrm{Ord}}}\,T_{\alpha}.\label{TotalHierarchy}
\end{align}
\end{definition}

\begin{remark}
Note that (\ref{TotalHierarchy}) is based on the generalization of definition \ref{DefInfGeneratedSet} and \ref{DefSetOfIGS}. Since the total universe contains the well-founded sets, it is similar to (\ref{CumulativeHierarchy}). The key difference is that the infinitely generated sets are created at each limit ordinal in addition to the von Neumann universe.
\end{remark}

\begin{definition}
\label{DefGenInfGeneratedSet} \ Suppose $\alpha$ is a limit ordinal and $H_{\gamma}$ is the same as $H_{n}$ in  (\ref{FiniteGeneratedSet}) and (\ref{InfGeneratedSet}) except $G_{\gamma}\in\underset{\beta<\alpha}{\displaystyle\bigcup} T_{\beta}$ $\left(\gamma<\omega\right)$ and $\mathcal{G}=\{G_{\gamma}\colon G_{\gamma}\in\underset{\beta<\alpha} {\displaystyle\bigcup}T_{\beta},\gamma<\omega\}$. An \textbf{infinitely generated set} (at $\alpha$) is defined as:
\begin{equation}
H_{\alpha}(\mathcal{G)}\,=\,\underset{\gamma\rightarrow\alpha}{\lim}\,H_{\gamma}(G_{\gamma},\dots,G_{0}) \label{GenInfGeneratedSet}
\end{equation}
Where $G_{\gamma}\, (\gamma\geqslant1)$\ are \textbf{principal} \textbf{generators} and $G_{0}$ is a \textbf{base generator} of $H_{\alpha}$. The language of set theory is expanded to $\bar{\mathscr{L}}^{\prime}=\{\in, H_{\alpha}\}$.
\end{definition}

\begin{definition}
\label{DefSetOfIGS}\ $S\vert_{\aleph_{0}}\,=\,\{H_{\alpha}(\mathcal{G)}\colon\mathcal{G}=\{G_{\gamma}\colon G_{\gamma}\in S,\gamma<\omega\}\}$ is known as the \textbf{set of the infinitely generated sets} from $S$.
\end{definition}

\begin{remark}
\label{48}\ All $G_{n}\in V_{\omega}$ in definition \ref{DefInfiniton}, \ref{DefSemiInfiniton} and \ref{DefQuasiInfiniton} are changed to $G_{\gamma}\in\underset{\beta<\alpha}{\displaystyle\bigcup}T_{\beta}$ $\left(\gamma<\omega\right)$.
\end{remark}

The axiom of extensionality for IGS can be modified from axiom \ref{50}.

\begin{axiom}
\label{49} \ Suppose $\mathcal{G}_{1}=\{G_{\gamma}^{1}\colon G_{\gamma}^{1}\in\underset{\beta<\alpha}{\displaystyle\bigcup} T_{\beta},\,\gamma<\omega\}$ and $\mathcal{G}_{2}=\{G_{\gamma}^{2}\colon G_{\gamma}^{2}\in\underset{\beta<\alpha} {\displaystyle\bigcup}T_{\beta},\,\gamma<\omega\}$. Then
\[
\left(\forall\gamma<\omega\right)\left(G_{\gamma}^{1}\,=\,G_{\gamma}^{2}\right)\:\Longrightarrow\:H_{\alpha}(\mathcal{G}_{1})\,=\,H_{\alpha}(\mathcal{G}_{2})
\]
\end{axiom}

\begin{corollary}
\qquad
\end{corollary}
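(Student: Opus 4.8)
The plan is to follow the template already established for the set of infinitons (Corollary \ref{39}) and the set of quasi-infinitons (Corollary \ref{45}), since the operator $S|_{\aleph_0}$ of Definition \ref{DefSetOfIGS} behaves formally like $S|_{\mathfrak{S}}$ and $S|_{\mathfrak{Q}}$: it collects all infinitely generated sets whose generator family is drawn from $S$. Accordingly, I expect the statement to assert monotonicity, the inclusion $(S_1\cup S_2)|_{\aleph_0}\supset S_1|_{\aleph_0}\cup S_2|_{\aleph_0}$, the reverse inclusion $(S_1\cap S_2)|_{\aleph_0}\subset S_1|_{\aleph_0}\cap S_2|_{\aleph_0}$, the inclusion $(\bigcup_{\alpha\in D}S_\alpha)|_{\aleph_0}\supset\bigcup_{\alpha\in D}S_\alpha|_{\aleph_0}$, and equality in this last line for ascending chains $S_\alpha\uparrow$. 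The overall strategy is to prove monotonicity first and derive the three middle parts from it exactly as in the two earlier corollaries.

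For monotonicity, suppose $S_1\subset S_2$ and let $H\in S_1|_{\aleph_0}$, so by Definition \ref{DefSetOfIGS} the generator family $\mathcal{G}=\{G_\gamma\colon\gamma<\omega\}$ of $H$ lies entirely in $S_1$, hence in $S_2$; thus $H\in S_2|_{\aleph_0}$. The axiom of extensionality for infinitely generated sets (Axiom \ref{49}) guarantees that $H$ is determined by $\mathcal{G}$ and is unchanged under this re-identification of its generators, so no ambiguity arises. From $S_1,S_2\subset S_1\cup S_2$ and $S_1\cap S_2\subset S_1,S_2$ the union and intersection inclusions follow immediately, and $(\bigcup_{\alpha\in D}S_\alpha)|_{\aleph_0}\supset\bigcup_{\alpha\in D}S_\alpha|_{\aleph_0}$ follows from $S_\beta\subset\bigcup_{\alpha\in D}S_\alpha$ for each $\beta\in D$ together with monotonicity.

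The hard part will be the reverse inclusion in the ascending-chain case. Given $H\in(\bigcup_{\alpha\in D}S_\alpha)|_{\aleph_0}$ with generator family $\mathcal{G}=\{G_\gamma\colon\gamma<\omega\}$, each $G_\gamma\in S_{\alpha_\gamma}$ for some $\alpha_\gamma\in D$; but unlike the semi-infiniton and quasi-infiniton cases, where only one (resp.\ finitely many $l$) distinct generators occur and a simple $\max$ suffices, here $\mathcal{G}$ is countably infinite, so one cannot in general select a single member of the chain by a maximum. The key step is instead to set $\beta=\sup_{\gamma<\omega}\alpha_\gamma$ and use that $S_\alpha\uparrow$ is a continuous ascending chain, whence $S_\beta\supseteq\bigcup_{\gamma<\omega}S_{\alpha_\gamma}$ contains every $G_\gamma$; then $\mathcal{G}\subset S_\beta$, so $H\in S_\beta|_{\aleph_0}\subset\bigcup_{\alpha\in D}S_\alpha|_{\aleph_0}$. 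Combined with part (iv) this yields the stated equality. I expect the only genuine subtlety to be justifying that this countable supremum of indices again names a member of the chain, i.e.\ that $S_\alpha\uparrow$ is closed under unions of its countable subchains; for finite generator families this issue is invisible, which is precisely why Corollaries \ref{39} and \ref{45} needed only a maximum.
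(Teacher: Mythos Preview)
Your argument for parts (i)--(iv) is exactly the paper's: prove monotonicity directly from Definition~\ref{DefSetOfIGS} and then derive the union, intersection, and arbitrary-union inclusions from it. The paper's proof is a single line doing precisely this.

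However, you have over-predicted the statement. The paper's corollary has only four parts; it does \emph{not} assert the ascending-chain equality $(\bigcup_{\alpha\in D}S_\alpha)|_{\aleph_0}=\bigcup_{\alpha\in D}S_\alpha|_{\aleph_0}$ that appears as part~(v) in Corollaries~\ref{39} and~\ref{45}. The reason is exactly the subtlety you yourself flagged: an infinitely generated set has countably many generators $G_\gamma$, so the $\max$ trick from the semi-infiniton and quasi-infiniton cases fails, and there is no guarantee that $\sup_{\gamma<\omega}\alpha_\gamma$ lies in $D$ or that the chain is closed under countable suprema. The paper simply omits this claim rather than adding a cofinality hypothesis. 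So your ``hard part'' is not a gap in your proof --- it is a gap in the statement you conjectured, and the paper avoids it by not making the claim at all.
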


\begin{enumerate}
\item $S_{1}\subset S_{2}\:\Longrightarrow\:S_{1}|_{\aleph_{0}}\subset S_{2}|_{\aleph_{0}}$

\item $S_{1}|_{\aleph_{0}}\cup S_{2}|_{\aleph_{0}}\,\subset\,\left(S_{1}\cup S_{2}\right)|_{\aleph_{0}}$

\item $\left(S_{1}\cap S_{2}\right)|_{\aleph_{0}}\,\subset\,S_{1}|_{\aleph_{0}}\cap S_{2}|_{\aleph_{0}}$

\item $\underset{\alpha\in D}{\displaystyle\bigcup}\left.S_{\alpha}\right\vert_{\aleph_{0}}\,\subset\Bigl(\underset{\alpha\in D}{\displaystyle\bigcup}S_{\alpha}\Bigr)\Big\vert_{\aleph_{0}}$
\end{enumerate}

\begin{proof}
\ (i) \ By definition \ref{DefSetOfIGS}, for any $H_{\alpha}(\mathcal{G)}\in S_{1}\vert_{\aleph_{0}}$ and any $G_{\gamma}\in S_{1}$, $G_{\gamma}\in S_{2}$. So $H_{\alpha}(\mathcal{G)}\in S_{2}\vert_{\aleph_{0}}$. The rest follow from (i).
\end{proof}

\begin{corollary}
\qquad
\[
\Bigl(\underset{\beta<\alpha}{\bigcup}T_{\beta}\Bigr)\Big \vert_{\mathfrak{S}}\cup\Bigl(\underset{\beta<\alpha}{\bigcup}T_{\beta}\Bigr)\Big\vert_{\mathfrak{Q}}\subset\Bigl(\underset{\beta<\alpha}{\bigcup}T_{\beta}\Bigr)\Big\vert_{\aleph_{0}}
\]
\end{corollary}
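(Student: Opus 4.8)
The plan is to establish the two inclusions $\left.\left(\bigcup_{\beta<\alpha}T_\beta\right)\right|_{\mathfrak{S}}\subseteq\left.\left(\bigcup_{\beta<\alpha}T_\beta\right)\right|_{\aleph_0}$ and $\left.\left(\bigcup_{\beta<\alpha}T_\beta\right)\right|_{\mathfrak{Q}}\subseteq\left.\left(\bigcup_{\beta<\alpha}T_\beta\right)\right|_{\aleph_0}$ separately, after which the stated union inclusion is immediate. Write $U=\bigcup_{\beta<\alpha}T_\beta$ for brevity. The guiding idea is the one already flagged in the prose preceding Theorems \ref{37} and \ref{43}: a semi-infiniton is an infinitely generated set with a single principal generator, and a quasi-infiniton is an infinitely generated set whose principal generators cycle through a finite block. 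So in each case it suffices to exhibit the generating family $\mathcal{G}$ explicitly and verify that all of its entries lie in $U$, which is precisely the membership condition in Definition \ref{DefSetOfIGS}.

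For the semi-infiniton inclusion I would take an arbitrary $Z\in U|_{\mathfrak{S}}$, so that $Z=\{G,G_0\}|_{\mathfrak{S}}$ with $G,G_0\in U$ by Definition \ref{DefSetSemiInfinitons}. By (the generalization via Remark \ref{48} of) Theorem \ref{37} and Definition \ref{DefSemiInfiniton}, $Z=\lim_{\gamma\to\alpha}Z_\gamma$ where $Z_\gamma=\{\ast G,Z_{\gamma-1}\}$ and $Z_0=G_0$; this is exactly the infinitely generated set $H_\alpha(\mathcal{G})$ of Definition \ref{DefGenInfGeneratedSet} whose base generator is $G_0$ and whose every principal generator equals $G$. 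Since $G,G_0\in U$, the family $\mathcal{G}=\{G_\gamma\colon\gamma<\omega\}$ has all its members in $U$, and Theorem \ref{37}(iii) guarantees that the limit is unique, so $Z\in U|_{\aleph_0}$ by Definition \ref{DefSetOfIGS}. This settles the first inclusion cleanly.

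For the quasi-infiniton inclusion I would take $Q\in U|_{\mathfrak{Q}}$, so that $Q=\{G_k,l\}|_{\mathfrak{Q}}$ with $G_0,\dots,G_l\in U$ and $l>1$ by Definition \ref{DefSetQuasiInfiniton}. By Definition \ref{DefQuasiInfiniton} and Theorem \ref{43}, $Q$ is one of the sublimits $Q_{\omega,q}$ and satisfies $Q_{\omega,q}=\{\ast G_{l+q},\{\ast G_{l+q-1},\cdots\{\ast G_{q+1},Q_{\omega,q}\}\cdots\}$ with $G_{l+j}=G_j$. Reading this as an infinitely generated set, its principal generators form the period-$l$ cyclic sequence built from $G_1,\dots,G_l$ and its base generator is $G_0$; every entry of this family again lies in $U$, so $Q$ should qualify as an element of $U|_{\aleph_0}$.

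The hard part will be the final step of this last paragraph, and the obstacle is conceptual rather than computational: for a periodic generating family the full limit $\lim_{\gamma\to\alpha}Q_\gamma$ is \emph{not} unique but splits into the $l$ distinct sublimits $Q_{\omega,0},\dots,Q_{\omega,l-1}$ (Theorem \ref{43}), whereas Definition \ref{DefSetOfIGS} is phrased through $H_\alpha(\mathcal{G})$, which literally names the limit. Any single-step periodic generating sequence necessarily produces $l$ sublimits, so a given $Q_{\omega,q}$ cannot be forced to be a \emph{unique} single-step limit; I must therefore make explicit that each $Q_{\omega,q}$ is itself the unique limit of the homogeneous subsequence $Q_{pl+q}$ (as shown inside the proof of Theorem \ref{43}), and argue that the set-builder defining $U|_{\aleph_0}$ is to be read so that every such sublimit counts as a bona fide infinitely generated set from $U$, consistent with Definition \ref{DefGeneralSubLimit} and Remark \ref{48}. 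Pinning down this reading of $H_\alpha(\mathcal{G})$ in the non-unique case is the only delicate point; once it is granted, both inclusions follow from the generator bookkeeping above, and their union gives the claim.
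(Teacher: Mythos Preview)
Your approach is correct and matches the paper's: the paper's entire proof is the one-liner ``By definition \ref{DefSetSemiInfinitons}, \ref{DefSetQuasiInfiniton}, \ref{DefSetOfIGS} and (\ref{TotalHierarchy}),'' so you have simply unpacked what those definitions say. Your concern about the quasi-infiniton case---that $H_\alpha(\mathcal{G})$ with periodic generators yields $l$ sublimits rather than a unique limit, so one must read Definition \ref{DefSetOfIGS} as admitting sublimits---is a legitimate observation about an ambiguity the paper glosses over, not a defect in your argument; the paper's intent (evident from the prose before Theorem \ref{43} and from Definition \ref{DefGeneralSubLimit}) is clearly that each sublimit counts as an infinitely generated set, and you are right to make this explicit.
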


\begin{proof}
\ By definition \ref{DefSetSemiInfinitons}, \ref{DefSetQuasiInfiniton}, \ref{DefSetOfIGS} and (\ref{TotalHierarchy}).\bigskip
\end{proof}

Rank in the total universe is the same as that of the von Neumann universe (definition \ref{DefModifiedCumulativeHierarchyRank}).

\begin{definition}
\label{DefTotalModelRank}\ The \textbf{rank} of $X$ in $T$ is defined as the least $\alpha$ that $X\in T_{\alpha}$ and denoted as $R_{T}(X)$, i.e. $R_{T}(X)=\,\smashoperator{\inf\limits_{\alpha\in\mathrm{Ord}}}\,\,\,\{\alpha\colon X\in T_{\alpha}\}$.
\end{definition}

The following lemma can be easily proved.

\begin{lemma}
\label{35}\ Suppose $X\in T$. Then
\begin{alignat*}{2}
	R_{T}(X)=\alpha\,\,&\Longleftrightarrow\,\bigl(X\in T_{\alpha}\,\wedge\,X\notin T_{\alpha-1}\bigr),& \alpha\in\mathrm{SOrd}
	\\
	\,\,&\Longleftrightarrow\,\bigl(X\in T_{\alpha}\,\wedge\, (\forall\beta<\alpha)(X\notin T_{\beta})\bigr),\qquad\qquad& \alpha\in\mathrm{LOrd}	
\end{alignat*}
\end{lemma}

The height function in $V_{\omega}$ (definition \ref{DefHeightFunction}) can be extended to $T$.

\begin{definition}
\label{DefGeneralHeightFunction}\ The \textbf{height function} of $X$ in $T$ is a function $h\colon T\to \mathrm{Ord}$ and $h(X)=\sup\{R_T(Y)\colon Y\in X\wedge X\in T\,\}$ where $h(\varnothing)=0$.
\end{definition}

The notion of $\omega$-invariance (definition \ref{DefOmegaInvariant}) can be extended to any limit ordinal as well.

\begin{definition}
\label{DefGenOmegaInvariant} \ Suppose $\alpha$ is a limit ordinal, $H_{\alpha}$ is an IGS and $H_{\alpha+\xi}=\{\ast G_{\alpha+\xi},\,H_{\alpha+\xi-1}\}$, where $G_{\alpha+\xi}\in\underset{\delta<\alpha}{\displaystyle\bigcup}T_{\delta}$ $\left(\xi\geqslant1\right)$. If for any $\gamma>\alpha$, there is a (successor ordinal) $\beta>\gamma$ that $H_{\beta}=H_{\alpha}$, then $H_{\alpha}$ is called $\,\mathbf{\omega}$\textbf{-invariant}.
\end{definition}

\begin{lemma}
\label{65} \ Suppose $\alpha$ is a limit ordinal, $H_{\alpha}$ is $\,\omega$-invariant and $G_{\xi}\in \underset{\mathfrak{\delta}<\alpha}{\displaystyle\bigcup}T_{\mathfrak{\delta}}\,\left(\xi\leqslant\beta\right)$. Then
\[
H_{\alpha}\mathcal\,=\,\{\ast G_{\beta},\{\ast G_{\beta-1},\dots\{\ast G_{\alpha+1},H_{\alpha}\}\dots\}
\]
\end{lemma}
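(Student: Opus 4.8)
The plan is to convert $\omega$-invariance into a finite fixed-point equation by unwinding the successor recursion for the structures $H_{\gamma}$ and then substituting the invariance identity back in. First I would apply definition \ref{DefGenOmegaInvariant}: since $H_{\alpha}$ is $\omega$-invariant there is a successor ordinal $\beta>\alpha$ with $H_{\beta}=H_{\alpha}$, and, taking the least such $\beta$, one argues that it lies strictly below $\alpha+\omega$, so $\beta=\alpha+n$ with $1\leqslant n<\omega$. This finiteness is exactly what makes every ordinal $\gamma$ with $\alpha<\gamma\leqslant\beta$ a successor ordinal, so that the defining rule $H_{\gamma}=\{\ast G_{\gamma},H_{\gamma-1}\}$ of definition \ref{DefGenOmegaInvariant} is available at each stage; the standing hypothesis $G_{\xi}\in\bigcup_{\delta<\alpha}T_{\delta}$ for $\xi\leqslant\beta$ guarantees that each generator appearing in the unwinding is a legitimate member of the layers below $\alpha$.

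Next I would iterate the recursion. Beginning from $H_{\beta}=\{\ast G_{\beta},H_{\beta-1}\}$ and substituting $H_{\gamma}=\{\ast G_{\gamma},H_{\gamma-1}\}$ successively for $\gamma=\beta,\beta-1,\ldots,\alpha+1$ — only finitely many substitutions, since $\beta-\alpha=n<\omega$ — produces
\[
H_{\beta}=\{\ast G_{\beta},\{\ast G_{\beta-1},\cdots\{\ast G_{\alpha+1},H_{\alpha}\}\cdots\}.
\]
Replacing $H_{\beta}$ by $H_{\alpha}$ on the left by the invariance identity then yields
\[
H_{\alpha}=\{\ast G_{\beta},\{\ast G_{\beta-1},\cdots\{\ast G_{\alpha+1},H_{\alpha}\}\cdots\},
\]
which is the desired conclusion. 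This is the same finite collapse already carried out in the proofs of theorem \ref{30}(ii), theorem \ref{36}(ii) and theorem \ref{44}(i).

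The hard part will be the first step, namely guaranteeing that the successor ordinal $\beta$ furnished by $\omega$-invariance can be taken with $\beta-\alpha$ finite. If instead $\beta\geqslant\alpha+\omega$, the interval $(\alpha,\beta]$ contains a limit ordinal $\lambda$ at which $H_{\lambda}$ is defined as a limit (an IGS at $\lambda$, by definition \ref{DefGenInfGeneratedSet}) rather than by the successor rule, so the unwinding would break at $\lambda$ and the nested-brace expression on the right would fail to terminate after finitely many layers. One must therefore verify that the first return $H_{\beta}=H_{\alpha}$ already occurs inside the successor block $(\alpha,\alpha+\omega)$; concretely this is the finite period $n=1$ (for infinitons and semi-infinitons) or $n=l$ (for quasi-infinitons of length $l$) exhibited in the three worked examples, and the general argument amounts to showing that the least successor $\beta>\alpha$ with $H_{\beta}=H_{\alpha}$ is bounded by $\alpha+\omega$.
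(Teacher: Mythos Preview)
Your approach is essentially the paper's: it too selects a successor ordinal $\beta$ with $\alpha<\beta<\alpha'$ (the next limit ordinal above $\alpha$), unwinds the recursion $H_{\gamma}=\{\ast G_{\gamma},H_{\gamma-1}\}$ finitely many times down to $H_{\alpha}$, and uses $H_{\beta}=H_{\alpha}$ to close the identity. The paper disposes of what you call ``the hard part'' (ensuring $\beta-\alpha<\omega$) with a bare ``WLOG, by definition~\ref{DefGenOmegaInvariant}'' rather than an argument, so your proposal is in fact more scrupulous on exactly the point the paper waves past.
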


\begin{proof}
\ Suppose $\alpha^{\prime}$ is the limit ordinal immediately above $\alpha$. WLOG, by definition \ref{DefGenOmegaInvariant}, assume for any $\gamma$ $\left(\alpha<\gamma<\alpha^{\prime}\right)$, there is a $\beta$ $\left(\gamma<\beta< \alpha^{\prime}\right)$ that $H_{\beta}=H_{\alpha}$. Then
\[
H_{\beta}\,=\,\{\ast G_{\beta},H_{\beta-1}\}\,=\,\{\ast G_{\beta},\{\ast G_{\beta-1},\dots\{\ast G_{\alpha+1},H_{\alpha}\} \dots\}\,=\,H_{\alpha}
\]
where $G_{\xi}\in\underset{\mathfrak{\delta}<\alpha}{\displaystyle\bigcup}T_{\mathfrak{\delta}}$ for $\xi\leqslant\beta$.
\end{proof}

\begin{remark}
\ Lemma \ref{65} shows that a $\omega$-invariant set always has an immediate member, while (in general) an IGS does not have one.
\end{remark}

Furthermore, the union operator and transitive closure need be extended to the transfinite case.

\begin{definition}
\label{DefGenUnionOperator} \ Suppose ${\bigcup}S=\{z\colon\exists y\left(y\in S\,\wedge\,z\in y\right)\}$. The $\mathbf{\alpha}^{th}$ \textbf{union operator} is defined (recursively) as:
\begin{alignat*}{2}
{\textstyle\bigcup}^{0}S\,&=\,S,
\\
{\textstyle\bigcup}^{\alpha}S\,&=\,{\textstyle\bigcup}\:{\textstyle\bigcup}^{\alpha-1}S,& \alpha\in\mathrm{SOrd}
\\
{\textstyle\bigcup}^{\alpha}S\,&=\,\underset{\beta<\alpha}{\bigcup}{\textstyle\bigcup}^{\beta}S,\qquad\qquad\qquad& \alpha\in\mathrm{LOrd}
\end{alignat*}
\end{definition}

\begin{definition}
\ Suppose $\alpha_{0}$ is the least ordinal $\alpha$ that \ ${\bigcup}^{\alpha}S\,=\,{\bigcup} ^{\alpha+1}S$. Then the \textbf{transitive closure} of $S$ is:
\begin{equation}
TC(S)\,=\underset{\alpha\leqslant\alpha_{0}}{\bigcup}{\textstyle\bigcup}^{\alpha}S\label{TransitiveClosure}
\end{equation}
\end{definition}

\begin{corollary}
\label{53}\qquad
\end{corollary}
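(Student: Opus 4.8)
The plan is to verify the elementary structural properties of the transitive closure operator just defined in (\ref{TransitiveClosure}): that $S\subset TC(S)$, that $TC(S)$ is transitive, and that $TC(S)$ is the $\subset$-least transitive set containing $S$, all of which follow from Definition \ref{DefGenUnionOperator} by transfinite induction on the iterated union operator $\bigcup^{\alpha}$.

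First I would dispatch the inclusion $S\subset TC(S)$, which is immediate because $\bigcup^{0}S=S$ is the $\alpha=0$ summand of the union in (\ref{TransitiveClosure}). Next, for transitivity, I would take an arbitrary $z\in TC(S)$ and $w\in z$ and show $w\in TC(S)$. By definition $z\in\bigcup^{\alpha}S$ for some $\alpha\leqslant\alpha_{0}$, so $w\in\bigcup\bigcup^{\alpha}S=\bigcup^{\alpha+1}S$. If $\alpha<\alpha_{0}$ then $\alpha+1\leqslant\alpha_{0}$ and $w\in TC(S)$ directly; if $\alpha=\alpha_{0}$, then the defining property $\bigcup^{\alpha_{0}+1}S=\bigcup^{\alpha_{0}}S$ gives $w\in\bigcup^{\alpha_{0}}S\subset TC(S)$. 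Either way $w\in TC(S)$, so $TC(S)$ is transitive.

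For minimality, suppose $M$ is any transitive set with $S\subset M$. I would show by transfinite induction that $\bigcup^{\alpha}S\subset M$ for every $\alpha\leqslant\alpha_{0}$: the base case is the hypothesis; at a successor $\alpha$, the inductive hypothesis $\bigcup^{\alpha-1}S\subset M$ together with transitivity of $M$ forces $\bigcup^{\alpha}S=\bigcup\bigcup^{\alpha-1}S\subset M$; and at a limit $\alpha$ the claim holds since $\bigcup^{\alpha}S=\bigcup_{\beta<\alpha}\bigcup^{\beta}S$ by Definition \ref{DefGenUnionOperator}(ii) is a union of sets already contained in $M$. Taking the union over $\alpha\leqslant\alpha_{0}$ yields $TC(S)\subset M$.

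The main obstacle I anticipate is justifying that the fixed point $\alpha_{0}$ exists, i.e.\ that the sequence $\bigcup^{\alpha}S$ eventually stabilizes, which is delicate for non-well-founded sets where a branch may cycle rather than terminate at $\varnothing$. I would argue that the partial unions $U_{\xi}=\bigcup_{\alpha\leqslant\xi}\bigcup^{\alpha}S$ form a non-decreasing chain of subsets of a single fixed set, namely the collection of all objects occurring in the membership tree of $S$; such a chain cannot strictly increase through all of $Ord$ and hence must reach a fixed point $\alpha_{0}$. For non-well-founded generators this stabilization is immediate, e.g.\ an infiniton $I=\{I\}$ already satisfies $\bigcup^{1}I=\bigcup I=I=\bigcup^{0}I$, giving $\alpha_{0}=0$ and $TC(I)=I$.
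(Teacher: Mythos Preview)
Your proposal addresses the wrong statement. Corollary~\ref{53} is not about the transitive closure operator at all; in the paper the enumerate block immediately following the corollary header contains the actual content, namely:
\begin{enumerate}
\item For any $\alpha<\omega$, $V_{\alpha}=T_{\alpha}$; otherwise $V_{\alpha}\subsetneq T_{\alpha}$.
\item $V\subsetneq T$.
\item $T_{\alpha}$ contains all ordinals less than $\alpha$, and $T$ contains all ordinals.
\end{enumerate}
You appear to have inferred from the blank corollary body and the proximity of (\ref{TransitiveClosure}) that the result concerns $TC(S)$, but the definition of transitive closure is simply placed between the definitions block and this corollary; it is used later (for the axiom of union in \textbf{EZF}), not here.

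The paper's actual proof proceeds by transfinite induction comparing the two hierarchies (\ref{CumulativeHierarchy}) and (\ref{TotalHierarchy}): below $\omega$ the recursions coincide, at $\omega$ the extra summand $\left.\left(\bigcup_{\beta<\omega}T_{\beta}\right)\right\vert_{\aleph_{0}}$ makes $T_{\omega}\supsetneq V_{\omega}$, and the strict inclusion then propagates through successors via $\mathcal{P}$ and through limits via the union. Part (iii) uses that every set in $\left.\left(\bigcup_{\beta<\alpha}T_{\beta}\right)\right\vert_{\aleph_{0}}$ is non-well-founded while ordinals are well-founded, so $T_{\alpha}$ and $V_{\alpha}$ contain the same ordinals, and one invokes Corollary~\ref{3}. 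None of your transitive-closure arguments, however correct on their own terms, bear on these claims.
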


\begin{enumerate}
\item \textit{For any $\alpha<\omega$, $V_{\alpha}=T_{\alpha}$. Otherwise, $V_{\alpha}\subsetneq T_{\alpha}$.}

\item $V\subsetneq T$.

\item \textit{$T_{\alpha}$ contains all ordinals less than $\alpha$. $T$ contains all ordinals.}
\end{enumerate}

\begin{proof}
\ (i) \ By (\ref{CumulativeHierarchy}) and (\ref{TotalHierarchy}), for any $\alpha<\omega$, $V_{\alpha}=T_{\alpha}$. Clearly
\[
T_{\omega}\,=\,\underset{\beta<\omega}{\displaystyle\bigcup}T_{\beta}\,\cup\Bigl(\underset{\beta<\omega}{\bigcup}T_{\beta}\Bigr)\Big\vert_{{\aleph}_{0}}\,\supsetneq\,\underset{\beta<\omega} {\displaystyle\bigcup}T_{\beta}\,=\,V_{\omega}
\]

Suppose $V_{\beta}\subsetneq T_{\beta}$ for any $\beta$, $\omega<\beta<\alpha$. Then if $\alpha$ is a successor ordinal
\[
T_{\alpha}\,=\,\mathcal{P}(T_{\alpha-1})\,\supsetneq\,\mathcal{P}(V_{\alpha-1})\,=\,V_{\alpha}
\]

If $\alpha$ is a limit ordinal, then
\[
T_{\alpha}\,=\,\underset{\beta<\alpha}{\displaystyle\bigcup}T_{\beta}\,\cup\Bigl(\underset{\beta<\alpha}{\bigcup}T_{\beta}\Bigr)\Big\vert_{\aleph_{0}}\,\supsetneq\,\underset{\beta<\alpha} {\displaystyle\bigcup}T_{\beta}\,\supsetneq\,\underset{\beta<\alpha}{\displaystyle\bigcup}V_{\beta}\,=\,V_{\alpha}
\]

(ii) \ By (i), (\ref{CumulativeHierarchy}) and (\ref{TotalHierarchy}).\medskip

(iii) \ By definition \ref{DefGenInfGeneratedSet}, $\Bigl(\underset{\beta<\alpha}{\displaystyle\bigcup}T_{\beta}\Bigr)\Big\vert_{\aleph_{0}}$ contains no ordinals since any set in it is NWF, but all ordinals are WF. So $T_{\alpha}$ has the same ordinals as $V_{\alpha}$ and by corollary \ref{3}, $T_{\alpha}$ contains all ordinals less than $\alpha$. By (\ref{TotalHierarchy}), $T$ contains all ordinals.\smallskip
\end{proof}

\begin{corollary}
\label{54}\qquad
\end{corollary}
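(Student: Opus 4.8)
The plan is to compute ranks in the total universe by transfinite induction on the layer index, keeping the well-founded part of each $T_{\alpha}$ separate from the infinitely generated sets adjoined at the limit stages. The structural fact I will lean on throughout is that, by (\ref{TotalHierarchy}) and Corollary \ref{53}, the well-founded members of $T_{\alpha}$ are precisely those of $V_{\alpha}$, whereas the extra members contributed at a limit ordinal $\alpha$, namely $\left(\bigcup_{\beta<\alpha}T_{\beta}\right)|_{\aleph_{0}}$, are all non-well-founded (each is an IGS, hence carries an infinite branch). Since a well-founded and a non-well-founded set can never coincide (Lemma \ref{1}), these two families stay disjoint at every stage, so their ranks can be tracked independently.

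First I would dispose of ordinals and well-founded sets. For an ordinal $\alpha$, Corollary \ref{53}(iii) says $T_{\beta}$ and $V_{\beta}$ contain the same ordinals for every $\beta$, so the least $\beta$ with $\alpha\in T_{\beta}$ equals the least $\beta$ with $\alpha\in V_{\beta}$; combined with Corollary \ref{3} this yields $R_{T}(\alpha)=R_{V}(\alpha)=\alpha+1$, a successor. More generally, for any $S\in V$ I would argue by transfinite induction that the first total layer containing $S$ coincides with the first von Neumann layer containing it: at a successor stage $T_{\alpha}=\mathcal{P}(T_{\alpha-1})$ restricts on its well-founded elements to $\mathcal{P}(V_{\alpha-1})=V_{\alpha}$, and the limit-stage adjunction $\left(\bigcup_{\beta<\alpha}T_{\beta}\right)|_{\aleph_{0}}$ adds no well-founded set, so $R_{T}(S)=R_{V}(S)$.

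Next I would locate an infinitely generated set. An IGS $H_{\alpha}(\mathcal{G})$ (Definition \ref{DefGenInfGeneratedSet}) is by construction the limit of a sequence whose generators all lie in $\bigcup_{\beta<\alpha}T_{\beta}$, hence $H_{\alpha}\in\left(\bigcup_{\beta<\alpha}T_{\beta}\right)|_{\aleph_{0}}\subset T_{\alpha}$ and $R_{T}(H_{\alpha})\leqslant\alpha$. For the reverse inequality I would show $H_{\alpha}\notin T_{\beta}$ for every $\beta<\alpha$: no successor layer below $\alpha$ can hold it, since successor layers are power sets of earlier layers whereas $H_{\alpha}$ is non-well-founded with infinite membership dimension (Theorem \ref{47}); and no smaller limit layer adjoins it, because its single infinite branch forces its generators to be cofinal in $\alpha$, so they cannot all sit inside a proper initial union $\bigcup_{\beta<\alpha'}T_{\beta}$ with $\alpha'<\alpha$. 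Hence $R_{T}(H_{\alpha})=\alpha$, a limit ordinal, which is exactly the phenomenon advertised in the introduction.

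The main obstacle is this reverse inequality for the IGS, that is, proving that $H_{\alpha}$ genuinely fails to appear before stage $\alpha$. One must simultaneously rule out its accidental appearance inside an earlier power set (handled by non-well-foundedness via Lemma \ref{1} together with the infinite membership dimension of Theorem \ref{47}) and its premature adjunction at a smaller limit ordinal (handled by the cofinality of the defining homogeneous sequence in $\alpha$). Everything remaining is a routine transfinite induction driven by Corollary \ref{53} and the layer recursion (\ref{TotalHierarchy}).
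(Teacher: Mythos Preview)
Your proposal addresses the wrong statement. Corollary~\ref{54} asserts that each $T_{\alpha}$ is transitive (and hence $T$ is transitive); the two items of the enumerate immediately following the corollary environment are its content. You have instead sketched a computation of ranks --- showing $R_{T}(\alpha)=\alpha+1$ for ordinals, $R_{T}(S)=R_{V}(S)$ for well-founded $S$, and $R_{T}(H_{\alpha})=\alpha$ for an IGS --- which is the material of the \emph{next} subsection (Lemmas~\ref{55}--\ref{56}, Corollaries~\ref{57} and~\ref{66}), not of this corollary.

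The paper's actual argument is a direct transfinite induction on $\alpha$: at a successor stage $T_{\alpha}=\mathcal{P}(T_{\alpha-1})$, so any $X\in T_{\alpha}$ satisfies $X\subset T_{\alpha-1}$, and transitivity of $T_{\alpha-1}$ gives $Y\in T_{\alpha}$ for each $Y\in X$. At a limit stage one splits $X\in T_{\alpha}$ into the case $X\in\bigcup_{\beta<\alpha}T_{\beta}$ (handled by the induction hypothesis) and the case $X\in\bigl(\bigcup_{\beta<\alpha}T_{\beta}\bigr)\big|_{\aleph_{0}}$; for an $\omega$-invariant $X$ one invokes Lemma~\ref{65} to exhibit its immediate members as elements of $T_{\alpha}$, while a non-$\omega$-invariant IGS has no immediate members and is vacuously handled. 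Note also that the rank lemmas you are aiming at (e.g.\ Lemma~\ref{55}) \emph{use} Corollary~\ref{54}, so even if your rank computations were the target, establishing them without first proving transitivity would require a different route than the paper's.
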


\begin{enumerate}
\item \textit{Each $T_{\alpha}$ is transitive.}

\item $T$ \textit{is transitive.}
\end{enumerate}

\begin{proof}
\ (i) \ We prove by the transfinite induction. First, since $T_{1}=\{\varnothing\}$ and $\varnothing\subset T_{1}$, $T_{1}$ is transitive. Suppose it is true for any $\beta<\alpha$. Then if $\alpha$ is a successor ordinal, for any $X\in T_{\alpha} \,=\,\mathcal{P}(T_{\alpha-1})$, $X\subset T_{\alpha-1}$. So for any $Y\in X$, $Y\in T_{\alpha-1}$. Since $T_{\alpha-1}$ is transitive, $Y\subset T_{\alpha-1}$ and $Y\in T_{\alpha}$. Thus $T_{\alpha}$ is transitive.\medskip

If $\alpha$ is a limit ordinal, then for any $X\in T_{\alpha}$, if $X\in\underset{\beta<\alpha}{\displaystyle\bigcup} T_{\beta}$, there is a $\gamma<\alpha$ that $X\in T_{\gamma}$. Since $T_{\gamma}$ is transitive, $X\subset T_{\gamma}\subset T_{\alpha}$. If $X\in\Bigl(\underset{\beta<\alpha}{\displaystyle\bigcup}T_{\beta}\Bigr)\Big \vert_{\aleph_{0}}$ and $X$ is $\omega$-invariant, then by lemma \ref{65}
\[
X\,=\,\{\ast G_{\eta},\{\ast G_{\eta-1},\dots\}\dots\},\:G_{\xi}\in\underset{\beta<\alpha}{\displaystyle\bigcup}T_{\beta} \,\subset\,T_{\alpha}\text{ \ for }\xi\leqslant\eta.
\]

Then there is a $\gamma<\alpha$ that $G_{\eta}\in T_{\gamma}$. Since $T_{\gamma}$ is transitive, for any $z\in G_{\eta}$, $z\in T_{\gamma}\subset T_{\alpha}$. Furthermore, $\{\ast G_{\eta-1},\{\ast G_{\eta-2},\dots\}\dots\}$ is also $\omega$-invariant and
\[
\{\ast G_{\eta-1},\{\ast G_{\eta-2},\dots\}\dots\}\in\Bigl(\underset{\beta<\alpha}{\displaystyle\bigcup}T_{\beta}\Bigr)\Big\vert_{\aleph_{0}}\subset\,T_{\alpha}
\]

So $X\subset T_{\alpha}$. If $X$ is not $\omega$-invariant, then $X$ does not have immediate members. Both cases show that $T_{\alpha}$ is transitive if $\alpha$ is a limit ordinal.\medskip

(ii) \ By (i) and (\ref{TotalHierarchy}).\smallskip
\end{proof}

\begin{corollary}
\label{4}\qquad
\end{corollary}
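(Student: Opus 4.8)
The plan is to establish that the total hierarchy is cumulative, i.e.\ $\subseteq$-increasing, which is the natural companion to the transitivity just proved in Corollary \ref{54}. Concretely I would prove by transfinite induction on $\alpha$ the statement $P(\alpha)$: for every $\beta<\alpha$, $T_{\beta}\subseteq T_{\alpha}$. Once $P(\alpha)$ is in hand for all $\alpha$, the membership form $T_{\beta}\in T_{\alpha}$ (for $\beta<\alpha$) follows at once, since $T_{\beta}\subseteq T_{\beta}$ gives $T_{\beta}\in\mathcal{P}(T_{\beta})=T_{\beta+1}$, and then $T_{\beta+1}\subseteq T_{\alpha}$ yields $T_{\beta}\in T_{\alpha}$; and strict growth $T_{\alpha}\subsetneq T_{\alpha+1}$ follows from $T_{\alpha}\subseteq T_{\alpha+1}$ together with Cantor's theorem $|T_{\alpha+1}|=|\mathcal{P}(T_{\alpha})|>|T_{\alpha}|$, valid since $T_{\alpha}\supseteq T_{1}=\{\varnothing\}\neq\varnothing$.

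For the induction, the base case is immediate: $T_{0}=\varnothing\subseteq T_{1}$. In the successor case $\alpha=(\alpha-1)+1$, fix $\beta<\alpha$, so $\beta\leqslant\alpha-1$; when $\beta<\alpha-1$ the hypothesis $P(\alpha-1)$ gives $T_{\beta}\subseteq T_{\alpha-1}$, so it is enough to show $T_{\alpha-1}\subseteq T_{\alpha}$. Here I would invoke Corollary \ref{54}(i): since $T_{\alpha-1}$ is transitive, every $X\in T_{\alpha-1}$ satisfies $X\subseteq T_{\alpha-1}$, hence $X\in\mathcal{P}(T_{\alpha-1})=T_{\alpha}$ by (\ref{TotalHierarchy}); thus $T_{\alpha-1}\subseteq T_{\alpha}$. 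In the limit case, (\ref{TotalHierarchy}) gives $T_{\alpha}=\bigcup_{\beta<\alpha}T_{\beta}\cup(\bigcup_{\beta<\alpha}T_{\beta})|_{\aleph_{0}}\supseteq\bigcup_{\beta<\alpha}T_{\beta}$, so $T_{\beta}\subseteq T_{\alpha}$ for every $\beta<\alpha$ with no further appeal to the hypothesis.

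The only step requiring genuine work is the successor case, and its crux is the inclusion $T_{\alpha-1}\subseteq\mathcal{P}(T_{\alpha-1})$. This is false for a general set and holds here solely because every level is transitive, so the whole result is parasitic on Corollary \ref{54}. The extra summand $(\bigcup_{\beta<\alpha}T_{\beta})|_{\aleph_{0}}$ introduced at limit stages---the infinitely generated sets---poses no obstacle, because it can only enlarge $T_{\alpha}$ and monotonicity is a one-directional inclusion; the non-well-founded content of the hierarchy is therefore irrelevant to the argument, and the proof runs exactly as in the classical von Neumann case.
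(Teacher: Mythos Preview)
Your proposal is correct and follows essentially the same route as the paper. The corollary actually has two parts---(i) $T_{\alpha}\subset\mathcal{P}(T_{\alpha})$ for every $\alpha$, and (ii) $\alpha<\beta\Rightarrow T_{\alpha}\subset T_{\beta}$---and the paper proves (i) first from the transitivity of $T_{\alpha}$ (Corollary \ref{54}), then uses (i) in the successor step of a transfinite induction on $\beta$ for (ii), handling the limit case directly from (\ref{TotalHierarchy}); you do exactly the same thing, except that you fold the proof of (i) into the successor step of your induction rather than stating it as a separate item, and your additional remarks on $T_{\beta}\in T_{\alpha}$ and strict growth are correct extras not in the corollary.
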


\begin{enumerate}
\item \textit{For any} $\alpha$, $T_{\alpha}\,\subset\,\mathcal{P}(T_{\alpha})$

\item $\alpha<\beta\:\Longrightarrow\:T_{\alpha}\subset T_{\beta}$
\end{enumerate}

\begin{proof}
\ (i) \ By corollary \ref{54}, $T_{\alpha}$ is transitive. So for any $X\in T_{\alpha}$, $X\subset T_{\alpha}$. Thus $X\in\mathcal{P}(T_{\alpha})$ and (i) follows.\medskip

(ii) \ Suppose it is true for any $\alpha<\gamma<\beta$. If $\beta$ is a successor ordinal, then by (i)
\[
T_{\beta}\,=\,\mathcal{P}(T_{\beta-1})\,\supset\,T_{\beta-1}\,\supset\,T_{\alpha}
\]

If $\beta$ is a limit ordinal, then
\[
T_{\beta}\,=\,\underset{\gamma<\beta}{\displaystyle\bigcup}T_{\gamma}\,\cup\Bigl(\underset{\gamma<\beta}{\displaystyle\bigcup}T_{\gamma}\Bigr)\Big\vert_{\aleph_{0}}\,\supset\,\underset{\gamma<\beta} {\displaystyle\bigcup}T_{\gamma}\,\supset\,T_{\alpha}
\]
\end{proof}

\subsection{Rank in Total Universe}\label{SectionRankTotalUniverse}

Now we investigate more on rank in the total universe.

\begin{corollary}
\label{55} \ Suppose $X\in T$. Then
\[
Y\in X\:\Longrightarrow\:R_{T}(Y)\leqslant R_{T}(X)
\]
\end{corollary}

\begin{proof}
\ Suppose $R_{T}(X)=\alpha$. Then by definition \ref{DefTotalModelRank}, $X\in T_{\alpha}$. By corollary \ref{54}, $T_{\alpha}$ is transitive. Thus for $Y\in X$, $Y\in T_{\alpha}$, i.e. $R_{T}(Y)\leqslant\alpha$.
\end{proof}

\begin{corollary}
\label{56}\ If $R_T(X)$ is a successor ordinal, then
%$\quad(R_{T}(X)\in\mathrm{SOrd}\,\wedge\, Y\in X)\:\Longrightarrow\:R_{T}(Y)<R_{T}(X)$
\[
Y\in X\:\Longrightarrow\:R_{T}(Y)<R_{T}(X)
\]
\end{corollary}

\begin{proof}
\ Suppose $R_{T}(X)=\alpha$ is a successor ordinal. Then $X\in T_{\alpha}\,=\,\mathcal{P}(T_{\alpha-1})$ and $X\subset T_{\alpha-1}$. Thus $Y\in T_{\alpha-1}$ and $R_{T}(Y)\leqslant\alpha-1<R_{T}(X)$.
\end{proof}

\begin{corollary}
\ If $R_T(X)$ is a successor ordinal, then
\[
Y\subset X\:\Longrightarrow\:R_{T}(Y)\leqslant R_{T}(X)
\]
%$\quad(R_{T}(X)\in\mathrm{SOrd}\,\wedge\, Y\subset X)\:\Longrightarrow\:R_{T}(Y)\leqslant R_{T}(X)$
\end{corollary}

\begin{proof}
\ Suppose $R_{T}(X)=\alpha$ is a successor ordinal. Then $X\in T_{\alpha}$ and $X\subset T_{\alpha-1}$. So $Y\subset T_{\alpha-1}$ and $Y\in T_{\alpha}$. Thus $R_{T}(Y)\leqslant\alpha=R_{T}(X)$.\smallskip

Note that this can fail if $R_T(X)$ is a limit ordinal. For example, let $Z=\{*G,Z\}$ where $G=\{a,b\}\in V_{\omega}$. Then $\{a,Z\}\subset Z$. But $R_T(\{a,Z\})=\omega+1$ while $R_T(Z)=\omega$.
\end{proof}

\begin{corollary}
\label{57} \ If $R_{T}(X)$ is a limit ordinal, then $X$ is an infinitely generated set.
\end{corollary}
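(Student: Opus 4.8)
The plan is to read the statement straight off the limit stage of the total hierarchy (\ref{TotalHierarchy}), using only the minimality built into the definition of rank. First I would put $\alpha = R_{T}(X)$ and invoke definition \ref{DefTotalModelRank}: $\alpha$ is the \emph{least} ordinal with $X \in T_{\alpha}$, so in particular $X \in T_{\alpha}$ while $X \notin T_{\beta}$ for every $\beta < \alpha$. Since by hypothesis $\alpha$ is a limit ordinal, the limit clause of (\ref{TotalHierarchy}) applies and gives $T_{\alpha} = \bigcup_{\beta<\alpha} T_{\beta} \,\cup\, \left.\left(\bigcup_{\beta<\alpha} T_{\beta}\right)\right|_{\aleph_{0}}$, so $X$ must lie in at least one of these two pieces.

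The one real step is to rule out the well-founded piece. If $X \in \bigcup_{\beta<\alpha} T_{\beta}$, then $X \in T_{\gamma}$ for some $\gamma < \alpha$, which forces $R_{T}(X) \leqslant \gamma < \alpha$ and contradicts the minimality of $\alpha$ fixed above. Hence $X$ lies in the remaining piece, $X \in \left.\left(\bigcup_{\beta<\alpha} T_{\beta}\right)\right|_{\aleph_{0}}$.

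Finally I would unpack the notation via definition \ref{DefSetOfIGS}: with $S = \bigcup_{\beta<\alpha} T_{\beta}$, the set $\left.S\right|_{\aleph_{0}}$ is by definition the collection of infinitely generated sets $H_{\alpha}(\mathcal{G})$ whose generator family $\mathcal{G} = \{G_{\gamma}\colon G_{\gamma}\in S,\ \gamma<\omega\}$ is drawn from $S$, each $H_{\alpha}(\mathcal{G})$ being the limit $\lim_{\gamma\rightarrow\alpha} H_{\gamma}(G_{\gamma},\cdots,G_{0})$ of definition \ref{DefGenInfGeneratedSet}. Thus $X = H_{\alpha}(\mathcal{G})$ for a suitable $\mathcal{G}$, i.e. $X$ is an infinitely generated set, as claimed.

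I expect no substantive obstacle here: the corollary is essentially a direct consequence of the shape of $T_{\alpha}$ at a limit ordinal. The only point requiring care is the minimality argument that excludes $X \in \bigcup_{\beta<\alpha} T_{\beta}$, together with making explicit that membership in the $\left.\,\cdot\,\right|_{\aleph_{0}}$ component is, by definition \ref{DefSetOfIGS}, literally the assertion that $X$ is an infinitely generated set; everything else is a mechanical application of (\ref{TotalHierarchy}) and definitions \ref{DefTotalModelRank}, \ref{DefGenInfGeneratedSet} and \ref{DefSetOfIGS}.
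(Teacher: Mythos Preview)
Your proposal is correct and follows essentially the same argument as the paper: set $\alpha=R_T(X)$, use the limit clause of (\ref{TotalHierarchy}) to split $T_\alpha$, rule out $X\in\bigcup_{\beta<\alpha}T_\beta$ by minimality of rank, and conclude $X\in\left.\left(\bigcup_{\beta<\alpha}T_\beta\right)\right|_{\aleph_0}$. The paper's proof is just a terser version of yours, stopping at that membership without explicitly unpacking definition~\ref{DefSetOfIGS}.
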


\begin{proof}
\ Suppose $R_{T}(X)=\alpha$ is a limit ordinal. If $X\in\underset{\beta<\alpha}{\displaystyle\bigcup}T_{\beta}$, then there is a $\gamma<\alpha$ that $X\in T_{\gamma}$, i.e. $R_{T}(X)\leqslant\gamma<\alpha$, contradiction. So $X\in\Bigl(\underset{\beta<\alpha}{\displaystyle\bigcup}T_{\beta}\Bigr)\Big\vert_{\aleph_{0}}$.\smallskip
\end{proof}

\begin{corollary}
\label{66} \ Suppose $\alpha$ is a limit ordinal and $H$ is $\omega$-invariant. Then $R_{T}(H)=\alpha$.
\end{corollary}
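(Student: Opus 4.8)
The plan is to pin $R_T(H)$ down by bounding it above by $\alpha$ directly from the construction and below by $\alpha$ using $\omega$-invariance, the key leverage being that $\omega$-invariance supplies an immediate member that is forced to have the \emph{same} rank as $H$.

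First I would establish $R_T(H)\le\alpha$. Since $H$ is an infinitely generated set at the limit ordinal $\alpha$, all of its generators lie in $\bigcup_{\beta<\alpha}T_\beta$, so by Definition \ref{DefSetOfIGS} we have $H\in\left(\bigcup_{\beta<\alpha}T_\beta\right)|_{\aleph_0}$, whence $H\in T_\alpha$ by the limit clause of (\ref{TotalHierarchy}). By Definition \ref{DefTotalModelRank} this gives $R_T(H)\le\alpha$.

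Next I would rule out $R_T(H)$ being a successor ordinal. By Lemma \ref{65}, $\omega$-invariance lets me write $H=\{\ast G_\beta,\{\ast G_{\beta-1},\cdots\{\ast G_{\alpha+1},H\}\cdots\}$, so $H$ has the immediate member $W_1=\{\ast G_{\beta-1},\cdots\{\ast G_{\alpha+1},H\}\cdots\}$, and unwinding this finite nest exhibits a membership cycle $H\ni W_1\ni\cdots\ni W_k\ni H$ (finite because $\beta-\alpha$ is finite). Applying Lemma \ref{55} to each link and chaining the resulting inequalities around the cycle forces $R_T(H)=R_T(W_1)=\cdots=R_T(W_k)$; in particular $R_T(W_1)=R_T(H)$ with $W_1\in H$. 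If $R_T(H)$ were a successor ordinal, Lemma \ref{56} applied to $W_1\in H$ would yield $R_T(W_1)<R_T(H)$, a contradiction. Since $H\neq\varnothing$ its rank is also not $0$, so $R_T(H)$ is a limit ordinal (consistent with Corollary \ref{57}).

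Finally I would show $R_T(H)\ge\alpha$, which by the limit clause of (\ref{TotalHierarchy}) is equivalent to $H\notin\bigcup_{\beta<\alpha}T_\beta$. Suppose instead $R_T(H)=\lambda<\alpha$; by the previous paragraph $\lambda$ is a limit ordinal, so by Corollary \ref{57} we would have $H\in\left(\bigcup_{\beta<\lambda}T_\beta\right)|_{\aleph_0}$, i.e. the entire generator sequence of $H$ already lives in $\bigcup_{\beta<\lambda}T_\beta$. The main obstacle is to see that this is incompatible with $H$ being $\omega$-invariant \emph{at $\alpha$}: I would argue, using Axiom \ref{49} to identify the generator sequence of an infinitely generated set uniquely, that the generators witnessing $\omega$-invariance at $\alpha$ (the $G_{\alpha+\xi}$ of Definition \ref{DefGenOmegaInvariant} together with the base generators) cannot all be confined below any limit ordinal $\lambda<\alpha$, so that $H$ genuinely first appears at level $\alpha$. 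Reconciling the level at which $H$ is freshly added with the limit ordinal at which it is $\omega$-invariant is the delicate step, and it is precisely where the bookkeeping of the total-universe construction (\ref{TotalHierarchy}) must be invoked, rather than just the abstract rank lemmas. Combining $H\notin\bigcup_{\beta<\alpha}T_\beta$ with $H\in T_\alpha$ then yields $R_T(H)=\alpha$.
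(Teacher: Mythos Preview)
Your middle step --- the membership cycle $H\ni H_1\ni\cdots\ni H$ extracted from Lemma~\ref{65}, combined with Lemma~\ref{56} to derive a contradiction from a successor rank --- is exactly the paper's argument. In fact the paper's proof consists of essentially nothing else: it writes $H=\{\ast G_\gamma,H_1\}$ via Lemma~\ref{65}, and then simply says ``If $R_T(H)>\alpha$, then by Lemma~\ref{56}, $R_T(H)<R_T(H_1)<R_T(H)$, contradiction. Thus $R_T(H)=\alpha$.'' The paper never separately establishes either bound; the inequality $R_T(H)\ge\alpha$ is taken as built into the hypothesis (in Definition~\ref{DefGenOmegaInvariant} the set is written $H_\alpha$, understood as the IGS introduced at level~$\alpha$), and the upper bound is not isolated at all.

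So your steps~1 and~3 do more bookkeeping than the paper attempts. That said, your proposed route for the lower bound has a genuine gap: Axiom~\ref{49} is one-directional (equal generator sequences imply equal IGS), so it cannot be used to show that \emph{distinct} generator data at $\lambda<\alpha$ must produce a set different from $H$. Nothing in the limit-of-formulas machinery rules out an IGS at $\alpha$ coinciding with one already built at some smaller limit ordinal. In the paper's framework this issue is not resolved by argument but by convention --- the subscript on $H_\alpha$ is read as fixing the level at which $H$ first appears --- so you should not expect to squeeze a proof of $R_T(H)\ge\alpha$ out of Axiom~\ref{49} and Corollary~\ref{57} alone.
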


\begin{proof}
\ By lemma \ref{65}, $H=\{\ast G_{\gamma},\{\ast G_{\gamma-1},\dots\{\ast G_{\alpha+1},H\}\dots\}$, where $G_{\xi}\in \underset{\beta<\alpha}{\displaystyle\bigcup}T_{\beta}$ for $\xi\leqslant\gamma$. Let $H_{1}=\{\ast G_{\gamma-1},\dots \{\ast G_{\alpha+1},H\}\dots\}$. If $R_{T}(H)>\alpha$, then by corollary \ref{56}, $R_{T}(H)<R_{T}(H_{1})<R_{T}(H)$, contradiction. Thus $R_{T}(H)=\alpha$.\bigskip
\end{proof}

Note that there are sets that appear to have successor ranks but actually have limit ordinal ranks.

\begin{example}
\ Suppose $a,b\in V_{\omega}$ and $Q=\{a,b,2\}|_{\mathfrak{Q}}$. Then $R_{T}(\{b,Q\})=\omega$.
\end{example}

At first sight, $\{b,Q\}\in T_{\omega+1}$ for $\{b,Q\}\subset T_{\omega}$. But by theorem \ref{43}, $\{a,\{b,Q\}\}=Q$. So $\{b,\{a,\{b,Q\}\}\}=\{b,Q\}$, i.e. $\{b,Q\}$ is $\omega$-invariant. By corollary \ref{66}, $R_{T}(\{b,Q\})=\omega$.\bigskip

The total universe can be partitioned by rank as follows.

\begin{theorem}
[Partition Formula by Rank]\label{59} \ Suppose $A_{\alpha}=\{X\colon R_{T}(X)=\alpha\,\wedge\, X\in T\}$.
\end{theorem}

\begin{enumerate}
\item \textit{If $\alpha$ is a successor ordinal, then}\qquad $A_{\alpha}\,=\,T_{\alpha}-T_{\alpha-1}$

\item \textit{If $\alpha$ is a limit ordinal, then}
\[
A_{\alpha}\,=\,T_{\alpha}\,-\underset{\beta<\alpha}{\displaystyle\bigcup}T_{\beta}\,\,=\Bigl(\underset{\beta<\alpha}{\displaystyle\bigcup}T_{\beta}\Bigr)\Big\vert_{\aleph_{0}}-\,\underset{\beta<\alpha}{\displaystyle\bigcup} T_{\beta}\,\,=\,\underset{\beta<\alpha}{\displaystyle\bigcap}\left(T_{\alpha}-T_{\beta}\right)
\]

\item \textit{In general, $T$ can be partitioned by rank as:}
\[
T\,\,=\,\,\,\smashoperator{{\displaystyle\bigcup}_{\alpha\in \mathrm{SOrd}}}\,\,(T_{\alpha}-T_{\alpha-1})\,\,\cup\,\,\, \smashoperator{{\displaystyle\bigcup}_{\alpha\in \mathrm{LOrd}}} \quad\underset{\beta<\alpha}{\displaystyle\bigcap}(T_{\alpha}-T_{\beta})
\]
\end{enumerate}

\begin{proof}
\ (i) \ If $\alpha$ is a successor ordinal, by lemma \ref{35}, $R_{T}(X)=\alpha$ if and only if $X\in T_{\alpha}-T_{\alpha-1}$.\medskip

(ii) \ If $\alpha$ is a limit ordinal, by lemma \ref{35}, $R_{T}(X)=\alpha$ if and only if $X\in T_{\alpha}$ and for any $\beta<\alpha$, $X\notin T_{\beta}$. So by (\ref{TotalHierarchy})
\[
A_{\alpha}\,=\,T_{\alpha}\,-\underset{\beta<\alpha}{\displaystyle\bigcup}T_{\beta}\,\,=\,\underset{\beta<\alpha}{\bigcup} T_{\beta}\,\cup\Bigl(\underset{\beta<\alpha}{\displaystyle\bigcup}T_{\beta}\Bigr)\Big\vert_{\aleph_{0}}-\underset{\beta<\alpha}{\displaystyle\bigcup}T_{\beta}\,\,=\Bigl(\underset{\beta<\alpha}{\displaystyle\bigcup}T_{\beta}\Bigr)\Big \vert_{\aleph_{0}}-\,\underset{\beta<\alpha}{\displaystyle\bigcup}T_{\beta}
\]

On the other hand, clearly we have
\[
T_{\alpha}\,-\underset{\beta<\alpha}{\displaystyle\bigcup}T_{\beta}\,\,=\,\underset{\beta<\alpha}{\displaystyle\bigcap} \left(T_{\alpha}-T_{\beta}\right)
\]

(iii) \ For any $X\in T$ and $R_{T}(X)=\alpha$, if $\alpha$ is a successor ordinal, then by (i)
\[
X\in T_{\alpha}-T_{\alpha-1}\,\subset\,\,\,\smashoperator{\bigcup_{\alpha\in \mathrm{SOrd}}}\,\left(T_{\alpha}-T_{\alpha-1}\right)
\]

If $\alpha$\ is a limit ordinal, then by (ii)
\[
X\in T_{\alpha}-\underset{\beta<\alpha}{\displaystyle\bigcup}T_{\beta}\,\,\subset\,\,\,\smashoperator{{\displaystyle\bigcup}_{\alpha\in \mathrm{LOrd}}} \quad\underset{\beta<\alpha}{\displaystyle\bigcap}(T_{\alpha}-T_{\beta})
\]

Obviously, each category is disjoint from others.\bigskip
\end{proof}

We also have the following theorems for rank in the total universe.

\begin{theorem}
\label{60} \ Suppose $X\in T$. Then
\end{theorem}

\begin{enumerate}
\item \textit{If $R_{T}(X)$ is a successor ordinal, then there is either a $Y\in X$ that $R_{T}(Y)=R_{T}(X)-1$, or a sequence $Y_{n}\in X$ that $R_{T}(Y_{n})\rightarrow R_{T}(X)-1$ as $n\rightarrow\infty$.}

\item \textit{If $R_{T}(X)$ is a limit ordinal and $X$ is $\omega$-invariant, then there is one and only one $Y\in X$
that $R_{T}(Y)=R_{T}(X)$.}
\end{enumerate}

\begin{proof}
\ (i) \ Suppose $R_{T}(X)=\alpha$ is a successor ordinal. By corollary \ref{56}, for any $Y\in X$, $R_{T}(Y)\leqslant\alpha-1$. If there is a $Y\in X$ that $R_{T}(Y)=\alpha-1$, then done. So suppose there is no $Y\in X$ that $R_{T}(Y)=\alpha-1$. \smallskip

If $\alpha-1$ is a successor ordinal, then for any $Y\in X$, $Y\in T_{\alpha-2}$. So $X\subset T_{\alpha-2}$ and $X\in T_{\alpha-1}$, i.e. $R_{T}(X)\leqslant\alpha-1$, contradiction. If $\alpha-1$ is a limit ordinal, then there is a sequence $Y_{n}\in X$ that $R_{T}(Y_{n})\rightarrow\alpha-1$. Otherwise suppose there is a $\beta<\alpha-1$ that for any $Y\in X$, $R_{T}(Y)\leqslant\beta$, i.e. $Y\in T_{\beta}$. So $X\subset T_{\beta}$ and $X\in T_{\beta+1}$, i.e. $R_{T}(X)\leqslant \beta+1<\alpha$, contradiction.\medskip

(ii) \ Suppose $R_{T}(X)=\alpha$ is a limit ordinal. Then by lemma \ref{65}, $X=\,\{\ast G_{\gamma},\{\ast G_{\gamma-1}, \dots\}\dots\}$, where $G_{\xi}\in\underset{\beta<\alpha}{\displaystyle\bigcup}T_{\beta}$ for $\xi\leqslant\gamma$. Let $Y=\{\ast G_{\gamma-1},\{\ast G_{\gamma-2},\dots\}\dots\}$. Then $Y$ is $\omega$-invariant and by corollary \ref{66}, $R_{T}(Y)=\alpha$. Furthermore, since $G_{\gamma}\in\underset{\beta<\alpha}{\displaystyle\bigcup}T_{\beta}$, $R_{T}(G_{\gamma})<\alpha$. So by corollary \ref{55}, for any $a\in G_{\gamma}$, $R_{T}(a)<\alpha$, i.e. $Y$ is the only one in $X$ that $R_{T}(Y)=\alpha$.\smallskip
\end{proof}

\begin{corollary}
\ Suppose $X\in T$. Then
\end{corollary}

\begin{enumerate}
\item \textit{If $R_{T}(X)$ is a successor ordinal, then}
\[
R_{T}(X)\,=\,\sup\{R_{T}(Y)\colon Y\in X\}\,+\,1
\]

\item \textit{If $R_{T}(X)$ is a limit ordinal and $X$ is $\omega$-invariant, then}
\[
R_{T}(X)\,=\,\sup\{R_{T}(Y)\colon Y\in X\}
\]
\end{enumerate}

\begin{proof}
\ (i) \ By corollary \ref{56}, for any $Y\in X$, $R_{T}(Y)\leqslant R_{T}(X)-1$. Then it follows by theorem \ref{60}(i).\medskip

(ii) \ By theorem \ref{60}(ii).\smallskip
\end{proof}

\subsection{Spectrum of Power Set Operations}

In this section, we will introduce the notion of the power set spectrum from which the total universe (\ref{TotalHierarchy}) can be proved. First, we have the following definitions.

\begin{definition}
\label{DefAlphaPowerSetOperations}\ Suppose $\mathcal{P}(S)\,=\,\{x\colon x\subset S\}$. Then the $\mathbf{\alpha}^{th}$ \textbf{power set operation} (on $S$) is defined (recursively) as:
\begin{alignat*}{2}
\mathcal{P}^{(0)}(S)\,&=\,S,
\\
\mathcal{P}^{(\alpha)}(S)\,&=\,\mathcal{P}\bigl(\mathcal{P}^{(\alpha-1)}(S)\bigr),&\alpha\in\mathrm{SOrd}
\\
\mathcal{P}^{(\alpha)}(S)\,&=\,\underset{\gamma\rightarrow\alpha}{\lim}\mathcal{P}^{(\gamma)}(S),\qquad\qquad\qquad& \alpha\in\mathrm{LOrd}
\end{alignat*}
\end{definition}

%\begin{enumerate}
%	\item \textit{If $\alpha$ is a successor ordinal, then}
%	\[
%	\mathcal{P}^{(\alpha)}(S)\,=\,\mathcal{P}(\mathcal{P}^{(\alpha-1)}(S))\quad(\mathcal{P}^{(0)}(S)\,=\,S)
%	\]
%	
%	\item \textit{If $\alpha$ is a limit ordinal, then}
%	\[
%	\mathcal{P}^{(\alpha)}(S)\,=\,\underset{\gamma\rightarrow\alpha}{\lim}\mathcal{P}^{(\gamma)}(S)
%	\]
%\end{enumerate}

\begin{definition}
	\ A \textbf{spectrum of power set operations} is defined as:
	\begin{equation}
		\mathfrak{Sp}\,=\,\underset{\alpha\in\Omega}{\displaystyle\bigcup}\mathcal{P}^{(\alpha)}(\varnothing)\label{PowerSetSpectrum}
	\end{equation}
	Where $\Omega$ is a collection of ordinals and is known as a \textbf{domain} of the spectrum.
\end{definition}

First, we show that the von Neumann universe is the power set spectrum at the successor ordinals.

\begin{lemma}
	\label{69}$\qquad V\,=\,\,\,\smashoperator{\bigcup_{\alpha\in \mathrm{SOrd}}}\,\mathcal{P}^{(\alpha)}(\varnothing)$
\end{lemma}

\begin{proof}
	\ First, we prove for any successor ordinal $\alpha$, $V_{\alpha}=\mathcal{P}^{(\alpha)}(\varnothing)$. Clearly, $\mathcal{P}^{(1)} (\varnothing)=V_{1}$. Suppose it is true for any successor ordinal $\beta<\alpha$. Then by definition \ref{DefAlphaPowerSetOperations} and (\ref{CumulativeHierarchy})
	\[
	V_{\alpha}\,=\,\mathcal{P}(V_{\alpha-1})\,=\,\mathcal{P}(\mathcal{P}^{(\alpha-1)}(\varnothing)) \,=\, \mathcal{P}^{(\alpha)}(\varnothing)
	\]
	
	If $\alpha$ is a limit ordinal, let $V_{\alpha}\,=\underset{\beta<\alpha}{\displaystyle\bigcup}\mathcal{P}^{(\beta)} (\varnothing)$. Since $V_{\alpha}$ is transitive, $V_{\alpha}\subset V_{\alpha+1}$. Thus
	\[
	V\,=\,\,\smashoperator{\bigcup_{\alpha\in \mathrm{Ord}}}\,V_{\alpha}\,=\,\,\,\smashoperator{\bigcup_{\alpha\in \mathrm{SOrd}}}\,V_{\alpha} \,\cup\,\,\,\smashoperator{\bigcup_{\alpha\in \mathrm{LOrd}}}\,V_{\alpha}\,=\,\,\,\smashoperator{\bigcup_{\alpha\in \mathrm{SOrd}}}\,V_{\alpha} \,=\,\,\,\smashoperator{\bigcup_{\alpha\in \mathrm{SOrd}}}\,\mathcal{P}^{(\alpha)}(\varnothing)
	\]
\end{proof}

In order to prove that the total universe is the power set spectrum at all (limit) ordinals, we need the following results.

\begin{theorem}
	\label{51} \ Suppose $\mathfrak{P}_n=\langle\mathcal{P}^{n}(\varnothing), \in,\varnothing\rangle\:(n<\omega)$ and $\,\mathfrak{P}=\bigcup_{n<\omega}\mathfrak{P}_n$. Then
\end{theorem}

\begin{enumerate}
	\item $\mathfrak{P}$ \textit{is ultrahomogeneous.}	
	
	\item $\underset{n\rightarrow\omega}{\lim}\mathfrak{P}_{n}\,=\,\underset{n\rightarrow\omega}{\lim}\mathcal{P}^{(n)}(\varnothing)\,=\,\mathcal{P}^{(\omega)}(\varnothing)$.
\end{enumerate}

\begin{proof}
\ (i) \ We prove $\underset{n<\omega}{\displaystyle\bigcup}\mathcal{P}^{(n)}(\varnothing)$ is an amalgamation class. Then (i) follows from proposition \ref{9}. First heredity is obvious. For joint embedding, suppose $X_{i},X_{j} \in\mathcal{P}^{(n)}(\varnothing)$. Since for $X_{k}\supset X_{i}\cup X_{j}$, there are embeddings $f_{0}\colon X_{i} \rightarrow X_{k}$, $f_{1}\colon X_{j}\rightarrow X_{k}$ and $X_{k}\in\mathcal{P}^{(n+1)}(\varnothing)$, joint embedding holds. For amalgamation, suppose $X_{i},X_{j},X_{k}\in\mathcal{P}^{(n)}(\varnothing)$ and embeddings $f_{0}\colon X_{i}\rightarrow X_{j}$ and $f_{1}\colon X_{i}\rightarrow X_{k}$. Then there is $X_{l}\supset X_{j}\cup X_{k}$, $g_{0}\colon X_{j}\rightarrow X_{l}$, $g_{1}\colon X_{k}\rightarrow X_{l}$ and $g_{0}\circ f_{0}=g_{1}\circ f_{1}$. Since $X_{l}\in \mathcal{P}^{(n+1)}(\varnothing)$, amalgamation property holds.\medskip

\ (ii) \ By (i) and proposition \ref{21}, Th$(\mathfrak{P})$ is $\aleph_{0}$-categorical. Suppose
\begin{equation}
\psi_{n}(x)\,\Longleftrightarrow\,\exists!\,y_{n} \,\cdots\,\exists!\,y_{1}\,\bigl(\,\,\,\smashoperator{\bigwedge_{1\leqslant j\leqslant n-1}} \,\,\bigl(y_{j}\in y_{j+1}\wedge(\forall z\in y_{j+1})(z\subset y_{j})\bigr)\,\wedge\,y_n=x\bigr)\label{PowerSetOneType}
\end{equation}

Then the validity of $\psi_n(x)$ means that there is a unique $\in$-sequence of length $n$ in $x$ with each $y_{j+1}=\mathcal{P}(y_{j})$. Since $\mathfrak{P}_n\vDash\psi_n\left[\mathcal{P}^{(n)}(\varnothing)\right]$, $(\psi_n(x))$ is a $1$-type of $\mathfrak{P}$. Furthermore, for any $k>n,\,\mathfrak{P}_k\vDash \psi_n\left[\mathcal{P}^{(k)}(\varnothing)\right]$. So $\left(\mathfrak{P}_{n},\psi_{n}\right)$ is a homogeneous sequence. And by definition \ref{DefLimit}, $\underset{n\rightarrow\omega}{\lim}\mathfrak{P}_{n}$ is unique. We no longer distinguish $\underset{n\rightarrow\omega}{\lim}\mathfrak{P}_{n}$ and $\underset{n\rightarrow\omega}{\lim}\mathcal{P}^{(n)}(\varnothing)$. So (ii) follows by definition \ref{DefAlphaPowerSetOperations}.
\end{proof}

\begin{theorem}
	\label{26} \ Suppose $H_{n}(G_{n},\dots,G_{0})$ is defined in (\ref{FiniteGeneratedSet}) ($G_{i}\in V_{\omega}\wedge i<n<\omega$) and $\underset{n\rightarrow\omega}{\lim}H_{n}$ is unique. Then
\end{theorem}

\begin{enumerate}
	\item $\bigl(\exists X\in\mathcal{P}^{(n)}(\varnothing)\bigr)\, (X=H_{n})$.
	
	\item $(\forall n<k<\omega)\,\bigl(\mathcal{P}^{(n)}(\varnothing)\,\subset\,\mathcal{P}^{(k)}(\varnothing)\bigr)$.
	
	\item $\bigl(\exists X\in\mathcal{P}^{(\omega)}(\varnothing)\bigr)\, \Bigl(X=\underset{n\rightarrow\omega}{\lim}H_{n}\Bigr)$.

	\item $(\forall n<\omega)\,\bigl(\mathcal{P}^{(n)}(\varnothing)\,\subset\,\mathcal{P}^{(\omega)} (\varnothing)\bigr)$.
	
	\item \textit{A type of $\mathcal{P}^{(\omega)}(\varnothing)$ is that there exists a unique $\in$-sequence of length $\omega$ in $\mathcal{P}^{(\omega)}(\varnothing)$, i.e.}
	\[
	\mathcal{P}^{(1)}(\varnothing)\in\mathcal{P}^{(2)}(\varnothing)\in\dots\in\mathcal{P}^{(n)}(\varnothing)\in\dots\in\mathcal{P}^{(\omega)}(\varnothing)
	\]
\end{enumerate}

\begin{proof}
\ (i) \ We prove by induction. Since $\mathcal{P}^{(1)}(\varnothing)=\{\varnothing\}$, let $G_{0}=\ast\varnothing$ and $G_{1}=\varnothing$. Then $X\,=\,H_{1}(\varnothing,\ast\varnothing)\,=\,\varnothing$. Suppose it is true for $n\leqslant k$, i.e. there is a $H_{k}\in\mathcal{P}^{(k)}(\varnothing)$ that
\[
H_{k}\,=\,\{\ast G_{k},\{\ast G_{k-1},\dots\{\ast G_{1},G_{0}\}\dots\}
\]
Then for any $G_{k+1}\in\mathcal{P}^{(k)}(\varnothing)$, $X\,=\,\{\ast G_{k+1},H_{k}\}\subset\mathcal{P}^{(k)} (\varnothing)$. So $X\in\mathcal{P}^{(k+1)}(\varnothing)$ and $X=H_{k+1}$.\medskip

(ii) \ For any $n<\omega$, $\mathcal{P}^{(n)}(\varnothing)\in\mathcal{P}^{(n+1)}(\varnothing)$. Since $\mathcal{P}^{(n)} (\varnothing)$ is transitive, $\mathcal{P}^{(n)}(\varnothing)\subset\mathcal{P}^{(n+1)}(\varnothing)$. Generally, it follows by induction.\medskip
	
(iii) \ By axioms \ref{15}, \ref{19} and theorem \ref{51}
\begin{align*}
	\underset{n\rightarrow\omega}{\lim}\bigl(\exists X\in\mathcal{P}^{(n)}(\varnothing)\bigr)\,(X=H_{n})
	\,& \Longleftrightarrow\,\underset{n\rightarrow\omega}{\lim}\,\exists X\bigl(X\in\mathcal{P}^{(n)}(\varnothing)\wedge X=H_{n}\bigr)
	\\
	\,& \Longleftrightarrow\,\underset{n\rightarrow\omega}{\lim}\,\exists X\bigl(X\in\mathcal{P}^{(n)}(\varnothing)\wedge\, \forall Y(Y\in X\Longleftrightarrow Y\in H_n)\bigr)
	\\
	\,& \Longleftrightarrow\,\,\exists X\Bigl(X\in\underset{n\rightarrow\omega}{\lim} \mathcal{P}^{(n)}(\varnothing)\,\wedge\,\forall Y\bigl(Y\in X\Longleftrightarrow Y\in \underset{n\rightarrow\omega}{\lim}H_n\bigr)\Bigr) 
	\\
	\,& \Longleftrightarrow\,\,\exists X\Bigl( X\in \mathcal{P}^{(\omega)}(\varnothing)\,\wedge\,X= \underset{n\rightarrow\omega}{\lim}H_n\Bigr) 
	\\
	\,& \Longleftrightarrow\,\bigl(\exists X\in\mathcal{P}^{(\omega)}(\varnothing) \bigr)\,\Bigl(X=\underset{n\rightarrow\omega}{\lim}H_{n}\Bigr)
\end{align*}

By (i), for any $n<\omega$, $\bigl(\exists X\in\mathcal{P}^{(n)}(\varnothing)\bigr)(X=H_{n})$ is true. So (iii) follows by corollary \ref{18}.\medskip

(iv) \ By theorem \ref{51} and corollary \ref{24}
\[
\underset{k\rightarrow\omega}{\lim}\forall X\bigl(X\in\mathcal{P}^{(n)}(\varnothing)\Longrightarrow X\in\mathcal{P}^{(k)}(\varnothing)\bigr)\,\Longleftrightarrow\,\forall X\bigl(X\in\mathcal{P}^{(n)}(\varnothing) \Longrightarrow X\in\mathcal{P}^{(\omega)}(\varnothing)\bigr)
\]

By (ii), for any $n<k<\omega$, the left side is true. So (iv) follows by corollary \ref{18}.\medskip

(v) \ By (\ref{PowerSetOneType}) and corollary \ref{67}
\begin{align*}
	\psi_{\omega} \,& \Longleftrightarrow\,\underset{n\rightarrow\omega}{\lim}\,\psi_{n}(\mathcal{P}^{(n)} (\varnothing))
	\\
	\,& \Longleftrightarrow\,\underset{n\rightarrow\omega}{\lim}\,\,\smashoperator{\bigwedge_{1\leqslant j\leqslant n}}\,\,\exists!\, \mathcal{P}^{(j)}(\varnothing)\,\exists!\,\mathcal{P}^{(j-1)}(\varnothing)\left(
	\begin{array}
		[c]{c}
		\mathcal{P}^{(j-1)}(\varnothing)\in\mathcal{P}^{(j)}(\varnothing)\,\wedge \\[1.1ex]
		\left(\forall Z\in\mathcal{P}^{(j)}(\varnothing)\right)\left(\forall z\in Z\right)\left(z\in\mathcal{P}^{(j-1)} (\varnothing)\right)
	\end{array}
	\right)  \\[1.3ex]
	\,&\qquad\wedge\,\,\underset{n\rightarrow\omega}{\lim}\,\exists!\,\mathcal{P}^{(n-1)}(\varnothing)\left(
	\begin{array}
		[c]{c}
		\mathcal{P}^{(n-1)}(\varnothing)\in\mathcal{P}^{(n)}(\varnothing)\,\wedge  \\[1.1ex]
		\left(\forall Z\in\mathcal{P}^{(n)}(\varnothing)\right)\left(\forall z\in Z\right)\left(z\in\mathcal{P}^{(n-1)} (\varnothing)\right)
	\end{array}
	\right)  \\[1.5ex]
	\,& \Longleftrightarrow\,\underset{n<\omega}{\displaystyle\bigwedge}\exists!\,\mathcal{P}^{(n)}(\varnothing)\,\exists!\,\mathcal{P}^{(n-1)} (\varnothing)\Bigl(\mathcal{P}^{(n-1)}(\varnothing)\in\mathcal{P}^{(n)}(\varnothing)\wedge\bigl(\forall Z\in\mathcal{P}^{(n)}(\varnothing)\bigr)\bigl(Z\subset\mathcal{P}^{(n-1)}(\varnothing)\bigr)\Bigr)  
	\\
	\,& \qquad\wedge\,\,\exists!\,\mathcal{P}^{(\omega)}(\varnothing)\,\Bigl(\mathcal{P}^{(\omega)}(\varnothing)\in \mathcal{P}^{(\omega)}(\varnothing)\wedge\bigl(\forall Z\in\mathcal{P}^{(\omega)}(\varnothing)\bigr)\bigl(  Z\subset\mathcal{P}^{(\omega)}(\varnothing)\bigr)\Bigr)
\end{align*}

By theorem \ref{71}, $\mathcal{P}^{(\omega)}(\varnothing)\vDash\psi_{\omega}$ where $\psi_{\omega}$ describes $\left\langle \mathcal{P}^{(n)}(\varnothing)\colon n\leqslant\omega\right\rangle$, the unique $\in$-sequence of length $\omega$ in $\mathcal{P}^{(\omega)}(\varnothing)$.\bigskip	
\end{proof}

The last two theorems can be extended to any limit ordinal and we omit the proof.

\begin{corollary}
	\label{28} \ Suppose $\alpha>\omega$ is a limit ordinal, $\mathfrak{P}_{\gamma}=\langle\mathcal{P}^{\gamma}(\varnothing), \in,\varnothing\rangle\:(\gamma<\alpha)$ and $\,\mathfrak{P}=\bigcup_{\beta<{\alpha}}\mathfrak{P}_{\beta}$. Then
\end{corollary}

\begin{enumerate}
	\item $\mathfrak{P}$ \textit{is ultrahomogeneous.}	

	\item $\underset{\gamma\rightarrow\alpha}{\lim}\mathfrak{P}_{\gamma}\,=\,\underset{\gamma\rightarrow\alpha}{\lim}\mathcal{P}^{(\gamma)}(\varnothing)\,=\,\mathcal{P}^{(\alpha)}(\varnothing)$.
\end{enumerate}

\begin{corollary}
	\label{68} \ Suppose $\alpha>\omega$ is a limit ordinal and $H_{\xi}(G_{\xi},\dots,G_{0})$ is defined in (\ref{GenInfGeneratedSet}) with $G_{\xi}\in\underset{\beta<\alpha}{\displaystyle\bigcup}T_{\beta}$ $\left(\xi<\omega \right)$. Let $\gamma$ be a successor ordinal. Then
\end{corollary}

\begin{enumerate}
	\item $(\forall\gamma<\alpha)\,\bigl(\exists X\in\mathcal{P}^{(\gamma)}(\varnothing)\bigr)(X=H_{\gamma})$.
	
	\item $(\forall\gamma<\eta<\alpha)\,\bigl(\mathcal{P}^{(\gamma)}(\varnothing)\,\subset\,\mathcal{P}^{(\eta)} (\varnothing)\bigr)$.
	
	\item $\bigl(\exists X\in\mathcal{P}^{(\alpha)}(\varnothing)\bigr)\,\bigl(X=\underset{\gamma\rightarrow\alpha}{\lim} H_{\gamma}\bigr)$.
	
	\item $(\forall\gamma<\alpha)\,\bigl(\mathcal{P}^{(\gamma)}(\varnothing)\,\subset\,\mathcal{P}^{(\alpha)} (\varnothing)\bigr)$.
	
	\item \textit{A type of $\mathcal{P}^{(\alpha)}(\varnothing)$ is that there exists a unique $\in$-sequence $\langle \mathcal{P}^{(\gamma)}(\varnothing)\colon \gamma\leqslant\alpha\rangle$ in $\mathcal{P}^{(\alpha)}(\varnothing)$.}\medskip
\end{enumerate}

\begin{theorem}
	\label{6}$\qquad T\,=\,\,\,\smashoperator{\bigcup_{\alpha\in \mathrm{Ord}}}\,\mathcal{P}^{(\alpha)}(\varnothing)$
\end{theorem}

\begin{proof}
\ First, we prove for any successor ordinal $\alpha$, $T_{\alpha}=\mathcal{P}^{(\alpha)}(\varnothing)$. Clearly, $\mathcal{P}^{(1)}(\varnothing)=T_{1}$. Suppose it is true for any $\beta<\alpha$. Then by definition \ref{DefAlphaPowerSetOperations} and (\ref{TotalHierarchy})
\[
T_{\alpha}\,=\,\mathcal{P}(T_{\alpha-1})\,=\,\mathcal{P}(\mathcal{P}^{(\alpha-1)}(\varnothing)) \,=\, \mathcal{P}^{(\alpha)}(\varnothing)
\]

If $\alpha$ is a limit ordinal, we prove the following.
\[
T_{\alpha}\,=\,\mathcal{P}^{(\alpha)}(\varnothing)\,=\underset{\beta<\alpha}{\displaystyle\bigcup}\mathcal{P}^{(\beta)}(\varnothing)\,\cup\Bigl(\underset{\beta<\alpha}{\displaystyle\bigcup}\mathcal{P}^{(\beta)}(\varnothing)\Bigr)\Big\vert_{\aleph_{0}}\tag{1}
\]

By corollary \ref{68}(iii), let
\[
\Bigl(\underset{\beta<\alpha}{\displaystyle\bigcup}\mathcal{P}^{(\beta)}(\varnothing)\Bigr)\Big\vert_{\mathcal {\aleph}_{0}}=\,\bigl\{X\colon X\in\mathcal{P}^{(\alpha)}(\varnothing)\,\wedge\,X=\underset{\gamma\rightarrow\alpha}{\lim} H_{\gamma}\bigr\}\tag{2}
\]

First, consider $\alpha=\omega$. For any $n<\omega$, by theorem \ref{26}(iv)
\[
\mathcal{P}^{(n)}(\varnothing)\,\subset\,\mathcal{P}^{(\omega)}(\varnothing)\quad\text{and so}\quad\underset{n<\omega} {\displaystyle\bigcup}\mathcal{P}^{(n)}(\varnothing)\,\subset\,\mathcal{P}^{(\omega)}(\varnothing)
\]

So by (2), we have
\[
\underset{n<\omega}{\displaystyle\bigcup}\mathcal{P}^{(n)}(\varnothing)\,\cup\Bigl(\underset{n<\omega} {\displaystyle\bigcup}\mathcal{P}^{(n)}(\varnothing)\Bigr)\Big\vert_{\aleph_{0}}\subset\,\mathcal{P}^{(\omega)} (\varnothing)
\]

On the other hand, for any $X\in\mathcal{P}^{(\omega)}(\varnothing)$, either $X\in\mathcal{P}^{(n)}(\varnothing)$ or $X\in\Bigl(\underset{n<\omega} {\displaystyle\bigcup}\mathcal{P}^{(n)}(\varnothing)\Bigr)\Big\vert_{\mathcal {\aleph}_{0}}$. So
\[
\mathcal{P}^{(\omega)}(\varnothing)\,\subset\underset{n<\omega}{\displaystyle\bigcup}\mathcal{P}^{(n)}(\varnothing)\,\cup \Bigl(\underset{n<\omega} {\displaystyle\bigcup}\mathcal{P}^{(n)}(\varnothing)\Bigr)\Big\vert_{\aleph_{0}}
\]

Thus (1) holds for $\omega$. The general case of (1) can be proved by corollary \ref{28}, \ref{68} and transfinite induction. Hence by
(\ref{TotalHierarchy})
\[
T\,=\,\,\,\smashoperator{\bigcup_{\alpha\in \mathrm{Ord}}}\,T_{\alpha}\,=\,\,\,\smashoperator{\bigcup_{\alpha\in \mathrm{Ord}}}\, \mathcal{P}^{(\alpha)}(\varnothing)
\]
\end{proof}\vspace{-2ex}

\begin{remark}
	\ Theorem \ref{6} shows that the total universe and the non-well-founded sets are the results of the power set spectrum at the limit ordinals. The von Neumann universe only involves the power set spectrum at the successor ordinals (lemma \ref{69}) and thus has only well-founded sets.
\end{remark}

%\section{New Set Theory and Russell's Paradox}

\subsection{Set Theory for Total Universe}\label{SectionSetTheoryTotalUniverse}

In this section, we will introduce an expanded theory of Zermelo-Fraenkel set theory known as \textbf{EZF} to handle both the well-founded and non-well-founded sets. The language of EZF is an expanded language of set theory, i.e. $\bar{\mathscr{L}}^{\prime}=\{\allowbreak\in, H_{\alpha}, I_{\alpha},Z_{\alpha},Q_{\alpha},Q_{\alpha,q}\}$ where $\alpha$ is any limit ordinal. First, we list the \textbf{axioms of EZF} as follows.

\begin{enumerate}
\item[\textbf{I.}] \hypertarget{I}{}\textbf{Extensionality.}

\item Let $X$ and $Y$ be two IGS that are generated by $\mathcal{G}_{X}= \{G_{\gamma}^{X}\colon G_{\gamma}^{X}\in S,\,\gamma<\omega\}$ and $\mathcal{G}_{Y}= \{G_{\gamma}^{Y}\colon G_{\gamma}^{Y}\in S,\,\gamma<\omega\}$ respectively (definition \ref{DefGenInfGeneratedSet} and \ref{DefSetOfIGS}). Then\footnote{This is the same as axiom \ref{49}.}
\[
\left(\forall\gamma<\omega\right)\left(G_{\gamma}^{X}\,=\,G_{\gamma}^{Y}\right)\:\Longrightarrow\:X(\mathcal{G}_{X}) \,=\,Y(\mathcal{G}_{Y})
\]

\item If $X$\ and $Y$\ are any other sets, then
\[
\forall z\left(z\in X\Longleftrightarrow z\in Y\right)\,\Longleftrightarrow\,X=Y
\]

\item[\textbf{II.}] \hypertarget{II}{}\textbf{Non-well-foundedness.}\ \ Suppose $\alpha\geqslant\omega$ is an ordinal and $\alpha_{0}$ is the limit ordinal immediately below $\alpha$. Then
\[
\exists\, X\bigl(\,\,\,\smashoperator{\bigwedge_{\alpha_{0}\leqslant\gamma\leqslant\alpha}}\, \exists\,X_{\gamma} \,\exists\,X_{\gamma-1}(X_{\gamma-1}\in X_{\gamma})\,\wedge\underset{\gamma<\alpha_{0}}{\displaystyle\bigwedge} \exists\,X_{\gamma}\,\exists\,X_{\gamma-1}(X_{\gamma-1}\in X_{\gamma}) \,\wedge\,X_{\alpha}=X\bigr)
\]

\item[\textbf{III.}] \hypertarget{III}{}\textbf{Union.}\ \ Suppose ${\bigcup}^{\alpha}X$ is given in definition \ref{DefGenUnionOperator}. Then for any $\alpha\geqslant1$
\[
\forall X\,\exists Y\left(Y=\,{\textstyle\bigcup}^{\alpha}X\right)
\]

\item[\textbf{IV.}] \hypertarget{IV}{}\textbf{Pairing.}
\[
\forall x\,\forall y\,\exists S\,\forall z\left(z\in S\,\Longleftrightarrow\, z=x\vee z=y\right)
\]

\item[\textbf{V.}] \hypertarget{V}{}\textbf{Power Set.}
\[
\forall X\,\exists Y\,\forall y\left(y\in Y\Longleftrightarrow\,y\subset
X\right)
\]

\item[\textbf{VI.}] \hypertarget{VI}{}\textbf{Infinity.}
\[
\exists S\,\bigl(\varnothing\in S\,\wedge\,(\forall x\in S)\left(x\cup\{x\}\in S\right)\bigr)
\]

\item[\textbf{VII.}] \hypertarget{VII}{}\textbf{Replacement.}\ \ Suppose $\phi$ is a formula and $p$ is a tuple. Then
\[
\forall x\,\forall y\,\forall z\left(\phi(x,y,p)\wedge\phi(x,z,p)\Longrightarrow y=z\right)\:\Longrightarrow\:\forall X\,\exists Y\,\forall y\,\bigl(y\in Y\Longleftrightarrow\left(\exists x\in X\right)\phi(x,y,p)\bigr)
\]

\item[\textbf{VIII.}] \hypertarget{VIII}{}\textbf{Separation.}\ \ Suppose $\phi$ is a formula and $p$ is a tuple. Then
\[
\forall X\,\forall p\,\exists Y\,\forall y\,\bigl(y\in Y\Longleftrightarrow\, y\in X\wedge\phi(y,p)\bigr)
\]
\end{enumerate}

Since infinitely generated sets (generally) do not have immediate members, the axiom of extensionality for EZF must be modified to handle IGS. As a result, I(i) is added to decide if two IGS are equal. For all other sets, the axiom of extensionality in ZF applies. By corollary \ref{33}, the axiom of regularity can fail. Thus AR is replaced by the non-well-foundedness axiom (II) which means that there exist sets of infinite $\in$-sequences in EZF.\medskip

Since non-well-founded sets contain infinite $\in$-sequences, the union axiom in ZF must be modified as well. III which is based on definition \ref{DefGenUnionOperator} indicates that the result of any union operation is a set. The rest axioms (IV,$\dots$,VIII) of EZF remain the same as those of ZF. Consequently, we reach a main conclusion of this paper that the total universe is a model of ZF minus the axiom of regularity.

\begin{theorem}
\label{72}\ \ $T$ \textit{is a model of} EZF.
\end{theorem}

\begin{proof}
\ \textit{Extensionality}. All sets in $T$ are either WF or NWF. For a NWF set, it either has immediate members or not. I(ii) applies for all WF and NWF sets with immediate members. For two IGS without immediate members, I(i) can decide if they are equal by checking the equality of their generators, which can be either WF or NWF sets at lower ranks. Thus by transfinite recursion, I holds for all sets in $T$.\medskip

\textit{Non-well-foundedness}. \ By theorem \ref{30}, $I_{\omega}$ contains an infinite $\in$-sequence. (By corollary \ref{33}, $I_{\omega}$ also fails the axiom of regularity.) Thus II holds in $T$.\medskip

\textit{Union}. \ Suppose $X\in T$ and $R_{T}(X)=\gamma$. If $\gamma$ is a successor ordinal, $X\subset T_{\gamma-1}$. By corollary \ref{54} and theorem \ref{60}, for any $y\in X$ and $z\in y$, $z\in T_{\gamma-1}$ and so ${\bigcup} X\in T_{\gamma}$. Suppose for any $\beta<\alpha$, ${\bigcup}^{\beta}X\in T_{\gamma}$. By definition \ref{DefGenUnionOperator}, ${\bigcup}^{\alpha}X\in T_{\gamma}$. If $\gamma$ is a limit ordinal, suppose $X$ is $\omega$-invariant. By lemma \ref{65}, ${\bigcup}^{\alpha}X\in T_{\gamma+\omega}$. In both cases, we have ${\bigcup}^{\alpha}X\in T$. Thus III holds in $T$.\medskip

\textit{Pairing}. \ For any sets $a,b\in T$, $a,b\in T_{\alpha}$. So $\{a,b\}\subset T_{\alpha}$ and $\{a,b\}\in T_{\alpha+1}$. Thus IV holds in $T$.\medskip

\textit{Power set}. \ For any set $X\in T$, $X\in T_{\alpha}$. By corollary \ref{54}, $X\subset T_{\alpha}$. So for any $Y\subset X$, $Y\subset T_{\alpha}$ and $Y\in\mathcal{P}(T_{\alpha})=T_{\alpha+1}$. Thus $\mathcal{P}(X)\subset T_{\alpha+1}$ and $\mathcal{P}(X)\in T_{\alpha+2}$, and V holds in $T$.\medskip

\textit{Infinity}. \ Since $\omega\in T$, VI holds in $T$.\medskip

\textit{Replacement}. \ Suppose $f=\{\left(x,y\right)\in T\colon\phi(x,y,p)\}$. Then the first part of VII implies $f$ is a function. Let $Y=f(X)$. Then there is a (least) $\alpha$ that $f\in T_{\alpha}$ and $X\in T_{\alpha}$. So $f(X)=\{f(x)\colon x\in X\}\in T_{\alpha}$. Thus
\[
\forall y\left(y\in Y\,\Longleftrightarrow\,\left(\exists x\in X\right)\left(\left(x,y\right)\in f\right)  \,\Longleftrightarrow\,\left(\exists x\in X\right)\left(\phi(x,y,p)\right)\right)
\]
And VII holds in $T$.\medskip

\textit{Separation}. \ Let $\varphi(x,y,p)\Leftrightarrow(x=y\wedge\phi(x,p))$. Clearly, $\varphi$ is a functional formula. So for $X\in T$, by VII, $Y=\{y\colon\left(\exists x\in X\right)\varphi(x,y,p)\}\in T$ and
\[
\forall y\left(y\in Y\,\Longleftrightarrow\,\left(\exists x\in X\right)\left(x=y\wedge\phi(x,p)\right) \,\Longleftrightarrow\,\left(y\in X\wedge\phi(y,p)\right)\right)
\]
Thus VIII holds in $T$.\smallskip
\end{proof}

\subsection{Solution to Russell's paradox}

In ZF set theory, Russell’s paradox is avoided by banning all non-well-founded sets through the axiom of regularity. This is an overkill. In EZF, non-well-founded sets are allowed, and it is possible to have a set being member of itself (infinitons and semi-infinitons) as well as a vicious cycle (quasi-infinitons). But all the infinitons form a class that can not be member of itself. Hence Russell’s paradox can be avoided in the total universe. First, we have the following conclusions. 

\begin{lemma}
\label{27}\qquad
\end{lemma}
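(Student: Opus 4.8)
The statement as quoted is empty: the body of Lemma \ref{27} consists of nothing but \qquad. In this paper's convention, a lemma or corollary whose statement is a bare \qquad carries its actual assertions in an \texttt{enumerate} environment placed immediately after the statement; compare Corollary \ref{14}, Corollary \ref{39}, and Corollary \ref{45}, each of which follows exactly this format. The excerpt, however, terminates at \end{lemma}, so the enumerated items that Lemma \ref{27} is meant to contain are not visible. As worded, therefore, there is no concrete mathematical assertion for which a proof can be planned, and the honest first observation is that any genuine sketch must wait on the missing items.

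What can be said is structural, inferred from the lemma sitting at the head of the section \emph{Solution to Russell's paradox}. A preparatory lemma in this position would most plausibly record the membership facts needed to locate a comprehension set inside $T$: which sets of $T$ can satisfy $x \in x$ and how such sets are distributed by rank. The relevant machinery is already available. Theorem \ref{47} supplies the membership-dimension obstruction, so any $x$ with $x \in x$ (or on a vicious cycle) must have $D(x) = \aleph_0$; Theorem \ref{30} and Theorem \ref{36} identify the self-membered sets concretely as infinitons and semi-infinitons; and Corollary \ref{33} together with Corollary \ref{66} and Theorem \ref{60} control their ranks. A proof of each enumerated item would then follow the pattern used throughout the section: reduce the claim to a statement about $\in$-sequences, invoke the limit apparatus of Section \ref{SectionOfLimitFormula} to pin down the infinitely generated set involved, and read off the rank via Corollary \ref{66} or Theorem \ref{60}.

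The main obstacle here is thus textual rather than mathematical: without the \texttt{enumerate} block the precise quantifiers and membership relations being asserted are undetermined, and the difficulty of the argument cannot be assessed until those items are restored. Once they are, I would expect the genuinely delicate step to be the one isolating the comprehension set $\{x \in S : x \notin x\}$ and showing it cannot belong to $S$, since that is where a diagonal argument would have to interact with the non-well-founded sets admitted by $T$; the bound $D(x) = \aleph_0$ from Theorem \ref{47} is what prevents the diagonal object from collapsing into a self-membered set, and that is presumably the lever by which the paradox is blocked.
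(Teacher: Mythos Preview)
Your diagnosis of the formatting is correct: Lemma~\ref{27} carries its content in the \texttt{enumerate} block immediately following \verb|\end{lemma}|. However, your speculation about that content is off the mark. The two items actually read
\[
\text{(i)}\quad \left.T\right\vert_{\mathfrak{S}}\,=\bigcup_{\alpha\in Ord}\left.T_{\alpha}\right\vert_{\mathfrak{S}},
\qquad
\text{(ii)}\quad \left.T\right\vert_{\mathfrak{Q}}\,=\bigcup_{\alpha\in Ord}\left.T_{\alpha}\right\vert_{\mathfrak{Q}}.
\]
There is no diagonal argument, no appeal to membership dimension, and no rank analysis here; the lemma is a purely formal commutation statement asserting that the semi-infiniton and quasi-infiniton constructors distribute over the union $T=\bigcup_{\alpha}T_{\alpha}$.

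The paper's proof is a single line: by (\ref{TotalHierarchy}) one has $T=\bigcup_{\alpha\in Ord}T_{\alpha}$; by Corollary~\ref{4}(ii) the family $(T_{\alpha})$ is increasing; and then Corollaries~\ref{39}(v) and~\ref{45}(v) say precisely that for an increasing family $S_{\alpha}$ one has $\left.\left(\bigcup S_{\alpha}\right)\right\vert_{\mathfrak{S}}=\bigcup\left.S_{\alpha}\right\vert_{\mathfrak{S}}$ and likewise for $\mathfrak{Q}$. That is the whole argument.

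So the gap in your proposal is not a missing technical step but a misidentification of the target. The machinery you lined up (Theorems~\ref{47}, \ref{30}, \ref{36}, Corollaries~\ref{33}, \ref{66}, Theorem~\ref{60}) is the apparatus for the \emph{later} results in the section---Theorem~\ref{61}, Corollary~\ref{62}, and ultimately Theorem~\ref{64}---not for this preparatory lemma. Lemma~\ref{27} itself is bookkeeping: it just records that the infiniton class $\mathcal{I}=\left.T\right\vert_{\mathfrak{S}}\cup\left.T\right\vert_{\mathfrak{Q}}$ can be computed level by level, which is what Corollary~\ref{63}(i) and the subsequent rank arguments need.
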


\begin{enumerate}
\item $T\vert_{\mathfrak{S}}\,=\,\,\smashoperator{{\displaystyle\bigcup}_{\alpha\in \mathrm{Ord}}}\,T_{\alpha} \vert_{\mathfrak{S}}$

\item $T\vert_{\mathfrak{Q}}\,=\,\,\smashoperator {{\displaystyle\bigcup}_{\alpha\in \mathrm{Ord}}}\,T_{\alpha} \vert_{\mathfrak{Q}}$
\end{enumerate}

\begin{proof}
\ By (\ref{TotalHierarchy}), corollary \ref{39}(v), \ref{4}(ii) and \ref{45}(v).\smallskip
\end{proof}

\begin{definition}
\label{DefNonInfinitonClass} \ The \textbf{infiniton class} $\mathcal{F}$ of the total universe includes all the semi-infinitons and quasi-infinitons in $T$. The \textbf{non-infiniton class} $\mathcal{N}$ of $T$ is the complement of $\mathcal{F}$ in $T$, i.e. $\mathcal{N}=T-\mathcal{F}$.
\end{definition}

\begin{theorem}
\label{61} \ Suppose $X\in T$. Then
\end{theorem}

\begin{enumerate}
\item \textit{If $X\in\mathcal{F}$, then $R_{T}(X)$ is a limit ordinal.}

\item \textit{If $R_{T}(X)$ is a successor ordinal, then $X\in\mathcal{N}$.}
\end{enumerate}

\begin{proof}
\ (i) \ Suppose $X\in\mathcal{F}$ and $R_{T}(X)=\alpha$ is a successor ordinal. Then $X\in T_{\alpha}=\mathcal{P} (T_{\alpha-1})$ and $X\subset T_{\alpha-1}$. If $X\in X$, then $X\in T_{\alpha-1}$ and $R_{T}(X)\leqslant
\alpha-1<\alpha$, contradiction.\medskip

If $X$ is a quasi-infiniton, suppose $X\in Y_{1},Y_{1}\in Y_{2},\dots,Y_{n}\in X$. Then by corollary \ref{56}, $R_{T}(Y_{n})<R_{T}(X)$. Thus by corollary \ref{55}, $R_{T}(X)\leqslant R_{T}(Y_{1})\leqslant\dots\leqslant R_{T}(Y_{n}) <R_{T}(X)$, contradiction again. \medskip

(ii) \ If $X\notin\mathcal{N}$, then $X\in\mathcal{F}$. By (i), $R_{T}(X)$ is a limit ordinal.\bigskip
\end{proof}

From theorem \ref{61}, lemma \ref{1} and \ref{2}, we can see that no well-founded sets are in the infiniton class, i.e. no well-founded set is a member of itself or contains a vicious cycle.

\begin{corollary}
\qquad$V\cap\mathcal{F}\,=\,\varnothing$
\end{corollary}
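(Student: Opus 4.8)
The plan is to argue by contradiction, exploiting the two competing constraints that membership in $V$ and membership in $\mathcal{I}$ place on the rank of a common element. Suppose $X \in V \cap \mathcal{I}$. On one side, since $X \in \mathcal{I}$ is a semi-infiniton or quasi-infiniton (an infiniton being a special case of the former by corollary \ref{40}), theorem \ref{61}(i) tells us at once that $R_{T}(X)$ is a limit ordinal. On the other side, $X \in V$ forces $X$ to be well-founded by lemma \ref{1}, and lemma \ref{2} says that no element of $V$ has a rank that is a limit ordinal. The whole proof therefore reduces to reconciling these two facts, which a priori live in two different rank functions, $R_{T}$ and $R_{V}$.

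The key bridging step I would establish first is that the two ranks agree on well-founded sets, i.e. $R_{T}(X) = R_{V}(X)$ whenever $X \in V$. To get this I would prove by transfinite induction that $T_{\alpha} \cap V = V_{\alpha}$ for every $\alpha$. The inclusion $V_{\alpha} \subseteq T_{\alpha} \cap V$ is immediate from corollary \ref{53}(i) (which gives $V_{\alpha} \subseteq T_{\alpha}$) together with lemma \ref{1}. For the reverse inclusion, the successor case uses $T_{\alpha} = \mathcal{P}(T_{\alpha-1})$ and the induction hypothesis (a well-founded $X \subseteq T_{\alpha-1}$ has all its members well-founded, hence in $T_{\alpha-1}\cap V = V_{\alpha-1}$, so $X \in V_{\alpha}$), while the limit case uses the defining equation (\ref{TotalHierarchy}) and the observation, already noted in the proof of corollary \ref{53}(iii), that the only sets adjoined at a limit stage, namely those in $\left.\left(\bigcup_{\beta<\alpha} T_{\beta}\right)\right\vert_{\aleph_{0}}$, are infinitely generated and hence non-well-founded; thus they contribute nothing to $T_{\alpha}\cap V$, giving $T_{\alpha}\cap V = \bigcup_{\beta<\alpha} V_{\beta} = V_{\alpha}$. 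Granting $T_{\alpha} \cap V = V_{\alpha}$, the least $\alpha$ with $X \in T_{\alpha}$ coincides with the least $\alpha$ with $X \in V_{\alpha}$ for any $X \in V$, so $R_{T}(X) = R_{V}(X)$.

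With the bridge in place the contradiction is one line: $R_{V}(X) = R_{T}(X)$ is a limit ordinal by theorem \ref{61}(i), yet lemma \ref{2} forbids any $V$-rank from being a limit ordinal. Hence no such $X$ exists and $V \cap \mathcal{I} = \varnothing$. I expect the rank-transfer identity $T_{\alpha} \cap V = V_{\alpha}$ to be the only real work; everything downstream is immediate. As a sanity check, there is also a shorter route that avoids ranks entirely: every element of $\mathcal{I}$ contains an infinite $\in$-sequence and is therefore non-well-founded, so it cannot lie in $V$ by lemma \ref{1}. Since the statement is explicitly drawn from theorem \ref{61} and lemma \ref{2}, I would present the rank argument as the main line and mention this alternative as a remark.
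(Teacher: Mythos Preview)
Your proposal is correct. The paper itself offers no formal proof; the corollary is preceded only by the sentence ``From theorem \ref{61}, lemma \ref{1} and \ref{2}, we can see that no well-founded sets are in the infiniton class,'' and the corollary is stated without a proof environment. So both of your routes are more detailed than what the paper provides.

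Your ``alternative'' route---every $X\in\mathcal{I}$ carries an infinite $\in$-sequence, hence is non-well-founded, hence lies outside $V$ by lemma \ref{1}---is the shortest and is almost certainly what the paper has in mind when it cites lemma \ref{1}. Your main rank-based argument is also valid, and you are right to flag the $R_V$ versus $R_T$ discrepancy: the paper silently conflates the two when invoking lemma \ref{2} alongside theorem \ref{61}(i), whereas you supply the missing identification $T_\alpha\cap V=V_\alpha$. That bridging lemma is genuine extra work the paper never does, so if you present the rank argument as primary you are being more careful than the source; the trade-off is that the direct non-well-foundedness argument makes the result a one-liner.
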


\begin{corollary}
\label{62} \ Suppose $X\in T$. Then
\end{corollary}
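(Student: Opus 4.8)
The plan is to read the corollary off almost directly from Theorem \ref{61}(ii) together with Definition \ref{DefNonInfinitonClass}. The statement concerns a set $X\in T$ and, as the promised extension of Corollary \ref{34} to nested constructions such as $\{\{\{I_{1},I_{2}\},I_{3}\},I_{4}\}$ and arbitrary (subsets of) sets of infinitons, it asserts that such sets cannot be members of themselves nor contain a vicious cycle. Since Definition \ref{DefNonInfinitonClass} tells us that the infiniton class $\mathcal{I}$ consists \emph{precisely} of the semi-infinitons (those $Y$ with $Y\in Y$) and the quasi-infinitons (those $Y$ containing a vicious cycle), while $\mathcal{N}=T-\mathcal{I}$, the cleanest route is to show that every set in question lies in $\mathcal{N}$; membership in $\mathcal{N}$ is exactly the negation of ``$X\in X$ or $X$ contains a vicious cycle.''

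First I would invoke Theorem \ref{61}(ii): any $X\in T$ whose rank $R_{T}(X)$ is a successor ordinal belongs to $\mathcal{N}$. This is essentially the whole content of the corollary, because the rank monotonicity of Lemmas \ref{55} and \ref{56} forces any hypothetical self-membership $X\in X$ or cyclic chain $X\in Y_{1}\in\cdots\in Y_{n}\in X$ to yield a strict descent $R_{T}(X)<R_{T}(X)$, a contradiction. Thus no such chain can occur at a successor rank, and $X\notin X$ with no vicious cycle follows. To handle the concrete instances flagged earlier, I would then note that each infiniton sits at a limit ordinal rank by Corollary \ref{66}, and that each additional finite layer of bracketing (forming a set of infinitons, then a set containing it, and so on) raises the rank by a successor step; hence the outermost set always carries a successor ordinal rank and Theorem \ref{61}(ii) applies to it directly.

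The proof carries essentially no obstacle of its own, since the real work was already done inside Theorem \ref{61} via rank monotonicity. The one point deserving minor care is the verification, for the named finite constructions, that nesting above limit-rank infinitons genuinely produces a \emph{successor} ordinal rank rather than accidentally returning to a limit ordinal; by Corollary \ref{66} the latter happens only for $\omega-$invariant sets, and the finitely nested sets considered here are not $\omega-$invariant, so their ranks are successors and the corollary follows.
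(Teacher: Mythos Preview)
You have misidentified the content of Corollary~\ref{62}. Because the enumerate items sit outside the \texttt{corollary} environment, you only saw the hypothesis ``Suppose $X\in T$. Then'' and reconstructed the conclusion from the forward reference after Corollary~\ref{34}. But the actual claim is not merely that nested sets of infinitons lie in $\mathcal{N}$; it is the global characterization
\[
X\text{ is a semi-infiniton }\Longrightarrow\ X\in\left.T\right\vert_{\mathfrak{S}},\qquad
X\text{ is a quasi-infiniton }\Longrightarrow\ X\in\left.T\right\vert_{\mathfrak{Q}},
\]
for \emph{every} $X\in T$. In other words, every self-membering or cyclic set in $T$ must arise through the explicit IGS constructions of Definitions~\ref{DefSemiInfiniton} and~\ref{DefQuasiInfiniton}.

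Your argument, via Theorem~\ref{61}(ii), correctly disposes of all $X$ with successor rank: such $X$ lie in $\mathcal{N}$ and hence are neither semi- nor quasi-infinitons, so the implication holds vacuously. That is exactly the successor step of the paper's transfinite induction. What you are missing is the limit-ordinal step, which is the substantive one. When $R_{T}(X)$ is a limit ordinal, Theorem~\ref{61} gives no information, and one cannot sidestep this by saying the ``sets in question'' have successor rank, because the statement quantifies over all of $T$. The paper instead invokes Corollary~\ref{57}: a set of limit rank $\alpha$ must lie in $\left.\left(\bigcup_{\beta<\alpha}T_{\beta}\right)\right\vert_{\aleph_{0}}$, i.e.\ it is an infinitely generated set. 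Among those, the ones satisfying $X\in X$ are by construction precisely the elements of $\left.\left(\bigcup_{\beta<\alpha}T_{\beta}\right)\right\vert_{\mathfrak{S}}\subset\left.T\right\vert_{\mathfrak{S}}$, and analogously for vicious cycles and $\left.T\right\vert_{\mathfrak{Q}}$. Without this limit case the proof is incomplete.
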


\begin{enumerate}
\item \textit{$X$ is not a semi-infiniton except $X\in T\vert_{\mathfrak{S}}$.}

\item \textit{$X$ is not a quasi-infiniton except $X\in T\vert_{\mathfrak{Q}}$.}
\end{enumerate}

\begin{proof}
\ We prove by transfinite induction.\medskip

(i) \ First, for any $X\in V_{\omega}$, $R_{T}(X)<\omega$. So by theorem \ref{61}(ii), $X\notin X$. Suppose it is true for $R_{T}(X)<\alpha$. Then for $R_{T}(X)=\alpha$, if $\alpha$ is a successor ordinal, $X\notin X$. If $\alpha$ is a limit ordinal, by corollary \ref{57}, $X\in\Bigl(\underset{\beta<\alpha}{\displaystyle\bigcup}T_{\beta}\Bigr)\Big \vert_{\aleph_{0}}$. Thus $X\in X$ if and only if $X\in\Bigl(\underset{\beta<\alpha}{\displaystyle\bigcup}T_{\beta}\Bigr)\Big\vert_{\mathfrak{S}}\subset T\vert_{\mathfrak{S}}$.\medskip

(ii) is similar to (i).\smallskip
\end{proof}

\begin{remark}
\ Corollary \ref{62} shows that all semi-infinitons and quasi-infinitons in $T$ are infinitely generated. Thus all semi-infinitons and quasi-infinitons in $T$ are precisely determined.
\end{remark}

\begin{corollary}
\label{63}\qquad
\end{corollary}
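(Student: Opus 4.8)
The plan is to show that the non-infiniton class $\mathcal{N}$ (and, symmetrically, the infiniton class $\mathcal{I}$) is a proper class rather than a set of $T$, and that this is exactly what blocks Russell's paradox: the would-be Russell set of all non-self-membered sets is not an element of $T$, so the equivalence ``$X\in X\Leftrightarrow X\notin X$'' can never be instantiated by any $X\in T$. The real work will be carried by the rank machinery of the previous subsection.

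First I would pin down which sets lie in $\mathcal{N}$. By Definition \ref{DefNonInfinitonClass}, $\mathcal{N}=T-\mathcal{I}$, where $\mathcal{I}$ collects all semi-infinitons and quasi-infinitons. Every von Neumann ordinal $\alpha$ is well-founded, so $\alpha\notin\alpha$ and $\alpha$ lies in no vicious cycle; hence $\alpha\in\mathcal{N}$. By Corollary \ref{53}(iii) every ordinal belongs to $T$, and by Corollary \ref{3} together with Corollary \ref{53} its rank is $R_{T}(\alpha)=\alpha+1$. Consequently $\mathcal{N}$ contains members of arbitrarily large rank.

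The core of the argument is then a rank obstruction. Suppose toward a contradiction that $\mathcal{N}\in T$, say $R_{T}(\mathcal{N})=\beta$. By Lemma \ref{55}, every $Y\in\mathcal{N}$ satisfies $R_{T}(Y)\leqslant\beta$. But $\beta+1\in\mathcal{N}$ with $R_{T}(\beta+1)=\beta+2>\beta$, a contradiction; so $\mathcal{N}\notin T$, i.e.\ $\mathcal{N}$ is a proper class. The same unbounded-rank argument applies to $\mathcal{I}$: by Theorem \ref{61}(i) its members all have limit rank, yet infinitons $\{G_{0}\}_{\mathcal{I}}$ exist for base generators $G_{0}$ of every finite rank and hence attain unbounded rank as well. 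With $\mathcal{N}$ a proper class the paradoxical instantiation is impossible: were $\mathcal{N}\in\mathcal{N}$, then $\mathcal{N}$ would be a member of itself, hence a semi-infiniton, hence in $\mathcal{I}=T-\mathcal{N}$, forcing $\mathcal{N}\notin\mathcal{N}$; but since $\mathcal{N}\notin T$ this biconditional is never asserted of an actual set, so no contradiction is derivable in \textbf{EZF}.

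The step I expect to be the main obstacle is reconciling the literal Russell collection $\{x\colon x\notin x\}$ with the classes $\mathcal{I}$ and $\mathcal{N}$: quasi-infinitons satisfy $Q\notin Q$ (so they belong to the Russell collection) yet lie in $\mathcal{I}$, so the Russell collection is not literally $\mathcal{N}$ but rather $T$ minus the semi-infinitons. I would therefore isolate the class of all semi-infinitons, show by the same rank obstruction (using Theorem \ref{61}(i) to force limit ranks and Corollary \ref{62} to confine the semi-infinitons of each $T_{\alpha}$ to $T|_{\mathfrak{S}}$) that it and its complement are both proper classes, and verify that the separation schema \textbf{EZF} VIII only ever yields subsets of a fixed set already in $T$, so the paradoxical totality is never captured as a set. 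Keeping the self-membership bookkeeping straight across the three infiniton types, while letting the rank argument do the decisive work, is the delicate part.
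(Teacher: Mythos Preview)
Your proposal targets the wrong statement. Because of the paper's formatting, the body of Corollary~\ref{63} sits in a separate \texttt{enumerate} block immediately after the corollary environment, so the excerpt you received looked empty. In fact Corollary~\ref{63} is a five-part structural summary:
(i)~$\mathcal{I}=T|_{\mathfrak{S}}\cup T|_{\mathfrak{Q}}$;
(ii)~$\mathcal{I}\subset\bigcup_{\alpha\in LOrd}\bigcap_{\beta<\alpha}(T_{\alpha}-T_{\beta})\subset T$;
(iii)~$T=\mathcal{N}\cup\mathcal{I}$ with $\mathcal{N}\cap\mathcal{I}=\varnothing$;
(iv)~$\mathcal{N}=\bigcup_{\alpha\in Ord}(T_{\alpha}-\mathcal{I})=\bigcup_{\alpha\in Ord}\mathcal{N}_{\alpha}$;
(v)~$\mathcal{I}$ contains no ordinals, each $\mathcal{N}_{\alpha}$ contains all ordinals below $\alpha$, and $\mathcal{N}$ contains all ordinals.
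The paper's proof is a few lines of citation: (i) from Definition~\ref{DefNonInfinitonClass} and Corollary~\ref{62}; (ii) from Theorems~\ref{59} and~\ref{61}; (iii) from Definition~\ref{DefNonInfinitonClass}; (iv) from (iii) and (\ref{TotalHierarchy}); (v) from well-foundedness of ordinals and Corollary~\ref{53}(iii).

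What you wrote --- that $\mathcal{N}\notin T$, $T\notin T$, and that this blocks Russell's paradox --- is the content of the \emph{next} result, Theorem~\ref{64}, not Corollary~\ref{63}. Your rank-obstruction argument for $\mathcal{N}\notin T$ is sound and close in spirit to the paper's Theorem~\ref{64}(ii) (which argues via ordinals rather than raw rank), and your observation that every ordinal lies in $\mathcal{N}$ is exactly part~(v) here. But the bulk of your proposal --- proper-classhood of $\mathcal{N}$ and $\mathcal{I}$, the Russell analysis, the discussion of the literal Russell collection versus $\mathcal{N}$ --- simply does not belong to Corollary~\ref{63}. You have not addressed items (i)--(iv) at all, and those are the actual content of the statement: they require only unwinding Definition~\ref{DefNonInfinitonClass}, invoking Corollary~\ref{62} to identify $\mathcal{I}$ with $T|_{\mathfrak{S}}\cup T|_{\mathfrak{Q}}$, and reading off the partition from (\ref{TotalHierarchy}) and Theorem~\ref{59}.
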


\begin{enumerate}
\item $\mathcal{F}\,=\, T\vert_{\mathfrak{S}}\cup T\vert_{\mathfrak{Q}}$

\item $\mathcal{F}\,\subset\,\,\,\smashoperator{{\displaystyle\bigcup}_{\alpha\in \mathrm{LOrd}}}\quad\underset{\beta<\alpha}{\displaystyle\bigcap} (T_{\alpha}-T_{\beta})\,\subset\,T$

\item $T\,=\,\mathcal{N}\cup\,\mathcal{F}$ \ \ \textit{and} \ \ $\mathcal{N}\cap\,\mathcal{F}\,=\,\varnothing$

\item $\mathcal{N}\,=\,\,\,\smashoperator{{\displaystyle\bigcup}_{\alpha\in \mathrm{Ord}}}\,\left(T_{\alpha}-\mathcal{F}\right)\,=\,\,\,\smashoperator{{\displaystyle\bigcup}_{\alpha\in \mathrm{Ord}}}\,\mathcal{N}_{\alpha}$

\item $\mathcal{F}$ \textit{contains no ordinals. $\mathcal{N}_{\alpha}$ contains all ordinals less than $\alpha$. $\mathcal{N}$ contains all ordinals.}
\end{enumerate}

\begin{proof}
\ (i) \ By\ definition \ref{DefNonInfinitonClass} and corollary \ref{62}. \medskip

(ii) \ By theorem \ref{59} and \ref{61}. \medskip

(iii) \ By definition \ref{DefNonInfinitonClass}. \medskip

(iv) \ By (iii) and (\ref{TotalHierarchy}).\medskip

(v) \ Since all ordinals are WF and all sets in $\mathcal{F}$ are NWF, $\mathcal{F}$ contains no ordinals. So $\mathcal{N}_{\alpha}=T_{\alpha}-\mathcal{F}$ and $\mathcal{N}$ contain the same ordinals as $T_{\alpha}$ and
$T$. And it follows by corollary \ref{53}(iii).\bigskip
\end{proof}

The total universe is shown in Figure \ref{Fig4}. The non-infiniton class is not a member of itself and the total universe, a key fact which enables us to show that the total universe is free of Russell's paradox.

\begin{figure}[h]
	\centering
	\includegraphics{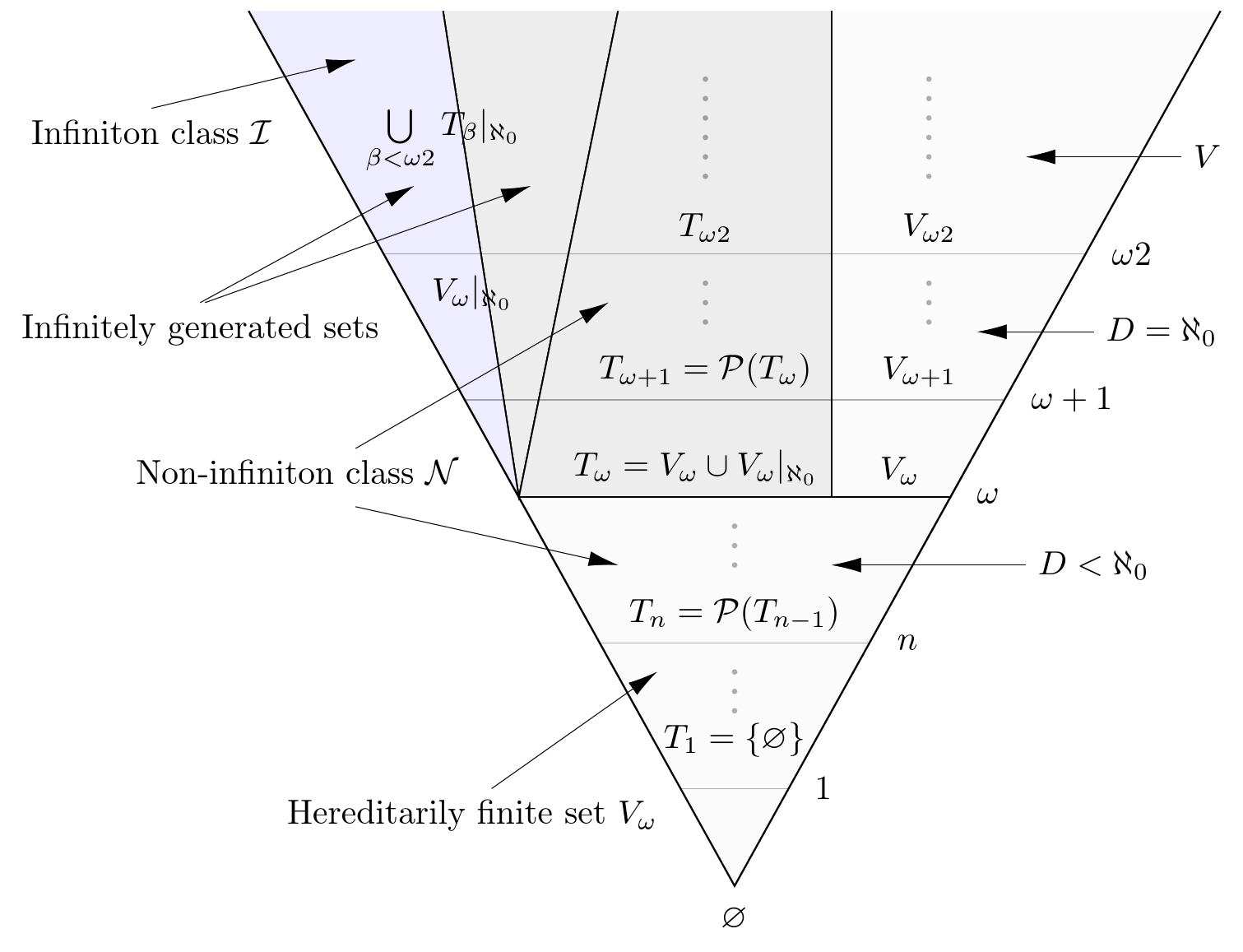}
	\caption{Diagram of the total universe.}
	\label{Fig4}
	\centering
\end{figure}

\begin{theorem}
\label{64}\qquad
\end{theorem}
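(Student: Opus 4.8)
The plan is to present the non-infiniton class $\mathcal{N}$ as a proper class that occupies the position of Russell's set, so that the familiar self-membership dichotomy degenerates into the benign conclusion $\mathcal{N}\notin T$ rather than into a contradiction internal to the universe. I would work entirely from corollary \ref{63}, in particular the disjoint decomposition $T=\mathcal{N}\cup\mathcal{I}$ with $\mathcal{N}\cap\mathcal{I}=\varnothing$ together with the identity $\mathcal{I}=T|_{\mathfrak{S}}\cup T|_{\mathfrak{Q}}$, and from corollary \ref{62}, which guarantees that every semi-infiniton and quasi-infiniton in $T$ is already captured by $T|_{\mathfrak{S}}$ and $T|_{\mathfrak{Q}}$.

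First I would show $\mathcal{N}\notin\mathcal{N}$ by a direct contradiction that never assumes $\mathcal{N}$ is a set. If $\mathcal{N}\in\mathcal{N}$, then $\mathcal{N}$ is a member of itself, hence a semi-infiniton, so $\mathcal{N}\in T|_{\mathfrak{S}}\subset\mathcal{I}$; then $\mathcal{N}$ would lie in both $\mathcal{N}$ and $\mathcal{I}$, violating $\mathcal{N}\cap\mathcal{I}=\varnothing$. Equivalently, the defining property of $\mathcal{N}$ is that its members are non-infinitons and hence are not members of themselves, so $\mathcal{N}\in\mathcal{N}$ immediately yields $\mathcal{N}\notin\mathcal{N}$.

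Next I would prove $\mathcal{N}\notin T$ by assuming $\mathcal{N}\in T$ and eliminating both cells of the partition. By the previous step $\mathcal{N}\notin\mathcal{N}$, so $\mathcal{N}$ is not a semi-infiniton. To rule out $\mathcal{N}$ being a quasi-infiniton, I would observe that a vicious cycle $\mathcal{N}\in X_{1}\in\cdots\in X_{n-1}\in\mathcal{N}$ places the member $X_{n-1}\in\mathcal{N}$ inside its own cycle $X_{n-1}\in\mathcal{N}\in X_{1}\in\cdots\in X_{n-1}$, so $X_{n-1}$ is itself a quasi-infiniton and $X_{n-1}\in T|_{\mathfrak{Q}}\subset\mathcal{I}$; but $X_{n-1}\in\mathcal{N}$ then contradicts $\mathcal{N}\cap\mathcal{I}=\varnothing$. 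Thus $\mathcal{N}\notin\mathcal{I}$, and since $T=\mathcal{N}\cup\mathcal{I}$ the assumption $\mathcal{N}\in T$ forces $\mathcal{N}\in\mathcal{N}$, contradicting the first step; hence $\mathcal{N}$ is a proper class.

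Finally, freedom from Russell's paradox would follow by noting that the Russell collection $R=\{X\in T\colon X\notin X\}$ has $\mathcal{N}$ as a subclass (every non-infiniton satisfies $X\notin X$), so $R\notin T$ as well, and the poisonous equivalence $R\in R\Longleftrightarrow R\notin R$ can never be instantiated by any set of $T$. The step I expect to be the main obstacle is the quasi-infiniton sub-case above: the semi-infiniton alternative is settled purely by the self-membership dichotomy, whereas the quasi-infiniton alternative requires tracing the vicious cycle back through $\mathcal{N}$ to an ordinary member and showing that the cycle this member inherits is incompatible with $\mathcal{N}\cap\mathcal{I}=\varnothing$.
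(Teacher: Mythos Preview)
Your argument for (i) and (ii) is correct but follows a genuinely different route from the paper. The paper uses a uniform Burali--Forti style argument: by corollary~\ref{63}(v) the class $\mathcal{N}$ contains all ordinals, whereas by corollary~\ref{53}(iii) every element of $T$ lives in some $T_{\alpha}$ and hence contains only ordinals below $\alpha$; this single observation immediately yields all of (i)--(v). You instead exploit the partition $T=\mathcal{N}\cup\mathcal{I}$ and corollary~\ref{62} directly, tracing a hypothetical vicious cycle through $\mathcal{N}$ back to an ordinary member $X_{n-1}\in\mathcal{N}$ that would itself have to be a quasi-infiniton and hence lie in $\mathcal{I}$. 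Your approach is more structural and showcases the infiniton classification, but it requires the semi-/quasi-infiniton case split you flagged and does not address (iii)--(v) explicitly; the paper's ordinal argument dispatches all five items with one stroke.

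There is also a small gap in your treatment of (vi): the inference ``$\mathcal{N}\subset R$, so $R\notin T$'' is not justified as written, since containing a proper subclass does not by itself force $R$ to be proper without an appeal to ranks (transitivity of $T_{\alpha}$ plus unbounded rank in $\mathcal{N}$) or to separation, neither of which you have set up. The paper's (vi) is moreover different in content: rather than analysing the naive Russell class $R$, it writes the separation instance $x\in\mathcal{N}\Longleftrightarrow x\in T\wedge\urcorner(x\in y_{1}\wedge\cdots\wedge y_{n}\in x)$ and verifies, using (i)--(v), that substituting $x=\mathcal{N}$ or $x=T$ makes both sides of the biconditional false, so the paradoxical equivalence never fires inside $T$.
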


\begin{enumerate}
\item $\mathcal{N\notin N},\quad\mathcal{N}\notin T,\quad T\notin T,\quad T\notin\mathcal{N}$

\item \textit{There is no vicious cycle for $\mathcal{N}$ in $T$.}

\item $T$ \textit{is free of Russell's paradox.}
\end{enumerate}

\begin{proof}
\ (i) \ By corollary \ref{63}(v), $\mathcal{N}$ contains all ordinals, but no $\mathcal{N}_{\alpha}$ contains all ordinals. For any $X\in\mathcal{N}$, suppose $X\in\mathcal{N}_{\alpha}$. So $X$ contains the ordinals less than $\alpha$. Hence $\mathcal{N}\neq X$, i.e. $\mathcal{N}\notin\mathcal{N}$. Also, by corollary \ref{53}(iii), $T$ contains all ordinals, but no $T_{\alpha}$ contains all ordinals. So $\mathcal{N}\notin T$ for no $X\in T$ containing all ordinals. The rest follow similarly.\medskip

(ii) \ Since $\mathcal{N}$ contains all ordinals and no $T_{\alpha}$ contains all ordinals, there are no $\mathcal{M}_{k}\in T$ that $\mathcal{N}\in\mathcal{M}_{1}$, $\mathcal{M}_{1}\in\mathcal{M}_{2}$, $\dots$, $\mathcal{M}_{n}\in\mathcal{N}$. \medskip

(iii) \ In the axiom of separation, $\urcorner\left(x\in y_{1},\,y_{1}\in y_{2},\,\dots,\,y_{n}\in x\right)$ must be considered along with $x\notin x$ because it can also lead to contradiction (p129 - 131, \cite{Quine}). By definition \ref{DefNonInfinitonClass} and corollary \ref{63}(i), $\mathcal{N}$ contains all the non-semi-infinitons and non-quasi-infinitons in $T$. Thus ($n=0$ reduces to $x\in x$)
\[
x\in\mathcal{N}\:\Longleftrightarrow\:x\in T\,\wedge\,\urcorner\left(x\in y_{1}\,\wedge\, y_{1}\in y_{2}\,\wedge\dots \wedge\, y_{n}\in x\right)\tag{1}
\]

For $n=0$, set $x=\mathcal{N}$ and $x=T$ in (1)
\[
\mathcal{N}\in\mathcal{N}\:\Longleftrightarrow\:\mathcal{N}\in T\,\wedge\,\mathcal{N}\notin\mathcal{N}\qquad\text{and}\qquad T\in\mathcal{N}\:\Longleftrightarrow\:T\in T\,\wedge\,T\notin T
\]

By (i), in both cases, the left and right side are false. So there is no contradiction.\medskip

For $n>0$, set $x=\mathcal{N}$ in (1)
\[
\mathcal{N}\in\mathcal{N}\:\Longleftrightarrow\:\mathcal{N}\in T\,\wedge\,\urcorner\left(\mathcal{N}\in\mathcal{M}_{1} \,\wedge\,\mathcal{M}_{1}\in\mathcal{M}_{2}\,\wedge\,\cdots\,\wedge\,\mathcal{M}_{n}\in\mathcal{N}\right)
\]

Again both sides are false and there is no contradiction. Thus $T$ is free of Russell's paradox.\smallskip
\end{proof}

\section{Conclusion\label{SectionConclusion}}

In this section we will discuss the validity of the axiom of regularity. Suppose $Z=\{\varnothing,Z\}$ is a semi-infiniton. Then $\varnothing$ is the $\in$-minimum element of $Z$. So the axiom of regularity holds for $Z$, but $Z\in Z$. This example suggests that the axiom of regularity not only can not exclude non-well-founded sets but rather holds for a (large) number of them. As a matter of fact, a well-known result that proves no set being member of itself by the axiom of regularity is actually erroneous.

\begin{conclusion}
\label{8}\ The standard theorem which uses the axiom of regularity to prove that there is no set being a member of itself is flawed.
\end{conclusion}

\begin{proof}
\ The proof is by contradiction \cite[p54]{Suppes}. First suppose $A\in A$ and $A=\{a,b,\dots,A\}$. Then $A\in A\cap\{A\}$ for $A\in\{A\}$. Since $A $ is the only member of $\{A\}$, by AR, $A\cap\{A\}=\varnothing$, contradiction. So we get $A\notin A$.\medskip

The problem in the proof is that if $A\in A$, $\{A\}\subset A$. Then $A\cap\{A\}\,=\,\{A\}\,\neq\,\varnothing$ since $A\neq\varnothing$. Thus we can not prove $A\cap\{A\}=\varnothing$, which means that AR actually can not prove that no set can be a member of itself.\bigskip
\end{proof}

%Conclusion \ref{8} can also be understood in the following way. Suppose $\mathcal{S}=\{X\colon$AR holds for $X\}$ and $ T\vert_{\mathfrak{S}}$ is the semi-infiniton class in $T$ (lemma \ref{27}(i)). If the axiom of regularity implies no $X$ that $X\in X$, then $\mathcal{S}\cap T\vert_{\mathfrak{S}}=\varnothing$. But clearly, $\mathcal{S}\cap T\vert_{\mathfrak{S}}\neq\varnothing$ for $Z=\{\varnothing,Z\}\in\mathcal{S}\cap T\vert_{\mathfrak{S}}$. Consequently, AR can not imply that no set is a member of itself.\medskip
%
%In addition, the claim to prove that every set belongs to the von Neumann universe based on the axiom of regularity is also flawed \cite[lemma 6.3]{Jech} because the axiom of regularity holds for (many) non-well-founded sets and $V$ only has well-founded sets.\medskip

Conclusion \ref{8} is consistent with the fact that there are non-well-founded models of ZF. For example (exercise 2.1.7 in \cite{Chang-Keisler}), suppose $\mathscr{L}=\{\in\}$ is the language of set theory and $\mathscr{L}^{\prime}=\mathscr{L}\,\cup\,\{c_i\colon i\in\omega\}$ is a new language with the distinct constants $\{c_i\colon i\in\omega\}$. Then by the compactness theorem, the theory $\Sigma=\operatorname{ZF}\,\cup\,\{c_{n+1}\in c_{n}\colon n\in\omega\}$ has a model of ZF. Another example of non-well-founded models of ZF is the ultrapower $\operatorname{Ult}_U(V)$ where $U$ is a non-principle non $\sigma$-complete ultrafilter over $\omega$. By Los's theorem, $\operatorname{Ult}_U(V)$ is a model of ZF because it is elementarily equivalent to $V$. By a well-known theorem, $\operatorname{Ult}_U(V)$ is well-founded iff $U$ is a non-principle $\sigma$-complete ultrafilter \cite{Jech}.\medskip

These examples show that the axiom of regularity is consistent with infinite $\in$-sequences. In other words, it can not ban infinite $\in$-sequences and sets being member of themselves as currently believed.\medskip

As a result, we conclude that the axiom of regularity is not valid even in defining the well-founded sets and so is dropped in this paper. This invalidity holds in any system that is consistent with ZF set theory. All well-founded sets are defined in definition \ref{DefWFAndNWFSet}, which is stronger than the axiom of regularity as in the following result.

\begin{corollary}
\label{5}\ If every branch of $S$ is finite, then there is a $x\in S$ that $x\cap S=\varnothing$. The converse is not true.
\end{corollary}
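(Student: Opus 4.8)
The plan is to establish the two assertions separately: the stated implication by an infinite--descending--chain argument against Definition~\ref{DefWFAndNWFSet}, and the failure of the converse by the semi-infiniton $Z=\{\varnothing,Z\}$ already exhibited at the start of this section. Note that the conclusion ``there is $x\in S$ with $x\cap S=\varnothing$'' is precisely the instance of the axiom of regularity for $S$, so the corollary says that the branch--based definition of well-foundedness implies regularity but is strictly stronger.

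For the implication I would argue by contradiction. Assume every branch of $S$ is finite, so $S$ is well-founded, but suppose no $x\in S$ satisfies $x\cap S=\varnothing$; that is, $x\cap S\neq\varnothing$ for every $x\in S$. Fixing any $x_{0}\in S$ and applying dependent choice, I recursively select $x_{n+1}\in x_{n}\cap S$, so that $x_{n+1}\in x_{n}$ and $x_{n+1}\in S$ at each stage. This yields a chain $\cdots\in x_{2}\in x_{1}\in x_{0}\in S$ lying entirely in the transitive closure of $S$ and containing infinitely many distinct nodes $S,x_{0},x_{1},x_{2},\ldots$ By the description of branches preceding Definition~\ref{DefWFAndNWFSet}, this path is an infinite branch of $S$, so $S$ admits an $\in$-sequence of length $\geqslant\omega$ and is non-well-founded. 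This contradicts the hypothesis, and hence some $x\in S$ must satisfy $x\cap S=\varnothing$.

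For the converse I would take $S=Z$ with $Z=\{\varnothing,Z\}$, the semi-infiniton generated by $G=\{\varnothing\}$ (since $\ast\{\varnothing\}=\varnothing$) and guaranteed by Theorem~\ref{36}(i) to satisfy $Z\in Z$. Here $\varnothing\in Z$ and $\varnothing\cap Z=\varnothing$, so the conclusion ``there is $x\in S$ with $x\cap S=\varnothing$'' holds for $Z$; yet $Z\in Z\in Z\in\cdots$ is an infinite $\in$-sequence, so $Z$ has an infinite branch and not every branch of $Z$ is finite. Thus the converse implication fails.

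The main obstacle is the implication, where I must faithfully convert the failure of $\in$-minimality into an infinite branch in the exact sense of Definition~\ref{DefWFAndNWFSet}. The subtlety is that the selected chain $x_{0}\ni x_{1}\ni\cdots$ descends (order type $\omega^{*}$), whereas an $\in$-sequence is indexed by an increasing ordinal; the resolution is that the chain nonetheless exhibits a single path with infinitely many nodes, which is precisely what the paper means by an infinite branch, so no reindexing into a bottom--up $\omega$-sequence is needed to conclude that $S$ is NWF. The only assumption beyond the paper's framework is dependent choice, used to build the chain, which I would flag explicitly.
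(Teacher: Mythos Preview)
Your proof is correct, and the counterexample for the converse is exactly the one the paper uses. For the forward implication, however, the paper takes a different and shorter route: since every branch of $S$ is finite, $S$ lies in $V$ and each element of $S$ has a rank $R_{V}$; one simply picks $x\in S$ of minimal rank, and then $x\cap S=\varnothing$ because any $z\in x\cap S$ would satisfy $R_{V}(z)<R_{V}(x)$, contradicting minimality. Your descending-chain argument via dependent choice is the classical regularity-style proof and is perfectly sound, but it imports a choice principle that the paper's rank argument avoids entirely. The rank approach also sidesteps your noted subtlety about order type $\omega^{*}$ versus $\omega$, since it never needs to build an infinite chain at all.
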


\begin{proof}
\ Suppose $x\in\{y\colon y\in S$, $R_{V}(y)=\min\}$. Then $x\cap S=\varnothing$. Conversely, let $S=\{\varnothing,S\}$. Then $S\cap\varnothing=\varnothing$, but $S$ has an infinite branch.\bigskip
\end{proof}

%In this paper, a new model is proposed to define non-well-founded sets rigorously upon limits of structures and formulas. In addition, the expansion of the von Neumann universe is shown to be necessary because it lacks the limit ordinal ranks. As a result, non-well-founded sets that are initially added to $V$ as limit ordinal ranks become an integral part of well-founded sets, forming a new complete universe of sets. The three types of infinitely generated sets are rigorously investigated based on the limit of formulas. The new universe of sets known as the total universe is shown to be a model of ZF minus the axiom of regularity and free of Russell's paradox. Furthermore, the spectrum of power set operations can offer a new perspective on the nature of non-well-founded sets (and well-founded sets).\smallskip

\appendix
\section{Introduction to Generalized Von Neumann Universe}

Due to the length and originality of this paper, a short version (summary) of it is given in Appendix A to help readers understand the key concepts and schemes in the paper. It is a separate part from the paper that can be omitted.

\subsection{Introduction}

The investigation of non-well-founded sets began with the work of Mirimanoff in 1917 \cite{Mirimanoff}. A number of axiomatic systems of non-well-founded sets have been proposed thereafter, introducing non-well-founded sets by replacing the axiom of regularity with separate anti-foundation axioms. The main problem with these systems is the lack of precise mathematical descriptions for non-well-founded sets. \smallskip

In this paper, we will present a model for precisely defining the non-well-founded sets based on the enlarged von Neumann universe ($V$). First, we point out an error in the current definition of rank in $V$. Consequently, we show that $V$ is incomplete because it does not have limit ordinal ranks. This fact is of fundamental importance because it implies that non-well-founded sets necessarily exist and should be added to $V$ as the limit ordinal ranks (initially). Furthermore, limits of finite structures and formulas along with an algebra to handle the limit operations are given for a rigorous treatment of non-well-founded sets. The expanded universe is known as the total universe, which can be shown to be a model of ZF minus the axiom of regularity and free of Russell’s paradox. The axiom of regularity is invalid in any system that is consistent with ZF set theory. \medskip

%\section{Preliminaries}

The von Neumann universe (also known as the cumulative hierarchy) is well known as the class of hereditary well-founded sets and is defined as follows:
\begin{align*}
	V_{0}\, & =\,\varnothing\mathfrak{;}\nonumber  
	\\
	V_{\alpha}\, & =\,\mathcal{P}(V_{\alpha-1})\text{,\qquad\qquad\ }\alpha\text{ is any successor ordinal;}\nonumber
	\\
	V_{\alpha}\, & =\underset{\beta<\alpha}{\bigcup}V_{\beta},\text{\qquad\qquad\quad \ }\alpha\text{ is any limit ordinal;} \nonumber  
	\\
	V \, & =\,\,\smashoperator{\bigcup_{\alpha\in \mathrm{Ord}}}\,V_{\alpha}. 
\end{align*}
Where Ord denotes the collection of all the ordinals.\medskip

The structure of any set $S$ can be represented by a tree, in which $S$ can be regarded as the root and all the objects in the transitive closure of $S$ form the nodes of the tree.  A {branch of the tree is a sequence of nodes connected by \textquotedblleft$\in$\textquotedblright\ from the root to an end node known as a terminal. Clearly, the only terminal in $V\,$is $\varnothing$. A finite branch consists of a finite number of nodes, while an infinite branch contains an infinite number of nodes. \medskip
	
	A transfinite sequence $\gamma_{\alpha}=\left\langle \gamma_{\xi}\colon \xi\leqslant\alpha\right\rangle $\ is a function with an ordinal domain where $\alpha$ is its length \cite{Jech}.\ A $\in\hspace{-0.04in}-\hspace{0.02in}$sequence is a transfinite sequence $\gamma_{\alpha}$ that $\gamma_{0}\in\cdots\in \gamma_{\xi}\in\gamma_{\xi+1}\in \cdots\in\gamma_{\alpha}$. Obviously, any branch of $S$ in $V$ can be represented by a $\in\hspace{-0.04in}-\hspace{0.02in}$sequence like $\varnothing=\gamma_{0}\in\cdots\in\gamma_{\alpha} =S$. As a result, well-founded and non-well-founded sets can be defined upon $\in\hspace{-0.04in}-\hspace{0.02in}$sequences as follows.
	
	\begin{definition}
		 \label{DefWFAndNWFSetApendix} \ Suppose $S$ is a set. Then $S$ is \textbf{well-founded} (WF) if any $\in\hspace{-0.04in}-\hspace{0.02in}$sequence of $S$ has a finite length. $S$ is \textbf{non-well-founded} (NWF) if one $\in\hspace{-0.04in}-\hspace{0.02in}$sequence of $S$ has an infinite length.
	\end{definition}
	
	We can easily show that all sets in $V$ are well-founded. The rank of a set $X$ in $V$ is defined as the least $\alpha$ that $X\in V_{\alpha+1}$. This rank is nonetheless incorrect for the following reasons. Rank in a universe of the sets is a function $R$ mapping each set to a unique ordinal number and satisfies the property of monotonicity, i.e. for any $Y\in X, \, R(Y) < R(X)$ and $R(\{X\}) = R(X) + 1$. Suppose $I_{n}=\underset{n}{\underbrace{\{\cdots\{\varnothing\}\cdots\}}}$. Then $I_n$ contains a $\in\hspace{-0.04in}-\hspace{0.02in}$sequence of length $n$: $\varnothing\in \{\varnothing\}\in\cdots\in I_n$. So by the monotonicity of $R$, we have
	\[
	R(I_{n})\,=\,R(\{I_{n-1}\})\,=\,R(I_{n-1})+1\,=\,R(\varnothing)+n
	\]
	As $n$ approaches infinity, $R(I_{\omega}) = \omega$ ($I_{\omega}$ is known as an infiniton). Therefore, the rank $\omega$ should belong to non-well-founded sets like $I_{\omega}$ that has an infinite branch rather than $\omega$ under the current definition of rank. On the other hand, even if for any $n\in \omega,\, R(n) < R(\omega)$, we cannot conclude that $R(\omega) = \omega$ because $\omega + 1$ also qualifies. These facts suggest that rank in $V$ should be modified as follows. 
	
	\begin{definition}
		\label{DefModifiedCumulativeHierarchyRankApendix}\ The \textbf{rank} of $X$ in $V$ is defined as the least $\alpha$ that $X\in V_{\alpha}$ and denoted as $R_{V}(X)$.
	\end{definition}
	
	From the above definition, we can prove easily the following fundamental result which shows $V$ is incomplete and needs be expanded.
	
	\begin{lemma}
		\label{2Apendix} \ No set in $V$ has a limit ordinal rank.
	\end{lemma}
	
	\begin{proof}
		\ Suppose $\alpha$ is a limit ordinal and $R_{V}(X)=\alpha$. Then there is a $\gamma<\alpha$ that $X\in V_{\gamma}$ or $R_{V}(X)<\alpha$, contradiction.\bigskip
	\end{proof}
	
	Since $R(I_{\omega}) = \omega$, some non-well-founded sets like $I_{\omega}$ should be added to $V$ as limit ordinal ranks to form a complete universe of sets. Therefore, non-well-founded sets not only exist but form an integral part with well-founded sets in the new universe of sets. \medskip
	
	Furthermore, the following result easily follows from definition \ref{DefModifiedCumulativeHierarchyRankApendix} and shows that the rank of any ordinal in $V$ is a successor ordinal, which is consistent with the fact that $V$ contains no rank of a limit ordinal.
	
	\begin{corollary}
		\label{3Apendix} \ For any von Neumann ordinal $\alpha\in V,\,R_{V}(\alpha)\,=\,\alpha+1$ (corollary \ref{3}).
	\end{corollary}
	
%	\begin{proof}
%		\ We prove by transfinite induction. First, since $\varnothing\in V_{1},\,R_{V}(\mathbf{0})=1$. If $\alpha$ is a successor ordinal, suppose $R_{V}(\alpha)=\alpha+1$, i.e. $\alpha\in V_{\alpha+1}$ and $\alpha\notin V_{\alpha}$. Then $\alpha+1= \alpha\cup\{\alpha\}\in\mathcal{P(}V_{\alpha+1})=V_{\alpha+2}$. Also, $\alpha+1\notin V_{\alpha+1}$ for otherwise $\alpha+1 \subset V_{\alpha}$, which means $\alpha\in V_{\alpha}$, contradiction. So $R_{V}(\alpha+1)\,=\,\alpha+2.$\medskip
%		
%		If $\alpha$ is a limit ordinal, for any $\gamma<\alpha,\,\gamma\in V_{\gamma+1}\subset V_{\alpha}$, i.e. $\alpha\subset V_{\alpha}$ and $\alpha\in V_{\alpha+1}.$ If $\alpha\in V_{\alpha}=\displaystyle\bigcup\limits_{\beta<\alpha}V_{\beta}$, there is a $\gamma<\alpha,\,\alpha\in V_{\gamma}$, contradiction. Thus $R_{V}(\alpha)\,=\,\alpha+1$.\medskip
%	\end{proof}

	\subsection{Limit of Structures and Formulas}
	
	In order to study non-well-founded sets rigorously, we must extend the current infinite model theory. As a result, limits of finite structures and formulas are proposed and investigated. The limit of finite structures is an infinite structure that can be described by an infinitely long formula involving countably many conjunctions and disjunctions and finitely many quantifiers \cite{Scott}.\smallskip
	
	An atomic theory in a language is a theory in which every of its consistent formulas can be derived from a complete formula in it. An atomic structure is a structure in which every of its tuples satisfies a complete formula in its theory. A $\aleph_0$-categorical theory in a language has exactly one countable structure up to isomorphism which is known as the $\aleph_0$-categorical structure of it. By a famous theorem of Engeler and Ryll-Nardzewski, a $\aleph_0$-categorical structure consists of finitely many atomic structures \cite{Chang-Keisler}. The limits of finite structures and formulas are investigated under a $\aleph_0$-categorical theory. 
	
	\begin{definition} 
		\label{DefHomoSeqApendix}\ Suppose $L$ is an infinitary language. Let $\phi_n$ be formulas (of finite length) in a $\aleph_0$-categorical theory of $L$ and $M_n$ be $L$-structures described by $\phi_n$. If for any $n < k < \omega,\, M_k \vDash \phi_n$, then $\{(M_n,\phi_n) \colon M_n \vDash \phi_n \wedge n < \omega\}$ is known as a \textbf{homogeneous sequence} of structures and formulas. 
	\end{definition}
	
	We can show that for a homogeneous sequence of structures, there exist a unique (countable) atomic structure $M$ (up to isomorphism) and a unique formula $\phi$ in $L$ (up to equivalence) such that $M \vDash \phi$ (theorem \ref{71}). So we have
	
	\begin{definition}
		\label{DefLimitApendix}\ Suppose $(M_n,\phi_n)$ is a homogeneous sequence. Its unique structure and formula are known as the \textbf{limit} of $M_n$ and $\phi_n$, denoted as $\underset{n\rightarrow \omega}{\lim} M_n=M$ and $\underset{n\rightarrow\omega}{\lim} \phi_n\Leftrightarrow\phi$. In both cases, we say the limit of $\phi_{n}$ or the limit of $M_{n}$ is unique. If a subsequence of pair $\{(M_{n_i},\phi_{n_i})\colon M_{n_i} \vDash \phi_{n_i} \,\wedge\, n_i < \omega\}$ is homogeneous, its limit is known as a \textbf{sublimit} of $M_n$ and $\phi_n$.
	\end{definition}
	
	The limit of formulas and structures satisfies the following axioms. Suppose $\phi_n$ and $\varphi_n$ are consistent formulas in a $\aleph_{0}$-categorical theory. If $\underset{n\rightarrow\omega}{\lim} \phi_n$ and $\underset{n\rightarrow\omega}{\lim} \varphi_n$ are unique, then
	
	\begin{enumerate}
		\item[I.] $\underset{n\rightarrow\omega}{\lim}\left(\phi_{n} \wedge\varphi_{n}\right)$ is unique and
		\[
		\underset{n\rightarrow\omega}{\lim}\left(\phi_{n}\wedge\varphi_{n}\right)\,\Longleftrightarrow\,\underset{n\rightarrow\omega}{\lim}\phi_{n} \wedge\underset{n\rightarrow\omega}{\lim}\,\varphi_{n}
		\]
		
		\item[II.] $\underset{n\rightarrow\omega}{\lim} \urcorner\phi_{n}$ is unique and
		\[
		\underset{n\rightarrow\omega}{\lim}\urcorner\phi_{n}\,\Longleftrightarrow\,\urcorner\,\underset{n\rightarrow\omega}{\lim}\phi_{n}
		\]
		
		\item[III.] $\underset{n\rightarrow\omega}{\lim}\exists x \,\phi_{n}$ is unique ($x$ is a variable in $\phi_{n}$), and 
		\[
		\underset{n\rightarrow\omega}{\lim}\exists x\,\phi_{n}\,\Longleftrightarrow\,\exists x\,\underset{n\rightarrow\omega}{\lim}\phi_{n}
		\]
		
		\item[IV.]  Suppose $M_{n}$ are $L$-structures in a $\aleph_{0}$-categorical theory,  $\underset{n\rightarrow\omega}{\lim}M_{n}$ is unique and $N$ is a substructure of $M_{n}$. Then
		\[
		\underset{n\rightarrow\omega}{\lim}\left(N\in M_{n}\right)\,\Longleftrightarrow\,\left(N\in\underset {n\rightarrow\omega}{\lim}M_{n}\right)
		\]	
	\end{enumerate}
	
	I$,\cdots,$IV are known as the \textbf{limit algebra} for structures and formulas. From these and calculus of logic, we can prove the following results.
	
	\begin{corollary}
		\ Suppose $\phi_{n}$ and $\varphi_{n}$ are consistent, $\underset{n\rightarrow\omega}{\lim}\phi_{n}$ and $\underset{n\rightarrow\omega}{\lim}\,\varphi_{n}$ are unique in a $\aleph_{0}$-categorical theory. Then
	\end{corollary}
	
	\begin{enumerate}
		\item $\underset{n\rightarrow\omega}{\lim}\left(\phi_{n}\vee\varphi_{n}\right) \,\Longleftrightarrow\,\underset{n\rightarrow\omega}{\lim} \phi_{n}\vee\underset{n\rightarrow\omega}{\lim}\,\varphi_{n}$
		
		\item $\underset{n\rightarrow\omega}{\lim}\left(\phi_{n}\Longrightarrow\varphi_{n}\right)\,\Longleftrightarrow\,\left(\underset {n\rightarrow\omega}{\lim}\phi_{n}\,\Longrightarrow\,\underset{n\rightarrow\omega}{\lim}\,\varphi_{n}\right)$
		
		\item $\underset{n\rightarrow\omega}{\lim}\left(\phi_{n}\Longleftrightarrow\varphi_{n}\right)\,\Longleftrightarrow\,\left(\underset {n\rightarrow\omega}{\lim}\phi_{n}\,\Longleftrightarrow\,\underset{n\rightarrow\omega}{\lim}\,\varphi_{n}\right)$
		
		\item $\underset{n\rightarrow\omega}{\lim}\,\forall x\,\phi_{n}\,\Longleftrightarrow\,\forall x\,\underset{n\rightarrow\omega}{\lim}\phi_{n}$
	\end{enumerate}
	
	\begin{theorem}\label{1Apendix}\ Suppose $A$, $M_{n}$ and $N_{n}$ are $L$-structures in a $\aleph_{0}$-categorical theory, $\underset{n\rightarrow\omega}{\lim}M_{n}$ and $\underset{n\rightarrow\omega}{\lim}N_{n}$ are unique, $K$ is a structure without relation and function symbols, and $\underset{n\rightarrow\omega}{\lim}\phi_n$ is unique. 
	\end{theorem}
	
	\begin{enumerate}
		\item $\underset{n\rightarrow\omega}{\lim}\,K\cup\{M_{n}\}\,=\,K\cup\left\{\underset{n\rightarrow\omega}{\lim} M_{n}\right\}$
		
		\item $\underset{n\rightarrow\omega}{\lim}\,\left(\forall{A}\in{M}_{n}\right)\phi_{n}\,\Longleftrightarrow\,\bigl(\forall {A}\in\underset{n\rightarrow\omega}{\lim}\,{M}_{n}\bigr)\underset{n\rightarrow\omega}{\lim}\phi_{n}$
		
		\item $\underset{n\rightarrow\omega}{\lim}\,\left(\exists{A}\in{M}_{n}\right)\phi_{n}\,\Longleftrightarrow\,\bigl(\exists {A}\in\underset{n\rightarrow\omega}{\lim}\,{M}_{n}\bigr)\underset{n\rightarrow\omega}{\lim}\phi_{n}$
		
		\item $\underset{n\rightarrow\omega}{\lim}\,\left(\exists\,N_{n}\in M_{n}\right)  \phi_{n}\,\Longleftrightarrow\,\left(\exists\,\underset{n\rightarrow \omega}{\lim}\,N_{n}\in\underset{n\rightarrow\omega}{\lim} M_{n}\right)\underset{n\rightarrow\omega}{\lim}\phi_{n}$
	\end{enumerate}
	
	Refer to section \ref{SectionOfLimitFormula} for more details.
	
	\subsection{Non-Well-Founded Sets}
	
	Mirimanoff initiated the study of non-well-founded sets in 1917 \cite{Mirimanoff}. Several axiomatic systems of non-well-founded sets such as AFA (by Aczel, Forti and Honsell \cite{Aczel}), SAFA (by Scott), FAFA (by Finsler), and BAFA (by Boffa), have been proposed thereafter. Since ZF set theory bans $\in\hspace{-0.04in}-\hspace{0.02in}$sequences of infinite length by the axiom of regularity (also known as the axiom of foundation), most of these systems incorporate non-well-founded sets by replacing the axiom of regularity with distinct anti-foundation axioms and are essentially models of ZF minus the axiom of regularity. A notable exception is New Foundations by Quine \cite{Quine} that allows non-well-founded sets without a specific axiom and avoids Russell’s paradox by permitting only stratified formulas. \smallskip
	
	The main problem of the above systems is the lack of precise mathematical descriptions for non-well-founded sets. For instance, a non-well-founded set such as a Quine atom in AFA is an accessible pointed graph that can be unfolded into an infinite tree. However, AFA only describes it intuitively and does not provide enough mathematical rigor for its description and operations.\smallskip
	
	In this paper, we will present a new scheme to generate non-well-founded sets by enlarging the von Neumann universe along with the exact process to generate these sets. Since $V$ is incomplete for lack of the limit ordinal ranks, non-well-founded sets are necessarily existent and some of them should take on the limit ordinal ranks in a complete universe of sets. Thus, non-well-founded sets are added to $V$ initially as infinitely generated sets with limit ordinal ranks. The expanded universe is known as the total universe ($T$). Furthermore, limits of finite structures and formulas provide rigorous analysis for three types of infinitely generated sets as infinitons, semi-infinitons and quasi-infinitons that appear in Russell’s paradox. Refer to section \ref{SectionNonWellFoundedSets} for more details.
	
	\subsubsection{Infinitely Generated Sets}
	
	An infinitely generated set is a generator of non-well-founded sets that contains only one infinite branch. It is the limit of finitely generated sets which are well-founded. For example, $\underset{n}{\underbrace{\{\cdots\{\varnothing\}\cdots\}}}$ discussed in the previous section is well-founded, and so is $\underset{n}{\underbrace{\{a,\{a, \cdots\{a,\varnothing\}\cdots\}}}$, where $a\in V_{\omega}$. Likewise, $a$ can be replaced by any collection of elements in $V_{\omega}$, i.e.
	\[
	Z_n\,=\,\underset{n}{\underbrace{\{ a_1,a_2,\cdots,\{ a_1,a_2,\cdots,\cdots\{ a_1,a_2,\cdots,\varnothing\}\cdots\}}}
	\]
	and $Z_n$ is a finitely generated set. Therefore, for simplicity and accuracy, we need a new symbol for the collection of elements without the curly brackets and call it the \textbf{unpacking operator} (definition \ref{DefUnpackOperator}). Let $G = \{a_1, a_2,\cdots\}$. Then $\ast G$ is the unpacking operator of G and denotes $a_1, a_2,\cdots$. For the above example, $Z_n$ can be simply written as $\{\ast G, \{\ast G, \cdots \{*G, \{\varnothing\}\}\cdots\}$. Refer to section \ref{SectionUnpackingOperator} for more details.\smallskip
	
	\begin{definition}
		\ Suppose $G_{k}\in V_{\omega} \,\left(0\leqslant k\leqslant n\right)$. A \textbf{finitely generated set} is a finite $L$-structure:
		\begin{equation*}
			H_{n}(G_{n},\cdots,G_{0})\,=\,\{\ast G_{n},\{\ast G_{n-1},\cdots\{\ast G_{1},G_{0}\}\cdots\}
		\end{equation*}
		Where $G_{k} \left(1\leqslant k\leqslant n\right)$ are principal generators and $G_{0}$ is a base generator of $H_{n}$.	
	\end{definition}
	
	An infinitely generated set is defined as the limit of finitely generated sets. 
	
	\begin{definition}
		\label{DefInfGeneratedSetApendix} \ Suppose $H_{n}$ is defined above and $\mathcal{G}=\{G_{n}\colon G_{n}\in V_{\omega},\,n<\omega\}$. An \textbf{infinitely generated set (IGS)} (at $\omega$) is defined as:
		\begin{equation*}
			H_{\omega}(\mathcal{G})\,=\,\underset{n\rightarrow\omega}{\lim}\,H_{n}(G_{n},\cdots,G_{0})
		\end{equation*}
		Where $G_{n} \left(n\geqslant1\right)$\ are principal generators and $G_{0}$ is a base generator of $H_{\omega}$.
	\end{definition}
	
	Note that each IGS has one infinite branch. A non-well-founded set with multiple infinite branches can be formed from IGS through power set operations. Next, we will discuss three types of non-well-founded sets that appear in Russell’s paradox as special cases of infinitely generated sets.
	
	\subsubsection{Infiniton}
	
	An \textbf{infiniton} is a set that contains itself as the only member, i.e. $I = \{I\}$. Intuitively, $I$ has infinite membership for 
	\begin{equation*}
		I=\underset{\aleph_{0}}{\,\underbrace{\{\dots\{G_0\}\dots\}}\,}=\underset{\aleph_{0}+1}{\,\underbrace{\{\{\dots\{G_0\}\dots\}\}}\,}=\{\underset{\aleph_{0}}{\,\underbrace{\{\dots\{G_0\}\dots\}}\,}\}  =\,\{I\}
	\end{equation*}
	Fortunately, this process can be precisely determined by the limit of finite structures as follows. Suppose $I_n = \{I_{n-1}\} =\cdots = \underset{n}{\underbrace{\{ \cdots\{G_0\}\cdots\}}}$ ($G_0$ is the base generator of $I_n$). Then $I_n$ is described by:
	\[
	\phi_{n}(I_n)\,\Longleftrightarrow\,(\left(Y_{n}=I_n\right)\wedge\,\,\smashoperator{\bigwedge_{1\leqslant j\leqslant n}}\,\,\exists!\,Y_{j} \,\exists!\,Y_{j-1}\left(Y_{j-1}\in Y_{j}\right))
	\]
	Since for any $k>n,\,{I}_k\vDash \phi_n[I_k]$, $(I_n, \phi_{n})$ is a homogeneous sequence. So there is a unique atomic structure $I$ such that $\underset {n\rightarrow\omega}{\lim}I_{n}=I$. By theorem \ref{1Apendix}
	\[
	I\,=\underset{n\rightarrow\omega}{\lim}I_{n}\,=\underset{n\rightarrow\omega}{\lim}\{I_{n-1}\}\,=\,\left\{\underset{n\rightarrow\omega}{\lim}I_{n-1}\right\}\,=\,\{I\}
	\] 
	And the complete formula for $I$ is:
	\[
	\phi\,\Longleftrightarrow\underset{n\rightarrow\omega}{\lim}\,\phi_{n}\,\Longleftrightarrow\underset{n<\omega}{\displaystyle\bigwedge}\exists! \,I_{n}\,\exists!\,I_{n-1}\left(I_{n-1}\in I_{n}\right)\,\wedge\,\exists!\,I\left(I\in I\right)
	\]
	Furthermore, we have the following conclusion. 
	\begin{corollary}
		\label{18Apendix}	\ AR fails for any infiniton and collections of infinitons.
	\end{corollary}
	
	\begin{proof}
		Suppose $I = \{I\}$ and $S = \{I_{\alpha}\colon I_{\alpha} = \{I_{\alpha}\}\}$ ($\alpha\in D$). Since the only member of $I$ is $I$ itself and $I \cap I = I \ne \varnothing$, AR fails for $I$.\smallskip
		
		For each $I_{\alpha}\in S$, $I_{\alpha} \cap S = I_{\alpha}$ for $I_{\alpha}\in I_{\alpha}$. Since no $y\in S$ satisfies $y \cap S = \varnothing$, AR fails for $S$ too.\bigskip
	\end{proof}
	
	Refer to section \ref{SectionInfiniton} for more details.
	
	\subsubsection{Semi-Infiniton}
	
	A \textbf{semi-infiniton} is a set that is a member of itself, i.e. $Z\in Z$. Suppose $G\,=\,\{a_{1},a_{2},\dots\}$ ($G\in V_{\omega}$) and $\ast G$ is the unpacking operator. Intuitively, $Z$ takes on the following form,
	\begin{equation*}
		Z\,=\,\underset{\aleph_{0}}{\underbrace{\{\ast G,\{\ast G,\dots\{\ast G,G_0\}\dots\}}}
	\end{equation*}
	for
	\[
	Z\,=\,\underset{\aleph_{0}+1}{\underbrace{\{\ast G,\{\ast G,\dots\{\ast G,G_0\}\dots\}}}\,=\,\{\ast G,\underset{\aleph_{0}}{\underbrace{\{\ast G,\dots\{\ast G,G_0\}\dots\}}}\}\,=\,\{\ast G,Z\}
	\]
	Thus $Z$ is precisely the limit of finitely generated sets in which all the principal generators are the same, i.e. $G_1= \cdots = G_n$. More specifically, suppose $Z_{n}\,=\,\underset{n}{\underbrace {\{\ast G,\{\ast G,\cdots\{\ast G,G_{0}\}\cdots\}}}$ ($G$ is the principal generator and $G_0$ the base generator of $Z_n$). Then $Z_n$ is described by:
	\[
	\varphi_{n}(Z_n)\,\Longleftrightarrow(\left(Y_{n}=Z_n\right)\wedge\,\smashoperator{\bigwedge_{1\leqslant j\leqslant n}}\,\,\exists!\,Y_{j} \left(\exists!\,Y_{j-1}\left(Y_{j-1}\in Y_{j}\right)\,\wedge\,\left(\forall z\in G\right)\left(z\in Y_{j}\right)\right))
	\]
	Since for any $k>n,\,{Z}_k\vDash \varphi_n[Z_k]$, $(Z_n, \varphi_{n})$ is a homogeneous sequence. So there is a unique atomic structure $Z$ that $\underset {n\rightarrow\omega}{\lim}Z_{n}=Z$. By theorem \ref{1Apendix}
	\[
	Z\,=\underset{n\rightarrow\omega}{\lim}\{\ast G,Z_{n-1}\}\,=\underset{n\rightarrow\omega}{\lim}\left(G\,\cup\{Z_{n-1}\}\right) \,=\,G\,\cup\left\{\underset{n\rightarrow\omega}{\lim}Z_{n-1}\right\}\,=\,\{\ast G,Z\}
	\] 
	And the complete formula for $Z$ is:
	\[
	\varphi\,\Longleftrightarrow\underset{n<\omega}{\displaystyle\bigwedge}\exists!\,Z_{n}\left(\exists!\,Z_{n-1}\left(Z_{n-1}\in Z_{n}\right) \,\wedge\,\left(\forall z\in G\right)\left(z\in Z_{n}\right)\right)\,\wedge\,\exists!\,Z \left(\left(Z\in Z\right)\,\wedge\,\left(\forall z\in G\right) \left(z\in Z\right)\right)
	\]
	
	Refer to section \ref{SectionSemiInfiniton} for more details.
		
	\subsubsection{Quasi-Infiniton}
	
	A \textbf{quasi-infiniton} is a set that contains a vicious cycle, e.g. $Q\in S_1, S_1\in S_2, \cdots, S_{n}\in Q$. $Q$ is the limit of finitely generated sets in which all the principal generators form a finite cycle. In other words, there are only $n\,(n > 1)$ distinct principal generators $G_j\,(1 \leqslant j \leqslant n)$ for $Q$. We can show that there are $n$ sublimits in $Q$, each of which forms an atomic substructure generated by a cycle of $G_j$. Therefore, $Q$ is overall a $\aleph_0$-categorical structure consisting of $n$ atomic substructures. 	Refer to section \ref{SectionQuasiInfiniton} for more details.
	
	\subsection{Total Universe}
	
	As discussed in earlier sections, the von Neumann universe is incomplete because it does not have limit ordinal ranks. The infinitely generated sets as generators of non-well-founded sets should take on limit ordinal ranks in a complete universe of sets. This new universe known as the total universe contains both well-founded and non-well-founded sets and is an expansion of the von Neumann universe. The \textbf{total universe} is defined as follows.
	\begin{align*}
		T_{0} \, & =\,\varnothing;  \nonumber  
		\\
		T_{\alpha} \, & =\,\mathcal{P}(T_{\alpha-1})\text{,\qquad\qquad\qquad\qquad\,}\alpha\operatorname{\: is\:any\: successor \:ordinal;} \nonumber  
		\\
		T_{\alpha} \,& =\underset{\beta<\alpha}{\bigcup} T_{\beta}\,\cup\Bigl(\underset{\beta<\alpha}{\bigcup} T_{\beta}\Bigr)\Big \vert_{\aleph_{0}},\text{\qquad\quad}\alpha\operatorname{\: is\: any\: limit\: ordinal;}  \nonumber
		\\
		T \, & =\,\,\smashoperator{\bigcup_{\alpha\in \mathrm{Ord}}}\,T_{\alpha}.
	\end{align*}
	Where $\Bigl(\underset{\beta<\alpha}{\bigcup} T_{\beta}\Bigr)\Big \vert_{\aleph_{0}}$ is the IGS generated from $\underset{\beta<\alpha}{\bigcup} T_{\beta}$ at the limit ordinal $\alpha$. $T$ has the same hierarchy as $V$ with the difference at each limit ordinal where IGS is created. 
	
	\subsubsection{Rank in Total Universe}
	
	Rank in $T$ is the same as that of $V$ (definition \ref{DefModifiedCumulativeHierarchyRankApendix}). 
	
	\begin{definition}\label{DefTotalModelRankApendix}
		\ The rank of $X$ in $T$ is defined as the least $\alpha$ that $X\in T_{\alpha}$ and denoted as $R_T(X)$.
	\end{definition}
	
	We have the following results on rank in the total universe.
	\begin{lemma}
		\ If $R_T(X)$ is a limit ordinal, then $X$ is an IGS.
	\end{lemma}
	
	\begin{proof}
		\ Suppose $R_T(X)=\alpha$ is a limit ordinal. If $X\in \underset{\beta<\alpha} {\bigcup}T_{\beta}$, then there is a $\gamma<\alpha$ such that $X\in T_{\gamma}$, i.e. $R_T(X) \leqslant\gamma< \alpha$, contradiction. So $X\in \Bigl(\underset{\beta<\alpha}{\bigcup} T_{\beta}\Bigr)\Big \vert_{\aleph_{0}}$.
	\end{proof}
	
	\begin{lemma}
		\ Suppose $X\in T$ and $Y\in X$.
	\end{lemma}
	
	\begin{enumerate}
		\item $R_T(Y) \leqslant R_T(X)$.
		
		\item If $R_T(X)$ is a successor ordinal, $\,R_T(Y) < R_T(X)$.
	\end{enumerate}
	
	\begin{proof}
		\ (i) \ Suppose $R_{T}(X)=\alpha$. Then by definition \ref{DefTotalModelRankApendix}, $X\in T_{\alpha}$. Since $T_{\alpha}$ is transitive, for $Y\in X$, $Y\in T_{\alpha}$, i.e. $R_{T}(Y)\leqslant\alpha$.\medskip
		
		(ii) \ Suppose $R_{T}(X)=\alpha$ is a successor ordinal. Then $X\in T_{\alpha}\,=\,\mathcal{P}(T_{\alpha-1})$ and $X\subset T_{\alpha-1}$. Thus $Y\in T_{\alpha-1}$ and $R_{T}(Y)\leqslant\alpha-1<R_{T}(X)$.\smallskip
	\end{proof}
	
	\begin{theorem}
		\ (Partition by Rank in $T$) Suppose $A_{\alpha}$ is the collection of elements with rank $\alpha$ in $T$.
	\end{theorem}
	
	\begin{enumerate}
		\item If $\alpha$ is a successor ordinal, 	$\,A_{\alpha} = T_{\alpha}- T_{\alpha-1}$.
		
		\item If $\alpha$ is a limit ordinal,  $\,A_{\alpha}=\,T_{\alpha}\,-\underset{\beta<\alpha}{\displaystyle\bigcup}T_{\beta}\,=\,\underset{\beta<\alpha}{\displaystyle\bigcap}\left(T_{\alpha}-T_{\beta}\right)$
		
		\item $T$ is partitioned by rank as,
		\[
		T\,\,=\,\,\,\smashoperator{{\displaystyle\bigcup}_{\alpha\in \mathrm{SOrd}}}\,\,(T_{\alpha}-T_{\alpha-1})\,\,\cup\,\,\, \smashoperator{{\displaystyle\bigcup}_{\alpha\in \mathrm{LOrd}}} \quad\underset{\beta<\alpha}{\displaystyle\bigcap}(T_{\alpha}-T_{\beta})
		\]
	\end{enumerate}
	Where SOrd denotes all successor ordinals and LOrd all limit ordinals. See theorem \ref{59} for a detailed proof.\smallskip
	
	\begin{theorem}
		\ Suppose $X\in T$.
	\end{theorem}
	
	\begin{enumerate}
		\item If $R_T(X)$ is a successor ordinal, $\,R_T(X)\,=\,\sup\{R_T(Y)\colon Y\in X\} + 1$.
		
		\item If $R_T(X)$ is a limit ordinal, $\,R_T(X)\,=\,\sup\{R_T(Y)\colon Y\in X\}$.
	\end{enumerate}
	See theorem \ref{60} for a detailed proof. For more results, refer to section \ref{SectionRankTotalUniverse}.\smallskip
	
	\subsubsection{Set Theory for Total Universe}
	
	An extension of ZF set theory known as EZF is proposed to handle both the well-founded and non-well-founded sets. The language of EZF is the language of ZF with the primitive symbol of \textquotedblleft$\in$\textquotedblright.  Since there are non-well-founded sets in EZF, infinite $\in\hspace{-0.04in}-\hspace{0.02in}$sequences are allowed. By corollary \ref{18Apendix}, the axiom of regularity can fail and is replaced by the non-well-foundedness axiom in EZF. Also, the extensionality and union axioms in ZF must be modified to handle infinite $\in\hspace{-0.04in}-\hspace{0.02in}$sequences. First, we extend the union operator and transitive closure to the transfinite case.
	
	\begin{definition}
		\label{DefGenUnionOperatorApendix} \ Suppose ${\bigcup}S=\{z\colon\exists y\left(y\in S\,\wedge\,z\in y\right)\}$. The $\mathbf{\alpha}^{th}$ \textbf{union operator} is defined (recursively) as:
	\end{definition}
	
	\begin{enumerate}
		\item \textit{If $\alpha$ is a successor ordinal, then}
		\[
		{\textstyle\bigcup}^{\alpha}S\,=\,{\textstyle\bigcup}\:{\textstyle\bigcup}^{\alpha-1}S\quad \left({\textstyle\bigcup}^{0}S\,=\,S\right)
		\]
		
		\item \textit{If $\alpha$ is a limit ordinal, then}
		\[
		{\textstyle\bigcup}^{\alpha}S\,=\,\underset{\beta<\alpha}{\bigcup}{\textstyle\bigcup}^{\beta}S
		\]
	\end{enumerate}
	
	\begin{definition}
		\label{TransitiveClosureApendix} \ Suppose $\alpha_{0}$ is the least ordinal $\alpha$ that \ ${\bigcup}^{\alpha}S\,=\,{\bigcup} ^{\alpha+1}S$. Then the \textbf{transitive closure} of $S$ is:
		\[
		TC(S)\,=\underset{\alpha\leqslant\alpha_{0}}{\bigcup}{\textstyle\bigcup}^{\alpha}S	
		\]
	\end{definition}
	The \textbf{axioms of EZF} are as follows.
	\begin{enumerate}
		\item[I.] Extensionality.	If $X$ and $Y$ are IGS with the same set of generators (principle and base), then $X=Y$. For other sets, it is the same as that of ZF (\cite{Jech} and \cite{Suppes}).
		
		\item[II.] Non-well-foundedness.  There exists a non-well-founded set.
		
		\item[III.]	Union.  For any set $X$ and any ordinal $\alpha$, ${\textstyle\bigcup}^{\alpha}X$ is a set.
		
		%	\item[IV.] Pairing.   Same as ZF.
		%	
		%	\item[V.] Power Set.   Same as ZF.
		%
		%	\item[VI.] Infinity.   Same as ZF.
		%
		%	\item[VII.]	Replacement.   Same as ZF.
		%
		%	\item[VIII.] Separation.   Same as ZF.
		
	\end{enumerate}
	
	The rest axioms IV$,\dots,$VIII are the axioms of pairing, power set, infinity, replacement, and separation that are the same as those in ZF. For more details, refer to section \ref{SectionSetTheoryTotalUniverse}. \medskip
	
	A main result is that the total universe is a model of EZF, i.e. ZF minus AR (theorem \ref{72}). 
	
	\subsubsection{Solution to Russell’s paradox}
	
	In ZF, Russell’s paradox is avoided by banning all non-well-founded sets through the axiom of regularity. This is an overkill. In EZF, non-well-founded sets are allowed, and it is possible to have a set being member of itself (infinitons and semi-infinitons). But all the infinitons form a class that can not be member of itself. Hence Russell’s paradox is avoided in the total universe as well.\smallskip
	
	Let $\mathcal{F}$ be the \textbf{infiniton class} in $T$, denoting the collection of all infinitons, semi-infinitons and quasi-infinitons in the total universe. Then the \textbf{non-infiniton class} $\mathcal{N}$ of $T$ is the complement of $\mathcal{F}$ in $T$, i.e. $\mathcal{N}=T-\mathcal{F}$. First, we have the following results. 
	
	\begin{lemma}
		\label{11Apendix}\ Any element in $\mathcal{F}$ has rank of a limit ordinal. Any element with rank of a successor ordinal is in $\mathcal{N}$.
	\end{lemma}
	
	\begin{lemma}
		\label{12Apendix}\ Let $\mathcal{N}_{\alpha}$ be the non-infiniton class of $T_{\alpha}$. 
	\end{lemma}
	
	\begin{enumerate}
		\item $T=\mathcal{F}\cup\mathcal{N},\quad\mathcal{F}\cap\mathcal{N}= \varnothing$
		
		%	\item $\mathcal{N}\,=\underset{\alpha\in Ord}{\displaystyle\bigcup}\left(T_{\alpha}-\mathcal{F}\right)\,=\underset{\alpha\in Ord}{\displaystyle\bigcup}\mathcal{N}_{\alpha}$
		
		\item $\mathcal{N}\,=\,\,\,\smashoperator{{\displaystyle\bigcup}_{\alpha\in \mathrm{Ord}}}\,\left(T_{\alpha}-\mathcal{F}\right)\,= \,\,\,\smashoperator{{\displaystyle\bigcup}_{\alpha\in \mathrm{Ord}}}\,\mathcal{N}_{\alpha}$
		
		\item $\mathcal{F}$ contains no ordinals. $\mathcal{N}_{\alpha}$ contains all ordinals less than $\alpha$. $\mathcal{N}$ contains all ordinals.
	\end{enumerate}
	
	See theorem \ref{61} and corollary \ref{63} for detailed proofs.
	
	\begin{theorem}
	\end{theorem}
	
	\begin{enumerate}
		\item $\mathcal{N}\notin\mathcal{N},\quad\mathcal{N}\notin T,\quad T\notin T,\quad T\notin\mathcal{N}$.
		
		\item There is no vicious cycle in $\mathcal{N}$. 
		
		\item $T$ is free of Russell’s paradox.
	\end{enumerate}
	
	\begin{proof}
		(i) \ By lemma \ref{12Apendix}(iii), $\mathcal{N}$ contains all ordinals, but no $\mathcal{N}_{\alpha}$ contains all ordinals. For any $X\in\mathcal{N}$,  suppose $X\in\mathcal{N}_{\alpha}$. Then $X$ only contains the ordinals less than $\alpha$. Hence $\mathcal{N}\neq X$, i.e. $\mathcal{N}\notin\mathcal{N}$. The rest follow similarly.\medskip
		
		(ii) \ Since $\mathcal{N}$ contains all ordinals and no $T_{\alpha}$ contains all ordinals, there are no $\mathcal{M}_{k}\in T$ that $\mathcal{N}\in\mathcal{M}_{1}$, $\mathcal{M}_{1}\in\mathcal{M}_{2}$, $\cdots$, $\mathcal{M}_{n}\in\mathcal{N}$. \medskip
		
		(iii) \ In the axiom of separation, $\urcorner\left(x\in y_{1},\,y_{1}\in y_{2},\,\cdots,\,y_{n}\in x\right)$ must be considered along with $x\notin x$ because it can also lead to contradiction (p129 - 131, \cite{Quine}). Since $\mathcal{N}$ contains all the non-semi-infinitons and non-quasi-infinitons in $T$, we have ($n=0$ reduces to $x\in x$)
		\[
		x\in\mathcal{N}\:\Longleftrightarrow\:x\in T \, \wedge\,\urcorner\left(x\in y_{1}\,\wedge\, y_{1}\in y_{2}\,\wedge\cdots \wedge\, y_{n}\in x\right)\tag{1}
		\]
		
		For $n=0$, set $x=\mathcal{N}$ and $x=T$ in (1), we have
		\[
		\mathcal{N}\in\mathcal{N}\:\Longleftrightarrow\:\mathcal{N}\in T \,\wedge\,\mathcal{N}\notin\mathcal{N}\qquad\text{and}\qquad T\in\mathcal{N}\:\Longleftrightarrow\:T\in T\,\wedge\,T\notin T
		\]
		
		By (i), in both cases, the left and right side are false. So there is no contradiction.\medskip
		
		For $n>0$, set $x=\mathcal{N}$ in (1)
		\[
		\mathcal{N}\in\mathcal{N}\:\Longleftrightarrow\:\mathcal{N}\in T\,\wedge\,\urcorner\left(\mathcal{N}\in\mathcal{M}_{1} \,\wedge\,\mathcal{M}_{1}\in\mathcal{M}_{2}\,\wedge\,\cdots\,\wedge\,\mathcal{M}_{n}\in\mathcal{N}\right)
		\]
		
		Again both sides are false and there is no contradiction. Thus $T$ is free of Russell's paradox.
	\end{proof}
	
	\subsection{Conclusions}
	
	In this section we will discuss the validity of the axiom of regularity. Suppose $Z=\{\varnothing,Z\}$ is a semi-infiniton. Then $\varnothing$ is the $\in$-minimum element of $Z$. So the axiom of regularity holds for $Z$, but $Z\in Z$. This example suggests that the axiom of regularity not only can not exclude non-well-founded sets but rather holds for a (large) number of them. As a matter of fact, a well-known result that proves no set being member of itself by the axiom of regularity is actually erroneous.
	
	\begin{conclusion}
		\label{14Apendix}\ The standard theorem which uses the axiom of regularity to prove that there is no set being a member of itself is flawed.
	\end{conclusion}
	
	\begin{proof}
		\ The proof is by contradiction \cite[p54]{Suppes}. First suppose $A\in A$ and $A=\{a,b,\cdots,A\}$. Then $A\in A\cap\{A\}$ for $A\in\{A\}$. Since $A $ is the only member of $\{A\}$, by AR, $A\cap\{A\}=\varnothing$, contradiction. So we get $A\notin A$.\medskip
		
		The problem in the proof is that if $A\in A$, $\{A\}\subset A$. Then $A\cap\{A\}\,=\,\{A\}\,\neq\,\varnothing$ since $A\neq\varnothing$. Thus we can not prove $A\cap\{A\}=\varnothing$, which means that AR actually can not prove that no set can be a member of itself.\bigskip
	\end{proof}
	
	%In addition, the claim to prove that every set belongs to the von Neumann universe based on the axiom of regularity is also flawed \cite[lemma 6.3]{Jech} because the axiom of regularity holds for (many) non-well-founded sets and $V$ only has well-founded sets.\medskip
	
	Conclusion \ref{14Apendix} is consistent with the fact that there are non-well-founded models of ZF. For example (exercise 2.1.7 in \cite{Chang-Keisler}), suppose $L=\{\in\}$ is the language of set theory and $L^{\prime}=L\,\cup\,\{c_i\colon i\in\omega\}$ is a new language with the distinct constants $\{c_i\colon i\in\omega\}$. Then by the compactness theorem, the theory $\Sigma=\operatorname{ZF}\,\cup\,\{c_{n+1}\in c_{n}\colon n\in\omega\}$ has a model of ZF. Another example of non-well-founded models of ZF is the ultrapower $\operatorname{Ult}_U(V)$ where $U$ is a non-principle non $\sigma$-complete ultrafilter over $\omega$. By Los's theorem, $\operatorname{Ult}_U(V)$ is a model of ZF because it is elementarily equivalent to $V$. By a well-known theorem, $\operatorname{Ult}_U(V)$ is well-founded iff $U$ is a non-principle $\sigma$-complete ultrafilter \cite{Jech}.\medskip
	
	These examples show that the axiom of regularity is consistent with infinite $\in$-sequences. In other words, it can not ban infinite $\in$-sequences and sets being member of themselves as currently believed.\medskip
	
	As a result, we conclude that the axiom of regularity is not valid even in defining the well-founded sets and so is dropped in EZF. This invalidity holds in any system that is consistent with ZF set theory. All well-founded sets are defined in definition \ref{DefWFAndNWFSetApendix}, which is stronger than the axiom of regularity as shown in the following result. 
	
	\begin{corollary}
		\label{5Apendix}\ If every branch of $S$ is finite, then there is a $x\in S$ that $x\cap S=\varnothing$. The converse is not true.
	\end{corollary}
	
	\begin{proof}
		\ Suppose $x\in\{y\colon y\in S$, $R_{V}(y)=\min\}$. Then $x\cap S=\varnothing$. Conversely, let $S=\{\varnothing,S\}$. Then $S\cap\varnothing=\varnothing$, but $S$ has an infinite branch.\bigskip
	\end{proof}

\textit{E-mail}: e353zhan@uwaterloo.ca


\begin{thebibliography}{99}                                                                                               
\bibitem {Aczel}Aczel, P. \ \textit{Non-well-founded Sets.} \ CSLI, 1988.

%\bibitem {Bernays}Bernays, P. \ \textquotedblleft A System of Axiomatic Set Theory -- Part I\textquotedblright, \textit{The Journal of Symbolic Logic}, 2(1): 65--77, 1937.
%
%\bibitem {Bernays}Bernays, P. \ \textquotedblleft A System of Axiomatic Set Theory -- Part II\textquotedblright, \textit{The Journal of Symbolic Logic}, 6(1): 1--17, 1941.

\bibitem {Bernays}Bernays, P. \ \textquotedblleft A System of Axiomatic Set Theory -- Part VI\textquotedblright, \textit{The Journal of Symbolic Logic}, 13(2): 65--79, 1948.

\bibitem {Chang-Keisler}Chang, C.C. and Keisler, H.J. \ \textit{Model Theory}. \ Dover, New York, 2012.

\bibitem {Hodges}Hodges, W. \ \textit{Model Theory}. \ Cambridge University Press, Cambridge, 1993.

\bibitem {Jech}Jech, T. \ \textit{Set Theory, The Third Millennium Edition, revised and expanded.} \ Springer-Verlag, Berlin, Heidelberg, New York, 2003.

\bibitem {Karp}Karp, C. \ \textquotedblleft Independence Proofs in Predicate Logic with Infinitely Long Expressions\textquotedblright, \textit{The Journal of Symbolic Logic}, 27(2): 171--188, 1962.

\bibitem {Kechris}Kechris, A. \ \textit{Classical Descriptive Set Theory.} \ Springer-Verlag, New York, 1995.

\bibitem {Keisler}Keisler, H.J. \ \textquotedblleft Finite approximations of infinitely long formulas\textquotedblright, \textit{The Theory of Models, Proceedings of the 1963 International Symposium at Berkeley}, edited by Addison, Henkin, and Tarski, Studies in logic and the foundations of mathematics, North-Holland Publishing Company, Amsterdam, 158--169, 1965.

\bibitem {Knoche}Knoche, C. \ \textquotedblleft A study of the concept of rank in set theory\textquotedblright, \textit{Master thesis}, Lehigh University, 1973.

\bibitem {Kunen}Kunen, K. \ \textit{Set Theory: An Introduction to Independence Proofs.} \ North-Holland, New York, 1980.

\bibitem {Macpherson}Macpherson, D. \ \textquotedblleft A survey of homogeneous structures\textquotedblright, \textit{Discrete Mathematics}, 311(15): 1599--1634, 2011.

\bibitem {Marker}Marker, D. \ \textit{Model Theory: An Introduction.} \ Springer-Verlag, New York, 2002.

\bibitem {Mirimanoff}Mirimanoff, D. \ \textquotedblleft Les antinomies de Russell et de Burali-Forti et le probleme fondamental de la theorie des ensembles\textquotedblright, \textit{L'Enseignement Mathematique}, 19: 37--52, 1917.

\bibitem {Quine}Quine, W.V. \ \textit{Mathematical Logic}. \ Harvard University Press, 1981.

\bibitem {Rathjen}Rathjen, M. \ \textquotedblleft Predictability, Circularity, and Anti-Foundation\textquotedblright, in Link, Godehard (eds),\ \textit{One Hundred Years of Russell's Paradox: Mathematics, Logic, Philosophy}. \ Walter de Gruyter, 2004.

\bibitem {Sharlow}Sharlow, M. \ \textquotedblleft Broadening the Iterative Conception of Set\textquotedblright, \textit{Notre Dame Journal of Formal Logic}, 42(3): 149--170, 2001.

\bibitem {Scott}Scott, D. \ \textquotedblleft Logic with denumerably long formulas and finite strings of quantifiers\textquotedblright, \textit{The Theory of Models, Proceedings of the 1963 International Symposium at Berkeley}, edited by Addison, Henkin, and Tarski, Studies in logic and the foundations of mathematics, North-Holland Publishing Company, Amsterdam, 329--341, 1965.

\bibitem {Suppes}Suppes, P. \ \textit{Axiomatic Set Theory. \ }Dover, New York, 1972.

\bibitem {Tarski}Tarski, A. \ \textquotedblleft The notion of rank in set theory and some of its applications\textquotedblright, \textit{Abstract 628}, \textit{Bulletin of American Mathematical Society}, 61(5): 443, 1955.

%\bibitem {Tent-Ziegler}Tent, K. and Ziegler, M. \ \textit{A Course in Model Theory}, \textit{Volume 40 of Lecture Notes in Logic}. Association for Symbolic Logic, Cambridge University Press, Cambridge, 2012.

\bibitem {Vaught}Vaught, R. \ \textquotedblleft Denumerable models of complete theories\textquotedblright, Infinitistic Methods (Proc. Symp. Foundations Math, Warsaw, 1959), Warsaw/Pergamon Press, 303--321, 1961.

\bibitem {Viale}Viale, M. \ \textquotedblleft The cumulative hierarchy and the constructible universe of ZFA\textquotedblright, \textit{Mathematical Logic Quarterly}, 50(1): 99--103, 2004.
\end{thebibliography}
\end{document}